\newtheorem{theorem}{Theorem}[section]
\newtheorem{conclusion}[theorem]{Conclusion}
\newtheorem{corollary}[theorem]{Corollary}
\newtheorem{definition}[theorem]{Definition}
\newtheorem{lemma}[theorem]{Lemma}
\newtheorem{notation}[theorem]{Notation}
\newtheorem{problem}[theorem]{Problem}
\newtheorem{proposition}[theorem]{Proposition}
\newtheorem{remark}[theorem]{Remark}
\newtheorem{warning}[theorem]{Warning}
\newtheorem{example}[theorem]{Example}
\newcommand{\x}{\boldsymbol{x}}
\renewcommand{\u}{\boldsymbol{u}}
\newcommand{\R}{\mathbb{R}}
\newcommand{\C}{\mathbb{C}}
\newcommand{\p}{\mathbb{P}}
\newcommand{\g}{\mathfrak g}
\newcommand{\gp}{\mathfrak p}
\newcommand{\X}{\mathfrak X}
\newcommand{\gl}{\mathfrak{gl}}
\newcommand{\aff}{\mathfrak{aff}}
\newcommand{\sll}{\mathfrak{sl}}
\newcommand{\gll}{\mathfrak{gl}}
\newcommand{\Span}[1]{\left\langle#1\right\rangle}
\DeclareMathOperator{\LL}{LGr}
\DeclareMathOperator{\tr}{tr}
\DeclareMathOperator{\ad}{ad}
\DeclareMathOperator{\id}{id}
\DeclareMathOperator{\rank}{rank}
\DeclareMathOperator{\vol}{vol}
\DeclareMathOperator{\Graph}{graph}
\DeclareMathOperator{\GL}{\mathsf{GL}}
\DeclareMathOperator{\SL}{\mathsf{SL}}
\DeclareMathOperator{\PGL}{\mathsf{PGL}}
\DeclareMathOperator{\Sp}{\mathsf{Sp}}
\DeclareMathOperator{\Gr}{Gr}
\DeclareMathOperator{\Graff}{Graff}
\DeclareMathOperator{\End}{End}
\DeclareMathOperator{\Aff}{Aff}
\DeclareMathOperator{\Ann}{Ann}%
\DeclareMathOperator{\Hom}{Hom}
\DeclareMathOperator{\dd}{d}
\newcommand{\E}{\mathcal{E}}
\newcommand{\F}{\mathcal{F}}
\newcommand{\D}{\mathcal{D}}
\newcommand{\CC}{\mathcal{C}}
\newcommand{\Th}{^\textrm{th}}
\newcommand{\St}{^\textrm{st}}
\newcommand{\Nd}{^\textrm{nd}}
\newcommand{\Asf}{\mathsf{A}}
\title{Contact manifolds, Lagrangian Grassmannians and PDEs}
\author{Olimjon Eshkobilov}
 \email{olim\_0190@mail.ru}
   \address{Dipartimento di Matematica ``G. L. Lagrange'', Politecnico di Torino, Corso Duca degli Abruzzi, 24, 10129 Torino, ITALY.}
\author{Gianni Manno}
 \email{giovanni.manno@polito.it}
   \address{Dipartimento di Matematica ``G. L. Lagrange'', Politecnico di Torino, Corso Duca degli Abruzzi, 24, 10129 Torino, ITALY.}
\author{Giovanni Moreno}
 \email{gmoreno@impan.pl}
   \address{Institute of Mathematics, Polish Academy of Sciences, ul. Sniadeckich 8, 00--656 Warsaw, POLAND.}
\author{Katja Sagerschnig}
 \email{katja.sagerschnig@univie.ac.at}
   \address{INDAM--Dipartimento di Matematica ``G. L. Lagrange'', Politecnico di Torino, Corso Duca degli Abruzzi, 24, 10129 Torino, ITALY.}
\date{\today}
\begin{document}
\maketitle
\begin{abstract}
In this paper we review a geometric approach to PDEs. We mainly focus on scalar PDEs in $n$ independent variables and one dependent variable of order one and two, by insisting on the underlying $(2n+1)$--dimensional contact manifold and the so--called Lagrangian Grassmannian bundle over the latter.  This work is based on  a 30--hours Ph.D course given by two of the authors (G. M.  and G. M.). As such, it was mainly designed as a quick introduction to the subject for graduate students. But also the more demanding reader will be gratified, thanks to the frequent references to current research topics and glimpses of higher--level mathematics, found mostly in the last sections.
\end{abstract}
\setcounter{tocdepth}{1}
\tableofcontents

\noindent\textbf{Keywords:} Contact and symplectic manifolds, jet spaces, Lagrangian Grassmannians, first and second order PDEs, symmetries of PDEs, characteristics, Monge-Amp\`ere equations, PDEs on complex manifolds.

\medskip\noindent\textbf{MSC 2010:} 32C15, 35A30, 35K96, 53C30, 53C55, 53D05, 53D10, 58A20, 58A30, 58J70.

\section*{Introduction}

The existing literature proposes various ways to interpret (non--linear) PDEs in geometric terms. They all share a common   core idea: one must give up looking for \emph{solutions} and   focus on the \emph{infinitesimal solutions} instead. And this is the moment when differential geometry comes into play. Such was the   pioneering intuition of \`Elie Cartan, which   reached its maturity in the modern theory of Exterior Differential Systems \cite{MR1083148}. An alternative approach, where the same basic idea is captured by the natural contact structures on jet spaces \cite{Ehresmann:InTSInPL1,Ehresmann:InTSInPL2}, was developed to provide a solid theoretical background to the rapidly increasing results in the context of Integrable Systems \cite{MR2813504,MR1670044}.\par
The advantage of these approaches lies in  their extreme generality. Notwithstanding the order, the number of dependent and independent variables, and the number of equations, a (system of) PDEs can be effectively rendered in one of these geometric pictures. As such, they provide the best environment to prove general theorems.\par
At the far side, there are the applicative results, where   single PDEs (or classes of them)  are examined case--by--case and \emph{ad hoc} techniques are developed thereby. Even if some of these techniques can be explained in terms of general theorems, the link between them is often overlooked. In particular, we put a particular emphasis on the fact that, for a small (though non--trivial) class of non--linear PDEs---namely scalar with one independent variable and of order one or two---the framework itself takes a particularly friendly form. At least, in the authors' opinion.\par
This is the framework based on contact manifolds and their prolongations, already described in the excellent book \cite{MR2352610}. The present work, beside being more concise, puts a particular focus on the geometry of the Lagrangian Grassmannian bundle, casting important bridges between the theory of $2\Nd$ order scalar PDEs and purely algebraic--geometric aspects of certain  projective varieties. Such interrelationships become particularly evident in the case of parabolic Monge--Amp\`ere equations, and a few recent results \cite{MR2503974,MR2985508,Alekseevsky2014144} are reviewed here, suitably framed against a common geometric background.
\subsection*{Structure of the paper}
These notes, being essentially based on a Ph.D course, begin with the review of basic classical notions, cover gradually even more complicated topics and, in the end, touch some recent research results. Roughly speaking, the material covered in Section  \ref{secPreliminaries},  Section \ref{secContSymp} and Section \ref{sec1stOrderPDEs}  explains how to introduce geometrically $1\St$ order jets and $1\St$ order PDEs, by using rather  standard tools of contact manifolds. We discuss, in particular, the \emph{holonomy equations}, thus paving the way to the higher--order setting. In Section \ref{secPrologContMan} and Section \ref{sec.SecondOrderPDEs}     we introduce $2\Nd$ order jets and  $2\Nd$ order PDEs in terms of  Lagrangian Grassmannian bundles over contact manifolds and its one--codimensional sub--bundle.    In Section \ref{secSymPDEBack} and Section \ref{sec.characteristics} we work in the general setting of $k\Th$ order jets and $k\Th$ order PDEs, in order to better appreciate the Lie--B\"acklund theorem and to frame the notion of \emph{characteristics} against a broader background. In the remaining sections we show how the so--obtained geometric framework allows to obtain non--trivial results in some special classes of PDEs, namely Monge--Amp\`ere and homogeneous PDEs.\par
More precisely, the content of the paper is as follows.\par
In Section \ref{secPreliminaries} we introduce the necessary toolbox for studying vector distributions on smooth manifolds, with a particular emphasis on the notions of complete integrability and non--integrability. The particular case of contact manifolds is dealt with in more detail in Section \ref{secContSymp}, and the classical Darboux's theorem is established. In view of its importance in the integration of certain Cauchy problems, a fundamental discrete invariant of vector fields known as  \emph{type} is introduced in this section.  Section \ref{sec1stOrderPDEs} contains some examples of  (systems of) $1\St$ order PDEs whose solutions are particularly evident in the geometry framework based on contact manifolds. The section ends with the so--called \emph{holonomy equations}, that are tautological equations allowing to introduce the framework for $2\Nd$ order PDEs.  The possibility of constructing a geometric theory for $2\Nd$ order PDEs by means of natural structures associated to contact manifolds is explained in  Section \ref{secPrologContMan}. The central notion of these notes, the \emph{Lagrangian Grassmannian}, appears precisely when contact manifolds are prolonged in order to ``accommodate'' $2\Nd$ order PDEs. The key properties of the Lagrangian Grassmannian, namely the identification of its tangent spaces with symmetric matrices, and the so--called Pl\"ucker embedding, are duly highlighted in this section. The former allows to speak of the \emph{rank} of a tangent vector, the latter allows to introduce a distinguished class of hypersurfaces, the \emph{hyperplane sections}. The arrangement of next  Section \ref{sec.SecondOrderPDEs} mirrors that of Section \ref{sec1stOrderPDEs}, with the geometric definition of a $2\Nd$ order PDE, and an example of resolution. The first part of these notes is ideally concluded with  Section \ref{secSymPDEBack}, where the classical definition of symmetries is reviewed and the Lie--B\"acklund theorem is established.\par
The second part is opened with Section \ref{sec.characteristics}. First, the method of integration by means of the characteristic field is recalled. Then, we review the classical notion of characteristic and the Cauchy--Kowalewskaya theorem. Finally, in the particular context of scalar $2\Nd$ order PDEs, the latter are recast in the above--introduced geometric framework based on contact manifolds and their prolongations. In  Section \ref{secMAE} we finally see how the peculiar geometry of the Lagrangian Grassmannian, more precisely the presence of the Pl\"ucker embedding, allows to single out a non--trivial and interesting class of scalar $2\Nd$ order PDEs, the \emph{Monge--Amp\`ere equations}, in two independent variables. We show that, for these equations, the family of their characteristics takes a particularly simple form, which is indeed the simplest possible in the entire class of scalar $2\Nd$ order PDEs (a pair of rank--2 sub--distributions of the contact distribution on the underlying contact manifold). The study of such \emph{characteristic variety} allows to find the normal forms for parabolic Monge--Amp\`ere equations.  Section \ref{sec.multidim.MAE} represents the multi--dimensional analogue of Section \ref{secMAE}. We introduce Monge--Amp\`ere equations in $n$ independent variables and we show how to integrate  a Cauchy problem by suitably extending the technique already used in the case of $1\St$ order PDEs. This  is supported by a working example. Following the general guidelines of Section \ref{sec.multidim.MAE}, in Section \ref{secComplessa} we study  Monge--Amp\`ere equations by working on symplectic manifolds instead of contact manifolds (thus obtaining the so--called \emph{symplectic} PDEs). In particular, we employ K\"ahler manifolds and we study the geometry of the corresponding  Monge--Amp\`ere equations, among which there is the celebrated Kantorowich equation.  In the last Section \ref{secVeryClear} we propose yet another characterization of Monge--Amp\`ere equations. As  the underlying contact manifold we chose the projectivised cotangent bundle of the projective space, understood as a homogeneous contact manifold of the Lie group $\PGL$. By using representation--theoretic arguments, we show that the Monge--Amp\`ere equation $\det u_{ij}=0$, together with an its ``exotic partner'', is the \emph{only} $\PGL$--invariant $2\Nd$ order PDEs on that particular ($\PGL$--homogeneous) contact manifold.

\subsection*{Acknowledgements}
These are lecture notes based on the course ``Contact manifolds, Lagrangian Grassmannians and PDEs'', lectured by G. Manno and G. Moreno at the Department of Mathematics, University of Turin, April--May 2016.
The research of all four authors has been partially supported by the project
 ``FIR (Futuro in Ricerca) 2013 -- Geometria delle equazioni differenziali''.
Giovanni Moreno and Gianni Manno have been also partially supported by
the Marie Sk\l odowska--Curie fellowship SEP--210182301 ``GEOGRAL".  The research of Giovanni Moreno has also been partially    founded by the  Polish National Science Centre grant
under the contract number 2016/22/M/ST1/00542.
Gianni Manno, Giovanni Moreno and Katja Sagerschnig are members of G.N.S.A.G.A of I.N.d.A.M. Katja Sagerschnig is an INdAM (Istituto Nazionale di Alta Matematica) research fellow.

\section{Preliminaries}\label{secPreliminaries}
\subsection{Prerequisites}

We assume the reader to be familiar with basic definitions and results from the theory of smooth manifolds. As a  standard reference for the definitions of tangent spaces, vector fields, differential forms and De Rham differential we recommend the book \cite{Lee2012}.

\subsection{Conventions}

Even if the foundations of the theory are universally accepted, there is  still   a slight variation  of terminology throughout the    existing literature. In order to avoid unnecessary confusion,  we make a concise list of  the conventions adopted within this paper.\par
If $F$ is a smooth map between smooth manifolds, $F^*$ denotes its   pull--back (acting on functions, differential forms, tangent covectors, etc.), $F_*$ its   push--forward (acting on vector fields, tangent vectors, etc.) and $dF$ its tangent map.  The $C^\infty(M)$--module of vector fields on the smooth manifold $M$ is denoted by $\X(M)$, and it coincides with the module of sections of the tangent bundle $TM$ f $M$. The $C^\infty(M)$--module of $k$-differential forms on the smooth manifold $M$ is denoted by $\Lambda^k(M)$, and it coincides with the module of sections of the cotangent bundle $T^*M$ of $M$.
The symbol $L_X$ stands for the Lie derivative along the vector field $X$.
If $\D$ is a distribution on a manifold $M$, i.e., a sub--bundle of $TM$, we abuse the notation and write $X\in\D$ to express that the vector field is a section of $\D$ (more rigorously, $X\in\Gamma\D$).\par

Even if we do not need any sophisticated algebraic--geometric tool, we   recall  below the elementary notions which we will make use of.
 If $V$ is a linear vector space, the symbol  $\p V=\p(V)$ denotes the set of all the $1$--dimensional linear subspaces in $V$. In particular, for $V=\R^{n+1}$, we often write
$$
\R\p^n = \{ \ell\mid\ell\textrm{ is a $1$--dimensional linear subspace of }\R^{n+1}\}
$$
instead of $\p(\R^{n+1})$. If $X\subset V$ is a subset of $V$, the symbol   $\Span{X}$ denotes the linear span of $X$ in $V$.\par
Projective spaces and Grassmannian varieties are standard gadgets in algebraic geometry. For any point $\ell\in\p V$, we call \begin{equation}\label{eq.affine.neig.1}
\ell^*\otimes\frac{V}{\ell}
\end{equation}
the \emph{affine neighborhood} centred at $\ell$. Similarly, for a point $L\in\Gr(l,V)$ of the Grassmannian of $l$--dimensional linear subspaces of $V$; we call
\begin{equation}\label{eq.affine.neig.2}
L^*\otimes\frac{V}{L}
\end{equation}
the \emph{affine neighborhood} centred at $L$. That is, we regard affine neighborhoods as spaces of linear maps. This is just a coordinate--free way to express the classical affine neighborhoods of  Grassmannian varieties, see, e.g., \cite[Section 5.4]{Smith2000}.  \par
In particular, if for $V$ we take the tangent space $T_pE$ to a smooth manifold $E$ at the point $p\in E$, the Grassmannian $\Gr(l,T_pE)$ may be thought of as the fibre  at $p$ of a (non--linear) bundle, henceforth denoted by $\Gr(l,TE)$. Similarly we define the bundles $\p TE$ and $\p T^* E$. Observe that $\Gr(\dim E-1,TE)$ identifies with $\p T^*E$.\par
We will also need the ``affine version'' of the  Grassmannian variety, i.e., the collection of all the \emph{affine} subspaces of a certain dimension of a given vector space. For instance, the symbol $\Graff(1,\R^2)$ will denote the set of all the straight lines in $\R^2$ (not necessarily passing through the origin).\par

The symbol $\ltimes$ stands for the semi--direct product.\par

The Einstein convention is used throughout the paper, unless we need to stress the range of the summation index.

\subsection{Distributions, integral submanifolds and the Darboux's theorem}\label{secDistro}

It is not an exaggeration to claim that any geometric approach to non--linear PDEs comes down, in one form or another, to the study of the integral submanifolds of a distribution on a suitable space parametrising independent variables, dependent variables, and their (formal) partial derivatives. This is why we begin by reviewing   the notion of a distribution, its fundamental properties, and the key theorem of Frobenius.\par
Then we prove the Darboux's theorem, which provides the indispensable link between an abstract contact manifold and the above--mentioned ``space of variables and formal (first) derivatives''.
\begin{definition}
Let $M$ be a smooth manifold of dimension $m$. An assignment of subspaces of $T_pM$ at each point $p\in M$ is called a \emph{distribution} (on $M$). A distribution $\D$ is said to be
of \emph{constant dimension} $\dim(\D)$ if $\dim(\D_p)$ is the same for any point $p\in M$. An $n$--dimensional distribution $\D$ is said to be of class $C^\infty$ if for each point $p\in M$ there exists a neighborhood $U$ and $n$ vector fields $X_1,\dots,X_n$ such that
$$
\D_q=\Span{ ({X_1})_q,\dots, ({X_n})_q}\,, \quad \forall\,q\in U\,.
$$
\end{definition}

\begin{warning}
Unless otherwise specified, in what follows, by the term ``distribution'' we mean a distribution of constant dimension of class $C^\infty$.
\end{warning}

\begin{remark}\label{rem.distr.dual}
An $n$--dimensional distribution $\D$ on an $m$--dimensional smooth manifold $M$ can be locally described by $m-n$ independent differential $1$--forms $\theta^1,\dots,\theta^{m-n}$ that annihilate $\D$:
\begin{equation}\label{eq.distr.1.forms}
\D=\ker(\theta^1)\cap\ker(\theta^2)\cap\dots\cap\ker(\theta^{m-n})\,.
\end{equation}
In general, this does not hold globally: for instance, for a codimension--1 distribution $\D$, the set of $1$--forms that annihilate $\D$ is a line bundle over $M$, since if $\alpha$ annihilates $\D$ also $f\alpha$ annihilates $\D$, where $f$ is a nowhere vanishing function on $M$. If such a line bundle is trivial, then one can construct globally a $1$--form annihilating $\D$. In this case, the distribution is called \emph{co--orientable} (see \cite[Section 8.5]{book:925352}).
\end{remark}

\begin{definition}
A system of type $\{\theta^1=\theta^2=\cdots = \theta^{m-n}=0\}$, where $\theta^i\in\Lambda^1(M)$, is called a \emph{system of Pfaffian equations}.
\end{definition}

\begin{warning}
 Unless otherwise specified, we are not concerned with global, i.e., topological, aspects of distributions. In particular, this allows us to \emph{always} regard a distribution as the common kernel of a certain set of differential forms. But the reader should bear in mind that these forms are defined only \emph{locally} in the neighborhood of each point.
 \end{warning}

\begin{definition}
An \emph{integral submanifold} of a distribution $\D$ on $M$ is a submanifold $N$ of $M$ such that $T_pN\subseteq \D_p$ $\forall p\in N$. An integral submanifold is said of \emph{maximal dimension} if $T_pN=\D_p$ $\forall p\in N$.
\end{definition}
In terms of $1$--forms, if a distribution $\D$ on $M$ is given by \eqref{eq.distr.1.forms}, then a submanifold $N\overset{\iota}{\hookrightarrow} M$ is integral  of $\D$ if and only if all the $1$--forms $\theta^i$ vanish on $N$, i.e.,
$$
\iota^*(\theta^1)=0\,,\,\iota^*(\theta^2)=0\,,\dots\,,\iota^*(\theta^{m-n})=0\, .
$$
In this case we have that
\begin{equation}\label{eq.dtheta.bla}
(d\theta^i)|_N=d(\theta^i|_N)=0\,,\quad i=1,\dots,m-n.
\end{equation}

\begin{definition}\label{defPrelimSymm}
Let $\mathcal{D}$ be a distribution on $M$. A diffeomorphism  $F:M\to M$ (resp., a vector field $X\in\X(M)$) is called a \emph{symmetry} (resp., \emph{infinitesimal symmetry}) of $\mathcal{D}$ if and only if $F_\ast (\mathcal{D})=\mathcal{D}$ (resp., $[X,\mathcal{D}]\subset \mathcal{D}$) or, equivalently, $F^*(\theta^k)=h^k_i\theta^i$ (resp.,  $L_X(\theta^k)=h^k_i\theta^i$), where $h^k_i\in C^\infty(M)$ and $\theta^k$ are defined by \eqref{eq.distr.1.forms}. An infinitesimal symmetry $X$ of $\D$ is called \emph{internal} or \emph{characteristic} if it belongs to $\D$.
\end{definition}
\begin{remark}
The name ``characteristic'' referred to an internal symmetry will be clear later on, starting from Section \ref{sec.characteristics}, when we shall study the characteristics (in the sense of Cauchy--Kowalewskaya) of a PDE. Basically, one can use such a kind of vector field to ``enlarge'' a Cauchy datum to a fully--dimensional solution of a given PDE.
\end{remark}

\begin{example}
Let us consider, in $\mathbb{R}^3=\{(x,y,z)\}$, the system of $1$--forms
\begin{equation}\label{eq.sys.ex}
\left\{
\begin{array}{l}
\theta^1=xdx+ydy\,,
\\
\theta^2=xydy+dz\, .
\end{array}
\right.
\end{equation}
System \eqref{eq.sys.ex} defines a $1$--dimensional distribution $\mathcal{D}$ (via formula \eqref{eq.distr.1.forms}) on $\mathbb{R}^3\smallsetminus\ell$, where $\ell=\{x=0\,,y=0\}$. In fact, $\mathcal{D}=\Span{ -y/x \partial_x + \partial_y -xy\partial_z}$. On $\ell$ the kernel of the system \eqref{eq.sys.ex} is $2$--dimensional. The integral curves of the form $(x,y(x),z(x))$ are
$$
\left(x\,,\,\mp\sqrt{-x^2+k_2}\,,\, \frac{1}{3}x^3+k_2\right)\, .
$$
\end{example}
\begin{example}
Let us consider, in $\mathbb{R}^3$, the $1$--form $dz-ydx=0$. Even if it describes a $2$--dimensional distribution on $\mathbb{R}^3$, its integral manifolds cannot be $2$--dimensional. Indeed, if we look for integral surfaces that are the graphs of a function $z=z(x,y)$, then condition $dz(x,y)-ydx=0$ leads to the system
$$
\frac{\partial z}{\partial x}=y\,,\quad \frac{\partial z}{\partial y}=0\, ,
$$
which is not compatible.
\end{example}
The above examples motivate the following definition.
\begin{definition}
A distribution $\D$ on $M$ is said to be \emph{completely integrable} if through  any $p\in M$ there passes an integral submanifold of maximal dimension. Otherwise, the distribution is said to be \emph{non--integrable}. If $\D$ is a codimension--1 distribution, then $\D$ is said to be \emph{completely non--integrable} if its maximal integral submanifolds are of the smallest possible dimension.
\end{definition}
We shall explain better the complete non--integrability condition in Section \ref{sec.non.int}.

\begin{definition}
Let $\D$ be a distribution. Let $[\D,\D]:=\{[X,Y]\,,X,Y\in\D\}$. Let us define $\D':=\D+[\D,\D]$. The distribution $\D'$ is called the \emph{derived} distribution of $\D$.
\end{definition}

\begin{definition}
A distribution $\D$ such that $[X,Y]\in\D$ for any vector fields $X,Y\in\D$ is called \emph{involutive}. In other words, $\D$ is involutive if and only if $\D'=\D$.
\end{definition}

\begin{theorem}[Frobenius]\label{th.Frobenius}
A distribution $\D$ is completely integrable if and only if it is {involutive}.
\end{theorem}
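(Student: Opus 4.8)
The plan is to prove the two implications separately, the easy one first. For the ``only if'' direction, suppose $\D$ is completely integrable and let $X,Y\in\D$. Fix $p\in M$ and let $N$ be an integral submanifold of maximal dimension through $p$, so $T_qN=\D_q$ for all $q\in N$. Since $X$ and $Y$ are tangent to $N$, they restrict to vector fields on $N$, and hence so does their bracket $[X,Y]$; in particular $[X,Y]_p\in T_pN=\D_p$. As $p$ was arbitrary, $[X,Y]\in\D$, so $\D$ is involutive. (Strictly, one should check that $X|_N$ is well defined as a section of $TN$, which follows from $X_q\in\D_q=T_qN$ for $q\in N$.)

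For the ``if'' direction, assume $\D$ is involutive of dimension $n$ on the $m$--manifold $M$; the goal is to produce, through each point, an integral submanifold of dimension $n$. The argument I would use is the standard inductive ``straightening'' proof. Fix $p\in M$ and a local frame $X_1,\dots,X_n$ of $\D$ near $p$. Pick one of them, say $X_1$, which is nonvanishing; by the flow-box theorem choose coordinates $(t,y^1,\dots,y^{m-1})$ in which $X_1=\partial/\partial t$. Replacing each $X_a$ for $a\ge 2$ by $X_a - (X_a t)X_1$, one obtains a new frame $X_1,\widetilde X_2,\dots,\widetilde X_n$ of $\D$ in which $\widetilde X_2,\dots,\widetilde X_n$ have no $\partial/\partial t$ component; these still span $\D$ together with $X_1$. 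Involutivity is preserved. Now the key computation: for $a,b\ge 2$, the bracket $[\widetilde X_a,\widetilde X_b]$ has no $\partial/\partial t$ component either (a bracket of vector fields each killing the coordinate function $t$ again kills $t$), yet by involutivity it lies in $\D=\Span{X_1,\widetilde X_2,\dots,\widetilde X_n}$; comparing $\partial/\partial t$ components forces the $X_1$--coefficient to vanish, so $[\widetilde X_a,\widetilde X_b]\in\Span{\widetilde X_2,\dots,\widetilde X_n}$. Hence $\widetilde\D:=\Span{\widetilde X_2,\dots,\widetilde X_n}$ is an involutive $(n-1)$--dimensional distribution which, after restricting to a slice $\{t=\text{const}\}$, lives on an $(m-1)$--manifold. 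By the inductive hypothesis it has an $(n-1)$--dimensional integral submanifold $N'$ through the image of $p$; then $N'\times(\text{interval in }t)$, i.e. the union of the $X_1$--flow lines issuing from $N'$, is an $n$--dimensional integral submanifold of $\D$ through $p$ (it is tangent to $X_1$ by construction and tangent to $\widetilde X_2,\dots,\widetilde X_n$ because these are $t$--independent and tangent to $N'$ on each slice). The base case $n=1$ is just the existence of integral curves of a nonvanishing vector field.

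I would package the same conclusion in the dual language used elsewhere in the paper, since it is frequently more convenient: writing $\D=\ker\theta^1\cap\dots\cap\ker\theta^{m-n}$ locally, involutivity is equivalent to $d\theta^i\equiv 0 \bmod (\theta^1,\dots,\theta^{m-n})$, i.e. $d\theta^i=\sum_j \omega^i_j\wedge\theta^j$ for some $1$--forms $\omega^i_j$; then the coordinates produced above are exactly coordinates in which $\D=\ker dx^{n+1}\cap\dots\cap\ker dx^m$, so the integral submanifolds are the slices $\{x^{n+1}=c^{n+1},\dots,x^m=c^m\}$. The equivalence between the bracket formulation and the $d\theta$ formulation is the identity $d\theta(X,Y)=X(\theta(Y))-Y(\theta(X))-\theta([X,Y])$, which for $X,Y\in\D$ reduces to $d\theta(X,Y)=-\theta([X,Y])$.

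The main obstacle is the inductive step: one must be careful that after the two modifications (flow-box coordinates for $X_1$, then subtracting off the $\partial/\partial t$ components of the other frame fields) the distribution $\widetilde\D$ genuinely descends to the $(m-1)$--dimensional slice as a distribution of \emph{constant} dimension $n-1$ and remains of class $C^\infty$, and that involutivity is inherited. Everything else is bookkeeping with the flow-box theorem and the Leibniz rule for brackets. I would state the result locally (Frobenius is inherently a local statement) and remark that the global foliation is obtained by patching, referring to \cite{Lee2012} for the details I skip.
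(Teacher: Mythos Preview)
Your necessary direction is the same idea as the paper's (tangency of brackets to an integral leaf), stated more intrinsically; the paper carries it out in local coordinates adapted to $N$, but the content is identical.

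For the sufficient direction you take a different route from the paper. The paper only sketches: first produce $n$ \emph{commuting} vector fields spanning $\D$, then compose their flows $\varphi^1_{t_1}\circ\cdots\circ\varphi^n_{t_n}(p)$ to get the leaf. It hides all the work in the sentence ``it is possible to find $n$ commuting vector fields spanning $\D$''. Your inductive flow--box argument is the standard way to actually \emph{prove} that hidden step (or, equivalently, to bypass it), so your version is more self-contained. Both approaches ultimately rest on the same mechanism: straighten one field and use involutivity to control how the rest of the frame behaves along its flow.

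There is, however, one incorrect sentence in your argument. You write that the resulting submanifold is tangent to $\widetilde X_2,\dots,\widetilde X_n$ ``because these are $t$--independent''. They are not: subtracting off the $\partial_t$ component makes $\widetilde X_a(t)=0$, but the remaining coefficients may still depend on $t$. What you need---and what follows by exactly the same computation you used for $[\widetilde X_a,\widetilde X_b]$---is that $[X_1,\widetilde X_a]\in\widetilde\D$: indeed $[X_1,\widetilde X_a](t)=X_1(0)-\widetilde X_a(1)=0$, so the $X_1$--coefficient in the involutivity expansion vanishes. This says the $X_1$--flow preserves the distribution $\widetilde\D$ (not the individual fields), and that is precisely what guarantees that flowing $N'$ by $X_1$ yields an integral submanifold of $\D$. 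Once you replace ``$t$--independent'' by ``$\widetilde\D$ is $X_1$--flow invariant'', the inductive step is complete.
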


\begin{proof}[Proof of the necessary part]
Let $\dim\D=n$. Let $p\in M$. By hypothesis there exists an $n$--dimensional integral submanifold $N$ of $\D$ passing through  $p$. Let us consider a system of coordinates $(x^1,\dots,x^m)$ in a neighborhood $U$ of $p$ such that, in this  neighborhood,
\begin{equation}\label{eq.p.000}
p=(0,\dots,0)
\end{equation}
and $N$ is described by
$$
N=\{x^{n+1}=\cdots=x^m=0\}\,.
$$
Thus, $\partial_{x^1},\dots,\partial_{x^n}$ are tangent to $N$ and so, by hypothesis
\begin{equation}\label{eq.D.loc.expr}
\D=\Span{ \partial_{x^1},\dots,\partial_{x^n} }\,.
\end{equation}
Let $X,Y\in\D$. We have that
$$
X=\sum_{i=1}^m a^i\partial_{x^i}\,, \quad Y=\sum_{i=1}^m b^i\partial_{x^i}\, ,
$$
where $a^i,b^i\in C^\infty(U)$ and
\begin{equation}\label{eq.a0.b0}
\left\{
\begin{array}{l}
a^{n+1}(x^1,\dots,x^r,0,\dots,0)=\cdots=a^{m}(x^1,\dots,x^r,0,\dots,0)=0\, ,
\\
\\
b^{n+1}(x^1,\dots,x^r,0,\dots,0)=\cdots=b^{m}(x^1,\dots,x^r,0,\dots,0)=0\,,
\end{array}
\right.
\end{equation}
since $X$ and $Y$ are in \eqref{eq.D.loc.expr}. Now
$$
[X,Y]_p=\sum_{i,j=1}^m\left(a^i(p)\frac{\partial b^j}{\partial x^i}(p) - b^i(p)\frac{\partial a^j}{\partial x^i}(p)\right)\partial_{x^j}|_p
$$
and, taking into account \eqref{eq.p.000} and \eqref{eq.a0.b0}, we have that $[X,Y]_p\in \D_p$.
\end{proof}
\begin{proof}[Sketch of the proof of the sufficient part]
As a first step, in a neighborhood $U$ of a point $p\in M$, it is possible to find $n$ commuting vector fields $X_i$ spanning $\D$. Let $\{\varphi^i_t\}$ be the local $1$--parametric group associated to the vector field $X_i$. Since $[X_i,X_j]=0$ $\forall\,i,j$, we have that $\varphi^i_t\circ\varphi^j_s=\varphi^j_s\circ\varphi^i_t$.
Then the   map
$$
\varphi:(t_1,\dots,t_n)\in\mathbb{R}^n\mapsto \varphi^1_{t_1}(\varphi^1_{t_2}(\dots(\varphi^r_{t_n}(p))\dots))\in U
$$
is   well defined and its image   is an $n$--dimensional integral submanifold of $\D$.\par
A complete proof of the sufficient part can be found, e.g., in  \cite[Section 2.3(c)]{book:8247}.
\end{proof}

\begin{corollary}
Let $\mathcal{D}$ be an $n$--dimensional integrable distribution on an $m$--dimensional manifold $M$. Let $p\in M$. Then there exists a system of coordinates $(x^1,\dots,x^m)$ of $M$ about $p$ such that $\mathcal{D}=\Span{ \partial_{x^1},\dots,\partial_{x^n}  }$. Dually, there exist functions $f^1,\dots,f^{m-n}\in C^\infty(M)$ such that, locally, $\D=\ker(df^1)\cap\ker(df^2)\cap\dots\cap\ker(df^{m-n})$.
\end{corollary}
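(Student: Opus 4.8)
The plan is to upgrade the construction sketched in the proof of the sufficient part of Theorem~\ref{th.Frobenius} to a genuine coordinate chart in which $\D$ becomes spanned by the first $n$ coordinate vector fields. First I would produce, near $p$, a \emph{commuting} local frame $X_1,\dots,X_n$ of $\D$; then I would straighten it out using the pairwise--commuting flows $\varphi^i_t$ of the $X_i$ together with $m-n$ transversal directions; finally the dual statement drops out by declaring the last $m-n$ coordinate functions to be the $f^j$.

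\emph{Commuting frame.} Starting from any local frame $Y_1,\dots,Y_n$ of $\D$ near $p$, choose coordinates $(z^1,\dots,z^m)$ centred at $p$ such that the projection $\pi\colon(z^1,\dots,z^m)\mapsto(z^1,\dots,z^n)$ maps $\D_p$ isomorphically onto $\R^n$; by continuity $\pi_\ast$ stays an isomorphism on $\D_q$ for $q$ in a smaller neighborhood. There is then a unique frame $X_1,\dots,X_n$ of $\D$ with $\pi_\ast X_i=\partial_{z^i}$. Since $\pi_\ast[X_i,X_j]=[\partial_{z^i},\partial_{z^j}]=0$, while $[X_i,X_j]\in\D$ by involutivity of $\D$ (Theorem~\ref{th.Frobenius}), injectivity of $\pi_\ast|_\D$ forces $[X_i,X_j]=0$ for all $i,j$. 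This is the step that really uses the hypothesis; everything after it is formal bookkeeping, essentially the content of the sketch of Theorem~\ref{th.Frobenius}.

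\emph{The chart.} Let $\varphi^i_t$ be the local flow of $X_i$; commutativity gives $\varphi^i_t\circ\varphi^j_s=\varphi^j_s\circ\varphi^i_t$. Shrinking the neighborhood, let $S=\{z^1=\dots=z^n=0\}$ be the transversal slice through $p$ and define
\[
\psi\colon(t^1,\dots,t^n,t^{n+1},\dots,t^m)\longmapsto
\varphi^1_{t^1}\circ\dots\circ\varphi^n_{t^n}\bigl(0,\dots,0,t^{n+1},\dots,t^m\bigr).
\]
At the origin, $d\psi$ sends $\partial_{t^i}$ to $(X_i)_p$ for $i\le n$ and to $\partial_{z^i}|_p$ for $i>n$; transversality of $\D_p$ to $T_pS$ makes this a linear isomorphism, so $\psi$ is a local diffeomorphism and $x:=\psi^{-1}$ is a chart about $p$. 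Because the $\varphi^i$ commute, translating $t^i$ ($i\le n$) in the source amounts to flowing along $X_i$ in $M$, so $\psi_\ast\partial_{t^i}=X_i$, i.e. $\partial_{x^i}=X_i$ for $i=1,\dots,n$. Hence $\D=\Span{X_1,\dots,X_n}=\Span{\partial_{x^1},\dots,\partial_{x^n}}$, which is the first assertion. For the dual statement, set $f^j:=x^{n+j}$, $j=1,\dots,m-n$: these are smooth, their differentials $df^j=dx^{n+j}$ are pointwise independent, and $\ker(dx^{n+1})\cap\dots\cap\ker(dx^m)=\Span{\partial_{x^1},\dots,\partial_{x^n}}=\D$, as required (locally, on the chart domain).

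The main obstacle is the construction of the commuting frame — equivalently, extracting the vanishing of the brackets $[X_i,X_j]$ from involutivity via the transversal projection $\pi$. Once that is in hand, verifying that $\psi$ is a diffeomorphism and that $\psi_\ast\partial_{t^i}=X_i$ is the routine manipulation of commuting one–parameter groups already alluded to in the sketch above, and I would not spell it out in full detail.
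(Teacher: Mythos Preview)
Your argument is correct and is essentially the standard ``flow--box'' upgrade of the sketch given for the sufficient part of Theorem~\ref{th.Frobenius}. In the paper the corollary is stated without proof, being regarded as an immediate consequence of Frobenius; what you have done is to supply the details the paper omits, namely the explicit construction of the commuting frame via the transversal projection $\pi$ and the extension of the map $\varphi$ from the sketch to a genuine chart by adjoining the transversal slice $S$. The dual statement is handled exactly as one would expect.
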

We underline that, if $\D=\{\theta^i=0\}$ is a completely integrable distribution, then, in view of \eqref{eq.dtheta.bla} (with $N$ a (maximal) integral manifold), we have that
\begin{equation}\label{eq.cond.dual.int}
d\theta^i=\lambda^i_j\wedge\theta^j\,, \quad \lambda^i_j\in\Lambda^1(M)\, .
\end{equation}
The Frobenius Theorem \ref{th.Frobenius} says that condition \eqref{eq.cond.dual.int} is also sufficient for the distribution $\D$ to be completely integrable.

For future purposes we need the following lemma, whose proof is straightforward.
\begin{lemma}\label{Lemma_Calcolo_Derivato}
Let $\mathcal{P}$ be a $k$-dimensional
distribution on a smooth manifold $M$ and let $I_{\mathcal{P}}$ be the
corresponding Pfaffian system, i.e., the set of $1$--form that annihilates $\mathcal{P}$. Then the Pfaffian system associated with the
derived distribution $\mathcal{P}^{\prime}$ is
\[
I_{\mathcal{P}}^{\prime}=\{\rho\in I_{\mathcal{P}}\text{ s.t. } L_X(\rho)\in
I_{\mathcal{P}} \,\, \forall X\in\mathcal{P}\}.
\]
\end{lemma}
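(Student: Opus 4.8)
The plan is to unwind both inclusions by hand, using the Cartan formula $L_X\rho = \iota_X d\rho + d(\iota_X\rho)$ together with the characterisation of the derived distribution $\mathcal{P}'=\mathcal{P}+[\mathcal{P},\mathcal{P}]$ through its annihilator. Recall that for any $1$--form $\rho$ and any two vector fields $X,Y$ one has the identity $d\rho(X,Y) = X(\rho(Y)) - Y(\rho(X)) - \rho([X,Y])$. This is the only nontrivial computational input, and it is standard.

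First I would prove the inclusion $I_{\mathcal{P}}' \supseteq \{\rho\in I_{\mathcal{P}} : L_X\rho\in I_{\mathcal{P}}\ \forall X\in\mathcal{P}\}$, which is really the statement that the right--hand set annihilates $\mathcal{P}'$. So fix $\rho\in I_{\mathcal{P}}$ with $L_X\rho\in I_{\mathcal{P}}$ for all $X\in\mathcal{P}$. Certainly $\rho$ kills $\mathcal{P}$; it remains to check $\rho([X,Y])=0$ for $X,Y\in\mathcal{P}$. From the displayed identity for $d\rho$ and the fact that $\rho(X)=\rho(Y)=0$ (as $\rho\in I_{\mathcal{P}}$) we get $\rho([X,Y]) = -d\rho(X,Y)$. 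On the other hand $L_X\rho\in I_{\mathcal{P}}$ means $(L_X\rho)(Y)=0$; but $(L_X\rho)(Y) = X(\rho(Y)) - \rho([X,Y]) = -\rho([X,Y])$ since $\rho(Y)=0$. Hence $\rho([X,Y])=0$, so $\rho$ annihilates $\mathcal{P}+[\mathcal{P},\mathcal{P}] = \mathcal{P}'$, i.e. $\rho\in I_{\mathcal{P}'} = I_{\mathcal{P}}'$.

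Conversely I would show $I_{\mathcal{P}}'\subseteq\{\rho\in I_{\mathcal{P}}: L_X\rho\in I_{\mathcal{P}}\ \forall X\in\mathcal{P}\}$. Take $\rho\in I_{\mathcal{P}}'$, so $\rho$ vanishes on $\mathcal{P}'$, in particular on $\mathcal{P}$ and on every bracket $[X,Y]$ with $X,Y\in\mathcal{P}$. Fix $X\in\mathcal{P}$; I must see that $L_X\rho$ kills $\mathcal{P}$, i.e. $(L_X\rho)(Y)=0$ for all $Y\in\mathcal{P}$. As above, $(L_X\rho)(Y) = X(\rho(Y)) - \rho([X,Y])$. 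The first term is $X(0)=0$ because $\rho(Y)=0$, and the second is $0$ because $[X,Y]\in\mathcal{P}'$ and $\rho\in I_{\mathcal{P}'}$. Hence $L_X\rho\in I_{\mathcal{P}}$, which is exactly what the right--hand set demands, together with the obvious $\rho\in I_{\mathcal{P}}$.

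There is essentially no obstacle here; the only point to be careful about is that the pointwise/algebraic definition $[\mathcal{P},\mathcal{P}]=\{[X,Y]: X,Y\in\mathcal{P}\}$ and the submodule it generates have the same annihilator, which follows from $C^\infty(M)$--bilinearity of evaluation of a fixed $1$--form; and that one may always extend a tangent vector in $\mathcal{P}_p$ to a local section of $\mathcal{P}$, so that the computations with vector fields detect the fibrewise statement. Both are routine, which is why the lemma is stated with ``whose proof is straightforward''.
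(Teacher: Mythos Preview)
Your proof is correct. The paper does not give a proof of this lemma at all---it is stated with ``whose proof is straightforward'' and left to the reader---so there is nothing to compare against; your argument, via the identity $(L_X\rho)(Y)=X(\rho(Y))-\rho([X,Y])$ applied in both directions, is precisely the routine verification the authors have in mind.
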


\subsection{Codimension--1 distributions, dimension of its integral manifolds and complete non--integrability}\label{sec.non.int}
The simplest Pfaffian systems are those generated by a \emph{single} $1$--form, thus corresponding  to codimension--1 distributions. They deserve special attention, since the integrability properties of the distribution are captured by a unique $1$--form.\par
Let $\D=\{\theta=0\}$ be a codimension--1 distribution on $M$. In view of \eqref{eq.dtheta.bla}, where $N$ is an integral manifold of $\D$, we have that $d\theta(X,Y)=0$, $\forall\,X,Y\in T_pN$, implying that $T_pN$ is an isotropic space for $(d\theta)|_{\D_p}$. The dimension of $T_pN$ cannot be greater than $m-k-1$, where $m$ is the dimension of $M$ and $2k$ is the rank of $(d\theta)|_{\D_p}$. Thus, we have an upper bound for the dimension of the integral manifolds of $\D$ accordingly to the rank of $(d\theta)|_{\D_p}$. The latter measures, in a sense, the ``integrability'' of distributions: the smaller this rank, the bigger the dimension of maximal integral submanifolds. Thus, the dimension $k$ of integral submanifolds of a distribution $\D$ of codimension $1$ on an $m$--dimensional manifold $M$ always fulfils
\begin{equation*}
\frac{m}{2}-\frac{\delta(m)}{2}\leq k \leq m-1\,,
\end{equation*}
where $\delta(m)$ is equal to $0$ if $m$ is even and is equal to $1$ if $m$ is odd. In particular, the dimension of the maximal integral submanifolds of a $2m$--dimensional distribution $\D$ on a $(2m+1)$--dimensional manifold $M$ can range from $m$ ($\D$ is a completely non--integrable distribution) to $2m$ ($\D$ is a completely integrable distribution). The distribution $\D$ is completely non--integrable if and only if
\begin{equation}\label{eq.dual.codim.1.non.int}
\theta\wedge (d\theta)^m\neq 0\, ,
\end{equation}
where $(d\theta)^m$ is the $m$--fold wedge product of $d\theta$ with itself. In fact, the rank of $(d\theta)|_{\D}$ is equal to $2k$ if and only if $((d\theta)|_{\D})^k\neq 0$ but $((d\theta)|_{\D})^{k+1}= 0$. In the case $k=m$, $((d\theta)|_{\D})^m\neq 0$ implies \eqref{eq.dual.codim.1.non.int}, being $\D=\ker(\theta)$.

\begin{remark}\label{re.no.internal}
The above results imply that a completely non--integrable distribution $\D$ cannot have infinitesimal internal symmetries. In fact, towards a contradiction, if $X\in\D$ is a symmetry of $\D$, then $f\theta =L_X(\theta)=X\lrcorner d\theta$ for some $f\in C^\infty(M)$ (for the first equality see Definition \ref{defPrelimSymm},  whereas for the second one recall that $X\in\D$ and the Cartan formula $L_X(\alpha)=X\lrcorner d\alpha + d (X\lrcorner\alpha)$ of the Lie derivative), so that $\theta\wedge X\lrcorner d\theta\wedge (d\theta)^{m-1}=0$, thus contradicting \eqref{eq.dual.codim.1.non.int}.
\end{remark}

\section{Contact and symplectic manifolds}\label{secContSymp}
What makes the first derivatives  of an unknown function $u$ special among all--order derivatives is that they come exactly in the \emph{same number} as the independent variables. This statement of the obvious has indeed deeper ramifications, which were extensively exploited in geometric mechanics. Such a ``duality'' between independent variables and (formal) first derivatives (of a scalar function) can be captured via a so--called \emph{symplectic space}, an even--dimensional linear space equipped with an operator defining the duality. A symplectic space is  but the ``infinitesimal analog'' of a symplectic manifold, which is introduced below.

\subsection{Symplectic manifolds}
Symplectic manifolds are ubiquitous in geometric mechanics, as they naturally
parametrise positions and momenta. In the present context, a symplectic manifold is used as a mere tool  to facilitate the inception of Darboux coordinates into a contact manifold.
\begin{definition}\label{def.sympl.man}
A \emph{symplectic manifold} is a pair $(W,\omega)$ where $W$ is an $2n$--dimensional manifold and $\omega$ is a closed non--degenerate $2$--form on $W$, i.e.,
$$
d\omega=0\quad\text{and}\quad \quad \forall\,\xi\in T_pW\,\,\exists\,\eta\in T_pW\,\,|\,\,\omega(\xi,\eta)\neq0\,.
$$
A diffeomorphism which preserves $\omega$ is called a \emph{symplectic transformation} (a.k.a. \emph{symplectomorphism}). A \emph{Lagrangian submanifold} of $(W,\omega)$ is an $n$-dimensional submanifold of $W$ on which $\omega$ vanishes.
\end{definition}

\begin{remark}\label{rem.conf.symp}
By taking into account the notation of Definition \ref{def.sympl.man}, also $(W,f\omega)$ is a symplectic manifold, with $f\in C^\infty(W)$ a nowhere vanishing function, with the same Lagrangian submanifolds. Thus, the property to be Lagrangian pertains more to a \emph{conformal} symplectic manifold rather than to a symplectic manifold itself. A similar reasoning, of course, applies also to symplectic vector spaces.
\end{remark}

\begin{example}[The cotangent bundle]
Let $W:=T^*N$, for some smooth manifold $N$. The cotangent bundle $T^*N$ plays a key role as the phase space in geometric mechanics, and it possesses a canonical 1--form $\beta$, called \emph{Liouville} $1$--form. In the standard coordinates $(x^i,u_i)$ of $T^*N$,
\begin{equation}\label{eq.Liuoville}
\beta=u_idx^i\, ,
\end{equation}
($u_i$ are the momenta canonically conjugated to $x^i$). The   differential  $\omega:=d\beta$ equips  $W$ with the structure of a symplectic manifold. Let us define $\beta$ in a coordinate--independent way.
The 1--form $\beta\in\Lambda^1(N)$ can be defined by setting
\begin{equation}\label{eq.taut.cotangent}
\beta_\alpha(\xi):=\alpha({\pi_N}_*(\xi))
\end{equation}
for any covector $\alpha\in T^*N$, where $\pi_N:T^*N\longrightarrow N$ is the cotangent bundle.
The reader may immediately verify that the so--defined $\beta$ coincides, in local coordinates, with \eqref{eq.Liuoville}.
\end{example}
Next Theorem \ref{th.Darboux.sympl} says that all $2n$--dimensional symplectic manifolds are locally equivalent to the cotangent bundle $T^*\mathbb{R}^n$.
\begin{theorem}[Darboux]\label{th.Darboux.sympl}
Let $(W,\omega)$ be a symplectic manifold. Then for any point $p\in W$ there exists a neighborhood $U\subset W$ of $p$ where one can choose coordinates $(x^i,u_i)$ such that
\begin{equation}\label{eq.appoggio.xipi}
\omega|_U=\sum_{i=1}^n dx^i\wedge du_i\,.
\end{equation}
\end{theorem}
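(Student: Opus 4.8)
The plan is to prove the statement by induction on the half--dimension $n$, building the Darboux coordinates one conjugate pair $(x^i,u_i)$ at a time; I will also indicate at the end the alternative route via Moser's homotopy trick. First I would record the standard consequences of non--degeneracy: the bundle map $TW\to T^*W$, $X\mapsto X\lrcorner\omega$, is an isomorphism, so each $f\in C^\infty(W)$ has a \emph{Hamiltonian field} $X_f$ characterised by $X_f\lrcorner\omega=df$, and one sets $\{f,g\}:=\omega(X_f,X_g)$, which equals $X_f(g)$ up to a fixed sign. Two elementary facts will be used repeatedly: $[X_f,X_g]=\pm X_{\{f,g\}}$, so that $X_f$ and $X_g$ commute whenever $\{f,g\}$ is locally constant; and the level set $\{f=c\}$ has, at each of its points, tangent space $\ker df=X_f^{\perp_\omega}$, the $\omega$--orthogonal complement of $X_f$.

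For the inductive step, fix $p\in W$ and choose $u_1\in C^\infty(W)$ with $(du_1)_p\ne 0$; then $X_{u_1}$ is nowhere vanishing near $p$, and by the flow--box theorem we may straighten it to a coordinate field, thereby obtaining a function $x^1$ with $\{u_1,x^1\}=X_{u_1}(x^1)=1$ near $p$. Set $X:=X_{x^1}$ and $Y:=X_{u_1}$; these commute by the first fact above, and since $\omega(X,Y)=\{x^1,u_1\}=\pm1\ne0$ they are linearly independent and span a symplectic $2$--plane at every point. By the second fact, the submanifold $S:=\{x^1=x^1(p),\ u_1=u_1(p)\}$ has tangent space $X^{\perp_\omega}\cap Y^{\perp_\omega}=\Span{X,Y}^{\perp_\omega}$ along $S$, and this is a symplectic subspace of dimension $2n-2$ with $T_qW=\Span{X,Y}\oplus T_qS$. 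Hence $\omega|_S$ is a symplectic form on the $(2n-2)$--manifold $S$, and the inductive hypothesis provides Darboux coordinates $x^2,\dots,x^n,u_2,\dots,u_n$ for $\omega|_S$ near $p$. I would then extend these to a neighbourhood of $p$ in $W$ by declaring them constant along the (commuting, transverse to $S$) flows of $X$ and $Y$; since $x^1$ and $u_1$ are, up to additive constants, the flow parameters of $Y$ and $-X$, one checks that $(x^1,u_1,x^2,u_2,\dots,x^n,u_n)$ is a coordinate system near $p$.

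It then remains to verify $\omega=\sum_{i=1}^n dx^i\wedge du_i$, and since a symplectic form is determined by the Poisson brackets of a coordinate system it suffices to show $\{x^i,x^j\}=\{u_i,u_j\}=0$ and $\{x^i,u_j\}=\pm\delta^i_j$ near $p$, with a single overall sign. The relations involving the index $1$ are immediate from the construction together with the $X$-- and $Y$--invariance of $x^j,u_j$ for $j\ge2$; and for $i,j\ge2$ that same invariance shows, through the Jacobi identity, that each of $\{x^i,x^j\}$, $\{u_i,u_j\}$, $\{x^i,u_j\}$ is again $X$-- and $Y$--invariant, hence determined by its restriction to $S$, where it takes the required value by the inductive hypothesis. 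I expect the main obstacle to lie precisely in this bookkeeping --- propagating the bracket identities off $S$ and keeping the sign conventions coherent --- rather than in any single deep idea; the only genuinely analytic ingredient is the local straightening of $X_{u_1}$, which is standard.

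As an alternative, one may first arrange $\omega_p=\sum dx^i\wedge du_i$ by a linear change of coordinates (every symplectic form on a vector space admits a standard basis) and then apply Moser's trick to the path $\omega_t:=\omega_0+t(\omega_1-\omega_0)$, $t\in[0,1]$, joining $\omega_0:=\omega$ to the constant--coefficient model $\omega_1$. On a sufficiently small neighbourhood of $p$ all $\omega_t$ are non--degenerate; writing $\omega_1-\omega_0=d\lambda$ with $\lambda_p=0$ (Poincar\'e lemma) and defining $X_t$ by $X_t\lrcorner\omega_t=-\lambda$, the associated flow $\phi_t$ fixes $p$ and satisfies $\frac{d}{dt}(\phi_t^*\omega_t)=\phi_t^*\left(L_{X_t}\omega_t+\dot\omega_t\right)=\phi_t^*(-d\lambda+d\lambda)=0$, whence $\phi_1^*\omega_1=\omega$ and the functions $x^i\circ\phi_1$, $u_i\circ\phi_1$ are the desired Darboux coordinates; here the only delicate point is the uniformity of the neighbourhood on which the homotopy stays non--degenerate and the flow exists up to time $1$.
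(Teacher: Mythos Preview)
Your proposal is correct. The paper does not actually prove this theorem: it simply cites \cite[Section 43B]{book:693821} (Arnold's \emph{Mathematical Methods of Classical Mechanics}) and moves on. Your inductive argument---building one conjugate pair $(x^1,u_1)$ via the flow--box theorem, restricting to the coisotropic slice $S=\{x^1=\mathrm{const},\,u_1=\mathrm{const}\}$, and propagating the inductive Darboux chart off $S$ along the commuting Hamiltonian flows---is essentially the proof in that reference, and your verification of the Poisson brackets via Jacobi is the standard way to close the induction. The Moser alternative you sketch is equally valid.

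One point worth emphasising, since the paper relies on it: your inductive proof makes manifest that the first coordinate $u_1$ can be chosen to be \emph{any} function with non--vanishing differential at $p$. This is exactly the content of the paper's Remark~\ref{rem.choose.arb.coord}, and it is used in an essential way in the proof of the contact Darboux theorem (Theorem~\ref{th.Darboux.contact}), where one needs to choose one symplectic coordinate to be the defining function of a hypersurface. The Moser argument, by contrast, produces Darboux coordinates as pull--backs of the linear ones under an isotopy and does not directly exhibit this freedom; so of your two routes, the inductive one is the more useful for the paper's purposes.
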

\begin{proof}
 See, e.g, \cite[Section 43B]{book:693821}.
\end{proof}
\begin{remark}\label{rem.choose.arb.coord}
We omitted the proof of Theorem \ref{th.Darboux.sympl} as we prefer to focus on a similar theorem for contact manifolds (see Theorem \ref{th.Darboux.contact} below). Nevertheless, it is important to stress that one of the coordinates functions $x^i,u_i$ of \eqref{eq.appoggio.xipi} can be chosen arbitrarily.
\end{remark}

\subsection{Contact manifolds}\label{sec.contact.manifolds.2}

We introduce now the main actor of this paper. A contact manifold is essentially defined as a very special Pfaffian system, generated by a unique ``generic'' $1$--form, up to a scalar factor. Then, the Darboux's Theorem \ref{th.Darboux.contact} below shows that such an abstract definition corresponds locally to a space with coordinates $x^1,\ldots, x^n,u,u_1,\ldots,u_n$, that is, independent variables, one dependent variable, and (formal) derivatives of the latter with respect to the former ones.

\begin{definition}\label{defContMan}
A \emph{contact manifold} is a pair $(M,\CC)$ where $M$ is an odd--dimensional manifold and $\CC$ is a completely non--integrable codimension--1 distribution on $M$. A diffeomorphism of $M$ which preserves $\mathcal{C}$ is called a \emph{contact transformation} (a.k.a. \emph{contactomorphism}). Locally, $\CC$ is the kernel of a $1$--form $\theta$ (defined up to a conformal factor), that we call a \emph{contact form}.
\end{definition}

\subsubsection{The Darboux's theorem for contact manifolds}

In this section we prove the Darboux's theorem for contact manifolds by ``simplectifying'' them and then by using Theorem \ref{th.Darboux.sympl}. This idea was explained in \cite{book:693821}, where one can find more details on this subject.
\begin{definition}\label{def.symplectification}
The \emph{symplectification} of a contact manifold $(M,\CC)$ is the symplectic manifold $(\widetilde{M},d\beta)$, where $\widetilde{M}$ is the line bundle
\begin{equation*}
\widetilde{\pi}:\widetilde{M}:=\bigcup_{p\in M}\{ \alpha\in T^*_pM\,\,|\,\,\ker\alpha=\CC_p\} \to M
\end{equation*}
and $\beta\in \Lambda^1(\widetilde{M})$ is the ``tautological'' $1$-differential form of $\widetilde{M}$:
\begin{equation}\label{eq.taut.general}
\beta_{\alpha}(\xi):=\alpha(\widetilde{\pi}_*(\xi))\,, \quad\alpha\in\widetilde{M}\,.
\end{equation}
\end{definition}
Recall that we have already met the tautological differential form \eqref{eq.taut.general} when we discussed the cotangent bundle, cf. \eqref{eq.taut.cotangent}.

\smallskip
Below we prove that the symplectification $(\widetilde{M},d\beta)$ of a contact manifold $(M,\CC)$ is actually a symplectic manifold.

\medskip\noindent
Of course $d\beta$ is a closed $2$-form, so we have to prove only its non--degeneracy. Locally, if $\theta$ is a contact form, then the canonical $1$-form $\beta$ is equal to
$$
\beta=f\, \widetilde{\pi}^*(\theta)\,, \quad \text{for some}\,\, f\in C^\infty(\widetilde{M})
$$
(here the function $f$ plays the role of vertical coordinate),
so that
$$
d\beta=df\wedge\widetilde{\pi}^*(\theta) + f\,d\widetilde{\pi}^*(\theta) = df\wedge\widetilde{\pi}^*(\theta) + f\,\widetilde{\pi}^*(d\theta)\,.
$$
Let $\xi$ be a non--zero vertical vector with respect to $\widetilde{\pi}$, i.e., $\widetilde{\pi}_*(\xi)=0$. Then
$$
d\beta(\xi,\eta)=df(\xi)\widetilde{\pi}^*(\theta)(\eta)\,.
$$
Since, in view of the verticality of $\xi$, $df(\xi)\neq 0$, then, in order to have $d\beta(\xi,\eta)\neq 0$, it is enough to choose a vector $\eta$ such that $\pi_*(\eta)\notin\CC_p$ ($\pi_*(\eta)\in T_pM$).

\smallskip\noindent
By the same reasoning, if $\xi$ does not project through $\widetilde{\pi}$ on $\CC$, it is enough to choose a non--zero vertical vector $\eta$ to have $d\beta(\xi,\eta)\neq 0$.

\smallskip\noindent
So, assume that $\xi$ is not vertical and that projects on the contact hyperplane $\CC$ through $\widetilde{\pi}$. So, we have
$$
d\beta(\xi,\eta)=df(\xi)\widetilde{\pi}^*(\theta)(\eta) + fd\theta\big(\widetilde{\pi}_*(\xi),\widetilde{\pi}_*(\eta)\big)\,.
$$
Since $\pi_*(\xi)$ is not zero and lies in the contact hyperplane, in view of the fact that $d\theta$ is not degenerate on the contact hyperplane, there exists a vector $\eta$ projecting on the contact hyperplane (i.e., $\widetilde{\pi}^*(\theta)(\eta)=0$) such that $d\theta\big(\widetilde{\pi}_*(\xi),\widetilde{\pi}_*(\eta)\big)\neq 0$, so that $d\beta(\xi,\eta)\neq 0$.

\begin{theorem}[Darboux for contact manifolds]\label{th.Darboux.contact}
Let $(M,\CC)$ be a $(2n+1)$-dimensional contact manifold. Let $\theta$ be a contact form, i.e., locally, $\CC=\ker\theta$. Then for any point $p\in M$ there exists a neighborhood $U\subset M$ of $p$ where one can choose coordinates $(x^i,u,u_i)$ such that
$$
\theta|_U=du-\sum_{i=1}^n u_idx^i\,.
$$
\end{theorem}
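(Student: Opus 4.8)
The plan is to deduce the statement from the symplectic Darboux theorem (Theorem~\ref{th.Darboux.sympl}) applied to the symplectification. Recall it has just been verified that $(\widetilde M,d\beta)$ is a symplectic manifold of dimension $2n+2$, and that locally $\beta=f\,\widetilde{\pi}^*(\theta)$ with $f$ the vertical coordinate. Fixing the given contact form $\theta$, I would first view it as a section $s\colon M\to\widetilde M$, $s(p):=\theta_p$. From the defining relation $\beta=f\,\widetilde{\pi}^*(\theta)$ together with $\beta_\alpha(\xi)=\alpha(\widetilde{\pi}_*\xi)$ one reads off $f\circ s\equiv 1$, and since $\widetilde{\pi}\circ s=\id_M$ also $s^*\beta=\theta$, hence $s^*(d\beta)=d\theta$. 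Observe that the image $\Sigma:=s(M)$ is exactly the hypersurface $\{f=1\}\subset\widetilde M$.

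Next I would apply Theorem~\ref{th.Darboux.sympl} at the point $\widetilde p:=s(p)$, using the freedom granted by Remark~\ref{rem.choose.arb.coord} to prescribe one of the canonical coordinates on $\widetilde M$ to be $f$ itself (legitimate since $df$ is nowhere zero on the line bundle $\widetilde M$). This yields local coordinates $(q^0,\dots,q^n,p_0,\dots,p_n)$ near $\widetilde p$ with $d\beta=\sum_{i=0}^{n}dq^i\wedge dp_i$ and, say, $p_0=f$. Restricting to the coordinate hypersurface $\Sigma=\{p_0=1\}$, the remaining $2n+1$ functions $q^0,\dots,q^n,p_1,\dots,p_n$ form a coordinate system on $\Sigma$ near $\widetilde p$, which I transport to $M$ through the diffeomorphism $s$; write $\bar q^i:=q^i\circ s$ and $\bar p_i:=p_i\circ s$. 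Since $\bar p_0=f\circ s\equiv 1$, pulling $d\beta$ back along $s$ kills the $i=0$ summand, so that $d\theta=s^*(d\beta)=\sum_{i=1}^{n}d\bar q^i\wedge d\bar p_i$.

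Then I would note that $\theta+\sum_{i=1}^{n}\bar p_i\,d\bar q^i$ is closed, hence on a small enough neighborhood of $p$ it equals $dg$ for some function $g$; thus $\theta=dg-\sum_{i=1}^{n}\bar p_i\,d\bar q^i$. Setting $u:=g$, $x^i:=\bar q^i$ and $u_i:=\bar p_i$ for $i=1,\dots,n$ already gives the desired normal form $\theta|_U=du-\sum_{i=1}^{n}u_i\,dx^i$. What remains is to check that $(x^1,\dots,x^n,u,u_1,\dots,u_n)$ is a genuine coordinate chart on $M$ near $p$: it differs from the already-established chart $(\bar q^0,\bar q^1,\dots,\bar q^n,\bar p_1,\dots,\bar p_n)$ only in that $\bar q^0$ has been replaced by $g$, so functional independence reduces to $dg\wedge d\bar q^1\wedge\cdots\wedge d\bar q^n\wedge d\bar p_1\wedge\cdots\wedge d\bar p_n\neq 0$ at $p$; and since $dg=\theta+\sum\bar p_i\,d\bar q^i$, this top-degree form equals, up to a nonzero scalar, $\theta\wedge(d\theta)^n$, which is nonzero precisely because $\CC=\ker\theta$ is completely non--integrable (Definition~\ref{defContMan} and criterion~\eqref{eq.dual.codim.1.non.int}).

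I expect the only delicate point to be this first reduction: invoking Remark~\ref{rem.choose.arb.coord} to force the vertical coordinate $f$ to be one of the Darboux coordinates on $\widetilde M$, and then restricting cleanly to $\Sigma=\{f=1\}$. Everything afterwards is bookkeeping, the single genuinely structural ingredient being the complete non--integrability of $\CC$, which enters only at the very end to guarantee that the primitive $g$ can serve as the dependent variable. It is immaterial whether $f$ ends up playing the role of a position or of a momentum in the Darboux chart, since the argument is symmetric under swapping the two families of coordinates.
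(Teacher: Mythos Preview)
Your proof is correct and follows essentially the same strategy as the paper: apply the symplectic Darboux theorem on the symplectification $\widetilde M$, using the freedom of Remark~\ref{rem.choose.arb.coord} to prescribe one Darboux coordinate so that its level set is the hypersurface cut out by the section $\theta$, and then restrict.

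The differences are organisational. The paper finds the primitive upstairs (writing $\beta=-\sum\widetilde u_i\,d\widetilde x^i+d\widetilde u$ on $\widetilde M$) and then restricts to $V$; you restrict first and find the primitive $g$ directly on $M$. Correspondingly, for the independence check the paper argues on $\widetilde M$, showing $\partial\widetilde u/\partial\widetilde x^0\neq 0$ via the non--degeneracy of $d\beta$ against a vertical vector, whereas you invoke the contact condition $\theta\wedge(d\theta)^n\neq 0$ on $M$ itself. Your version of this last step is arguably more transparent, since it makes explicit that complete non--integrability is precisely the ingredient that promotes $g$ to a genuine coordinate; the paper's argument is of course equivalent (the non--degeneracy of $d\beta$ along the fibre is how the contact condition manifests upstairs), but the link is less visible there.
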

\begin{proof}
Let us consider the symplectification $\widetilde{\pi}:\widetilde{M}\to M$ of the contact manifold $M$. Let $\beta\in\Lambda^1(\widetilde{M})$ be the $1$-differential form constructed in Definition \ref{def.symplectification}. Let us fix a contact form $\theta$ in a neighborhood $U$ of a point $p\in M$: such a form defines, locally, a hypersurface $V$ in $\widetilde{M}$ which non--degenerately projects on $U$ through $\widetilde{\pi}$. Let $q$ be the point of $V$ such that $\widetilde{\pi}(q)=p$. By Theorem \ref{th.Darboux.sympl} there exists, in a neighborhood of $\widetilde{M}$ containing $q$, a system of coordinates $(\widetilde{x}^0,\widetilde{u}_0,\dots,\widetilde{x}^n,\widetilde{u}_n)$ such that
\begin{equation}\label{eq.dbeta}
d\beta=\sum_{i=0}^{n}d\widetilde{x}^i\wedge d \widetilde{u}_i\,.
\end{equation}
Since we can choose a coordinate arbitrarily (see Remark \ref{rem.choose.arb.coord}), we choose $\widetilde{u}_0$ in such a way that $V=\{\widetilde{u}_0=0\}$. Thus, in a (possibly) smaller neighborhood, in view of \eqref{eq.dbeta}, we have that $\beta=-\sum_{i=0}^{n}\widetilde{u}_i d\widetilde{x}^i + d\widetilde{u}$, so that
\begin{equation}\label{eq.tilde.res}
\beta|_V=-\sum_{i=1}^{n}\widetilde{u}_i d\widetilde{x}^i + d\widetilde{u}\,,
\end{equation}
where $\widetilde{x}^i$, $\widetilde{u}$ and $\widetilde{u}_i$ in  \eqref{eq.tilde.res} in what follows are to be considered restricted to $V$.
Now we define the functions $(x^i,u,u_i)$ as
$$
\widetilde{\pi}^*(x^i)=\widetilde{x}^i\,, \quad \widetilde{\pi}^*(u)=\widetilde{u}\,, \quad \widetilde{\pi}^*(u_i)=\widetilde{u}_i\,.
$$
It remains to prove that the above--defined $(x^i,u,u_i)$ are coordinates in a neighborhood of $p$. To this end we will show that $(\widetilde{x}^1,\dots,\widetilde{x}^n,\widetilde{u},\widetilde{u}_1,\dots,\widetilde{u}_n)$ are coordinates on the hypersurface $V$. Since $(\widetilde{x}^0,\dots,\widetilde{x}^n,\widetilde{u}_1,\dots,\widetilde{u}_n)$ are coordinates on $V$, it is enough to show that $\widetilde{u}$ depends non--trivially on $\widetilde{x}^0$, i.e., $\frac{\partial\widetilde{u}}{\partial \widetilde{x}^0}\neq 0$. We shall proceed as follows. Let $\upsilon$ be a coordinate vertical vector field with respect to $\widetilde{\pi}$. We have that $\big(d\beta(\partial_{\widetilde{x}^0},\upsilon)\big)|_V\neq 0$. Indeed, on the contrary,
\begin{equation}\label{eq.cond.appoggio}
d\beta(\partial_{\widetilde{x}^0}|_z,\eta)= 0 \,\,\forall\,\eta\in T_z\widetilde{M}\,, \,\,z\in V\,,
\end{equation}
since $d\beta(\partial_{\widetilde{x}^0}|_z,\xi)= 0\,\,\forall\,\xi\in T_zV$ and the aforementioned $\upsilon$ is transversal to $V$. But condition \eqref{eq.cond.appoggio} cannot hold as $d\beta$ is non--degenerate. Thus, $\big(d\beta(\partial_{\widetilde{x}^0},\upsilon)\big)|_V\neq 0$ which, in view of the fact that $[\partial_{\widetilde{x}^0},\upsilon]=0$ and $\beta(\upsilon)=0$, implies $\upsilon\big(\beta(\partial_{\widetilde{x}^0})\big)\neq 0$ on $V$, from which we infer $\beta(\partial_{\widetilde{x}^0})$, whence, in view of \eqref{eq.tilde.res}, $\frac{\partial\widetilde{u}}{\partial \widetilde{x}^0}\neq 0$, as wanted.
\end{proof}

\begin{definition}
The coordinates $(x^i,u,u_i)$ constructed in Theorem \ref{th.Darboux.contact} are called \emph{Darboux} or \emph{contact} coordinates.
\end{definition}
Let $(x^i,u,u_i)$ be contact coordinates on an $(2n+1)$-dimensional contact manifold. The locally defined vector fields
\begin{equation}\label{eq.contact.local}
D_{x^i}:=\partial_{x^i} + u_i \partial_u\,,\quad  \partial_{u_i}
\end{equation}
span the contact distribution $\CC$.
\begin{remark}\label{rem.ConfSympSpace}
Let $(M,\mathcal{C})$ be a $(2n+1)$-dimensional contact manifold. Let $\theta$ be a contact form. The restriction $\omega:=(d\theta)|_{\CC}$ defines on each hyperplane $\CC_p$, $p\in M$, a conformal symplectic structure as $\big(d(f\theta)\big)|_{\CC}=f(d\theta)|_{\CC}$ for any $f\in C^\infty(M)$ . Lagrangian planes of $\CC_p$ depend only on such conformal symplectic structure (see also Remark \ref{rem.conf.symp}) and they  are the tangent spaces to the maximal submanifolds of $\CC$ (that have dimension $\frac{\dim M - 1}{2}$ in view of the discussion at the beginning of Section \ref{sec.non.int}). We call these submanifolds \emph{Lagrangian} submanifolds according to the definition we gave in the context of symplectic manifolds. They are often called \emph{Legendrian} submanifolds.
\end{remark}
Since the contact distribution $\mathcal{C}$ is
completely non--integrable, it does not admit any internal infinitesimal symmetry (see Remark \ref{re.no.internal}), so that
the flow of any vector field $Y\in\mathcal{C}$
deforms $\mathcal{C}$. The sequence of iterated Lie derivatives
\begin{equation}\label{X_i(U)}
\theta,\,L_Y(\theta),\,L_Y^{2}(\theta),\dots,\,L_Y^{2n-1}(\theta)
\end{equation}
gives a measure of this deformation \cite{MR722524}.
\begin{definition}
Let $Y\in\mathcal{C}$. The \emph{type} of $Y$ is the rank of the system \eqref{X_i(U)}.
\end{definition}
Note that the type does not depend on the choice of $\theta$. Also, the type of the line distribution $\Span{Y}$ is well defined, rather than the type of vector field $Y$.

Since $\omega=(d\theta)|_{\CC}$ is non--degenerate (see Remark \ref{rem.ConfSympSpace}), it induces an isomorphism $\CC\simeq\CC^*$, that we shall continue to denote by $\omega$. Any vector field $Y\in\mathcal{C}$ is the image of a $1$-form $\sigma\in \mathcal{C}^*$ via $\omega$, i.e., $Y=Y_\sigma:=\omega^{-1}(\sigma)$.
\begin{definition}\label{def.involution}
A vector field $Y\in\mathcal{C}$ such that $Y=Y_{df}$ is called \emph{Hamiltonian}. Two functions $f$ and $g$ on $M$ are in \emph{involution} if $d\theta(Y_{df},Y_{dg})=0$ (or equivalently, if $Y_{df}(g)=0$).
\end{definition}
A straightforward computation shows that, in a system of contact coordinates $(x^i,u,u_i)$, a Hamiltonian vector field can be written as
\begin{equation}\label{eq.hamiltonian.local}
Y_{df}=\sum_{i=1}^{n} \,\, f_{u_i}D_{x^{i}} -
D_{x^{i}}(f)\partial_{u_{i}}.
\end{equation}
where $D_{x^i}$ are defined in \eqref{eq.contact.local}. A Hamiltonian vector field $Y_{df}$ satisfies the following properties:
\begin{equation}\label{eq.proprieta.Xf}
df(Y_{df})=Y_{df}(f)=0\,, \quad \theta(Y_{df})=0\,, \quad Y_{df}(\theta)=df-\frac{\partial f}{\partial u}\theta.
\end{equation}
Properties \eqref{eq.proprieta.Xf} imply:
\begin{itemize}
\item $Y_{df}$ is a vector field of type $2$;
\item $Y_{df}$ is a characteristic symmetry for the distribution
$Y_{df}^{\perp}=\{\theta=0,\,df=0\}$. In other words, $Y_{df}$ coincides with the
classical characteristic vector field of the $1\St$ order PDE $f(x^i,u,u_i)=0$ where $u_i = \partial u/\partial x^i$. See also Section \ref{sec.characteristics}, where such a vector field is used to construct a solution of a $1\St$ order PDE starting from a non--characteristic Cauchy datum.
\end{itemize}
\begin{notation}
 We shall denote the Hamiltonian vector field associated with a function $f\in C^\infty(M)$h by $Y_{df}$ and by $Y_f$.
\end{notation}

\smallskip
The next proposition is a consequence of the Darboux's theorem \ref{th.Darboux.contact}. We shall use it especially for getting the normal form of (multidimensional) Monge--Amp\`ere equation with integrable characteristic distribution (see Section \ref{secNormFormMAEs} later on).
\begin{proposition}\label{prop.int.one.way}
Let $(M,\mathcal{C})$ be a $(2n+1)$--dimensional contact manifold. Let $\mathcal{D}$ be an $n$-dimensional integrable
sub-distribution of $\mathcal{C}$. Then there exists a contact
chart $(x^i,u,u_i)$ of $M$ where $\mathcal{D}$ takes the form
\begin{equation}\label{eq.int.dist.vert}
\mathcal{D}=\langle\partial_{u_1}\,, \dots\,,\partial_{u_n}\rangle.
\end{equation}
\end{proposition}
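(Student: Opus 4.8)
The idea is to build the required contact chart in two stages: first use ordinary Frobenius to flatten $\mathcal{D}$, then use the freedom in the Darboux construction (Theorem \ref{th.Darboux.contact} and Remark \ref{rem.choose.arb.coord}) to make the flattened $\mathcal{D}$ sit inside $\mathcal{C}$ in the prescribed vertical form. Concretely, since $\mathcal{D}$ is an $n$-dimensional integrable distribution on the $(2n+1)$-dimensional manifold $M$, the Corollary to the Frobenius Theorem gives functions $f^1,\dots,f^{n+1}\in C^\infty(M)$ near $p$ with $\mathcal{D}=\ker(df^1)\cap\dots\cap\ker(df^{n+1})$, equivalently $\mathcal{D}=\langle\partial_{y^1},\dots,\partial_{y^n}\rangle$ in suitable coordinates $(y^1,\dots,y^n,z^1,\dots,z^{n+1})$. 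The point of the proof is then to upgrade these coordinates to Darboux coordinates adapted to $\mathcal{D}$.

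\textbf{Key steps.} First I would observe that, because $\mathcal{D}\subset\mathcal{C}$ and $\mathcal{C}$ is contact, the restriction $\omega=(d\theta)|_{\mathcal{C}_p}$ is a (conformal) symplectic form on each $\mathcal{C}_p$, and $\mathcal{D}_p$ being $n$-dimensional inside the $2n$-dimensional $\mathcal{C}_p$ with $(d\theta)|_{\mathcal{D}_p}=0$ (by integrability and \eqref{eq.dtheta.bla}) means $\mathcal{D}_p$ is a Lagrangian subspace of $\mathcal{C}_p$; thus $\mathcal{D}$ is a Lagrangian sub-distribution tangent to an $n$-parameter family of Legendrian submanifolds (the leaves of the Frobenius foliation, which are automatically isotropic for $\theta$ since $\theta|_{\mathcal{D}}=0$ would need checking --- in fact the leaves are integral manifolds of $\mathcal{D}\subset\mathcal{C}$, hence Legendrian). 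Next, redo the proof of Theorem \ref{th.Darboux.contact}: pass to the symplectification $\widetilde{\pi}\colon\widetilde{M}\to M$ with its tautological form $\beta$ and symplectic form $d\beta$, pull back the foliation by $\mathcal{D}$ to a coisotropic foliation of $\widetilde{M}$ (its leaves $\widetilde{\pi}^{-1}(\text{leaf})$ are $(n+1)$-dimensional and coisotropic), and apply the stronger version of the symplectic Darboux theorem that lets one choose one coordinate function freely: I would successively choose the Darboux coordinates $(\widetilde{x}^0,\widetilde{u}_0,\dots,\widetilde{x}^n,\widetilde{u}_n)$ on $\widetilde{M}$ so that (i) $\widetilde{u}_0=0$ cuts out the hypersurface $V$ corresponding to the chosen contact form $\theta$ (as in the original proof), and (ii) the functions $\widetilde{u}_1,\dots,\widetilde{u}_n$ are constant along the lifted leaves --- i.e. they are pulled back from the $n$-parameter leaf space. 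This is possible because the $f^i$ descend to functions on $M$ in involution (their common level sets are the Legendrian leaves, which are Lagrangian for $\omega$), so on $\widetilde{M}$ they Poisson-commute and can be completed to a Darboux chart with the leaf directions spanned by $\partial_{\widetilde{x}^1},\dots,\partial_{\widetilde{x}^n}$. Restricting to $V$ and setting $\widetilde{\pi}^*(x^i)=\widetilde{x}^i$, $\widetilde{\pi}^*(u)=\widetilde{u}$, $\widetilde{\pi}^*(u_i)=\widetilde{u}_i$ exactly as in Theorem \ref{th.Darboux.contact} produces contact coordinates $(x^i,u,u_i)$ in which the leaves of $\mathcal{D}$ are the level sets $\{x^i=\text{const},\,u_i=\text{const}\}$, and hence $\mathcal{D}=\langle\partial_{u_1},\dots,\partial_{u_n}\rangle$, which is \eqref{eq.int.dist.vert}.

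\textbf{Main obstacle.} The delicate point is the compatibility of the two normalizations: one must choose the symplectic Darboux chart on $\widetilde{M}$ so that it \emph{simultaneously} respects the hypersurface $V$ (encoding the contact form $\theta$) and the coisotropic foliation $\widetilde{\pi}^{-1}(\mathcal{D}\text{-leaves})$. The cleanest way is to run the standard inductive proof of the symplectic Darboux theorem (e.g.\ as in \cite[Section 43B]{book:693821}) starting from the half-set of coordinates $\widetilde{u}_0=0,\widetilde{u}_1=\widetilde{\pi}^*\widehat{f}^1,\dots,\widetilde{u}_n=\widetilde{\pi}^*\widehat{f}^n$, where $\widehat{f}^i$ are $n$ independent functions on $M$ whose common level sets are the $\mathcal{D}$-leaves, and then complete them to a full Darboux system by the usual construction of the conjugate coordinates; one checks that these can be chosen with $\{\widetilde{u}_i,\widetilde{u}_j\}=0$ because the $\mathcal{D}$-leaves, being Legendrian in $M$, lift to Lagrangian leaves in $\widetilde{M}$. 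The verification that the resulting $(x^i,u,u_i)$ really form a chart near $p$ is identical to the non-degeneracy argument ($\partial\widetilde{u}/\partial\widetilde{x}^0\neq 0$) at the end of the proof of Theorem \ref{th.Darboux.contact} and needs no change.
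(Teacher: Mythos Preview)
Your strategy---rerun the symplectification proof of Darboux while simultaneously normalizing the foliation---can in principle be pushed through, but it is a substantial detour and, as written, contains a genuine slip. The paper's argument is far more direct and bypasses the symplectification entirely: since $\mathcal{D}=\bigcap_{i=1}^{n+1}\ker df^i$ (Frobenius) and $\mathcal{D}\subset\mathcal{C}=\ker\theta$, the contact form $\theta$ must itself be a $C^\infty(M)$--combination of the $df^i$, say $\lambda\theta=df^{n+1}+\sum_{i=1}^{n}a_i\,df^i$. One then simply \emph{declares} $x^i:=f^i$, $u:=f^{n+1}$, $u_i:=-a_i$. The contact condition $\theta\wedge(d\theta)^n\neq0$ is exactly what forces these $2n+1$ functions to be independent, hence a chart, and by construction $\mathcal{D}=\{dx^1=\cdots=dx^n=du=0\}=\langle\partial_{u_1},\dots,\partial_{u_n}\rangle$. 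No lift to $\widetilde{M}$, no inductive Darboux, no Carath\'eodory--Jacobi--Lie extension of a half-set in involution.

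The concrete error in your plan is the assignment $\widetilde{u}_i=\widetilde{\pi}^*\widehat{f}^i$. After descent this makes the coordinates $u_i$ (not the $x^i$) constant on the leaves of $\mathcal{D}$, so $\mathcal{D}\subset\mathcal{C}\cap\bigcap_i\ker du_i=\langle D_{x^1},\dots,D_{x^n}\rangle$, the \emph{horizontal} Lagrangian sub-distribution, not the vertical one in \eqref{eq.int.dist.vert}. (Your closing line ``leaves are the level sets $\{x^i=\text{const},\,u_i=\text{const}\}$'' confirms the confusion: those level sets are $1$--dimensional, and in any case $\langle\partial_{u_1},\dots,\partial_{u_n}\rangle$ corresponds to $\{x^i=\text{const},\,u=\text{const}\}$.) You could repair this by a Legendre transformation or by swapping the roles of $\widetilde{x}^i$ and $\widetilde{u}_i$, but then the ``delicate point'' you yourself flag---making the function cutting out $V$ Poisson-commute with the chosen half-set---remains unresolved in your sketch. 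The paper's one-line observation that $\theta$ lies in the $C^\infty$-span of the $df^i$ is the idea that renders all of this machinery unnecessary.
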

\begin{proof}
Let us choose a contact form $\theta$.
Since $\mathcal{D}$ is integrable, we can find $n+1$ functions
$\{f^i\}_{i=1,...,n+1}$ such that $\mathcal{D}$ is described by
\begin{equation}\label{eq.2}
\mathcal{D}=\{df^1=0\,,\,\, df_2=0\,,\dots \,, df^n=0, \,\,
df^{n+1}=0\}ker(df^1)\cap\cdots\cap\ker(df^n)
\end{equation}
Since by hypothesis $\mathcal{D}$ is inscribed in $\mathcal{C}$,
the contact form $\theta$ must depend on $df^1,\,\, df^2\,,\dots \,, df^n\,,\,df^{n+1}$, i.e.,
\begin{equation}\label{eq.3}
\lambda \theta = df^{n+1} + \sum_{i=1}^{n} a_i df^i
\end{equation}
for some $\lambda, a_1,...,a_n\in C^\infty(M)$. Note that since
$\theta$ is a contact form, all $n+1$ differentials must appear on
the right side of \eqref{eq.3}, otherwise the condition \eqref{eq.dual.codim.1.non.int} would not be satisfied.
From \eqref{eq.3} we obtain that
\[
x^i=f^i\,, \quad u=f^{n+1}\,, \quad u_i=-a_i,
\]
are contact coordinates on $M$. In these coordinates, \eqref{eq.2} takes the form
\[
\mathcal{D}=\{dx^1=0\,,\,\, dx^2=0\,,\dots \,, dx^n=0, du=0\}\,,
\]
i.e., \eqref{eq.int.dist.vert}.
\end{proof}

\subsubsection{The Legendre transformation}\label{secLegTrans}

Let us fix a system of contact
coordinates $(x^i,u,u_i)$ so that $\theta=du-u_idx^i$.
The functions
\begin{equation}\label{eq.Legendre}
\tilde{u}=u-\sum_{i=1}^n u_i x^i\,, \quad \tilde{x}^i=u_i\,, \quad \tilde{u}_i=-x^i \,, \qquad  i=1,\dots,n\,,\end{equation}
define a local contact transformation $\Psi:\,(x^i,u,u_i)\to (\tilde{x}^i,\tilde{u},\tilde{u}_i)$ since $\theta=d\tilde{u}-\tilde{u}_id\tilde{x}^i$. Such transformation is known as the \emph{Legendre transformation}. The action of $\Psi$ on vector fields interchanges the
roles of $D_{x^{i}}$ and $\partial_{u_{i}}$:
\begin{equation*}
\Psi_{*}\left(  \partial_{u}\right)  =
\partial_{\tilde{u}} ,\quad\Psi_{*}\left(
D_{x^{i}}\right)  =
-\partial_{\tilde{u}_{i}},\quad\Psi_{*}\left(
\partial_{u_{i}}\right)  = D_{\tilde{x}^{i}}.
\end{equation*}
Sometimes it is useful to define a ``partial'' Legendre transformation. For
instance, we can divide the index $i=1,\dots,n$ into $\alpha=1,\dots,m$ and
$\beta=m+1,\dots,n$, and define
\begin{equation}\label{eq.partial.legendre}
\tilde{u}:=u-\sum_{\alpha=1}^m u_{\alpha}x^{\alpha} \,, \quad
\tilde{x}^{\alpha}:=u_{\alpha} \,, \quad
\tilde{u}_{\alpha}:=-x^{\alpha} \,, \quad
\tilde{x}^{\beta}:=x^{\beta}  \,, \quad
\tilde{u}_{\beta}:=u_{\beta} \,, \quad\alpha=1,\dots,m\,,\quad \beta=m+1,\dots,n\,.
\end{equation}
It is easy to realize that \eqref{eq.partial.legendre} is also a (local) contact transformation. In this case, only the first $m$ coordinates $x^{\alpha}$ and $u_{\alpha}$ are interchanged.

\subsection{The projectivised cotangent bundle}\label{sec.proj.cot.bundle}
Let $E$ be an $(n+1)$--dimensional smooth manifold. Consider the Grassmannian bundle
\begin{equation*}
\pi:J^1(E,n):=\Gr(n,TE)=\bigcup_{p\in E}\Gr(n,T_pE)\to E
\end{equation*}
of tangent hyperplanes to $E$. It is also known as the \emph{manifold of contact elements of $E$} or as the \emph{space of first jets of hypersurfaces of $E$}. In view of the line--hyperplane duality, it is easy to realise that $J^1(E,n)$ also coincides with $\mathbb{P}T^*E$, the projectivised cotangent bundle.
\begin{remark}\label{rem.Gianni.Giovanni}
The projectivised cotangent bundle $\mathbb{P}T^*E$ of an arbitrary manifold $E$ is the chief example of a contact manifold (see Proposition \ref{prop.HpContact} below). It is worth stressing that, by its very definition, $\mathbb{P}T^*E$ possesses an intrinsic notion of ``verticality'', that is, directions degenerately projecting onto $E$. However, in general, there is no ``verticality'' inherent to $E$. Still, there are important contexts where $E$ comes equipped with its own notion of ``verticality'', for instance a fiber bundle structure $\rho:E\longrightarrow \mathcal{X}$ of rank $1$. In this case, $\mathbb{P}T^*E$ acquires a notion of ``horizontality'' as well. More precisely, within $\mathbb{P}T^*E$ there appears an open and dense subset, usually denoted by $J^1(\rho)$ (known as the \emph{first jet of the bundle $\rho$}), made of the ``horizontal'' tangent hyperplanes, i.e., tangent to the  graphs of local sections of $\rho$. If $E$ is equipped with a trivial vector bundle structure $\rho:E\simeq N\times\R\to N$, then the bundle $J^1(\rho)$ is the \emph{first jet of functions on $N$} (for the relationship between these notions, see also Section \ref{sub.Graff1R2}).
\end{remark}
\begin{proposition}\label{prop.HpContact}
The distribution
\begin{equation}\label{eq.defCHp}
H_p\in \Gr(n,T_pE)\to \mathcal{C}_{H_p}:=(d\pi)_{H_p}^{-1}(H_p)
\end{equation}
is a contact one.
\end{proposition}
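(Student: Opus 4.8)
The plan is to verify the two defining conditions of a contact structure for the distribution $\mathcal{C}$ on $M := J^1(E,n) = \p T^*E$: that it has codimension one, and that it is completely non-integrable, i.e., satisfies \eqref{eq.dual.codim.1.non.int}. First I would compute dimensions. Since $\dim E = n+1$, the Grassmannian $\Gr(n, T_pE)$ has dimension $n$, so $\dim M = (n+1) + n = 2n+1$. The fibre of $\pi: M \to E$ at $H_p$ has dimension $n$, hence $\ker(d\pi)_{H_p}$ is $n$-dimensional; since $H_p \subset T_pE$ is itself $n$-dimensional, the preimage $(d\pi)_{H_p}^{-1}(H_p)$ has dimension $n + n = 2n$, so $\mathcal{C}$ is indeed a codimension-one distribution on the $(2n+1)$-dimensional manifold $M$. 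One should also check smoothness of the assignment $H_p \mapsto \mathcal{C}_{H_p}$, which is clear from a local description.

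Next I would write everything in the natural local coordinates. Choosing coordinates $(x^1,\dots,x^n,u)$ on $E$, a generic tangent hyperplane $H \in \Gr(n,T_pE)$ is the kernel of a covector $du - \sum p_i\, dx^i$ (those missing the $du$-component form a lower-dimensional locus and are covered by other charts), which gives coordinates $(x^i, u, p_i)$ on an open dense subset of $M$ --- precisely the chart structure discussed in Remark \ref{rem.Gianni.Giovanni}. The tautological/contact $1$-form is $\theta = du - \sum_i p_i\, dx^i$: by construction, a vector $\xi \in T_{H_p}M$ lies in $\mathcal{C}_{H_p} = (d\pi)_{H_p}^{-1}(H_p)$ if and only if $(d\pi)_{H_p}(\xi) \in H_p = \ker(du - \sum p_i dx^i)$, which is exactly the condition $\theta(\xi) = 0$. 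So $\mathcal{C} = \ker\theta$ locally, where $\theta = du - p_i\, dx^i$ in the abbreviated Einstein notation of the paper.

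Finally I would verify complete non-integrability via \eqref{eq.dual.codim.1.non.int}. We have $d\theta = -dp_i \wedge dx^i = \sum_{i=1}^n dx^i \wedge dp_i$, so $(d\theta)^n = n!\, dx^1 \wedge dp_1 \wedge \cdots \wedge dx^n \wedge dp_n$ and therefore
\[
\theta \wedge (d\theta)^n = n!\, du \wedge dx^1 \wedge dp_1 \wedge \cdots \wedge dx^n \wedge dp_n \neq 0,
\]
since this is (up to sign and the nonzero factor $n!$) the volume form in the coordinates $(x^i,u,p_i)$. By the criterion \eqref{eq.dual.codim.1.non.int}, $\mathcal{C}$ is completely non-integrable, hence $(M,\mathcal{C})$ is a contact manifold. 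The one subtle point --- the main thing to be careful about rather than a genuine obstacle --- is the coordinate-independence of this conclusion: the chosen chart covers only an open dense subset of $\p T^*E$, so strictly one must either note that the complement is covered by analogous charts (permuting the role of $u$ with the $x^i$, or more invariantly, picking any coordinate system on $E$ adapted to a given $H_p$), or give the computation of $\theta \wedge (d\theta)^n$ in a manifestly invariant way using the tautological form on $\widetilde{M}$ as in Definition \ref{def.symplectification}. Since the contact condition is open and the charts cover $M$, checking it in one chart type suffices.
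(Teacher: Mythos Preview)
Your proof is correct and follows essentially the same approach as the paper: introduce local coordinates $(x^i,u,u_i)$ on $\p T^*E$ (the paper writes $u_i$ where you write $p_i$), identify $\mathcal{C}=\ker\theta$ with $\theta=du-u_i\,dx^i$, and invoke the non-integrability criterion \eqref{eq.dual.codim.1.non.int}. You are in fact more explicit than the paper, which simply asserts that one ``immediately sees'' that $\theta$ is a contact form, whereas you carry out the computation of $\theta\wedge(d\theta)^n$ and address the chart-coverage point.
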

\begin{proof}
Local coordinates $(x^1,\ldots,x^{n},u)$ on $E$ induce local coordinates $(x^1,\ldots,x^{n},u,u_1,\ldots,u_{n})$ on $\p T^* E$, in such a way that   $H_p=\Span{\partial_{x^1}+u_1\partial_u\,,\ldots,\,\partial_{x^{n}}+u_{n}\partial_u}$ has coordinates $(x^1,\ldots,x^{n},u,u_1,\ldots,u_{n})$. Then from \eqref{eq.defCHp} it follows immediately that
\begin{equation*}
\mathcal{C}_{H_p}=H_p\oplus\Span{\partial_{u_1} ,\ldots,\partial_{u_{n}} }=\ker\theta_{H_p}\, ,
\end{equation*}
with
\begin{equation}\label{eqThetaPTStarM}
\theta=du-u_idx^i\, .
\end{equation}
Recalling the condition \eqref{eq.dual.codim.1.non.int} (also Theorem \ref{th.Darboux.contact}), one immediately sees that the $1$--form \eqref{eqThetaPTStarM} is a contact form.
\end{proof}
Let us fix a hypersurface $S$ of $E$. The natural map
\begin{equation}\label{eq.jS}
j^1S:p\in S\mapsto T_pS\in J^1(E,n)\,.
\end{equation}
allows us to ``prolong'' $S$ to $J^1(E,n)$.

\subsubsection{The Grassmannian of affine lines in $\R^2$}\label{sub.Graff1R2}
We study now the particular case of \eqref{eq.defCHp} when $M$ is the affine space $\R^2$. This example reveals important features of the projectivised cotangent bundle $\p T^*\R^2$, which hold also for $\p T^*M$, albeit less visibly. To begin with, observe that
\begin{equation*}
J^1(\R^2,1):=\Gr(1,T\R^2) =\p T^*\R^2=\R^2\times\R\p^1\,.
\end{equation*}
An important aspect of $J^1(\R^2,1)$ is that it can be constructed simply by using functions from $\R$ to $\R$. More precisely,
let $f,g\in C^1(\R)$, and define
\begin{equation}\label{eq.rel.equ.initial}
f\sim_x^1 g\Leftrightarrow f(x)=g(x)\textrm{ and }f'(x)=g'(x)\, .
\end{equation}
$\sim_x^1 $ is an equivalence relation. We shall denote by $[f]^1_x$ an equivalence class and by $J^1(1,1)$ the set of such equivalence classes:
\begin{equation}\label{eq.J1.1.1}
J^1(1,1):=\{[f]^1_x\,\,|\,\, f\in C^1(\R)\}
\end{equation}
(first jet of functions on $\R$, a more general construction is given in Section \ref{sec1stOrderPDEs}, see formula \eqref{eq.rel.equ.more.gen} and Definition \ref{def.Michoriana}).
 It holds
 \begin{equation}\label{eq.per.scuole.medie}
J^1(\R^2,1)\simeq\overline{\coprod_{x\in\R}C^1(\R)/\sim_x^1}\, .
\end{equation}
Indeed, if we let $(x,u,u_1)\in J^1(1,1)$  and choose a function $f\in C^1(\R)$ such that $f(x)=u$ and $f'(x)=u_1$, then $[f]_x^1$ is well--defined.
The space $J^1(1,1)$ may be dubbed ``the space of Taylor series coefficients of order $\leq 1$'' and it is an open dense subset of $J^1(\R^2,1)$.

\smallskip\noindent
There is a natural projection $J^1(\R^2,1)\longrightarrow\Graff(1,\R^2)$,
where
$$
\Graff(1,\R^2)= \{ \ell\mid\ell\textrm{ is an $1$--dimensional affine subspace of }\R^2\}\,.
$$

Let us denote by $\vec{\ell}=(\x,\ell)$
an element of $J^1(\R^2,1)$. To have an efficient description of the tangent bundle $TJ^1(\R^2,1)$, one should understand what object is the ``infinitesimal'' element $\delta\vec{\ell}$ of $J^1(\R^2,1)$. Roughly,
$\delta\vec{\ell}=(\delta\x,\delta\ell)$.
More precisely,
\begin{equation}\label{eqTangSpaceRPTR}
T_{\vec{\ell}}(\R^2\times\R\p^1)=T_{\x}\R^2\oplus T_\ell(\R\p^1)\, .
\end{equation}
A fundamental observation concerning \eqref{eqTangSpaceRPTR} is that it contains the canonical subspace
\begin{equation}\label{eqPrimaDEfCi}
T_{\x}\R^2\oplus T_\ell(\R\p^1)\supset\ell\oplus T_\ell(\R\p^1)=:\CC_{\vec{\ell}}\, .
\end{equation}
Now we can specialise Proposition \ref{prop.HpContact} to the present context.
\begin{proposition}\label{prop.J1R2cont}
The correspondence
$$
\vec{\ell}\in J^1(\R^2,1) \longmapsto \CC_{\vec{\ell}}\in T_{\vec{\ell}}J^1(\R^2,1)
$$
is a contact distribution (of dimension $2$) on $J^1(\R^2,1)$.
\end{proposition}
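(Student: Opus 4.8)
The plan is to deduce Proposition~\ref{prop.J1R2cont} directly from Proposition~\ref{prop.HpContact}, since $J^1(\R^2,1)=\p T^*\R^2$ is precisely the projectivised cotangent bundle of $E=\R^2$ (an $(n+1)$--dimensional manifold with $n=1$), and the distribution $\CC_{\vec\ell}$ defined in \eqref{eqPrimaDEfCi} is by construction the pullback $(d\pi)_{\ell}^{-1}(\ell)$ appearing in \eqref{eq.defCHp}. Thus the statement is, in principle, a corollary; what remains is to make the identifications transparent and to exhibit the contact form explicitly in the coordinates adapted to the $\R^2\times\R\p^1$ picture, so that the reader sees why the canonical subspace \eqref{eqPrimaDEfCi} is exactly the contact hyperplane of Proposition~\ref{prop.HpContact}.

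First I would fix standard coordinates: write a point of $\R^2$ as $(x,u)$, so that $T_\x\R^2=\Span{\partial_x,\partial_u}$, and coordinatise $\R\p^1$ near a generic line $\ell$ by the slope $u_1$, i.e.\ $\ell=\Span{\partial_x+u_1\partial_u}$ (this is the usual affine chart on $\p T^*\R^2$, valid away from the ``vertical'' direction; by symmetry the complementary chart is handled the same way). In these coordinates $(x,u,u_1)$ the projection $\pi:J^1(\R^2,1)\to\R^2$ is $(x,u,u_1)\mapsto(x,u)$, so $(d\pi)_{\vec\ell}^{-1}(\ell)$ is spanned by any lift of $\partial_x+u_1\partial_u$ together with the vertical direction $\partial_{u_1}$; that is, $\CC_{\vec\ell}=\Span{\,\partial_x+u_1\partial_u,\ \partial_{u_1}\,}$, matching \eqref{eqPrimaDEfCi} after the identification $T_\ell(\R\p^1)=\Span{\partial_{u_1}}$. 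Hence $\CC_{\vec\ell}=\ker\theta_{\vec\ell}$ with
\begin{equation*}
\theta=du-u_1\,dx\,,
\end{equation*}
exactly as in \eqref{eqThetaPTStarM} specialised to $n=1$.

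It then remains only to check complete non--integrability of this rank--$2$ distribution on the $3$--dimensional manifold $J^1(\R^2,1)$, i.e.\ condition \eqref{eq.dual.codim.1.non.int} with $m=1$: one computes $d\theta=dx\wedge du_1$, so
\begin{equation*}
\theta\wedge d\theta=(du-u_1\,dx)\wedge dx\wedge du_1=du\wedge dx\wedge du_1\neq 0\,,
\end{equation*}
a nowhere--vanishing top form. By Definition~\ref{defContMan} this shows $(J^1(\R^2,1),\CC)$ is a contact manifold, which is the claim. The only mild subtlety — the ``main obstacle'', though it is more a bookkeeping point than a genuine difficulty — is that the slope chart on $\R\p^1$ misses one point of each fibre, so strictly speaking one must either repeat the computation in the second affine chart $u=$ slope of $x$ as a function of $u$ (where by the same reasoning $\theta=dx-\tilde u_1\,du$ and again $\theta\wedge d\theta\neq 0$), or invoke that complete non--integrability is a pointwise condition checked chart by chart; either way the two charts cover $J^1(\R^2,1)$ and the conclusion is global.
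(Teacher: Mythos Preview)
Your proof is correct and follows essentially the same approach as the paper: identify $\CC_{\vec\ell}$ with $(d\pi)^{-1}(\ell)$, write it in the slope coordinates as $\ker(du-u_1\,dx)$, and conclude that this is a contact form. The only differences are cosmetic: you explicitly verify the non--integrability condition $\theta\wedge d\theta\neq 0$, whereas the paper simply recognises $du-u_1\,dx$ as the Darboux normal form; and you address the second affine chart, which the paper omits in keeping with its earlier warning that global/topological aspects are not a concern.
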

Before commencing the proof of Proposition \ref{prop.J1R2cont}, let us further inspect the tangent geometry of $J^1(\R^2,1)$. Let $
\x=(x,u)$
be coordinates on $\R^2$. Then
$\X(\R^2)=\Span{\partial_x,\partial_u}$, and
$
T_{\x}\R^2=\Span{\partial_x|_{\x},\partial_u|_{\x}}
$.
Accordingly,
 \begin{equation}\label{eqCorrdSbaglSuJ121}
 \vec{\ell} = (\x,\ell)=(x,u,l_1,l_2)\, ,
\end{equation}
where $\ell=(l_1,l_2)$.
Observe that \eqref{eqCorrdSbaglSuJ121} are not coordinates, for there is ambiguity in the $l_i$'s. However,
\begin{equation}\label{eqCoordOnJ1}
J^1(\R^2,1)=\overline{\{(x,u,1,u_1)\}}\stackrel{\textrm{loc.}}{\equiv} \{(x,u,u_1)\}=J^1(1,1)\, ,
\end{equation}
(see also \eqref{eq.per.scuole.medie}) where (see Fig. \ref{fig.TgAlfa})
\begin{equation}\label{eqDEfPi}
u_1:=\frac{l_2}{l_1}=\tan\alpha\, .
\end{equation}
\begin{figure}
{\epsfig{file=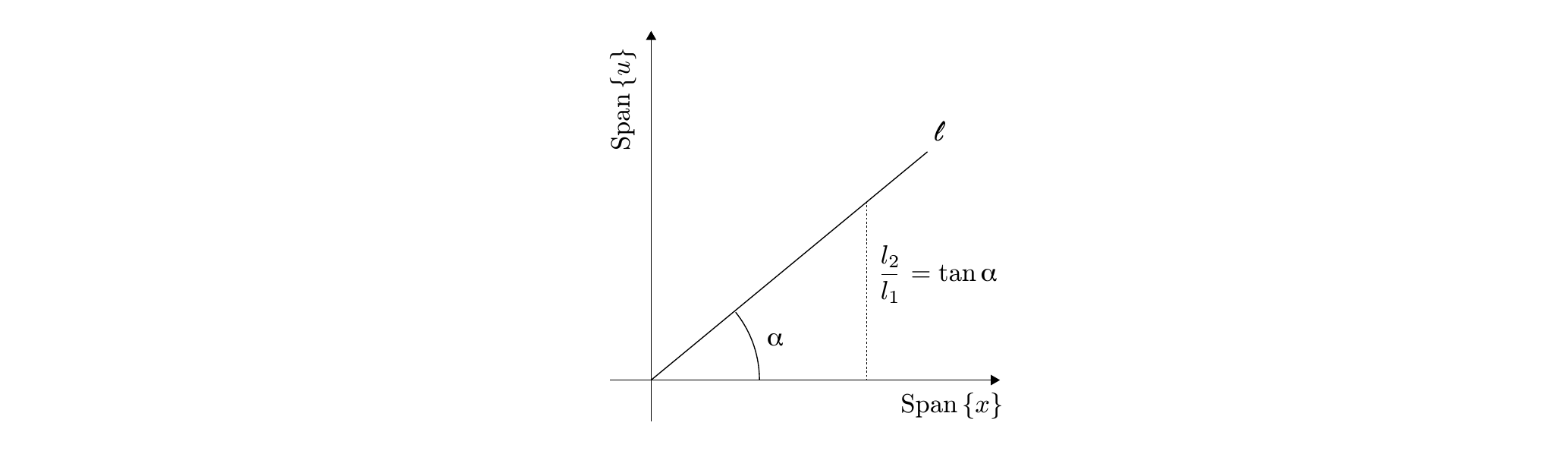,width=\textwidth}}
\caption{Save   for the vertical, all lines in $\R^2=\{(x,u)\}$ are unequivocally labeled  by $\tan\alpha$.\label{fig.TgAlfa}}
\end{figure}
Let us recall the definition of the affine neighborhood centred at $\ell$ (see \eqref{eq.affine.neig.1})
\begin{equation*}
\Hom\left(\ell,\frac{\R^2}{\ell}\right)\stackrel{\textrm{non--canonically}}{\equiv}\ell^*\otimes\ell^\perp\stackrel{\textrm{open and dense}}{ \subset }\R\p^1\, ,
\end{equation*}
and the corresponding embedding:
\begin{equation*}
\ell^*\otimes\ell^\perp\ni h\longmapsto\Graph(h):=\ell+h(\ell)\, .
\end{equation*}
(Indeed, the zero element of $\ell^*\otimes\ell^\perp$ is mapped precisely into $\ell$.)
Observe that, in particular, for the line $ \Span{x}$, one has
\begin{equation*}
T_\ell \R\p^1 \equiv T_\ell \Hom\left(\Span{x},\frac{\R^2}{\Span{x}}\right)\stackrel{\textrm{}}{\equiv}\Span{x}^*\otimes\Span{u}\, .
\end{equation*}
This means that $\ell$ is identified with a homomorphism
$
h:\Span{x}\longrightarrow\Span{u}
$,
 and then   an ``infinitesimal deformation'' $\delta\ell$  of $\ell$ is identified with \emph{another} homomorphism
$
\delta h:\Span{x}\longrightarrow\Span{u}
$,
 in the sense that $\delta h$   is tangent to the curve of homomorphisms
\begin{equation*}%
\epsilon\longmapsto h+\epsilon\delta h\equiv\Graph(h+\epsilon\delta h)=\Span{x+h(x)+\epsilon \delta h(x)}.
\end{equation*}
Observe that the identification line$\leftrightarrow$homomorphism is necessary, because $\ell+\epsilon\delta\ell$ makes no sense, whereas $h+\epsilon\delta h$ does.
\begin{remark}
 If $\ell$ belongs to the affine neighborhood
 \begin{equation}\label{eqAffNeighUno}
\Span{x}^*\otimes\Span{u}\, ,
\end{equation}
then the homomorphism $h$ corresponding to $\ell$ is precisely the number $u_1$ defined by \eqref{eqDEfPi}. Accordingly, $\delta h=\delta u_1$, and thus $\partial_{u_1}|_\ell$ is the natural generator of $T_\ell\R\p^1$.
This means that, if $f\in C^\infty(\R\p^1)$, then
\begin{equation*}
\partial_{u_1}|_\ell(f)=\left.\frac{\dd}{\dd\epsilon}\right|_{\epsilon=0}f(\Graph(h+\epsilon\delta h))\, .
\end{equation*}
\end{remark}
\begin{proof}[Proof of Proposition \ref{prop.J1R2cont}]
Directly from \eqref{eqPrimaDEfCi} it follows that
\begin{equation*}
\CC_{\vec{\ell}}=\Span{(\partial_x+u_1(\ell)\partial_u)_{\x}}\oplus\Span{\partial_{u_1}|_\ell}=\Span{(\partial_x+u_1 \partial_u)_{(\x,\ell)},\partial_{u_1}|_{(\x,\ell)}}=\Span{\partial_x+{u_1} \partial_u ,\partial_{u_1}}_{\vec{\ell}}
\end{equation*}
Let
$\pi:J^1(\R^2,1)\simeq\R^2\times\R\p^1\to\R^2$
be the projection on the first factor. Then
$d_{\vec{\ell}}\pi\,(\CC_{\vec{\ell}})=\ell$,
and actually
\begin{equation*}
\CC_{\vec{\ell}}=(d_{\vec{\ell}}\pi)^{-1}(\ell)\,,
\end{equation*}
which is coherent with Proposition \ref{prop.HpContact}.
Let now $\ell^\perp=(-u_1,1)$,
i.e.,
\begin{equation*}
\ell=\ker(du-u_1dx)_{\x}\, .
\end{equation*}
Then the covector
$\theta_{\vec{\ell}}:=(d_{\vec{\ell}}\pi)^*(du-u_1dx)_{\x}=(du-u_1dx)_{\x}\circ (d_{\vec{\ell}}\pi)$
is such that $\CC_{\vec{\ell}}=\ker \theta_{\vec{\ell}}$.
The field of covectors $\vec{\ell}\longmapsto\theta_{\vec{\ell}}$
defines the contact $1$--form $\theta $.
Obviously, $\theta=du-u_1dx$, i.e., a contact form,
so that $J^1(\R^2,1)$ is a three--dimensional contact manifold.
\end{proof}

\subsubsection{The jet space $J^1(\R^3,2)$}\label{sub.JUno21}

We introduce now the $5$--dimensional contact manifold
\begin{equation}\label{eqJayUnoDueUno}
J^1(\R^3,2):=\p T^*\R^3=\Gr(2,T\R^3)=\R^3\times\R\p^2
\end{equation}
with the $4$-dimensional contact distribution
\begin{equation}\label{eqPrimaDEfCi4D}
\CC_{\vec{H}}:=H\oplus T_H(\R\p^2)\subset T_{\x}\R^3\oplus T_H(\R\p^2)\,,
\end{equation}
at the point
$
\vec{H}:=(\x,H)=(\x,\ker\alpha)=(\x,[\alpha])
$,
with $\alpha\in T_{\x}^*\R^3$. To see that \eqref{eqPrimaDEfCi4D} is actually a contact distribution, we need the fact that the $2$--dimensional subspaces $H\subset\R^3$ are in one--to--one correspondence with the lines $H^\perp\subset\R^3$, which in turn can be identified with equivalence classes $[\alpha]\in\p(\R^3)$ of covectors on $\R^3$. We also need the
affine neighborhood
$
\Span{x^1,x^2}^*\otimes\Span{u}\subset\R\p^2
$,
identifying
\begin{equation*}
\Span{x^1,x^2}^*\otimes\Span{u}\ni h\equiv(u_1,u_2)\leftrightarrow \Graph(h)=\{ (x^1,x^2,u)\mid u=u_1x^1+u_2x^2\}\, .
\end{equation*}
Observe that
$
\Graph(h)=\Span{x^1+u_1u,x^2+u_2u}
$.
Since $\Span{x^1,x^2}^*\otimes\Span{u}$ is linear,
\begin{equation*}
T_{H}(\Span{x^1,x^2}^*\otimes\Span{u})\equiv \Span{x^1,x^2}^*\otimes\Span{u}\,.
\end{equation*}
In view of the natural projection
$\pi:\R^3\times\R\p^2\longrightarrow\R^3$,
one can define also $\CC_{\vec{H}}$ as follows:
\begin{equation*}
\CC_{\vec{H}}:=(d_{\vec{H}}\pi)^{-1}\pi(H)\, ,
\end{equation*}
which is in accordance with Proposition \ref{prop.HpContact}.
Let now $H\equiv(u_1,u_2)$, in the sense that
$
H=\Graph(h)=\Span{x^1+u_1u,x^2+u_2u}
$.
Then it is easy to see that $H=\ker\alpha$, where
$
\alpha=du-u_1dx^1-u_2dx^2
$.
To see this, just think of the matrix
\begin{equation*}
\left(\begin{array}{ccc}1 & 0 & u_1 \\0 & 1 & u_2 \\-u_1 & -u_2 & 1\end{array}\right)\, .
\end{equation*}
Then
$
\CC_{\vec{H}}=\ker(\alpha\circ d_{\vec{H}}\pi)=\ker(du-u_1dx^1-u_2dx^2)
$, i.e., the distribution $\CC$ is the kernel of a contact $1$--form.

\subsection{Homogeneous contact manifolds}

In this section we provide the general definition of a  homogeneous contact manifold (see Definition \ref{defHomContMan} below). In \ref{J^1_homog} we illustrate it by developing in detail the very special case of the three--dimensional contact manifold $J^1(\R^2,1)$. Its ``compact analog'', that is the projectivised cotangent bundle of the projective plane, is dealt with in \ref{subsubProj3d}. This example is important, because   its multi--dimensional analog will be made use of in the last  Section \ref{secVeryClear}.

For a Lie group $G$ and a closed subgroup $H\subset G$, the space $G/H$ of left cosets is called a \emph{homogeneous space}.  Any group action $G\times M\to M$, $(g, p)\mapsto g\cdot p$,   identifies the orbit $G \cdot p=\{g\cdot p: g\in G\}$ with the homogeneous space $G/G_p$, where the closed subgroup
$G_p:=\{ g\in G: g\cdot p=p\}$ is called the \emph{stabilizer}, or  \emph{isotropy subgroup}, of $p\in M$ in $G$. If $G\cdot p= M$, the action is called \emph{transitive}, and  $M$ is called a \emph{homogeneous manifold}.

Recall that $\GL_n$ denotes the general linear group of (real) invertible $n$ by $n$ matrices, which acts by linear transformations on $\mathbb{R}^n$, $(A,\textbf{x})\mapsto A\textbf{x}$. Its subgroups
$$\SL_n:=\{A\in\GL_n\vert\, \mathrm{det}(A)=1\}$$
and
$$\mathsf{O}_n:=\{A\in\GL_n\vert\, AA^t=1\}$$
are called  the \emph{special linear group} and the \emph{orthogonal group}, respectively. The quotient
$$\PGL_n=\GL_n/\left\langle\mathbb{I}_n\right\rangle$$ of the general linear group by its center (consisting of all scalar multiples of the identity matrix) is called the \emph{projective linear group}.  We shall also consider the \emph{affine group}
$$\mathsf{Aff}_n:=\GL_n\ltimes\mathbb{R}^n=\left\{ \left(\begin{array}{c|c}B & v   \\\hline 0 & 1   \end{array}\right)\vert\, B\in\GL_n, \, v\in\mathbb{R}^n\right\},$$
which acts naturally on $\mathbb{R}^n$ by affine motions $(A,\textbf{x})\mapsto B\textbf{x}+v$, and its subgroup of \emph{Euclidean motions}
$$\mathsf{E}_n:=\mathsf{O}_n\ltimes\mathbb{R}^n=\left\{ \left(\begin{array}{c|c}B & v   \\\hline 0 & 1   \end{array}\right)\vert\, B\in\mathsf{O}_n, \, v\in\mathbb{R}^n\right\}.$$

The Lie algebras of these groups will be denoted by $\mathfrak{gl}_n$, $\mathfrak{sl}_n$, $\mathfrak{o}_n$, $\mathfrak{aff}_n$, $\mathfrak{e}_n$, respectively.
\begin{example}
The spaces $\mathbb{R}^2$ and $\mathbb{R}\mathbb{P}^2$ are naturally homogeneous manifolds.
\begin{enumerate}
\item
Evidently, the natural actions of $\mathsf{E}_2$and $\mathsf{Aff}_2$ on $\mathbb{R}^2$ are transitive and we have
\begin{equation*}
\R^2=\mathsf{E}_2/\mathsf{O}_2=\mathsf{Aff}_2/\GL_2.
\end{equation*}
\item The standard action of the special linear group $\mathsf{SL}_3$ on $\mathbb{R}^3$ induces a transitive action on $\mathbb{R}\mathbb{P}^2$  and we can identify
\begin{equation*}
\mathbb{R}\mathbb{P}^2=\mathsf{SL}_3/P\, ,
\end{equation*}
where $P\subset \mathsf{SL}_3$ is the subgroup stabilizing a line $\ell\subset\mathbb{R}^3$.
\end{enumerate}
\end{example}

\begin{definition}\label{defHomContMan}
A \emph{homogeneous contact manifold} is a contact manifold $(M,\mathcal{C})$ that admits a transitive group action by contact transformations.
\end{definition}

\subsubsection{$J^1(\R^2,1)$ as a homogeneous contact manifold}\label{J^1_homog}

The following proposition shows that the action of $\Aff(2)$ on $\R^2$ lifts to a transitive action  on $J^1(\R^2,1)$ (see also Section \ref{sub.Graff1R2}). In particular, this realizes $J^1(\R^2,1)$ as a homogeneous contact manifold.

\begin{proposition}
The natural action of $\Aff_2$ on $\R^2$ lifts to a transitive action by contact transformations on $J^1(\R^2,1)$ and we have
\begin{equation*}
J^1(\R^2,1)=\R^2\times\R\p^1=\Aff_2/(\Aff_2)_{\vec{\ell}}\, ,
\end{equation*}
where the isotropy subgroup
$\Aff(2)_{\vec{\ell}}$ is isomorphic to the  subgroup
$$P=\left\{ \left(\begin{array}{cc}\ast & \ast \\0 & \ast\end{array}\right)\in\GL_2\right\}$$
of upper--triangular matrices in $\GL_2$.
\end{proposition}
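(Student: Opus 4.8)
The plan is to verify the three assertions in turn: (i) the $\Aff_2$--action on $\R^2$ lifts to $J^1(\R^2,1)$, (ii) the lifted action is by contact transformations and is transitive, and (iii) the stabilizer of a point $\vec\ell$ is the subgroup $P$ of upper--triangular matrices. First I would observe that any diffeomorphism $F\colon\R^2\to\R^2$ acts on $J^1(\R^2,1)=\Gr(1,T\R^2)$ by sending $\vec\ell=(\x,\ell)$ to $(F(\x),\,(d_\x F)(\ell))$; this is just the natural action of $\Diff(\R^2)$ on tangent lines, so the lift exists for free. Since $\Aff_2$ acts on $\R^2$ by $\x\mapsto B\x+v$, the differential $d_\x F$ at every point is the constant linear map $B\in\GL_2$, so the lifted action reads $(\x,\ell)\mapsto(B\x+v,\,B\ell)$ in the notation of Section~\ref{sub.Graff1R2}.

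For the contact condition, recall from Proposition~\ref{prop.J1R2cont} (and~\eqref{eqPrimaDEfCi}) that $\CC_{\vec\ell}=\ell\oplus T_\ell(\R\p^1)$, defined intrinsically from the tangent line $\ell$ together with the vertical directions of $\pi\colon J^1(\R^2,1)\to\R^2$. Since the lifted map $\widetilde F$ covers $F$ (i.e. $\pi\circ\widetilde F=F\circ\pi$), its differential carries vertical vectors to vertical vectors and carries $\ell$ to $B\ell$; hence $d\widetilde F(\CC_{\vec\ell})=B\ell\oplus T_{B\ell}(\R\p^1)=\CC_{\widetilde F(\vec\ell)}$, so $\widetilde F$ preserves $\CC$. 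Equivalently, one can check directly that the pullback of $\theta=du-u_1dx$ under the induced map on $(x,u,u_1)$--coordinates is a conformal multiple of $\theta$: under $x\mapsto ax+bu+e$, $u\mapsto cx+du+f$ the coordinate $u_1=du/dx$ transforms by the Möbius rule $u_1\mapsto (c+du_1)/(a+bu_1)$, and a short computation shows $\theta$ scales by $(a+bu_1)^{-1}$. Transitivity is then immediate: $\Aff_2$ is already transitive on $\R^2$, and the stabilizer $\GL_2$ of, say, the origin acts on the fiber $\R\p^1=\p(T_0\R^2)$ by its standard projective action, which is transitive on lines; composing these two moves connects any $\vec\ell$ to any $\vec\ell'$.

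It remains to identify the isotropy subgroup. Fix the basepoint $\vec\ell_0=(0,\Span{\partial_x})$, i.e. the origin together with the horizontal line (in coordinates $(x,u,u_1)=(0,0,0)$). An element $\left(\begin{smallmatrix}B & v\\ 0 & 1\end{smallmatrix}\right)$ fixes $\vec\ell_0$ iff $v=0$ (to fix the origin) and $B$ fixes the line $\Span{\partial_x}\subset\R^2$, i.e. $B$ is upper--triangular with respect to the basis $(\partial_x,\partial_u)$. This is exactly the group $P=\left\{\left(\begin{smallmatrix}\ast & \ast\\ 0 & \ast\end{smallmatrix}\right)\in\GL_2\right\}$, and the identification $J^1(\R^2,1)=\Aff_2/P$ follows from the orbit--stabilizer principle recalled just before Definition~\ref{defHomContMan}. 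I do not anticipate a serious obstacle here; the only point requiring a little care is bookkeeping the Möbius transformation law for $u_1$ and confirming the conformal factor is nowhere zero on the relevant chart (it vanishes precisely where the image line becomes vertical, i.e. outside the affine neighborhood~\eqref{eqAffNeighUno}), which is consistent with the fact that the lifted action is globally defined on $J^1(\R^2,1)=\R^2\times\R\p^1$ even though the coordinate $u_1$ is not.
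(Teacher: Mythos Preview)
Your proof is correct and follows essentially the same route as the paper: define the lift via the tangent action $(\x,\ell)\mapsto(B\x+v,B\ell)$, note that transitivity on $\R^2\times\R\p^1$ is evident, compute the stabilizer of $(0,\Span{\partial_x})$ directly, and argue that the contact structure is preserved because the construction of $\CC$ is natural. One small slip: the conformal factor in your coordinate check should be $\det B/(a+bu_1)$ rather than $(a+bu_1)^{-1}$ (cf.\ the computation just after the proposition in the paper), though this does not affect the argument since $\det B\neq 0$.
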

\begin{proof}
Let us block--represent $A\in \Aff_2$ as
\begin{equation}\label{Aff2}
\left(\begin{array}{c|c}B & v   \\\hline 0 & 1   \end{array}\right)\, .
\end{equation}
Then the action on $J^1(\R^2,1)=\R^2\times\R\p^1$ is given by
\begin{equation}\label{eqActionOfA}
\vec{\ell}=(\x,\ell)\stackrel{A}{\longmapsto}(B\cdot\x+v, B\cdot \ell),
\end{equation}
which is evidently transitive on.
To compute the stabilizer of an element $\vec{\ell}=(\x,\ell)$ in $\Aff_2$ we note that
$A\vec{\ell}=\vec{\ell}$ if and only if $B\cdot\x+v=\x\, \textrm{ and } B\cdot \ell=\ell$.
Since the action is transitive, we can choose $\ell=\Span{(1,0)}$ and $\x=(0,0)$, so that the stabilizer is given by $A$  as in \eqref{Aff2} with $B$ of the form
\begin{equation*}
B=\left(\begin{array}{cc}\ast & \ast \\0 & \ast\end{array}\right)
\end{equation*}
and $v=0$. It follows that
\begin{equation*}
\Aff_2/(\Aff_2)_{\vec{\ell}}=
\R^2\times (\GL(2)/P)=\R^2\times\R\p^1=J^1(\R^2,1)\, .
\end{equation*}
Since the lift is natural, the lifted action preserves the contact structure on $J^1(\R^2,1)$.
\end{proof}

\begin{remark}
The affine motions  $\Aff_2$ are our first example of \emph{point transformations} (see also Definition \ref{def.point.trans} later on).
\end{remark}
Let us now describe the action of $A$ in terms of the coordinates \eqref{eqCoordOnJ1}. Directly from \eqref{eqActionOfA} it follows that
\begin{equation}\label{eqAltraAzioneDiAA}
(x,u,u_1)\stackrel{A}{\longmapsto}\left(B\cdot\x+v, \frac{b_2^1+b_2^2u_1}{b_1^1+b_1^2u_1}\right)\,.
\end{equation}
Recall that the Lie algebra of $\Aff_2$ is given by
\begin{equation*}
\aff_2=\Span{\partial_x,\partial_u,x\partial_x,x\partial_u,u\partial_x,u\partial_u}=\R^2\oplus\gll_2.
\end{equation*}
We  now wish to represent $\aff_2$ as an algebra of vector fields on $J^1(\R^2,1)$.
The ``long way''  to do that is:
\begin{equation*}
X\Rightarrow\textrm{ flow }\{A_t\}\Rightarrow\textrm{ prolongation }\{\widetilde{A}_t\}\Rightarrow\widetilde{X}:=\left.\frac{\dd}{\dd t}\right|_{t=0}\widetilde{A}_t^*.
\end{equation*}
\begin{example}
 Starting with $X=\partial_x$, we obtain
 \begin{equation*}
A_t= \left(\begin{array}{c|c}\mathrm{id} & \left(\stackrel{t}{0}\right) \\ \hline 0 & 1\end{array}\right)\,,
\end{equation*}
whence
$(x,u,u_1)\stackrel{A_t}{\longmapsto} (x+t,u,u_1)$,
and finally
\begin{equation}\label{eqEsempioDelCazzo}
\widetilde{\partial_x}=\partial_x\, .
\end{equation}
\end{example}
\begin{example}
 Starting with $X=x\partial_x$, we obtain
 \begin{equation*}
A_t= \left(\begin{array}{c|c} \exp \left(\begin{array}{cc}t & 0 \\0 & 0\end{array}\right) &0\\\hline 0 & 1\end{array}\right)\,,
\end{equation*}
whence
\begin{equation*}
(x,u,u_1)\stackrel{A_t}{\longmapsto} \left( \left(\begin{array}{cc}e^t & 0 \\0 & 1\end{array}\right)\left(\begin{array}{c}x \\u\end{array}\right)   , e^{-t}u_1\right)\,,
\end{equation*}
and, finally,
\begin{equation}
\widetilde{x\partial_x}=x\partial_x-u_1\partial_{u_1}\, .\label{eqEsempioDelCazzo2}
\end{equation}
\end{example}
There is also the ``smart way''.  First, note that for  $X\in\aff_2$ the lift $\widetilde{X}\in\X(J^1(\R^2,1))$ is projectable, i.e.,  $\widetilde{X}\stackrel{\textrm{loc.}}{=}X+f\partial_{u_1}$.
On the other hand,  the lifted field $ \widetilde{X}$ must be a symmetry of $\CC$. Indeed, we can also verify  directly from \eqref{eqAltraAzioneDiAA} that
\begin{equation*}
A^*(x) =  b_1^1x+b_1^2u\, ,\quad
A^*(u) =  b_2^1x+b_2^2u\, ,\quad
A^*(p) = \frac{ b_2^1+b_2^2u_1}{b_1^1+b_1^2u_1}\, ,
\end{equation*}
whence
\begin{equation*}
A^*(du-u_1dx)=\frac{\det B}{b_{11}+b_{21}u_1}(du-u_1dx)\, .
\end{equation*}
This allows us to determine $\widetilde{X} = X+f\partial_{u_1}$ as follows. Let
$X=g\partial_x+h\partial_u$.
Then
 \begin{equation*}
L_{\widetilde{X}}(du-u_1dx)=(h_x-u_1g_x-f)dx+(h_u-u_1g_u)du
\end{equation*}
is proportional to $du-u_1dx$ if and only if
\begin{equation*}
h_x-u_1g_x-f=-u_1(h_u-u_1g_u)\, .
\end{equation*}
\begin{conclusion}\label{conConlcusioneSollevamento}
The lift  of  $X=g\partial_x+h\partial_u $ to $J^1(\R^2,1)$ is given by $\widetilde{X} = X+f\partial_{u_1}$, where
\begin{equation*}
f=(\partial_x+u_1\partial_u)(h)-u_1(\partial_x+u_1\partial_u)(g)\, .
\end{equation*}
\end{conclusion}
\begin{example}
 In the case $X=\partial_x$ we have $g=1$ and $h=0$, whence $f=0$ (i.e., \eqref{eqEsempioDelCazzo}). In the case $X=x\partial_x$ we have $g=x$ and $h=0$, whence $f=-u_1$ (i.e., \eqref{eqEsempioDelCazzo2}).
\end{example}
\subsubsection{The homogeneous contact manifold $\mathbb{P}T^*\mathbb{R}\mathbb{P}^2$}\label{subsubProj3d}
Finally, we observe that also $\mathbb{P}T^*\mathbb{R}\mathbb{P}^2$ is naturally a homogeneous contact manifold.
\begin{proposition}\label{propPTRP2homogeneous}
The natural action of $\mathsf{SL}_3$ on $\mathbb{R}\mathbb{P}^2$ lifts to a transitive action by contact transformations on $\mathbb{P}T^*\mathbb{R}\mathbb{P}^2$. We have
\begin{equation*}
\mathbb{P}T^*\mathbb{R}\mathbb{P}^2=\mathsf{SL}_3/Q,
\end{equation*}
where
$$Q=\left\{ \left(\begin{array}{ccc} \ast & \ast&\ast  \\  0 & \ast&\ast\\0&0&\ast   \end{array}\right)  \right\}\subset \mathrm{SL}_3.$$
\end{proposition}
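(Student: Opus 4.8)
The plan is to mimic the argument already given for $J^1(\mathbb{R}^2,1)=\Aff_2/P$, but now in the projective rather than affine category, and using Proposition \ref{prop.HpContact} to get the contact structure for free. First I would recall that $\mathbb{P}T^*\mathbb{RP}^2$ is, by definition, the bundle of tangent hyperplanes (here, tangent lines) to $\mathbb{RP}^2$, equivalently the space of flags $(\ell,H)$ with $\ell\subset\mathbb{R}^3$ a line, $H\subset\mathbb{R}^3$ a plane and $\ell\subset H$; such a flag records a point $[\ell]\in\mathbb{RP}^2$ together with a line through it in $T_{[\ell]}\mathbb{RP}^2\simeq H/\ell$. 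Since $\mathsf{SL}_3$ acts on $\mathbb{R}^3$ preserving incidence, it acts on the set of such complete flags, and the induced action on $\mathbb{P}T^*\mathbb{RP}^2$ is by contact transformations because it is the lift of a diffeomorphism of $\mathbb{RP}^2$ (any diffeomorphism of the base lifts to a contactomorphism of $\mathbb{P}T^*$ of the base, as the contact distribution $\mathcal{C}_{H_p}=(d\pi)^{-1}_{H_p}(H_p)$ of Proposition \ref{prop.HpContact} is defined purely in terms of the bundle projection $\pi$).

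Next I would prove transitivity. A point of $\mathbb{P}T^*\mathbb{RP}^2$ is a complete flag $0\subset V_1\subset V_2\subset\mathbb{R}^3$ with $\dim V_i=i$. Given two such flags, pick bases adapted to each; the change-of-basis matrix sends one to the other, and after rescaling one column we may assume it lies in $\mathsf{SL}_3$ (the determinant is nonzero, and $\mathsf{SL}_3$ still acts transitively on complete flags since we can absorb a scalar into any one basis vector without disturbing the flag it spans). Hence the action is transitive, and $\mathbb{P}T^*\mathbb{RP}^2\cong\mathsf{SL}_3/Q$ where $Q$ is the stabilizer of a chosen flag.

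It then remains to identify $Q$ explicitly. Choosing the standard flag $\Span{e_1}\subset\Span{e_1,e_2}\subset\mathbb{R}^3$, a matrix $g\in\mathsf{SL}_3$ stabilizes it iff $g e_1\in\Span{e_1}$ and $g e_2\in\Span{e_1,e_2}$, i.e. iff $g$ has zeros below the diagonal: $g\in Q$ is exactly the group of upper-triangular matrices in $\mathsf{SL}_3$, which is the asserted description. I would close by remarking that this $Q$ is a closed subgroup (it is the zero set of the two polynomial equations $g_{21}=g_{31}=g_{32}=0$ intersected with $\mathsf{SL}_3$), so the quotient is a genuine homogeneous space, and that the resulting $\mathsf{SL}_3$-action on $\mathbb{P}T^*\mathbb{RP}^2$ is the promised transitive action by contactomorphisms, which is exactly what Definition \ref{defHomContMan} requires.

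The only mildly delicate point — hardly an obstacle — is making sure the lift of the $\mathbb{RP}^2$-action to $\mathbb{P}T^*\mathbb{RP}^2$ really is by contact transformations; but this is immediate from the functoriality of the construction in Proposition \ref{prop.HpContact}, since $\mathcal{C}$ is built from $d\pi$ alone and $\pi$ is $\mathsf{SL}_3$-equivariant. Everything else is bookkeeping with flags and block-triangular matrices, entirely parallel to the $\Aff_2$ computation carried out above for $J^1(\mathbb{R}^2,1)$.
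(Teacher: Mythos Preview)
Your proof is correct and follows essentially the same route as the paper: identify $\mathbb{P}T^*\mathbb{RP}^2$ with the manifold of complete flags in $\mathbb{R}^3$, deduce transitivity of the $\mathsf{SL}_3$-action, read off $Q$ as the stabilizer of the standard flag $\langle e_1\rangle\subset\langle e_1,e_2\rangle$, and invoke naturality of the construction in Proposition~\ref{prop.HpContact} for the contact-preservation claim. Your version simply spells out in more detail what the paper leaves as a one-line sketch.
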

\begin{proof}
To see that the action is transitive, note that $\mathbb{P}T^*\mathbb{R}\mathbb{P}^2$ can be identified with the \emph{flag manifold}
$$
\left\{ (\ell, V)\,\,\vert\,\, \ell\subset V\subset\mathbb{R}^3\right\}$$ of lines $\ell$ contained in $2$--planes $V$ in $\mathbb{R}^3$.
Then also the claim about the form of the isotropy subgroup follows, since $Q$ is the stabilizer of the flag $\left\langle e_1\right\rangle\subset\left\langle e_1,e_2\right\rangle\subset\mathbb{R}^3$. Since the construction of the contact distribution on the projectivized cotangent bundle $\mathbb{P}T^*\mathbb{R}\mathbb{P}^2$ is natural (see Section \ref{sec.proj.cot.bundle}), the lifted action preserves the contact structure.
\end{proof}
\begin{remark}\label{remPGL3} Similarly, the natural action of $\mathsf{PGL}_3$ on $\mathbb{R}\mathbb{P}^2$ lifts to a transitive action by contact transformations on $\mathbb{P}T^*\mathbb{R}\mathbb{P}^2$.
\end{remark}

\section{(Systems of) PDEs of $1\St$ order}\label{sec1stOrderPDEs}
Since a contact manifold $M$, in virtue of Darboux's theorem, is locally parametrised by $(x^i,u,u_i)$, where $\theta=du-u_idx^i$ is a contact form, it is easy to convince oneself that a hypersurface $\mathcal{F}\subset M$, being cut out by a relation between those coordinates, can be interpreted as a $1\St$ order PDE imposed on the function $u$. The delicate point is to recover the dependency of the \emph{formal parameter} $u$ upon the independent variables $x^i$'s. To this end, one needs to replace the intuitive idea of a function $u=u(x^1,\ldots x^n)$ with the more geometric picture of an \emph{integral submanifold}. Locally, the latter is nothing but the graph of the former. \par
It is very important for the reader to always bear in mind that the distinction between the $x^i$'s, the $u$ and the $u_i$'s, for a general contact manifold, \emph{has no geometric content}, since, e.g.,  a Legendre transformation \eqref{eq.Legendre} may switch the roles of the former ones and the latter ones, even partially (see \eqref{eq.partial.legendre}). If the considered contact manifold has more properties, for instance it is the projectivised cotangent bundle $\mathbb{P}T^*E$ (see Section \ref{sec.proj.cot.bundle}, in particular Remark \ref{rem.Gianni.Giovanni}) then one can require that contact transformations preserve the bundle structure on $E$: this way interchanging $x^i$'s with $u_i$'s is no longer possible.
\begin{remark}
 This is the appropriate moment to stress that our choice of putting contact manifolds at the foundations of the theory of PDEs will reveal its utility only later on this these notes, when certain subtle properties of the prolongation of a contact manifold will be used. Nevertheless, the reader must always be aware that $1\St$ order jets of scalar functions constitute a particular case of a (non--compact) contact manifold (see also Remark \ref{rem.Gianni.Giovanni}) and that, conversely, every contact manifold, in view of the Darboux's theorem \ref{th.Darboux.contact}, contains a jet space as an open sub--contact manifold.
\end{remark}

\subsection{Systems and scalar PDEs of $1\St$ order}\label{sec.system.scalar.pdes}

\begin{definition}\label{def.1stOrderPDEs}
A \emph{scalar $1\St$ order partial differential equation} ($1\St$ order PDE) with one unknown function and
$n$ independent variables is a hypersurface of a $(2n+1)$--dimensional contact manifold $(M,\mathcal{C})$. A \emph{solution} of $\mathcal{F}$
is an $n$--dimensional integral submanifold of $\mathcal{C}$ contained in
$\mathcal{F}$.
\end{definition}
We have just seen (Definition \ref{def.1stOrderPDEs}) that $1\St$ order PDEs can be interpreted as hypersurfaces of contact manifolds.
In order to define  a system of $1\St$ order PDEs one should replace the contact manifold with a more general geometric object. We shall not introduce this most general geometric object here  as it goes beyond the purposes of this paper. Instead, we shall define systems of $1\St$ order PDEs as submanifolds of a Grassmannian bundle. This is a natural generalization of the projectivized cotangent bundle, a particular contact manifold, which we have already studied in detail in Section \ref{sec.proj.cot.bundle}.

To this aim, we define the bundle
\begin{equation}\label{eq.def.J1.E.n}
\pi:J^1(E^{n+m},n):=\Gr(n,TE^{n+m})=\bigcup_{p\in E^{n+m}}\Gr(n,T_pE^{n+m})\to E^{n+m}\,,
\end{equation}
where $E=E^{n+m}$ is an $(n+m)$--dimensional manifold. The manifold $J^1(E^{n+m},n)$ has a canonical distribution that generalizes the contact distribution (see \eqref{eq.defCHp}) in the case $m=1$. Recall that a point $H_p\in \Gr(n,T_pE^{n+m})$ is a $n$--dimensional subspace of $T_pE^{n+m}$. The canonical distribution on $J^1(E^{n+m},n)$ can be defined by
\begin{equation}\label{seconddef} H_p\in\Gr(n,T_pE^{n+m})\to\CC_{H_p}:=(\pi_*)^{-1}(H_p).\end{equation}
There is another way to understand this distribution. We can define it at a point $j^1S(p)\in J^1(E,n)$ (for the definition of $j^1S(p)$, see \eqref{eq.jS}),
where $S$ is an $n$--submanifold of $E$, as
\begin{multline}\label{eq.C.general}
\CC_{j^1S(p)}=\{\text{the smallest vector subspace of $T_{j^1S(p)}J^1(E,n)$ containing }
\\
\text{$T_{j^1S(p)}(im(j^1S))$ for every $n$--dimensional submanifold $S\subset E$}\}\,.
\end{multline}
It is straightforward to see that, in local coordinates \begin{equation}\label{eq.coodinates.J1.general.gianni}
(x^1,\dots,x^n,u^1,\dots,u^m,u^1_1,\dots,u^m_n)=(\x,\u,\u_{i})
\end{equation}
 of $J^1(E^{n+m},n)$, the distribution $\CC$ \eqref{seconddef} is given by
\begin{equation}\label{eq.distr.C.general.local}
\CC=\Span{ \partial_{x^i}+u^j_i\partial_{u^j}\,,\,\partial_{u^j_i} }=: \Span{ D_{x^i}\,,\,\partial_{u^j_i} }\,.
\end{equation}
\begin{definition}\label{def.Cartan.distribution}
The distribution \eqref{seconddef} is called the \emph{Cartan} distribution on $J^1(E^{n+m},n)$.
\end{definition}

\begin{definition}\label{def.system.1stOrderPDEs}
A \emph{system of $m$ $1\St$ order PDEs} with $m$ unknown functions and
$n$ independent variables is a codimension--$m$ submanifold $\mathcal{F}$ of $J^1(E,n)=\Gr(n,TE)$, where $E$ is an $(n+m)$-dimensional manifold. A \emph{solution} of $\mathcal{F}$
is an $n$--dimensional integral submanifold of $\mathcal{C}$ (defined by \eqref{seconddef}--\eqref{eq.C.general}) contained in
$\mathcal{F}$.
\end{definition}
\begin{remark}
We shall consider only systems of PDEs where the number of equations is equal to the number of unknown functions. These systems are called \emph{determined}. A motivation for considering them, among others, is that one can apply the Cauchy--Kowalewskaya theorem (see Section \ref{sezioneZarinista}).
\end{remark}
In terms of coordinates \eqref{eq.coodinates.J1.general.gianni},  a system of $m$ $1\St$ order PDEs $\mathcal{F}$  can be locally described as
$$
\F=\{f^k(\x,\u,\u_{i})=0\}\,,\quad k=1\dots m\,,
$$
with $f^k\in C^\infty(M)$. A solution $N\subset E$  parametrized by
$x^{1},\dots,x^{n}$, can be written as
\begin{equation*}
N\equiv
\begin{cases}
\displaystyle{u^j=\phi^j(x^{1},\dots,x^{n})}\, ,\,\,\,j=1\dots m\,,\\
\\
\displaystyle{u^j_i=\frac{\partial \phi^j}{\partial x^{i}}(x^{1},\dots,x^{n})}\, , \,\,\,i=1\dots n\,,\,\,\,j=1\dots m\,,
\end{cases}
\end{equation*}
where the functions $\phi^j$ satisfy
\[
f^k\left( x^{i},\phi^j,\frac{\partial\phi^j}{\partial x^{i}}\right) =0\,, \,\,\,k=1\dots m\,,
\]
which coincides with the classical notion of a solution.

\medskip

Now we specify definition \eqref{eq.def.J1.E.n} in the case that $E$ has more properties, for instance when $E=N\times M$. This will be useful later, when, for instance, we discuss some example of (systems of) ODEs and for introducing  holonomy equations (see Section \ref{sec.HolEqsJ111}) to construct jets of higher order. To this aim, we introduce, similarly to \eqref{eq.rel.equ.initial}, an equivalence relation. Given two maps $F,G:N\to M$, of class at least $C^1$, and a point $p\in N$, we define
\begin{equation}\label{eq.rel.equ.more.gen}
F\sim_p^1 G\Leftrightarrow T_{(p,F(p))} (\Graph{F})=T_{(p,G(p))} (\Graph{G})\,
\end{equation}
and we denote by $[F]_p^1$ an equivalence class. Such a notation is standard in the study of spaces of differentiable mappings \cite{MR583436}.
\begin{definition}\label{def.Michoriana}
The \emph{space of $1\St$ jets of maps} of $C^1$ functions from $N$ to $M$ is
\begin{equation}\label{eq.J1.N.M}
J^1(N,M):=\{  [F]_p^1\mid F:N\to M\}\, .
\end{equation}
The \emph{$1\St$ jet extension} of $F$ is the map
\begin{equation}\label{eq.j1.F}
j^1F:p\in N\mapsto [F]^1_p\in J^1(N,M)\,.
\end{equation}
\end{definition}
It is easy to see that $J^1(N,M)\subset J^1(N\times M,n)$ is an open subset (cf. also Remark \ref{rem.Gianni.Giovanni}). The map \eqref{eq.j1.F} can  be seen as a particular case of \eqref{eq.jS}.
A special case of \eqref{eq.J1.N.M} is when $N=\R^n$ and $M=\R^m$. In this case we shall use the notation
\begin{equation}\label{eq.J1.n.m}
J^1(n,m):=J^1(\R^n,\R^m)
\end{equation}
(which generalizes definition \eqref{eq.J1.1.1}).

\subsection{Scalar ODEs of $1\St$ order}\label{sec.Scalar.ODEs.order1}
A scalar ODE  of $1\St$ order (in one unknown function), accordingly to Definition \ref{def.1stOrderPDEs}, is a hypersurface of a $3$--dimensional contact manifold. For our purposes, we specify below this definition together with the definition of its solutions in the case when the contact manifold is $J^1(1,1)$. In fact, the space $J^1(1,1)$ (see Section \ref{sub.Graff1R2} for more details) is a  contact manifold suitable for studying local properties of scalar ODEs.
\begin{definition}\label{defDefinizioneEquatione}
A \emph{scalar ODE of $1\St$ order (in one unknown function)} is a hypersurface $\F\subset J^1(1,1)$. The $1\St$ \emph{jet prolongation} of a function $f\in C^1(\R)$ is the curve
\begin{equation}\label{eq.j1f.for.dummies}
j^1f:\R\ni x\longmapsto [f]_x^1=(x,f(x),f'(x))\in J^1(1,1)\, .
\end{equation}
A solution to $\F$ is a function $f\in C^1(\R)$ such that $j^1f$ takes its values in $\F$.
\end{definition}
Observe that $j^1f$ is of class $C^0$, and it is a section of the bundle \begin{equation}\label{eq.bundle.J11.R}
[f]^1_x\in J^1(1,1)\mapsto x\in\R\,.
\end{equation}
\begin{proposition}
 A curve $\gamma:\R\to J^1(1,1)$ is integral of the contact distribution $\CC$ if and only if it is a section of the bundle \eqref{eq.bundle.J11.R} and it is also the $1\St$ jet prolongation $j^1f$ of a function $f\in C^1(\R)$.
\end{proposition}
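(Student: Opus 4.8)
The statement is an ``if and only if'' between two descriptions of the same curves, so the plan is to work in the contact coordinates $(x,u,u_1)$ on $J^1(1,1)$ in which $\theta = du - u_1\,dx$ (Theorem~\ref{th.Darboux.contact}; here $J^1(1,1)$ is the $3$--dimensional contact manifold studied in Section~\ref{sub.Graff1R2}), and compute directly what it means for a curve to annihilate $\theta$. Write $\gamma(t) = (x(t), u(t), u_1(t))$. First I would establish the reverse implication: if $\gamma = j^1 f$ for some $f \in C^1(\R)$, then by \eqref{eq.j1f.for.dummies} we have $\gamma(x) = (x, f(x), f'(x))$, which is visibly a section of the bundle \eqref{eq.bundle.J11.R}, and $\gamma^*\theta = d f(x) - f'(x)\,dx = f'(x)\,dx - f'(x)\,dx = 0$, so $\gamma$ is integral of $\CC = \ker\theta$.

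For the forward implication, suppose $\gamma\colon\R\to J^1(1,1)$ is integral of $\CC$. The first point is that a curve integral of $\CC$ cannot be vertical for the bundle \eqref{eq.bundle.J11.R}: in coordinates $\CC = \Span{\partial_x + u_1\partial_u,\ \partial_{u_1}}$ (cf.\ \eqref{eq.contact.local} or the computation in the proof of Proposition~\ref{prop.J1R2cont}), and a nonzero tangent vector of $\gamma$ lying in $\CC$ has the form $a(\partial_x + u_1\partial_u) + b\,\partial_{u_1}$; if this curve were to fail to be a section of \eqref{eq.bundle.J11.R} at some point, then $\dot x = a$ would vanish there, and one must rule this out. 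The cleanest way is to \emph{assume} (as the proposition implicitly does, matching Definition~\ref{defDefinizioneEquatione}) that we are interested in curves that \emph{are} sections of \eqref{eq.bundle.J11.R}, i.e.\ reparametrise so that $x(t) = t$; then the integrality condition $\theta(\dot\gamma) = 0$ reads $\dot u(t) - u_1(t)\,\dot x(t) = \dot u(t) - u_1(t) = 0$, whence $u_1(t) = \dot u(t) = u'(x)$. Setting $f := u(\cdot)\in C^1(\R)$, we get $\gamma(x) = (x, f(x), f'(x)) = j^1 f(x)$, which is exactly the claim. Alternatively, without assuming $\gamma$ is a section a priori, one shows the tangent vector of an integral curve, being a nonzero element of $\CC$, projects nontrivially to $\R$ except possibly where it is vertical ($a=0$); a short connectedness/openness argument, or simply restricting to the open subset where $\dot x \neq 0$ and then invoking the inverse function theorem to reparametrise by $x$, completes the reduction.

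\textbf{Main obstacle.} The substantive content is entirely in the direction ``integral $\Rightarrow$ section of \eqref{eq.bundle.J11.R}'': one must argue why an integral curve of the contact distribution cannot drift off into the vertical $\partial_{u_1}$ direction and thereby fail to be a graph over $x$. This is genuinely a place where one needs either the hypothesis that $\gamma$ is already a section (the reading consistent with the surrounding text, where ``curve'' means a section of the natural projection) or a small transversality argument; everything else — the coordinate expression of $\CC$, the computation $\gamma^*\theta = 0 \Leftrightarrow u_1 = u'$, and the identification with $j^1 f$ — is a one--line verification once Darboux coordinates are in hand.
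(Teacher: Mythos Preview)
Your approach is exactly the paper's: work in Darboux coordinates, note that the reverse implication is immediate, and for the forward direction reparametrise a section by $x$ and read off $u_1 = u'$ from $\theta(\dot\gamma)=0$. Your careful flagging of the ``section'' issue is in fact more scrupulous than the paper's own proof, which simply writes ``$t=x$ because $\gamma$ is a section'' without addressing why an integral curve could not be purely vertical (e.g.\ $t\mapsto(0,0,t)$); the paper tacitly reads the statement as you suggest, namely as a claim about curves that are already sections.
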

\begin{proof}
One way is obvious. Let now $\gamma:t\in\R\longmapsto (x(t),u(t), u_1(t))\in J^1(1,1)$.
Then, $t=x$ because $\gamma$ is a section, whence
$\gamma:x\in\R\longmapsto (x,u(x), u_1(x))\in J^1(1,1)$.
Furthermore,
$\dot{\gamma}(x)=\partial_x + u'(x)\partial_u+u_1'(x)\partial_{u_1}$
is contained into $\CC$ if and only if $(du-u_1dx)(\dot{\gamma}(x))=u'(x)-u_1(x)=0$,
i.e., if and only if $u_1=u'$, as wished.
\end{proof}
\begin{definition}\label{def.holonomic}
 We say that $\gamma$ is \emph{horizontal} if it is a section of \eqref{eq.bundle.J11.R}. We call it \emph{holonomic} if it is  the $1\St$ jet prolongation of a function $f\in C^1(\R)$, i.e., if it is of type \eqref{eq.j1f.for.dummies}.
\end{definition}
\begin{corollary}\label{corCiEffe}
 Solutions to $\F$ are in one--to--one correspondence with horizontal (with respect to \eqref{eq.bundle.J11.R}) integral curves of the restricted distribution $$\CC_\F:=\CC\cap T\F\,.$$
\end{corollary}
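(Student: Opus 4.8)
The plan is to unravel the definitions on both sides and show that the two descriptions of a solution coincide. Recall that, by Definition \ref{defDefinizioneEquatione}, a solution to $\F$ is a function $f\in C^1(\R)$ whose $1\St$ jet prolongation $j^1f$ takes values in $\F$; equivalently, by Definition \ref{def.holonomic}, it is a holonomic curve $\gamma\subset\F$. On the other side, we must interpret a horizontal integral curve of $\CC_\F=\CC\cap T\F$: ``integral of $\CC_\F$'' means $\dot\gamma(x)\in\CC_{\gamma(x)}$ for all $x$ (so $\gamma$ is integral of $\CC$) and, simultaneously, $\dot\gamma(x)\in T_{\gamma(x)}\F$ for all $x$, which is exactly the condition that $\gamma$ be a curve lying inside the submanifold $\F$.

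First I would set up the correspondence $f\leftrightarrow j^1f$. Given a solution $f$ of $\F$, the curve $\gamma:=j^1f$ is horizontal with respect to \eqref{eq.bundle.J11.R} (being of type \eqref{eq.j1f.for.dummies}, with $x$--component equal to the identity), it lies in $\F$ by hypothesis, hence $\dot\gamma(x)\in T_{\gamma(x)}\F$ for every $x$, and by the Proposition immediately preceding Definition \ref{def.holonomic} a holonomic horizontal curve is automatically integral of $\CC$; therefore $\dot\gamma(x)\in\CC_{\gamma(x)}\cap T_{\gamma(x)}\F=(\CC_\F)_{\gamma(x)}$. Thus $j^1f$ is a horizontal integral curve of $\CC_\F$.

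Conversely, let $\gamma$ be a horizontal integral curve of $\CC_\F$. Being horizontal, it is a section of the bundle \eqref{eq.bundle.J11.R}; being integral of $\CC_\F$, it is in particular integral of $\CC$. By the cited Proposition, a horizontal curve integral of $\CC$ is the $1\St$ jet prolongation $j^1f$ of some $f\in C^1(\R)$ (explicitly, $f(x):=u(x)$ from the local expression $\gamma(x)=(x,u(x),u_1(x))$, and integrality forces $u_1=u'$). Finally, integrality of $\gamma$ with respect to $\CC_\F$ also gives $\dot\gamma(x)\in T_{\gamma(x)}\F$ for all $x$; since $\F$ is a submanifold, a connected curve whose velocity is everywhere tangent to $\F$ and which meets $\F$ lies in $\F$ (one may also argue locally: writing $\F=\{F=0\}$, one has $\frac{\dd}{\dd x}F(\gamma(x))=dF(\dot\gamma(x))=0$, so $F\circ\gamma$ is constant, hence zero once $\gamma$ meets $\F$; and if one prefers, one simply builds $\gamma$ from the start as a curve in the manifold $\F$). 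Hence $j^1f$ takes values in $\F$, i.e., $f$ is a solution of $\F$. The two assignments $f\mapsto j^1f$ and $\gamma\mapsto f$ are mutually inverse, which establishes the bijection.

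The only point requiring a little care is the passage ``$\dot\gamma\in T\F$ everywhere'' $\Rightarrow$ ``$\gamma\subset\F$'': this needs $\gamma$ to actually touch $\F$ (which is built into the statement, since $\CC_\F$ lives on $\F$ and an integral curve of it is by definition a curve in $\F$) and connectedness of the domain $\R$. No other subtlety arises; the rest is a direct translation through Definition \ref{defDefinizioneEquatione}, Definition \ref{def.holonomic}, and the preceding Proposition.
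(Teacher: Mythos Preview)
Your argument is correct and is precisely the intended one: the paper states this as a corollary without proof, and your unpacking of the definitions via the preceding Proposition and Definition~\ref{def.holonomic} is exactly the natural route. The only superfluous worry is the discussion of ``$\dot\gamma\in T\F\Rightarrow\gamma\subset\F$'': since $\CC_\F$ is a distribution on $\F$, an integral curve of it is by construction a curve in $\F$, so this step is automatic and you need not invoke connectedness or $F\circ\gamma$ being constant.
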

\subsubsection{An example of how to solve a $1\St$ order ODE}\label{sec.an.example.solve.ode}
In the particular case when the distribution $\CC_\F$ is $1$--dimensional, Corollary \ref{corCiEffe} says precisely that finding solutions amounts at finding a suitable vector field spanning  $\CC_\F$ and then integrating it. The charm of this example is that one is allowed to scale at will the vector field in order to simplify its integration.
The equation
\begin{equation}\label{eqPrimEsempio1}
(3x-2u(x))u'(x)=u(x)
\end{equation}
(see \cite[Examples 1.2 and 2.2]{MR1670044} and Fig. \ref{Fig.sol.sode}) is traditionally solved by the substitution
\begin{equation}\label{eqPrimEsempioSUB}
u=xv\, .
\end{equation}
From \eqref{eqPrimEsempioSUB} it follows that
\begin{equation}\label{eqPrimEsempio2}
xv'=u'-v\, .
\end{equation}
Replacing \eqref{eqPrimEsempio1} into \eqref{eqPrimEsempio2}, we get
\begin{equation*}
xv'=\frac{u}{3x-2u}-v\, ,
\end{equation*}
and we finally obtain a new equation in the variable $v$:
\begin{equation}\label{eqPrimEsempio3}
v'=\frac{2(1-v)v}{x(2v-3)}\, .
\end{equation}
The next step is ``to multiply by $\frac{(2v-3)}{2(1-v)v}dx$'' the equation \eqref{eqPrimEsempio3}, thus obtaining
\begin{equation}\label{eqPrimEsempio4}
\frac{2v-3}{2(1-v)v}dv=\frac{1}{x}dx\, .
\end{equation}
Then both sides of \eqref{eqPrimEsempio4} are integrated:
\begin{equation*}
\int \frac{2v-3}{2(1-v)v}dv=\int\frac{1}{x}dx\, ,
\end{equation*}
viz.
\begin{equation*}
\frac{1}{2}(\log|v-1|-3\log|v|)+c_1=\log|x|+c_2\, .
\end{equation*}
Hence, by exponentiating, one gets
\begin{equation*}
e^{2c_1}\frac{v-1}{v^3}=x^2e^{2c_2}\, ,
\end{equation*}
i.e., the implicit solution
$
e^{2c_2}v^3x^2+e^{2c_1}(v-1)=0
$
of the equation \eqref{eqPrimEsempio1}. By \eqref{eqPrimEsempioSUB} we can bring back the last solution into the form containing $u$:
$
e^{2c_2}u^3+e^{2c_1}(u-x)=0
$.
Let us now discover the geometry behind this procedure.\par
First of all, by Definition \ref{defDefinizioneEquatione}, we regard \eqref{eqPrimEsempio1} as the submanifold
$
\F=\{f=0\}\subset J^1(1,1)
$,
where
$
f=(3x-2u)u_1-u
$.
Observe that $(x,u)$ can be taken as local coordinates on $\F$, since
\begin{equation}\label{eqEsempioTRIANGOLINO}
u_1=\frac{u}{3x-2u}\,,
\end{equation}
taking into account that  $f=0$.
Hence, the restricted contact distribution $\CC_\F$ is given by the restriction $(du-u_1dx)|_\F$ of the contact form. So, in view of \eqref{eqEsempioTRIANGOLINO}, we obtain
\begin{equation*}
(du-u_1dx)|_\F=du-\frac{u}{3x-2u}dx\, .
\end{equation*}
According to the definition of solutions given in \ref{defDefinizioneEquatione}, a solution of  \eqref{eqPrimEsempio1} is an integral $1$--dimensional manifold of the line distribution
\begin{equation}\label{eqEsepioDistrRistretta}
\F\ni p \longmapsto \ker (du-u_1dx)_p\, .
\end{equation}
Suppose for a moment that $(du-u_1dx)|_\F$  is exact, i.e., that
$
(du-u_1dx)|_\F= d\varphi$, for some $\varphi\in C^\infty(\R^2)$.
Then a vector field $X\in\X(\F)$ belongs to $\CC_\F$ if and only if
$
d\varphi(X)=0\Leftrightarrow X(\varphi)=0\Leftrightarrow\dot{\gamma}(\varphi)=0$ for all trajectories $\gamma$ of $X$,
i.e., if and only if its trajectories are contained into the level surfaces of $\varphi$. But
$
\dim\varphi^{-1}(k)=1
$,
hence the trajectories coincide with the level surfaces. Unfortunately,
\begin{equation*}
d((du-u_1dx)|_\F)=d\left(du-\frac{u}{3x-2u}dx\right)=\frac{3x}{(2u-3x)^2}dx\wedge du\neq 0\, ,
\end{equation*}
so $(du-u_1dx)|_\F$ cannot be exact, not being even closed.\par
The key observation is that \eqref{eqEsepioDistrRistretta} does not change if $(du-u_1dx)|_\F$ is multiplied by any nowhere vanishing function $g\in C^\infty(\F)$.
Hence, we may look for a function $g\in C^\infty(\E)$ such that
$
g(du-u_1dx)|_\F
$
becomes closed and, hence, exact (in view of its role, the function $g$ is known as the ``integrating factor'') .\par
This, in principle, is not an easier task than solving the original equation, since
\begin{equation*}
d\large(g(du-u_1dx)|_\F\large)=\left( g_x+g_u\frac{u}{3x-u}+g\frac{3x}{(2u-3x)^2} \right)dx\wedge du=0
\end{equation*}
if and only if $g$ satisfies the $1\St$ order PDE
\begin{equation}\label{eqEsempioPDEintegrabilitaAssociata}
g_x+g_u\frac{u}{3x-u}+g\frac{3x}{(2u-3x)^2}=0\, .
\end{equation}
The above--mentioned ``trick'' to solve \eqref{eqPrimEsempio1} provides in fact a solution to \eqref{eqEsempioPDEintegrabilitaAssociata}. Indeed, rewrite \eqref{eqPrimEsempio4} as a differential form
\begin{equation*}
\eta=\frac{1}{x}dx+\frac{3-2v}{2(1-v)v}dv\in\Lambda^1(\F)
\end{equation*}
and use \eqref{eqPrimEsempioSUB} to bring it back to the coordinates $(x,u)$:
\begin{equation*}
\eta=\frac{1}{2(u-x)}dx+\frac{3x-2u}{2u(u-x)}du\, .
\end{equation*}
It is immediate to see that $(du-u_1dx)|_\F$, multiplied by the function
\begin{equation*}
g:=\frac{2u-3x}{2u(u-x)}\, ,
\end{equation*}
gives precisely $\eta$, and moreover, that $g$ is a solution of \eqref{eqEsempioPDEintegrabilitaAssociata}.
Observe that, written in terms of $v$,
\begin{equation*}
g=\frac{2v-3}{2(v-1)v}\, ,
\end{equation*}
i.e., precisely the ``magic factor'' we multiplied with the equation \eqref{eqPrimEsempio3}.\par
\begin{figure}
 \centerline{\epsfig{file=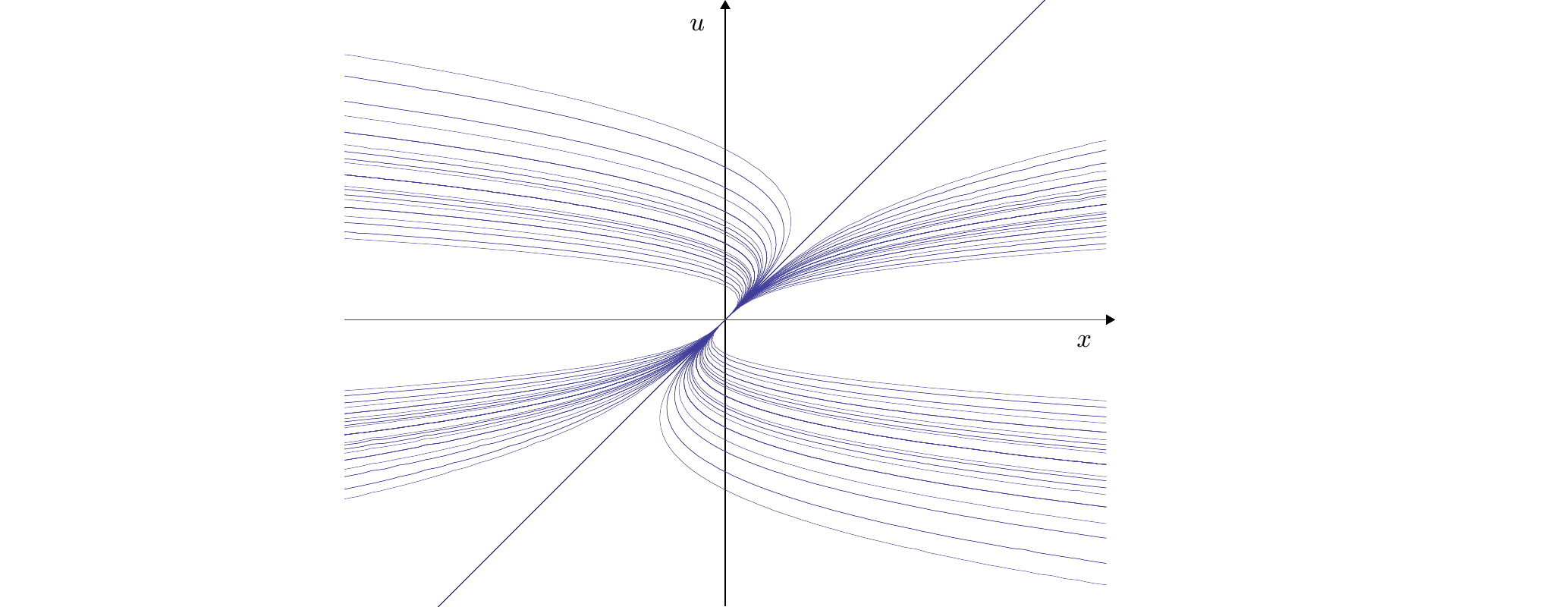,width=\textwidth}}
 \caption{Solutions of the equation $(3x-2u(x))u'(x)=u(x)$. \label{Fig.sol.sode}}
\end{figure}

\subsection{Systems of $1\St$ order ODEs in several unknown functions: trajectories of vector fields}\label{secVectFields}
Let us pass now to systems of $1\St$ order ODEs. We consider the $(2m+1)$--dimensional space
$
J^1(1,m)=\{(t,u^1,\ldots,u^m,u^1_t,\ldots,u^m_t)\}
$ (see definition \eqref{eq.J1.n.m}). Here we denote the independent variable by ``$t$'' as it will represent the external parameter on curves, that in the present case turn out to be trajectories of a given vector field.
Even if this example does not fit the general framework of contact manifolds (in fact $J^1(1,m)$ is  a contact manifold only for $m=1$), we develop it nonetheless, due to its importance in describing trajectories of vector fields. The Cartan distribution $\CC$ (see Definition \ref{def.Cartan.distribution}) is $(m+1)$--dimensional and it is spanned by (see formula \eqref{eq.distr.C.general.local})
$$
\CC=\Span{D_t, V_i}\,,\quad D_t=\partial_t+u^i_t\partial_{u^i}\,,\quad  V_i=\partial_{u^i_t}\,.
$$
All definitions contained in Definition \ref{defDefinizioneEquatione} can be easily generalized to the present case, i.e., to systems of ODEs, by using \eqref{eq.J1.N.M}--\eqref{eq.J1.n.m}, but we shall not do it to not overload the notation.

\smallskip
We observe that the natural correspondence
 \begin{eqnarray}
J^1(1,m)\ni [\gamma]_t^1 &\longmapsto & (t, \dot{\gamma}(t))\in  \R\times T\R^m\label{eqIdJ1UnoEnneTM}
\end{eqnarray}
is a diffeomorphism.
In order to lower the dimension of $\CC$ to 1, it is not sufficient to intersect $\CC$ with a hypersurface $\F=\{f=0\}$, but rather with the intersection of $m$ hypersurfaces, i.e., a \emph{system} of $m$ equations
\begin{equation}\label{eqSistOfEqs}
\F:=\{ f^1=0\, ,\ldots, f^m=0\}\, ,
\end{equation}
such that at any point $p\in\F$, the differentials $d_pf^1$, \ldots, $d_p f^m$ are linearly independent. One can introduce the vector--valued function $\vec{f}:=(f^1,\ldots, f^m)$,
and rewrite \eqref{eqSistOfEqs} as $
\F=\{\vec{f}=0\}\subset J^1(1,m)$.
\begin{example}
 The $m$ functions $
f^i=g^i(u^1,\ldots, u^m)-u^i_t
$
are such that their differentials $df^i$ are linearly independent and, hence, $\dim\F=m+1$. Moreover, $\vec{f}$ does not depend on $t$, so $\F=\R\times\F_0$, where $\F_0$ is the ($m$--dimensional) submanifold of $T\R^m$ determined by the same equations $\vec{f}=0$. Hence, in view of identification \eqref{eqIdJ1UnoEnneTM}, the solutions of $\F$ are nothing but curves $\gamma(t)=(\gamma^1(t),\ldots,\gamma^m(t))$ such that their image is contained into $\F_0$, i.e., such that
\begin{equation*}
g^i(\gamma^1(t),\ldots,\gamma^m(t))=\frac{\dd \gamma^i}{\dd t}(t)\, ,\quad \forall i=1,\ldots, m\, .
\end{equation*}
In other words, solutions of $\F$ are trajectories of the vector field
$
X=
g^i(u^1,\ldots, u^m)\frac{\partial}{\partial u^i}
$
on $\R^m=\{  (u^1,\ldots, u^m)\}$.
\end{example}

\subsection{The holonomy equations in $J^1(J^1)$}\label{sec.HolEqsJ111}
Besides the equation of the trajectories of a vector field, examined in Section \ref{secVectFields} above, there is another important example of a system of $1\St$ order ODEs in several unknowns, which is worth discussing. Indeed, such a \emph{canonical} system provides a way   to  introduce the $2\Nd$ order jet space $J^2(1,1)$  of functions $f:\R\to\R$ and hence it has some pedagogical value. A more formal  definition of $J^2(1,1)$ will be given later on (see  \eqref{eqDefJayDueEnneUno}).\par The bottom line here is to interpret $J^2(1,1)$  as an ``iterated jet space'', where    suitable holonomy conditions have been imposed (see  \cite{MorenoCauchy} for a systematic treatment of iterated jet spaces). These conditions take the form of ``tautological'' canonical $1\St$ order PDEs.\par
Let us consider \eqref{eq.J1.N.M} with  $N=\R$ and $M=J^1(1,1)=\{x,u,u_1\}$.  Then
$
J^1(\R, J^1(1,1))=\{ (t,x,u,u_1,\dot{x},\dot{u},\dot{u_1})\}
$
is the $7$-dimensional space of the $1\St$ order jets of function from $\R$ to $J^1(1,1)$ (see Definition \ref{eq.J1.N.M}), being $t$ the independent variable of $J^1(\R, J^1(1,1))$ and $\dot{x},\dot{u},\dot{u_1}$ the fibre coordinates of the bundle $J^1(\R, J^1(1,1))\to\R\times J^1(1,1)$.

\smallskip\noindent
There are three canonical differential equations in $J^1(\R, J^1(1,1))$, listed below:
\begin{enumerate}
\item the ``horizontality'' equation, i.e., $x=t$;
\item the consequence of the above, i.e., $\dot{x}=1$;
\item the ``holonomy'' equation, i.e., $\dot{u}=u_1$.
\end{enumerate}
Let us denote by $\E\subset J^1(\R, J^1(1,1))$ the intersection of these three equations. Then,
\begin{equation}\label{eqPrimaVoltaCompaionoDerivateSeconde}
\E=\{(x,u,u_1,u_{11})\}\, ,\quad\textrm{ where }u_{11}:=\dot{u_1}.
\end{equation}
In view of the definition of solutions given in Definition \ref{defDefinizioneEquatione}, a solution to $\E$ is a map
\begin{equation}\label{eqCurvaInJ1J111}
\R\ni t\mapsto (t,x(t),u(t), u_1(t), \dot{x}(t),\dot{u}(t),\dot{u_1}(t))\in J^1(\R, J^1(1,1))
\end{equation}
such that
\begin{eqnarray}
\dot{x}(t) &=&\frac{\dd x}{\dd t}(t)\, ,\label{eqOlonomiaDelCacchio1}\\
\dot{u}(t) &=&\frac{\dd u}{\dd t}(t)\, ,\label{eqOlonomiaDelCacchio2}\\
\dot{u_1}(t) &=&\frac{\dd u_1}{\dd t}(t)\, ,\label{eqOlonomiaDelCacchio3}
\end{eqnarray}
and, in addition the three conditions above are satisfied. Then \eqref{eqCurvaInJ1J111} reads
\begin{equation*}
x\mapsto \left(x,u(x), \frac{\dd u}{\dd t}(x), 1, \frac{\dd u}{\dd t}(x), \frac{\dd^2 u}{\dd t^2}(x)\right)\, ,
\end{equation*}
i.e., in view of \eqref{eqPrimaVoltaCompaionoDerivateSeconde},
\begin{equation}\label{eqFunzioneBrutalmenteSollevata}
x\mapsto \left(x,u(x), \frac{\dd u}{\dd t}(x),   \frac{\dd^2 u}{\dd t^2}(x)\right)\, .
\end{equation}
\begin{remark}
 The construction of $\E$ just presented is a possible way to introduce the $2\Nd$ order jet space $J^2(1,1)$. That is, we used the old trick of ``regarding $2\Nd$ order equations as systems of $1\St$ order equations.''
\end{remark}
Let us try to realise $J^2(1,1)$ as a geometric construction associated with $J^1(1,1)$.
To this end, let
$
\gamma(x)=(x,u(x),u'(x))
$
be a holonomic horizontal curve in $J^1(1,1)$ (see Definition \ref{def.holonomic}) and observe that, by definition,
\begin{equation}\label{eqDefSpanDotGAmmaBUONA}
\Span{\dot{\gamma}(x)}=\Span{\partial_x+u'(x)\partial_u+u''(x)\partial_{u_1}}\in\p\CC_{\gamma(x)}\, .
\end{equation}
This suggests to define
\begin{equation}\label{eqPrimaDefinizioneDiJayUnoUnoUno}
J^1(1,1)^{(1)}:=\p\CC=\coprod_{\vec{\ell}\in J^1(1,1)}\p\CC_{\vec{\ell}}
\end{equation}
 which is a bundle over $J^1(1,1)$ with abstract fibre $\R\p^1$.
\begin{definition}\label{defPrimoCasoDiProlungamento}
 $J^1(1,1)^{(1)}$ is the \emph{prolongation} of the contact manifold $(J^1(1,1),\CC)$.
\end{definition}
A more abstract definition of  prolongation of any submanifold of a contact manifold will be given in Section \ref{secPrologContMan}. Now we use the affine neighborhood (see \eqref{eq.affine.neig.1})
\begin{equation*}
\Span{\partial_x+u_1\partial_u}^*\otimes\Span{\partial_{u_1}}\subset \p\CC_{\vec{\ell}}
\end{equation*}
in order to introduce the new coordinate $
u_{11}\leftrightarrow \Span{\partial_x+u_1\partial_u+u_{11}\partial_{u_1}}$.
Then, in particular, \eqref{eqDefSpanDotGAmmaBUONA}
reads
$
(x,u(x),u'(x),u''(x))
$,
i.e.,
\begin{equation*}
\R\ni x\longmapsto \Span{\dot{\gamma}(x)}\in J^1(1,1)^{(1)}\, ,
\end{equation*}
which is precisely the map \eqref{eqFunzioneBrutalmenteSollevata} in the coordinates $(x,u,u_1,u_{11})$ of $J^1(1,1)^{(1)}$.
\begin{remark}
From a local  standpoint,  one  can choose among three ways to define the same thing: the canonical equation $\E$, the prolongation $J^1(1,1)^{(1)}$, or the ``classical'' definition of $J^2(1,1)$, given by \eqref{eqDefJayDueEnneUno} later on.
\end{remark}
The three holonomy equations \eqref{eqOlonomiaDelCacchio1}, \eqref{eqOlonomiaDelCacchio2} and \eqref{eqOlonomiaDelCacchio3} are usually written in terms of ``contact forms'', viz.
\begin{equation*}
dx-\dot{x}dt=0\, , \quad du-\dot{u}dt=0\, , \quad du_1-\dot{u_1}dt=0\, ,
\end{equation*}
which, restricted to $\E\equiv J^2(1,1)$, yields
\begin{equation}\label{eqContForm2ndorder1}
du-u_1dx=0\, ,\quad
du_1-u_{11}dx=0\,,
\end{equation}
i.e., a system of two ``contact forms''.\par
The definition of a $2\Nd$ order ODE and its solutions are the obvious analogues of those contained in Definition \ref{defDefinizioneEquatione}. They will be given in Section \ref{sec.SecondOrderPDEs} later on.
\section{The prolongation of a contact manifold}\label{secPrologContMan}
\subsection{The Lagrangian Grassmannian $\LL(n,2n)$ and the prolongation $M^{(1)}$}\label{sec.LagGrassPrologM}

We wish to define $J^2(2,1)$ in an analogous way as we defined $J^2(1,1)$ by \eqref{eqPrimaDefinizioneDiJayUnoUnoUno}. We invite the reader to carry out the same construction in the case of two independent variables and compute the resulting dimension: one gets 9  instead of 8. The latter is the predicted dimension for the space of $2\Nd$ order jets of scalar functions in two variables. Such a discrepancy suggests that  \eqref{eqPrimaDefinizioneDiJayUnoUnoUno} is not the correct general definition of a prolongation, and the purpose of this section is precisely that of rectifying it.\par
We introduce here the notion of prolongation of a contact manifold, generalizing the Definition \ref{defPrimoCasoDiProlungamento} given above---the one which allowed us to correctly define $J^1(1,1)^{(1)}$  through \eqref{eqPrimaDefinizioneDiJayUnoUnoUno}.
This way, we will be able to introduce $J^2(n,1)$ for any value of $n$, as the prolongation of the contact manifold $J^1(n,1)$.\par

Let $(M,\CC)$ be a $(2n+1)$--dimensional contact manifold. By recalling the definition \eqref{eq.def.J1.E.n}, we consider the Grassmann bundle of $n$--planes in $TM$
\begin{equation*}
\Gr(n,TM)=\bigcup_{p\in M}\Gr(n,T_pM)\to M\,.
\end{equation*}
By definition,  $\Gr(n,TM)$ is a natural bundle over $M$, whose rank equals the dimension of $\Gr(n,2n+1)$, which is $n(n+1)$.  Inside $\Gr(n,TM)$ there is the obvious sub--bundle
\begin{equation*}
\Gr(n,\CC):=\bigcup_{p\in M}\Gr(n,\CC_p)
\end{equation*}
of $n$--dimensional tangent planes to $M$ contained in the contact distribution $\CC$. Its rank is $n^2$. This bundle is not yet the correct place to study $2\Nd$ order PDEs in $n$ independent variables, since---loosely speaking---the new fiber coordinates $p_{ij}$ are not symmetric in the indices $(i,j)$.   In order to define the prolongation of $M$, that is the environment for $2\Nd$ order PDEs, we need one last   reduction of the fibres of $\Gr(n,\CC_p)$. This is possible thanks to the (conformal) symplectic structure on the contact hyperplanes (see Remark \ref{rem.ConfSympSpace}). For a linear symplectic space $(W,\omega)$, with $\dim W=2n$, the \emph{Lagrangian Grassmannian} is the subset
\begin{equation}\label{eq.DefLGLineare}
\LL(W):=\{ L\in\Gr(n,W)\mid \omega|_L\equiv 0\}\,
\end{equation}
of the Grassmannian manifold $\Gr(n,W)$ of $n$--planes in $W$. Observe that \eqref{eq.DefLGLineare} is invariant under a rescaling of $\omega$, so that it would be more appropriate to say that $\LL(W)$ is associated to the \emph{conformal} symplectic space $(W,[\omega])$. Indeed, each $\CC_p$ is a conformal symplectic space (see Remark \ref{rem.ConfSympSpace}).\par
In order to realise $\LL(W)$ as a smooth projective manifold, it suffices to convince oneself that, for any $n$--plane $L\subset W$, the notion of the \emph{volume} of $L$, that is,
\begin{equation*}
\vol (L):=[l_1\wedge\cdots\wedge l_n]\in\p\Lambda^n(W)\, ,\quad \Span{l_1,\ldots, l_n}=L\, ,
\end{equation*}
is well--defined. Then one can embed $\LL(W)$ into $\p\Lambda^n(W)$, just by sending each $L\in \LL(W)$ into its volume $\vol (L)\in \p\Lambda^n(W)$. Remarkably, $\p\Lambda^n(W)$ is not the smallest projective space $\LL(W)$ is embeddable in.
\begin{definition}\label{defPluck}
 The projective subspace $\p\Lambda_0^n(W)\subset \p\Lambda^n(W)$, where
 \begin{equation*}
\Lambda_0^n(W):=\{ \alpha\in \Lambda^n(W)\mid \omega\lrcorner\alpha=0\}
\end{equation*}
is called the \emph{Pl\"ucker embedding space} for $\LL(W)$. The map
\begin{equation*}
\LL(W)\ni L\longmapsto \vol(L)\in \p \Lambda_0^n(W)
\end{equation*}
is called the \emph{Pl\"ucker embedding}.
\end{definition}
The Pl\"ucker embedding realises the so--called \emph{minimal projective embedding}, that is, it is the simplest way to realise $\LL(W)$ as a projective variety.
\begin{definition}
 The $\left((2n+1)+\frac{n(n+1)}{2}\right)$--dimensional manifold
 \begin{equation*}
\underset{\rank=\frac{n(n+1)}{2}}{\underbrace{M^{(1)}}}=\LL(\CC):=\bigcup_{p\in M}\LL(\CC_p)\subset \underset{\rank=n^2}{\underbrace{\Gr(n,\CC)}}\subset \underset{\rank=n^2+n}{\underbrace{\Gr(n,TM)}}
\end{equation*}
  is called the \emph{first prolongation} (or \emph{Lagrangian Grassmannian bundle}) over $M$.
\end{definition}
Let $N\subseteq M$ be a Lagrangian submanifold. Then, by definition, $T_pN$ is Lagrangian in $\CC_p$ for all $p\in N$, that is, a point of $M^{(1)}$ projecting over $p$.

\begin{remark}[Coordinates on $M^{(1)}$]\label{rem.coordinates.M1}
Contact coordinates $(x^i,u,u_i)$ on $M$ induce coordinates $(x^i,u,u_i,u_{ij})$ (that we keep calling ``contact'') on $M^{(1)}$.
Recall that the $D_{x^i}$'s and the $\partial_{u_i}$'s span (locally) the distribution $\CC$ (see \eqref{eq.contact.local}).
Then one observes that $\Gr(\CC_p)$ contains the (open) affine neighborhood
\begin{equation}\label{eqAffNeighCiPi}
\Span{D_{x^i}\mid i=1,\ldots,n}^*\otimes \Span{\partial_{u_i}\mid i=1,\ldots,n}\, ,
\end{equation}
which can be parametrised by $n\times n$ matrices $u_{ij}$. Indeed, \eqref{eqAffNeighCiPi} is a particular case of  \eqref{eq.affine.neig.2}, with $V=\CC_p$ and $L=\Span{\partial_{u_i}\mid i=1,\ldots,n}$, and the $n$--plane
\begin{equation*}
\Span{D_{x^i}+u_{ij}\partial_{u_j}\mid i=1,\ldots,n}
\end{equation*}
is Lagrangian if and only if $u_{ij}$ is symmetric.
In other words, in the affine neighborhood \eqref{eqAffNeighCiPi},  the subset $\LL(\CC_p)$ of $\Gr(\CC_p)$ corresponds precisely to \emph{symmetric} $n\times n$ matrices $u_{ij}$.
\end{remark}

\begin{definition}\label{eq.def.prolog.lag.submanifold}
We define the prolongation
$V^{(1)}\subset\LL(W)$ of a subspace $V \subset W$ by:
\begin{equation}\label{eq.prolongation.of.V}
V^{(1)}:=\left\{
\begin{array}{c}
L\in \LL(W)\,\,|\,\, L\supseteq V,\,\,\text{if}\,\,\dim(V)\leq n \,, \\
\\
L\in \LL(W)\,\,|\,\, L\subseteq V,\,\,\text{if}\,\,\dim(V)\geq n\,. \\
\end{array}
\right.
\end{equation}
The prolongation $N^{(1)}$ of an integral submanifold $N$ of $\CC$ is by definition equal to $\bigcup_{p\in N}(T_pN)^{(1)}$. In particular,
the \emph{prolongation} of the Lagrangian submanifold $N$ is the tangent bundle $TN$, understood as an $n$--dimensional submanifold of $M^{(1)}$.
\end{definition}
In order to provide a simple example, let us consider the $(2n+1)$--dimensional manifold $J^1(n,1)$ (see definition \eqref{eq.J1.n.m}). Similarly as we did in \eqref{eq.rel.equ.initial}--\eqref{eq.J1.1.1} (see also \eqref{eq.rel.equ.more.gen}--\eqref{eq.J1.n.m}), we introduce $J^2(n,1)$. Let $f,g\in C^2(\R^n)$. Then
\begin{equation*}
f\sim_{\x}^2 g\Leftrightarrow f(\x)=g(\x)\,, \,\, (df)_{\x}=(dg)_{\x}\,, \,\, (\mathrm{Hess}\,f)_{\x}=(\mathrm{Hess}\,g)_{\x}\,,
\end{equation*}
i.e., functions $f$ and $g$ are equivalent at $\x$ if their Taylor expansions  is equal up to second order, i.e., they have a contact of order $2$ at $\x$. As usual, we denote by $[f]^2_{\x}$ an equivalence class.
\begin{definition}
The space $J^2(n,1)$ is the union
 \begin{equation}\label{eqDefJayDueEnneUno}
J^2(n,1):=\{[f]^2_{\x}\,\,|\,\,f\in C^2(\R^n)\}
\end{equation}
of the above--introduced equivalence classes.
\end{definition}
Observe that the map
\begin{equation*}
J^2(n,1)\ni [f]_{\x}^2  \,\,\longmapsto\,\, T_{[f]_{\x}^1}(\Graph j^1f)\in(J^1(n,1))^{(1)}
\end{equation*}
is injective, and its image is open and dense. Hence,   $(J^1(n,1))^{(1)}$ can be safely (at least locally) identified with the space $J^2(n,1)$ of $2\Nd$ jets of maps from $\R^n$ to $\R$. Since any contact manifold $M$, of dimension $(2n+1)$ is locally of the form $J^1(n,1)$ thanks to Theorem \ref{th.Darboux.contact}, it follows that  one can develop the theory of $2\Nd$ order scalar PDEs in $n$ independent variables, by studying hypersurfaces $\E\subset M^{(1)}$.\par

By recalling the local description of the contact distribution $\CC$ (see for instance \eqref{eq.contact.local}) one can say that $\CC$ consists of two parts: a ``horizontal'' (spanned by $D_{x^i}$) and a ``vertical'' one (spanned by $\partial_{u_i}$) of the same dimension. Generally, these parts are not canonically defined (see the discussion at the beginning of Section \ref{sec1stOrderPDEs}). If the considered contact manifold has some extra structure (for instance a bundle structure), then the vertical part of the contact distribution is canonical (see Remark \ref{rem.Gianni.Giovanni}).
Anyway, both of these parts are Lagrangian with respect to the symplectic form on $\CC$. As it turns out, the datum of the horizontal and the vertical part is sufficient to characterise, up to a conformal factor, the symplectic form on $\CC$.
\begin{proposition}\label{propCRnRnStar}
Up to a non--zero factor, there exists  a unique symplectic form $\omega_0$ on the space $\R^n\oplus\R^{n\ast}$, such that both  $\R^{n}$ and $\R^{n\ast}$ are Lagrangian with respect to $\omega_0$, and $\omega_0(v,\alpha)=\alpha(v)$.
Moreover, for any symplectic vector space $(W,\omega)$, there exists an isomorphism $W\simeq \R^n\oplus\R^{n\ast}$, which pulls back $\omega_0$
to a non--zero multiple of $\omega$.
\end{proposition}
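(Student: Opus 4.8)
The plan is to settle the uniqueness and normalization statement by a direct block computation, and then to reduce the general case to it by producing a symplectic basis.

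For the first part, write a point of $\R^n\oplus\R^{n\ast}$ as a pair $(v,\alpha)$ and decompose an arbitrary bilinear form $B$ on $\R^n\oplus\R^{n\ast}$ into its four blocks $B_1$ on $\R^n\times\R^n$, $B_2$ on $\R^n\times\R^{n\ast}$, $B_3$ on $\R^{n\ast}\times\R^n$, and $B_4$ on $\R^{n\ast}\times\R^{n\ast}$. Requiring $\R^n$ and $\R^{n\ast}$ to be isotropic forces $B_1=0$ and $B_4=0$; skew--symmetry of $B$ then forces $B_3(\alpha,w)=-B_2(w,\alpha)$, so $B$ is completely determined by the single block $B_2\colon\R^n\times\R^{n\ast}\to\R$, via $B\bigl((v,\alpha),(w,\beta)\bigr)=B_2(v,\beta)-B_2(w,\alpha)$. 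Non--degeneracy of $B$ is then equivalent to non--degeneracy of $B_2$ as a pairing; moreover, since any $n$--dimensional isotropic subspace of a $2n$--dimensional symplectic space is maximal isotropic, hence Lagrangian (this uses $\dim L+\dim L^{\perp}=2n$), the summands $\R^n$ and $\R^{n\ast}$ are automatically Lagrangian. Finally, the normalization $\omega_0(v,\alpha)=\alpha(v)$ says exactly that $B_2$ is the canonical evaluation pairing, which pins $B$ down uniquely to $\omega_0\bigl((v,\alpha),(w,\beta)\bigr)=\beta(v)-\alpha(w)$; dropping the normalization leaves precisely the freedom of rescaling $B_2$ by a non--zero scalar, which is the asserted ``up to a non--zero factor''.

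For the second part, the key lemma is the existence of a symplectic basis of $(W,\omega)$. By the usual induction one picks $e_1\neq 0$, finds $f_1$ with $\omega(e_1,f_1)=1$ (possible by non--degeneracy), observes that $\Span{e_1,f_1}$ is a non--degenerate subspace so that $W=\Span{e_1,f_1}\oplus\Span{e_1,f_1}^{\perp}$ with $\omega$ restricting non--degenerately to the $(2n-2)$--dimensional complement, and repeats. This yields $e_1,\dots,e_n,f_1,\dots,f_n$ with $\omega(e_i,e_j)=\omega(f_i,f_j)=0$ and $\omega(e_i,f_j)=\delta_{ij}$. Let $(\epsilon_i)$ be the standard basis of $\R^n$ and $(\epsilon^i)$ its dual basis in $\R^{n\ast}$, and define the linear isomorphism $\Phi\colon W\to\R^n\oplus\R^{n\ast}$ by $\Phi(e_i)=(\epsilon_i,0)$, $\Phi(f_i)=(0,\epsilon^i)$. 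Evaluating $\omega_0$ (with $B_2$ the evaluation pairing) on the images of these basis vectors reproduces exactly the relations $\omega(e_i,e_j)=\omega(f_i,f_j)=0$, $\omega(e_i,f_j)=\delta_{ij}$, so $\Phi^{\ast}\omega_0=\omega$ on all of $W$; in particular $\Phi^{\ast}\omega_0$ is a non--zero multiple of $\omega$ (and if one uses the unnormalised $\omega_0$, one rescales $\Phi$ accordingly).

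The only genuinely non--routine ingredient is the existence of the symplectic basis, i.e.\ the claim that the $\omega$--orthogonal complement of a hyperbolic plane is again symplectic; this is where non--degeneracy of $\omega$ is really used, and it is the step I would write out most carefully. Everything else is bookkeeping with the block decomposition above.
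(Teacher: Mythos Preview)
Your argument is correct and is essentially a fleshed--out version of the paper's one--line proof, which simply invokes ``the uniqueness of a symplectic matrix in dimension $2n$'' (i.e., the existence of a symplectic basis). You make the same reduction---block decomposition plus symplectic basis---explicit, which is fine.

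One small imprecision: you write that ``dropping the normalization leaves precisely the freedom of rescaling $B_2$ by a non--zero scalar''. Taken literally this is false: if you drop the condition $\omega_0(v,\alpha)=\alpha(v)$ entirely and keep only the Lagrangian conditions, then $B_2$ can be \emph{any} non--degenerate pairing $\R^n\times\R^{n\ast}\to\R$, a $\GL_n$--worth of freedom, not a scalar. What makes the ``up to a non--zero factor'' in the statement correct is reading the normalization as $\omega_0(v,\alpha)=c\,\alpha(v)$ for some non--zero constant $c$ (consistent with the paper's emphasis on \emph{conformal} symplectic structures, cf.\ Remark~\ref{rem.conf.symp}); then indeed only the scalar $c$ is free. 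This does not affect the validity of your proof, but the sentence as written should be tightened.
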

\begin{proof}
 Follows from the uniqueness of a symplectic matrix in dimension $2n$.
\end{proof}
\begin{definition}\label{DefDopo_propCRnRnStar}
 The isomorphism $W\simeq \R^n\oplus\R^{n\ast}$ above is a \emph{bi--Lagrangian decomposition} of $(W,\omega)$.
\end{definition}
In view of the above Proposition \ref{propCRnRnStar}, from now on we use the symbol $\LL(n,2n)$ instead of $\LL(W)$, if there is no need to specify the symplectic space $(W,\omega)$.
\begin{remark}
 Denote by $\mathcal{V}$ the vertical distribution on $J^1(n,1)\to J^0(n,1)=\R^{n+1}$. Then $\mathcal{V}$ is an $n$--dimensional (completely integrable) distribution contained in $\CC$. Let $H\in J^1(n,1)$ be a point, understood as a tangent $n$--plane to $\R^{n+1}$. The correspondence
 \begin{eqnarray*}
\mathcal{V}_H\ni \xi &\longrightarrow & \omega_H(\xi,\, \cdot\, )\in H^*
\end{eqnarray*}
is an isomorphism. Hence, in view of Proposition \ref{propCRnRnStar},
we can assume
$
\CC_H\simeq H\oplus H^*
$.
\end{remark}
We generalise now the construction of the   affine neighborhood \eqref{eqAffNeighUno} to arbitrary values of $n$.
\begin{proposition}\label{prop.ApAffLGR}
The image of the map
 \begin{eqnarray}
\R^{n\ast}\otimes \R^{n\ast}\ni h  &\longrightarrow & \Graph h\in  \Gr(n,W)\, ,\label{eq.ApAffLGRn}
\end{eqnarray}
is an open and dense subset, such that
$
\omega|_{\Graph h}\equiv 0$ $\Leftrightarrow$ $h$ is symmetric, with  $W\simeq \R^n\oplus\R^{n\ast}$.
\end{proposition}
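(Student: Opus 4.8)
The plan is to recognise the map \eqref{eq.ApAffLGRn} as the standard affine chart of $\Gr(n,W)$ attached to the bi--Lagrangian decomposition $W\simeq\R^n\oplus\R^{n\ast}$ of Definition \ref{DefDopo_propCRnRnStar}, and then to turn the vanishing of $\omega$ on $\Graph h$ into a symmetry condition by a one--line bilinear computation.

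For openness and density, I would identify $\R^{n\ast}\otimes\R^{n\ast}$ with $\Hom(\R^n,\R^{n\ast})$ in the obvious way. For such an $h$, the subspace $\Graph h=\{v+h(v)\mid v\in\R^n\}$ is $n$--dimensional and satisfies $\Graph h\cap\R^{n\ast}=0$; conversely, any $L\in\Gr(n,W)$ with $L\cap\R^{n\ast}=0$ projects isomorphically onto $\R^n$ along $\R^{n\ast}$, hence is the graph of $h:=\mathrm{pr}_{\R^{n\ast}}\circ(\mathrm{pr}_{\R^n}|_L)^{-1}$. So \eqref{eq.ApAffLGRn} is a bijection onto $U:=\{L\in\Gr(n,W)\mid L\cap\R^{n\ast}=0\}$. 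The set $U$ is open, being the locus where the $n\times n$ determinant of the projection $L\to\R^n$ does not vanish; its complement is the Schubert variety of $n$--planes meeting $\R^{n\ast}$ nontrivially, of positive codimension, so $U$ is dense as well.

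For the Lagrangian condition, by Proposition \ref{propCRnRnStar} I may assume $\omega=\omega_0$, the property $\omega|_{\Graph h}\equiv 0$ being insensitive to rescaling $\omega$. For $v,w\in\R^n$, using bilinearity of $\omega_0$, the fact that $\R^n$ and $\R^{n\ast}$ are both Lagrangian, the normalisation $\omega_0(v,\alpha)=\alpha(v)$ and antisymmetry, one obtains
\[
\omega_0\big(v+h(v),\,w+h(w)\big)=\omega_0\big(v,h(w)\big)+\omega_0\big(h(v),w\big)=h(w)(v)-h(v)(w)\,.
\]
This vanishes for all $v,w\in\R^n$ precisely when the bilinear form $(v,w)\mapsto h(v)(w)$ on $\R^n$ is symmetric, i.e. when $h\in\Sym^2(\R^{n\ast})\subset\R^{n\ast}\otimes\R^{n\ast}$; combined with the previous paragraph this yields the statement.

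There is no genuinely hard step: the only points demanding care are keeping the identification $\R^{n\ast}\otimes\R^{n\ast}=\Hom(\R^n,\R^{n\ast})$ consistent throughout, and spelling out that ``$h$ symmetric'' means symmetry of the associated bilinear form on $\R^n$. One could also skip Proposition \ref{propCRnRnStar} and argue directly with $\omega_0$, but invoking it makes transparent that the conclusion holds for an abstract symplectic space $(W,\omega)$ as stated.
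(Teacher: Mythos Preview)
Your proof is correct and follows essentially the same approach as the paper's: identify the image of \eqref{eq.ApAffLGRn} with the $n$--planes transversal to $\R^{n\ast}$, observe this is the standard open dense affine chart of $\Gr(n,W)$, and compute $\omega$ on a pair of graph vectors to extract the symmetry condition. The only difference is presentational: the paper fixes a basis $e_i,\epsilon^j$ and carries out the computation with the matrix entries $u_{ij}$, obtaining $\omega(e_i+u_{ij}\epsilon^j,e_{i'}+u_{i'j'}\epsilon^{j'})=u_{i'i}-u_{ii'}$, whereas you do the same computation coordinate--free via Proposition~\ref{propCRnRnStar}.
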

\begin{proof}
Fix the basis
$e_1,\ldots, e_n\in\R^n$
and its   dual
$\epsilon^1,\ldots, \epsilon^n\in\R^{n\ast}$,
and observe that
$\omega=e_i\wedge\epsilon^i$.
The elements $L\in \Gr(n,W)$ non--trivially projecting on $\R^n$ form an open and dense subset, and, for each of them, there is an $n\times n$ matrix $U=\|u_{ij}\|$, such that $L=\Span{ e_i+u_{ij}\epsilon^j \mid i=1,2,\ldots, n }$.
In other words, $L=\Graph h$, where the matrix of $h$ is precisely $U$.
Finally,
$\omega|_{\Graph h}\equiv 0$ if and only if $\omega( e_i+u_{ij}\epsilon^j, e_{i'}+u_{i'j'}\epsilon^{j'})=u_{i'i}-u_{ii'}=0$ for all $  i,i'$.
\end{proof}
Recall the definition  \eqref{eq.DefLGLineare} of $\LL(W)$. Proposition \ref{prop.ApAffLGR} provides an alternative definition of the Lagrangian Grassmannian.
\begin{corollary}\label{cor.AltDefLGR}
 The closure in $ \Gr(n,W)$ of the subset $S^2\R^{n\ast} $ of $\R^{n\ast}\otimes \R^{n\ast}$ is the  {Lagrangian Grassmannian} $\LL(W)$.
\end{corollary}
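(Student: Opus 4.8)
The plan is to read the statement through the lens of Proposition \ref{prop.ApAffLGR}. Write $U\subset\Gr(n,W)$ for the open dense affine neighbourhood appearing there, i.e. the image of the map $h\mapsto\Graph h$, $h\in\R^{n\ast}\otimes\R^{n\ast}$; concretely $U$ consists of the $n$--planes transversal to the Lagrangian $\R^{n\ast}\subset W$, and Proposition \ref{prop.ApAffLGR} tells us that, inside $U$, the Lagrangian condition $\omega|_{\Graph h}\equiv 0$ holds exactly when $h$ is symmetric. Thus, under the identification of $\R^{n\ast}\otimes\R^{n\ast}$ with $U$, we have $S^2\R^{n\ast}=\LL(W)\cap U$, and the Corollary is precisely the assertion $\overline{\LL(W)\cap U}=\LL(W)$. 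So the plan is to prove the two inclusions separately.

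The inclusion $\overline{\LL(W)\cap U}\subseteq\LL(W)$ is the easy one: it suffices that $\LL(W)$ is closed in $\Gr(n,W)$, which is immediate since $L\mapsto\omega|_L$ is continuous (indeed $\LL(W)=\{\omega|_L\equiv 0\}$ is algebraic), or simply because the Pl\"ucker embedding of Definition \ref{defPluck} realises $\LL(W)$ as a projective subvariety of $\p\Lambda^n_0(W)$, hence a closed subset.

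The substantive half is the reverse inclusion, i.e. that $\LL(W)\cap U$ is dense in $\LL(W)$, or equivalently that every Lagrangian plane is a limit of Lagrangian planes transversal to $\R^{n\ast}$. First I would fix $L\in\LL(W)$, consider the projection $\mathrm{pr}\colon W=\R^n\oplus\R^{n\ast}\to\R^n$, set $A:=\mathrm{pr}(L)$ and $k:=n-\dim A$, and record (by a one--line pairing computation in the spirit of the proof of Proposition \ref{prop.ApAffLGR}) that $L\cap\R^{n\ast}=A^{\circ}$, the $k$--dimensional annihilator of $A$. Choosing a basis $e_1,\dots,e_n$ of $\R^n$ adapted to $A$ (so $A=\Span{e_1,\dots,e_{n-k}}$) with dual basis $\epsilon^1,\dots,\epsilon^n$, one can then normalise $L=\Span{f_1,\dots,f_{n-k},\epsilon^{n-k+1},\dots,\epsilon^n}$ with $f_i=e_i+\sum_{j\le n-k}c_{ij}\epsilon^j$, the Lagrangian condition forcing the block $(c_{ij})$ to be symmetric and imposing nothing on the ``vertical'' generators $\epsilon^{n-k+1},\dots,\epsilon^n$. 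I would then exhibit the explicit one--parameter deformation
\[
L_t:=\Span{f_1,\dots,f_{n-k},\;\epsilon^{n-k+1}+t\,e_{n-k+1},\dots,\epsilon^{n}+t\,e_{n}},\qquad t\in\R,
\]
and verify two things: (i) $\omega|_{L_t}\equiv 0$ for every $t$, a direct check of the pairings $\omega(f_i,f_{i'})$, $\omega(f_i,\epsilon^m+te_m)$, $\omega(\epsilon^m+te_m,\epsilon^{m'}+te_{m'})$ using $\omega(e_i,\epsilon^j)=\delta_{ij}$, so $L_t\in\LL(W)$; and (ii) for $t\ne 0$ the $\R^n$--components of the displayed generators are linearly independent, whence $L_t\cap\R^{n\ast}=0$, i.e. $L_t\in U$. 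Since $L_t\to L_0=L$ as $t\to 0$, this puts $L$ in $\overline{\LL(W)\cap U}=\overline{S^2\R^{n\ast}}$, finishing the proof.

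I expect the only real obstacle to be bookkeeping: fixing the sign convention for $\omega$ consistently with Proposition \ref{prop.ApAffLGR} and checking that the Lagrangian condition on $L$ genuinely reduces to symmetry of the block $(c_{ij})$ while leaving the vertical directions free. Once that normal form is in place, both the Lagrangianity and the transversality of $L_t$ are one--line verifications. (One could also bypass the explicit family by the softer observation that $\LL(W)$ is irreducible, being a homogeneous space of the connected group $\Sp(W)$, and that $\{L\in\LL(W):L\cap\R^{n\ast}\ne 0\}$ is a proper Zariski--closed subset and hence nowhere dense; but the constructive argument above fits better the elementary spirit of this section.)
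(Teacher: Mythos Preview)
Your proof is correct and complete. The paper, however, offers no proof at all: the corollary is stated immediately after Proposition~\ref{prop.ApAffLGR} with the remark that the proposition ``provides an alternative definition of the Lagrangian Grassmannian,'' and the reader is evidently meant to accept the statement as self--evident.

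What you have done is supply the missing density argument. Proposition~\ref{prop.ApAffLGR} gives only that the affine chart $U\cong\R^{n\ast}\otimes\R^{n\ast}$ is open and dense in $\Gr(n,W)$ and that $\LL(W)\cap U=S^2\R^{n\ast}$; it does \emph{not} directly say that $\LL(W)\cap U$ is dense in $\LL(W)$, which is the nontrivial half of the corollary. Your explicit one--parameter family $L_t$ is a clean and elementary way to establish this, and your normal form for $L$ (with the block $(c_{ij})$ symmetric and the ``vertical'' generators free) is exactly right. The alternative you mention at the end---invoking irreducibility of $\LL(W)$ as an $\Sp(W)$--homogeneous space to conclude that the proper closed subset $\{L:L\cap\R^{n\ast}\neq 0\}$ is nowhere dense---is shorter and arguably more in keeping with the paper's informal style, but your constructive argument has the advantage of being self--contained and not relying on facts about algebraic groups that the paper has not set up.
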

\begin{corollary}\label{cor.CanIdentTanLGR}
 There is a local natural identification
 \begin{eqnarray}\label{eq.giovanni.fundamental}
 T(S^2\R^{n\ast})    &\longrightarrow &  T\LL(n,2n) \nonumber \, ,\\
 (h,\delta h) &\longmapsto & \left.\frac{\dd}{\dd\epsilon}\right|_{\epsilon=0}\Graph(h+\epsilon\delta h)\, .
\end{eqnarray}
\end{corollary}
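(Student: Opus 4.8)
The plan is to exhibit the identification in \eqref{eq.giovanni.fundamental} as the tangent map of the parametrisation \eqref{eq.ApAffLGRn}, restricted to the symmetric part. Concretely, Proposition \ref{prop.ApAffLGR} tells us that $h\mapsto\Graph h$ maps $S^2\R^{n\ast}$ biholomorphically onto an open dense subset of $\LL(W)\subset\Gr(n,W)$; call this map $\Phi$. Since $\Phi$ is a diffeomorphism onto its image, its differential $d\Phi_h\colon T_h(S^2\R^{n\ast})\to T_{\Phi(h)}\LL(W)$ is a linear isomorphism at each point $h$. Because $S^2\R^{n\ast}$ is a vector space, we have the canonical identification $T_h(S^2\R^{n\ast})\simeq S^2\R^{n\ast}$, under which a tangent vector is represented by a symmetric matrix $\delta h$, realised as the velocity of the straight line $\epsilon\mapsto h+\epsilon\delta h$. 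Then by the chain rule
\[
d\Phi_h(\delta h)=\left.\frac{\dd}{\dd\epsilon}\right|_{\epsilon=0}\Phi(h+\epsilon\delta h)=\left.\frac{\dd}{\dd\epsilon}\right|_{\epsilon=0}\Graph(h+\epsilon\delta h)\,,
\]
which is exactly the assignment $(h,\delta h)\mapsto\frac{\dd}{\dd\epsilon}|_{\epsilon=0}\Graph(h+\epsilon\delta h)$ appearing in the statement. Assembling these fibrewise isomorphisms over the affine neighborhood gives the asserted bundle isomorphism $T(S^2\R^{n\ast})\to T\LL(n,2n)$.

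First I would fix the bi--Lagrangian decomposition $W\simeq\R^n\oplus\R^{n\ast}$ and the bases $e_i$, $\epsilon^j$ as in the proof of Proposition \ref{prop.ApAffLGR}, so that $\Graph h=\Span{e_i+h_{ij}\epsilon^j}$. Next I would record that $\Phi$ is smooth with smooth inverse on the affine chart (this is immediate from the explicit matrix coordinates $u_{ij}$ on $\Gr(n,W)$, which restrict to global linear coordinates on $S^2\R^{n\ast}$), hence a diffeomorphism onto the open dense set $\LL(W)\cap\{L\text{ projecting onto }\R^n\}$. Then I would invoke the general fact that the differential of a diffeomorphism is a fibrewise linear isomorphism, and spell out the chain--rule computation above to see that this differential is precisely the map in \eqref{eq.giovanni.fundamental}. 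Finally I would note ``natural'' is justified because the construction used only the data of the bi--Lagrangian decomposition and, up to the conformal factor discussed in Proposition \ref{propCRnRnStar}, the symplectic form $\omega$; a change of such data transforms both sides compatibly.

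The verification is essentially bookkeeping, so there is no serious obstacle; the one point deserving care is the word \emph{local natural identification}. The map \eqref{eq.ApAffLGRn} only covers the affine neighborhood where $L$ projects isomorphically onto $\R^n$, so the identification is a priori only over that chart. To see it is well defined independently of the chosen bi--Lagrangian decomposition, one checks that two overlapping affine charts differ by a (symmetric--matrix--valued) linear fractional transformation whose differential intertwines the two copies of $S^2\R^{n\ast}$ correctly; this is the analogue, in the Lagrangian setting, of the standard transition functions of the Grassmannian, and it is what makes the pointwise identifications glue. I would state this briefly rather than compute the transition maps in full, since the earlier discussion of affine neighborhoods in \eqref{eq.affine.neig.2} and Remark \ref{rem.coordinates.M1} already sets up the coordinate--free viewpoint that makes the naturality transparent.
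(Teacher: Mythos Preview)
Your proposal is correct and follows essentially the same approach as the paper: the paper's proof simply observes that the map \eqref{eq.ApAffLGRn} restricts to a map from symmetric matrices to the Lagrangian Grassmannian with open image, and therefore induces isomorphisms on tangent spaces. You have spelled this out more carefully (including the chain--rule identification of the differential with the formula in \eqref{eq.giovanni.fundamental} and a discussion of naturality across overlapping charts), but the underlying argument is identical.
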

\begin{proof}
 Just observe that the map \eqref{eq.ApAffLGRn} restricts to a map from the space of symmetric matrices to the Lagrangian Grassmannian, whose image is open. As such, it induces an isomorphisms between the tangent space at $h$ to the former and the tangent space at $\Graph h$ to the latter.
\end{proof}
 In other words, at the point $h\equiv L$, the tangent space $T_L\LL(n,2n)$ inherits the identification with $S^2\R^{n\ast}$, due to the fact that $T_hS^2\R^{n\ast}$ identifies with $S^2\R^{n\ast}$ (in this perspective, the above symbol $\delta h$ denotes an elements of $S^2\R^{n\ast}$, understood as a tangent vector at $h$ to $S^2\R^{n\ast}$ itself). Finally, since $L$ canonically identifies with $\R^n$, via the projection, one can says also that
$
T_L\LL(n,2n)\equiv S^2L^*
$.
This motivates the  introduction of  a canonical bundle over $\LL(n,2n)$.
\begin{definition}
 The bundle $\mathcal{L}\to\LL(n,2n)$ defined by
$
\mathcal{L}^{-1}(L):=L
$
is called the \emph{tautological bundle} over $\LL(n,2n)$.
\end{definition}
Corollary \ref{cor.CanIdentTanLGR} immediately leads to the next fundamental theorem. We shall give an alternative proof of it. Essentially we will go on the other way around, i.e., we shall construct a  bundle isomorphism which may be thought of as a ``global inverse'' (i.e., not defined only on an affine neighborhood)
of the map \eqref{eq.giovanni.fundamental}.
\begin{theorem}
 There is a canonical bundle isomorphism
 \begin{equation}\label{eqCanBundIso}
T\LL(n,2n)\equiv S^2\mathcal{L}^*\, .
\end{equation}
\end{theorem}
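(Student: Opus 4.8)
The plan is to exhibit a \emph{global} bundle morphism $\Phi\colon T\LL(n,2n)\to S^2\mathcal L^*$, to check that it is fibrewise injective, and to conclude that it is an isomorphism by comparing ranks; this $\Phi$ will be precisely the ``global inverse'' of the chart--wise identification of Corollary \ref{cor.CanIdentTanLGR}. I would start from the chart--free description of the tangent space of an ordinary Grassmannian: for $L\in\Gr(n,W)$ there is a canonical isomorphism $T_L\Gr(n,W)\simeq\Hom(L,W/L)$, sending a tangent vector $\xi$, represented by a curve $t\mapsto L(t)$ with $L(0)=L$, to the homomorphism $\ell\mapsto[\dot\ell(0)]\in W/L$, where $t\mapsto\ell(t)\in L(t)$ is any curve with $\ell(0)=\ell$. (If two such curves agree at $t=0$, their difference is a curve lying in $L(t)$ and vanishing at $0$, so its derivative at $0$ lies in $L(0)=L$ and maps to $0$ in $W/L$; hence the homomorphism is well defined.)

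Next I would bring in the symplectic form. Since $L$ is Lagrangian, $L$ equals its $\omega$--orthogonal complement, so the linear map $W\to L^*$, $w\mapsto\omega(w,\cdot)|_L$, has kernel exactly $L$ and descends to a canonical isomorphism $W/L\simeq L^*$. Composing the two identifications, a tangent vector $\xi\in T_L\LL(n,2n)\subset T_L\Gr(n,W)$ produces the bilinear form on $L=\mathcal L^{-1}(L)$ given by
\[
B_\xi(\ell,\ell'):=\omega\bigl(\dot\ell(0),\ell'\bigr),\qquad \ell,\ell'\in L,
\]
and I define $\Phi(\xi):=B_\xi$. The target is, a priori, $L^*\otimes L^*$; the content of the theorem is that $B_\xi$ is in fact symmetric, i.e. $\Phi$ lands in $S^2\mathcal L^*$.

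The key step is therefore the symmetry of $B_\xi$, and here one uses crucially that \emph{each} $L(t)$ is Lagrangian, not merely $L$: differentiating the identity $\omega(\ell(t),\ell'(t))\equiv0$ at $t=0$ gives $\omega(\dot\ell(0),\ell')+\omega(\ell,\dot\ell'(0))=0$, whence $B_\xi(\ell,\ell')=-\omega(\ell,\dot\ell'(0))=\omega(\dot\ell'(0),\ell)=B_\xi(\ell',\ell)$. Linearity of $\xi\mapsto B_\xi$ is immediate once one knows it factors through the linear isomorphism $T_L\Gr(n,W)\simeq\Hom(L,W/L)\simeq L^*\otimes L^*$, and smoothness in $L$ is checked in the affine neighbourhood of Proposition \ref{prop.ApAffLGR}: writing $L(t)=\Graph(h+t\,\delta h)$ with the basis $\ell_i(t)=e_i+(h+t\,\delta h)_{ij}\epsilon^j$ of $L(t)$, one computes $\dot\ell_i(0)=\delta h_{ij}\epsilon^j$ and $B_\xi(\ell_i,\ell_{i'})=-\delta h_{ii'}$, a polynomial expression in the chart which moreover shows that $\Phi$ recovers, up to sign, the identification \eqref{eq.giovanni.fundamental}, and in particular that $B_\xi$ does not depend on the chosen curves.

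Finally, $\Phi$ is fibrewise injective: if $B_\xi=0$ then $\omega(\dot\ell(0),\cdot)|_L\equiv0$ for all $\ell$, so every $\dot\ell(0)$ lies in the $\omega$--orthogonal complement of $L$, which is $L$ itself, i.e. the homomorphism $\ell\mapsto[\dot\ell(0)]\in W/L$ vanishes and $\xi=0$. Since $\dim\LL(n,2n)=\tfrac{n(n+1)}{2}=\dim S^2L^*$ (Remark \ref{rem.coordinates.M1}), a fibrewise injective morphism of vector bundles of equal rank is a bundle isomorphism, which yields \eqref{eqCanBundIso}. I expect the only delicate point---as opposed to bookkeeping---to be the well--definedness chain: that $B_\xi$ depends neither on the curve $L(t)$ representing $\xi$ nor on the curves $\ell(t)$ representing points of $L$. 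Both ambiguities are absorbed by reducing $\dot\ell(0)$ modulo $L$ and invoking the Lagrangian property of $L$ to kill the residual term, and it is exactly here that the canonicity asserted by the theorem resides.
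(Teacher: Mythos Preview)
Your argument is correct. Both proofs construct the very same bilinear form $g^{\xi}(\ell,\ell')=\omega(\dot\ell(0),\ell')$ on $L$, but the paper parametrises tangent vectors to $\LL(n,2n)$ by one--parameter subgroups $\phi_t$ of $\Sp(W)$ and takes $\ell(t)=\phi_t(\ell)$, whereas you use arbitrary curves of Lagrangian planes and the standard identification $T_L\Gr(n,W)\simeq\Hom(L,W/L)$. Your route is more elementary---it avoids the symplectic group entirely---and also more complete, since you spell out the symmetry (by differentiating $\omega(\ell(t),\ell'(t))\equiv0$), the injectivity, and the rank count, none of which the paper makes explicit. What the paper's approach buys is that the well--definedness check collapses to a single observation: two one--parameter subgroups with the same tangent direction at $L_0$ differ, to first order, by an element of the stabiliser of $L_0$ in $\Sp(W)$, which preserves $L_0$ and hence does not affect $\omega(\cdot,\cdot)|_{L_0}$.
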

\begin{proof}
Let $W$ a $2n$--symplectic space and $\LL(n,2n)=\LL(W)$.
Let $\dot{L}_{0} \in T_{L_{0}}\LL(n,2n)$ and
$\phi_{t}$ be an  $1$--parameter  subgroup of $\Sp(W)$, the symplectic group of $W$, such that $\dot
L_0 =  \left.\frac{d\phi_t(L_0)}{dt}\right\vert_{t=0} $. We define a symmetric bilinear
form $g^{\dot L_0}$ on $L_0$ by
\begin{equation*}
g^{\dot{L}_{0}}(v,w)\overset{\text{def}}{=}\omega\left(  \left.  \frac
{d\phi_{t}(v)}{dt}\right\vert _{t=0},w\right)  ,~~v,w\in L_{0}.
\end{equation*}
It does not depend on the $1$--parametric  group $\phi_t$ whose orbit at $L_0$ has
tangent vector  $\dot L_0$. Indeed,  any other  such $1$--parameter
group can be written as $\widetilde{\phi}_t = \phi_t \circ h_t + o(t)$ where
$h_t $ belongs  to the  stabilizer in $\Sp(V)$  of the point
$L_0$. Since $\widetilde{\phi}(v)= \phi_t(h_t(v))= \phi(t,h_t(v))$, we have that
$$
\left.\frac{d\widetilde{\phi}_{t}(v)}{dt}\right\vert _{t=0}=
\left.\frac{d\phi_{t}(v)}{dt}\right\vert _{t=0} + \left.\frac{dh_{t}(v)}{dt}\right\vert
_{t=0}
$$
and
$$
\omega\left(  \left.  \frac {d\widetilde{\phi}_{t}(v)}{dt}\right\vert
_{t=0},w\right) = \omega\left(  \left.  \frac
{d\phi_{t}(v)}{dt}\right\vert _{t=0},w\right)\,,
$$
since $\omega|_{L_0} =0$.
\end{proof}
In other words, tangent vectors to $\LL(n,2n)$ can be understood as symmetric $n\times n$ matrices $\delta U$. Also  the elements of $\LL(n,2n)$  themselves can be (locally, in an affine neighborhood) identified with  symmetric $n\times n$ matrices  $U$.
So, the correct way to understand
$
U+\delta U
$
as an ``affine tangent vector'' to $\LL(n,2n)$, is to think of it as the velocity at zero of the curve
\begin{equation*}
\R\ni\epsilon\longmapsto U+\epsilon\delta U\in \LL(n,2n)\, .
\end{equation*}
We conclude this section with a   definition which will be of paramount importance in   the study of characteristics of $2\Nd$ order PDEs (see Section \ref{secAntipatetica}).
\begin{definition}\label{def.rank.vector}
A tangent vector $U+\delta U$ is called \emph{rank--$k$} if and only if  $\rank\delta U=k$.
\end{definition}
Observe that the definition is well--posed in view of the canonical isomorphism \eqref{eqCanBundIso},
whereas $\rank U$ is not an intrinsic concept, as $U$ can have maximal rank in one coordinate chart, and be zero into another.

Rank--$1$ tensors in $S^2L^*$ are, up to sign, of type
\begin{equation}\label{eq.rank.1.vector}
\xi\odot\xi\,,\quad\xi\in L^*\,.
\end{equation}
So, rank--$1$ vectors, via identification \eqref{eqCanBundIso}, are $\sum_{i\leq j}\xi_i\xi_j\partial_{u_{ij}}$, where $\xi_k$ are the component of the covector $\xi$ in the basis $\{dx^i\}$ of $L^*$.
The annihilator $\Ann(\xi)$ of the covector $\xi$ is of course a hyperplane of $L$. The set of all Lagrangian subspaces containing $\Ann(\xi)$, i.e., $(\Ann(\xi))^{(1)}$ (see \eqref{eq.prolongation.of.V}), is a curve $L_t$, $t\in\R$, in the Lagrangian Grassmannian starting at $L_0=L$: the tangent vector at $L$ to this curve is exactly the aforementioned vector of rank $1$. We have thus established a correspondence
\begin{equation}\label{eq.correspondence.gianni}
\text{hyperplanes of $L$ (which correspond  to  elements of
$\mathbb{P}L^*$)$\,\,\,\Longleftrightarrow\,\,\,$ directions of
$T_L\LL(n,2n)$ of rank $1$}\,.
\end{equation}

\subsection{Geometry of $\LL(2,4)$}\label{sec.L24}
We specialise now to the case $n=2$.
Let us fix the dual to each other bases
$
\R^2=\Span{e_1,e_2}
$
and
$
\R^{2\,\ast}=\Span{\epsilon^1,\epsilon^2}
$.
Recall (see Proposition \ref{prop.ApAffLGR} and Corollary \ref{cor.AltDefLGR}) that $\LL(2,4)$ contains $S^2\R^{2\ast}$ as a preferred affine neighborhood. More precisely, this neighborhood is the image of the   map \eqref{eq.ApAffLGRn}, which in the case $n=2$ looks like
\begin{eqnarray}\label{eq.L.P.as.a.matrix}
S^2\R^{2\ast}  &\longrightarrow & \LL(2,4)\,,\nonumber \\
U=\left(\begin{array}{cc}u_{11} & u_{12} \\u_{12} & u_{22}\end{array}\right)&\longmapsto &L(U):=\Span{e_1+u_{11}\epsilon^1+u_{12}\epsilon^2,e_2+u_{12}\epsilon^1+u_{22}\epsilon^2}\, ,\label{eqDefElleU}
\end{eqnarray}
where $L(U)$ is the graph of the matrix $U$, understood as a homomorphism from $\R^2$ to $\R^{2\ast}$.\par
The Pl\"ucker embedding defined in Definition \ref{defPluck}, in the case $n=2$, reads
\begin{eqnarray}
 \LL(2,4) &\longrightarrow & \p\Lambda^2(\R^2\oplus \R^{2\,\ast} )\, \label{eqPluckL24}\\
 L(U)=\Span{e_1+u_{11}\epsilon^1+u_{12}\epsilon^2,e_2+u_{12}\epsilon^1+u_{22}\epsilon^2} &\longmapsto & [e_1\wedge e_2 +u_{11} \epsilon^1\wedge e_2  +u_{12}( \epsilon^2\wedge e_2+ e_1\wedge\epsilon^1)+te_1\wedge\epsilon^2+\det U\,\epsilon^1\wedge\epsilon^2]\, .\nonumber
\end{eqnarray}
Recall that
$
\Lambda^2(\R^2\oplus \R^{2\,\ast} )=\Lambda^2(\R^2)\oplus \End(\R^2)\oplus \Lambda^2  (\R^{2\,\ast})
$,
where
$
\Lambda^2(\R^2)=\Span{e_1\wedge e_2}
$,
and
$
\Lambda^2  (\R^{2\,\ast})=\Span{\epsilon^1\wedge\epsilon^2}
$,
whereas
\begin{equation*}
\End(\R^2)=\gl_2=\Span{\epsilon^1\wedge e_2 ,  \epsilon^2\wedge e_2, e_1\wedge\epsilon^1,e_1\wedge\epsilon^2}\, .
\end{equation*}
In terms on matrices,
\begin{equation*}
\epsilon^1\wedge e_2  \longleftrightarrow
\left(\begin{array}{cc}
0 & 0 \\
1 & 0
\end{array}\right)\, ,\quad
\epsilon^2\wedge e_2  \longleftrightarrow  \left(\begin{array}{cc}0 & 0 \\0 & 1\end{array}\right)\, ,\quad
e_1\wedge \epsilon^1  \longleftrightarrow  \left(\begin{array}{cc}-1 & 0 \\0 & 0\end{array}\right)\, ,\quad
e_1\wedge \epsilon^2  \longleftrightarrow  \left(\begin{array}{cc}0 & 1 \\0 & 0\end{array}\right)\, .
\end{equation*}
As we observed before (cf. Definition \ref{defPluck}), however, \eqref{eqPluckL24} takes values in the subspace
\begin{equation*}
\Lambda_0^2(\R^2\oplus \R^{2\,\ast} ):=\Lambda^2(\R^2)\oplus \sll_2\oplus \Lambda^2  (\R^{2\,\ast})\, .
\end{equation*}
Indeed,
\begin{equation*}
\sll_2=\Span{ \left(\begin{array}{cc}0 & 0 \\1 & 0\end{array}\right)\, ,    \left(\begin{array}{cc}-1 & 0 \\0 & 1\end{array}\right)     \, ,   \left(\begin{array}{cc}0 & 1 \\0 & 0\end{array}\right)   }\, ,
\end{equation*}
and
\begin{equation*}
 \epsilon^2\wedge e_2+ e_1\wedge\epsilon^1  \longleftrightarrow \left(\begin{array}{cc}-1 & 0 \\0 & 1\end{array}\right) \, .
\end{equation*}
Let us introduce coordinates
$
(z_0,z_1,z_2,z_3,z_4)
$
on $\Lambda_0^2(\R^2\oplus \R^{2\,\ast} )$, by
\begin{equation*}
(z_0,z_1,z_2,z_3,z_4)\longleftrightarrow z_0  e_1\wedge e_2 + \left(\begin{array}{cc}-z_2 & z_3 \\z_1 & z_2\end{array}\right)+z_4 \epsilon^1\wedge\epsilon^2\, .
\end{equation*}
These induce projective coordinates
$
[z_0:z_1:z_2:z_3:z_4]
$
on $\p^4=\p\Lambda_0^2(\R^2\oplus \R^{2\,\ast} )$, whence \eqref{eqPluckL24}  reads
\begin{equation*}
U=\left(\begin{array}{cc}u_{11} & u_{12} \\u_{12} & u_{22}\end{array}\right)\longmapsto L(U)\equiv [1:u_{11}:u_{12}:u_{22}:u_{11}u_{22}-u_{12}^2 ]\, .
\end{equation*}
Observe that we  have reduced the projective space surrounding $\LL(2,4)$ from $\p^5$ to $\p^4$ (recall Definition \ref{defPluck}). Now we can immediately obtain the equation describing the hypersurface $\LL(2,4)$ in $\p^4$.
\begin{proposition}
$\LL(2,4)$ is the smooth quadric in $\p^4$ given by the equation
\begin{equation}\label{eqLieQuadric}
z_0z_4=z_1z_3-z_2^2\, .
\end{equation}
\end{proposition}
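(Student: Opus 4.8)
The plan is to use the explicit coordinate description of the Plücker embedding that has just been set up. From the formula
\[
U=\left(\begin{array}{cc}u_{11} & u_{12} \\ u_{12} & u_{22}\end{array}\right)\longmapsto L(U)\equiv [1:u_{11}:u_{12}:u_{22}:u_{11}u_{22}-u_{12}^2],
\]
the image of the affine chart $S^2\R^{2\ast}$ is exactly the set of points $[z_0:z_1:z_2:z_3:z_4]\in\p^4$ with $z_0\neq 0$ satisfying, after dehomogenising with $z_0=1$, the relation $z_4=z_1z_3-z_2^2$. Homogenising, this is the equation $z_0z_4=z_1z_3-z_2^2$ in \eqref{eqLieQuadric}. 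So first I would verify that the Plücker image of the affine neighborhood lies on this quadric, which is an immediate substitution.

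Next I would argue that the closure of this affine piece — which by Corollary \ref{cor.AltDefLGR} is all of $\LL(2,4)$ — is the \emph{whole} quadric, not merely a dense subset of it. The quadric $Q=\{z_0z_4=z_1z_3-z_2^2\}$ is irreducible (its defining polynomial is irreducible, since it is a nondegenerate quadratic form in five variables up to a linear change of coordinates), hence it contains no proper closed subvariety of the same dimension. Since $\LL(2,4)$ is a $3$--dimensional projective variety contained in the $3$--dimensional irreducible variety $Q$, and it contains the dense affine open subset just described, we get $\LL(2,4)=Q$. Alternatively, and perhaps more in the spirit of these notes, one can check directly that every point of $Q$ is a Plücker image: a point with $z_0\neq 0$ is covered above; a point with $z_0=0$ forces $z_1z_3=z_2^2$, and one checks such points arise as limits $\lim_{\epsilon\to 0}L(U/\epsilon)$ of the chart, or equivalently correspond to Lagrangian planes $L$ meeting $\R^{2\ast}$ nontrivially, which are parametrised by the other standard affine charts of the Grassmannian.

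Finally I would check smoothness. The quadratic form $q(z)=z_0z_4-z_1z_3+z_2^2$ has associated symmetric matrix of full rank $5$ (its determinant is nonzero, as $z_0z_4$, $-z_1z_3$, $z_2^2$ are three ``independent'' hyperbolic/definite blocks), so the projective quadric it defines is nonsingular; equivalently $\nabla q$ never vanishes on $q=0$ away from the origin. The only point I expect to require a little care is the surjectivity/closure step: making sure the Plücker image is genuinely the entire quadric and not a proper (necessarily lower-dimensional, but one should say why) subset. Invoking irreducibility of the quadric together with the dimension count $\dim\LL(2,4)=\tfrac{n(n+1)}{2}=3=\dim Q$ settles it cleanly, so I would lead with that rather than with an explicit chart-by-chart verification.
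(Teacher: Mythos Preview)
Your argument is correct and complete. The paper itself does not give an explicit proof: it simply writes ``Now we can immediately obtain the equation describing the hypersurface $\LL(2,4)$ in $\p^4$'' and states the proposition, treating it as a direct read--off from the Pl\"ucker coordinate formula $L(U)\equiv[1:u_{11}:u_{12}:u_{22}:u_{11}u_{22}-u_{12}^2]$. Your write--up fills in exactly the details the paper leaves implicit---the substitution showing the image lies on the quadric, the dimension/irreducibility argument showing the image is the whole quadric, and the nondegeneracy check for smoothness---so there is nothing to correct; if anything, your closure step via irreducibility and $\dim\LL(2,4)=3=\dim Q$ is the clean way to make rigorous what the paper takes for granted.
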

\begin{figure}
{\epsfig{file=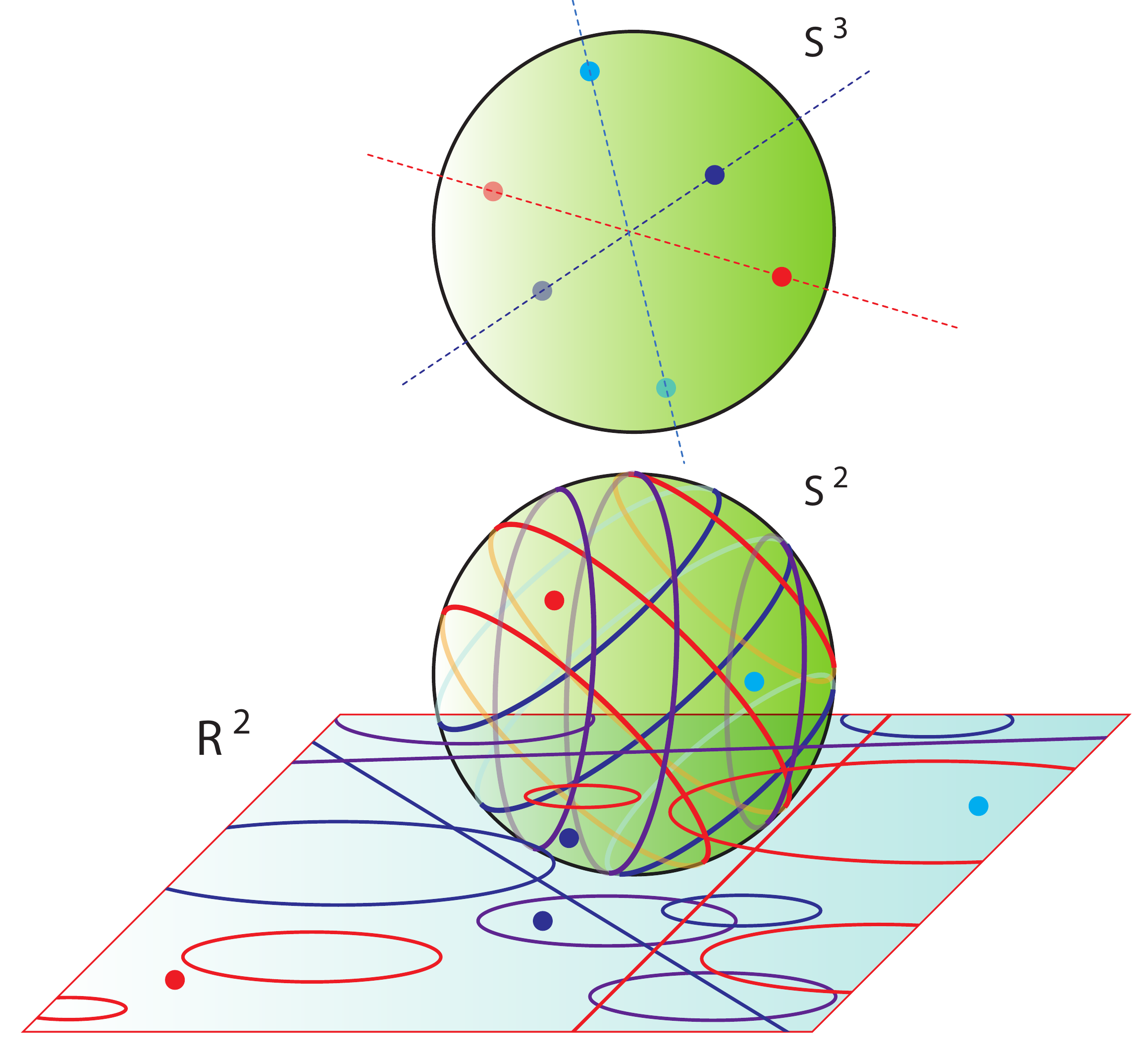,width=0.5\textwidth}}
\caption{The space of all circles in $\R^2$ (comprising the degenerate cases with zero/infinite radius) is surprisingly isomorphic to the Lagrangian Grassmannian in dimension 4.\label{Fig.Lie.Quad}}
\end{figure}
\begin{definition}
 The quadric $Q^3\subset\p^4$, cut out by the equation \eqref{eqLieQuadric}, is known as the \emph{Lie quadric}.
\end{definition}
The name \emph{Lie quadric} is due to the fact that Sophus Lie introduced it as the space of circles on the plane (see \cite{MR2876965,2014arXiv1405.5198J}). Indeed, one can take the $3$--sphere $S^3\subset \R^2\oplus \R^{2\,\ast}$ and observe that the intersection $L\cap S^3$, where $L\in\LL(2,4)$, is a circle in $S^3$. Through the Hopf fibration, such a circle becomes either a circle, either  a point in $S^2$. Finally, by stereographic projection on $\R^2$, one gets points, circles, and lines in $\R^2$ (i.e., circles of radius 0, greater than 0, and infinite, respectively). See also Fig. \ref{Fig.Lie.Quad}.\par

\section{Scalar PDEs of $2\Nd$ order}\label{sec.SecondOrderPDEs}
With the extended Darboux coordinates $(x^i,u,u_i,u_{ij})$ (see Remark \ref{rem.coordinates.M1}) on the prolongation $M^{(1)}$ of $M$, we are now ready to define $2\Nd$ order PDEs in $n$ independent variables, in strict analogy with the $1\St$ order PDEs defined in Section \ref{sec1stOrderPDEs}. Indeed, a relation amongst those coordinates \emph{formally} looks like a $2\Nd$ order  PDE. In order to recast the notion of a solution  in such a geometric framework it is indispensable to recall the  Definition  \ref{eq.def.prolog.lag.submanifold} of the prolongation of a Lagrangian submanifold $N\subset M$.

\subsection{General definition}
\begin{definition}\label{def2ndOrdPDE}
Let $(M,\mathcal{C})$ be a $(2n+1)$--dimensional contact manifold and $M^{(1)}$ its prolongation.
A hypersurface $\mathcal{E}$ of $M^{(1)}$ is called a \emph{scalar $2\Nd$ order partial differential equation} ($2^{nd}$ order PDE) with one unknown function and $n$ independent variables. A \emph{solution} of $\mathcal{E}$ is a Lagrangian submanifold $N\subset M$
whose prolongation $N^{(1)}$ is contained in $\mathcal{E}$.
\end{definition}
As in the $1\St$ order case, if
$\mathcal{E}=\{F(x^{i},u,u_{i},u_{ij})=0\}$ then a solution
$N$  parametrized by $x^{1},\dots, x^{n}$, can be written as
\begin{equation*}
N\equiv
\begin{cases}
\displaystyle{u=\varphi(x^{1},\dots,x^{n})}\,,\\
\\
\displaystyle{u_{i}=\frac{\partial\varphi}{\partial x^{i}}(x^{1},\dots,x^{n}%
)}\,,\\
\\
\displaystyle{u_{ij}=\frac{\partial^{2}\varphi}{\partial x^{i}\partial x^{j}%
}(x^{1},\dots,x^{n})\, ,}%
\end{cases}
\end{equation*}
where  the  function $\varphi$ satisfies the equation
\[
F\left( x^{i},\varphi,\frac{\partial\varphi}{\partial x^{i}},\frac
{\partial^{2}\varphi}{\partial x^{i}\partial x^{j}}\right) =0,
\]
which coincides with the classical notion of a solution.

\begin{remark}
According to Definition \ref{def2ndOrdPDE} above, a PDE of $2\Nd$ order in two independent variables can be regarded as a (family) of hypersurfaces in the Lagrangian Grassmannian $\LL(2,4)$. This is very important, in view of the identification between $\LL(2,4)$ and the Lie quadric $Q^3$ (see Section \ref{sec.L24}). Indeed, the theory of surfaces in $Q^3$ is a well--developed subject    (see \cite{MR2876965}).
\end{remark}

\subsection{Scalar ODEs of $2\Nd$ order: a working example}

The classical way to solve the ODE
\begin{equation}\label{eqEs2ndODE1}
F\left(  u(x), u'(x), u''(x)\right)=0\,
\end{equation}
is to introduce a new variable
\begin{equation}\label{eqEs2ndODEStarStarStar}
w=w(u):=u'
\end{equation}
in order to reduce the order of the considered equation (see \cite[Example 2.46]{MR1240056} and \cite[Example 2.1]{MR1670044}).
Thus, we have that
$
u'' =\frac{\dd w}{\dd u}w\,,
$
as
\begin{equation*}
\frac{\dd w}{\dd u} =\frac{\dd w}{\dd x}\frac{\dd x}{\dd u}\Rightarrow \frac{\dd w}{\dd x} =\frac{\dd w}{\dd u} \frac{\dd u}{\dd x} =\frac{\dd w}{\dd u} w\, .
\end{equation*}
So, \eqref{eqEs2ndODE1} becomes
\begin{equation}\label{eqEs2ndODE2}
F\left(  u, w(u),w'(u)w(u)\right)=0\, ,
\end{equation}
i.e., a $1\St$ order ODE with dependent variable $u$ and independent variable $w$.\par

Then, finding solutions of \eqref{eqEs2ndODE1}  means finding a solution $w=w(u)$ of \eqref{eqEs2ndODE2}, and then plug it into \eqref{eqEs2ndODEStarStarStar} and get a $1\St$ order ODE
$
u'(x)=w(u(x))
$.
Let us give this method a geometric interpretation.\par
Condition \eqref{eqEs2ndODE1} determines the subset
$
\E:=\{F=0\}\subset J^2(1,1)
$.
Observe that the $2$--dimensional distribution given as the kernel of $1$--forms \eqref{eqContForm2ndorder1} is spanned by
\begin{equation}\label{eq.distr.cont.on.J2}
D_x:=\partial_x+u_1\partial_u+u_{11}\partial_{u_1}\, ,\quad
V:=\partial_{u_{11}}\, .
\end{equation}
Consider the diffeomorphism
\begin{equation*}
t_c:(x,u,u_1,u_{11})\to (x+c,u,u_1,u_{11})
\end{equation*}
and observe that
${t_c}_*(\CC)=\CC$.
This can be shown in two equivalent ways. Either we observe that $t_c^*$ preserves the two forms \eqref{eqContForm2ndorder1}, or we observe that $L_{\partial_x}(D_x)=0$ and $L_{\partial_x}(V)=0$.
Moreover, $t_c(\E)=\E$ (cf. also Definition \ref{defPrelimSymm}).\par
\begin{figure}
  {\epsfig{file=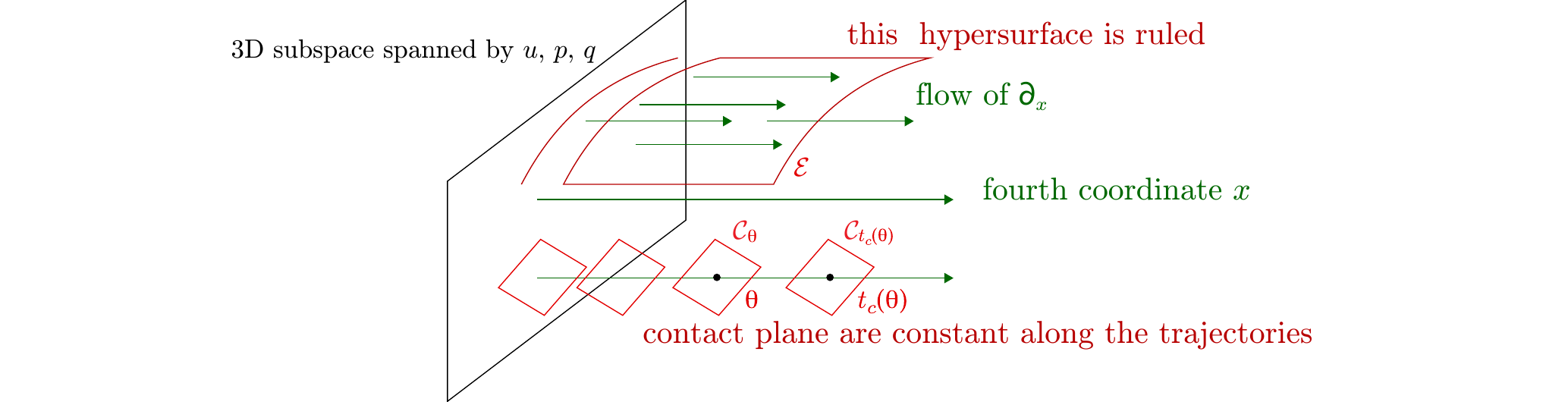,width=\textwidth}}\caption{If there is no explicit dependence on $x$, the equation can be projected to a lower--dimensional space.\label{Fig.Proiettante}}
\end{figure}
It is convenient now to introduce the smooth map
\begin{eqnarray*}
T:J^2(1,1) &\longrightarrow & \R^3\, ,\\
(x,u,u_1,u_{11}) &\longmapsto & (u,u_1,u_{11})\, ,
\end{eqnarray*}
which identifies the quotient space of $J^2(1,1)$ with respect to the $1$--parameter group of transformations $\{t_c\}$, with the more convenient space $\R^3$. The natural question is now whether the distribution \eqref{eq.distr.cont.on.J2} on $J^2(1,1)$ passes to the quotient $\R^3$, in such a way that the latter becomes (locally) isomorphic to the three--dimensional contact manifold $J^1(1,1)$. \par

Observe that the image $T(\E)$ is given, as a hypersurface in $\R^3$, by the same equation \eqref{eqEs2ndODE1}.
Let us compute the ``image distribution'' $\widetilde{\CC}:=T_*(\CC)$. %
By definition, for any
$
p_0=(u,u_1,u_{11})
$,
the plane $\widetilde{\CC}_{p_0}$ can be obtained by applying $d_p T$ to $\CC_p$, where $p$ is any point such that $T(p)=p_0$. See also Fig. \ref{Fig.Proiettante}.
So, choose an arbitrary $x\in\R$ and put $p=(x,u,u_1,u_{11})$.
Then
$
\CC_p=\Span{\partial_x|_p+u_1\partial_u|_p+u_{11}\partial_{u_1}|_p, \partial_{u_{11}}|_p}
$
and
$
d_p T\CC_p=\Span{ u_1\partial_u|_{p_0}+u_{11}\partial_{u_1}|_{p_0}, \partial_{u_{11}}|_{p_0}}
$.
Hence, $\widetilde{\CC}$ is spanned by the vector fields
$$
\widetilde{D}:=u_1\partial_u+u_{11}\partial_{u_1}\, ,\quad
\widetilde{V}:=
\partial_{u_{11}}\, .
$$
Being $\widetilde{D}$ and $\widetilde{V}$ independent, there must be a unique differential $1$--form $\widetilde{\omega}\in\Lambda^1(\R^3)$, such that
$
\widetilde{\CC}=\ker \widetilde{\omega}
$.
Let us write
$
\widetilde{\omega}=adu+bdu_1+cdu_{11}
$
and impose the two conditions:
\begin{equation*}
\widetilde{\omega}(\widetilde{D})=au_1+bu_{11}=0\, ,\quad
\widetilde{\omega}(\widetilde{V})=c=0\, .
\end{equation*}
It follows that
$ au_1=-bu_{11}$ and $c=0$,
i.e., any differential form $\widetilde{\omega}=adu-bdu_1$ with
\begin{equation}\label{eqEsSecOrdDelta}
 au_1=-bu_{11}
\end{equation}
defines the distribution $\widetilde{\CC}$.
We wonder now if there is a change of coordinates
$
\Phi:(t,y,y_1)\to (u,u_1,u_{11})
$
which brings $\widetilde{\CC}$ to the standard Contact distribution
$
\Span{\partial_t+y_1\partial_y,\partial_{y_1}}
$
on $J^1(1,1)$.\par
The ``global'' answer to this question is negative, since $\widetilde{D}$ vanishes when $u_1=u_{11}=0$ (this should not surprise, since in the algebraic manipulations we divided by $u'$). So, we rule out the zero--measure subset $u_1=u_{11}=0$, and we observe that, on the remaining part,
\eqref{eqEsSecOrdDelta} reads
$
a=-b\frac{u_{11}}{u_1}
$,
i.e.,
\begin{equation*}
\widetilde{\omega}=bdu_1-b\frac{u_{11}}{u_1}du=b\left(  du_1 -\frac{u_{11}}{u_1}du\right)\, ,
\end{equation*}
hence, being $\widetilde{\omega}$ defined up to proportionality,
$
\widetilde{\omega}=du_1-\frac{u_{11}}{u_1}du
$.
Hence, it is sufficient to set
\begin{equation*}
u := t\, ,\quad
u_1 := y\, ,\quad
u_{11} := \overline{u_{11}}(t,y,y_1)\, ,
\end{equation*}
and obtain
$
F^*(\widetilde{\omega})=dy-\frac{\overline{u_{11}}(t,y,y_1)}{y}dt\equiv dy-y_1dt\Leftrightarrow \overline{u_{11}}(t,y,y_1)=yy_1
$.
We conclude that, in the new variables $t,y,y_1$, $T(\E)$ is given by  the relations
$
F(t,y,yy_1)=0
$,
i.e., precisely \eqref{eqEs2ndODE2}.

\section{Symmetries of PDEs and the Lie--B\"acklund theorem}\label{secSymPDEBack}

In this section we review the general definition of symmetries and infinitesimal symmetries for arbitrary systems of PDEs in several unknown a known variables, that is, submanifolds of the jet spaces
\begin{equation*}
J^k(n,m):=\{[f]^k_{\x}\,\,|\,\,f\in C^{k}(\R^n,\R^m)\}\, ,
\end{equation*}
whose definition is an obvious generalization of that given in \eqref{eq.J1.n.m} for $k=1$.
The map
$$
j^kf:\R^n\to J^k(n,m)\,,\quad \x\mapsto [f]^k_{\x}\,,
$$
in analogy with \ref{eq.j1.F}, is also defined. A local chart on $J^k(n,m)$ is denoted by
$$
(x^i,u^a,u^a_J)=(\x,\u,\u_J)\,,
$$
and coordinates $\u_J$ are defined by
\begin{equation}\label{eq.uJ}
\u_{J}\circ j^kf=\frac{\partial^{|J|}f}{\partial x^J}:=\frac{\partial^{h}f}{\partial x^{j_1}\cdots\partial x^{j_h}}\,,
\end{equation}
where $J= j_1\cdots j_h$ is a multi--index and $|J|:=h\leq k$. Similarly as we did in \eqref{seconddef}--\eqref{eq.distr.C.general.local}, one can define a canonical distribution $\CC$ on $J^k(n,m)$ whose local expression
\begin{equation}\label{eq.Cartan.distribution.general}
\CC=\Span{D_{x^i}=\partial_{x^i}+\sum_{|J|<k} u^a_{J,i}\partial_{u^a_J}\,,\,\partial_{u^a_K}}\,,\quad |K|=k\,,
\end{equation}
where $(J,i):=(j_1\cdots j_h,i)$, is analog to \eqref{eq.distr.C.general.local}.
\begin{definition}\label{def.Cartan.distribution}
The distribution \eqref{eq.Cartan.distribution.general} is called the \emph{Cartan} distribution on $J^k(n,m)$.
\end{definition}
Even if this goes far beyond the context of contact manifolds and their prolongation, it is only in such an extended framework that one can state the celebrated Lie--B\"acklund theorem, whose proof is omitted due to its complexity. Moreover, the notions introduced here will be needed in Section \ref{secVeryClear}.
 We begin with the definition of a symmetry of $J^k(n,m)$, which is a particular case of Definition \ref{defPrelimSymm}.
\begin{definition}\label{defSymmJey}
Let $\phi$ be a diffeomorphism of $J^k(n,m)$ (resp., a vector field $X\in\X(J^k(n,m))$). The diffeomorphism $\phi$  (resp., $X$) is called a \emph{finite (resp., infinitesimal) symmetry} of $J^k(n,m)$ if and only if $\phi_*(\CC)=\CC$ (resp., $[X,\CC]\subset\CC$).
\end{definition}
By definition, any diffeomorphism of $J^0(n,m)=\R^n\times\R^m$ (resp., a vector field $X\in\X(J^0(n,m))$) is a finite (resp., infinitesimal) symmetry of $J^0(n,m)$. Let $\E\subset J^k(n,m)$ be a (system of) PDEs.
\begin{definition}\label{defSymmEQ}
$\phi\in\mathrm{Diffeo}(J^k(n,m))$ (resp., $X\in\X(J^k(n,m))$) is a \emph{finite} (resp., \emph{infinitesimal}) symmetry of $\E$ if and only if $\phi$ is a symmetry of $J^k(n,m)$ and $\phi(\E)=\E$ (resp., $X$ is tangent to $\E$).
\end{definition}
Recall (see Conclusion \ref{conConlcusioneSollevamento}) that we have managed to lift a vector field from $J^0$ to a symmetry of $J^1$.\par
This is a rather general fact, involving also finite symmetries. In fact, the prolongation of a symmetry $\phi$ of $J^{k-1}(n,m)$ to a symmetry of $J^k(n,m)$ is defined as follows: $\phi([F]^k_{\x}):=[\phi^*(F)]^k_{\phi(\x)}$. One can obtain the prolongation of an infinitesimal symmetry by considering the prolongation of its local flow.
\begin{remark}\label{remProlong}
One could define the above prolongation also by observing that  $J^{k}(n,m)$, similarly as in Section \ref{sec.HolEqsJ111}, can be seen as an ``equation'' inside $J^1(\R^n,J^{k-1}(n,m))$, and that
there is an obvious way to lift a diffeomorphism of $J^0=J^{k-1}(n,m)$ to $J^1$.  Less obviously, if the original diffeomorphism is a symmetry (see Definition \ref{defSymmJey}) of $J^{k-1}(n,m)$, then its lift is a symmetry (see Definition \ref{defSymmEQ}) of the equation $J^{k}(n,m)$ inside $J^1(\R^n,J^{k-1}(n,m))$, that is, a symmetry of $J^{k}(n,m)$.
\end{remark}

\begin{definition}
Let $\phi$ (resp., $X$) be a finite (resp., infinitesimal) symmetry of $J^{k}(n,m)$. Then the recursively defined diffeomorphism (resp., vector field) $\phi^{(l)}:=(\phi^{(l-1)})^{(1)}$ (resp., $X^{(l)}:=(X^{(l-1)})^{(1)}$)  of $J^{k+l}(n,m)$ is called the $l^\textrm{th}$ lift of $\phi$ (resp., $X$).
\end{definition}
The following theorem, whose proof is not trivial, classifies all the symmetries of $J^k(n,m)$.
\begin{theorem}[Lie--B\"acklund]\label{thLieBakc}
Let $\phi$ (resp., $X$) be a finite (resp., infinitesimal) symmetry of $J^{k}(n,m)$. Then, if $m=1$, there exists a finite (resp., infinitesimal) symmetry $\widetilde{\phi}$ (resp., $\widetilde{X}$) of $J^{1}(n,m)$, such that $\phi=\widetilde{\phi}^{(k-1)}$ (resp., $X=\widetilde{X}^{(k-1)}$); if  $m> 1$,  there exists a finite (resp., infinitesimal) symmetry $\widetilde{\phi}$ (resp., $\widetilde{X}$) of $J^{0}(n,m)$, such that $\phi=\widetilde{\phi}^{(k)}$  (resp., $X=\widetilde{X}^{(k)}$).
\end{theorem}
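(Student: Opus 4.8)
The plan is to prove the theorem by \emph{downward} induction on $k$, the only real content being a single inductive step, which I will call the \emph{Projectability Lemma}: every (finite or infinitesimal) symmetry $\phi$ of $(J^k(n,m),\CC)$ with $k\ge2$, or with $k=1$ and $m>1$, automatically preserves the fibration $\pi_{k,k-1}\colon J^k(n,m)\to J^{k-1}(n,m)$, the diffeomorphism $\bar\phi$ it covers is again a symmetry of $(J^{k-1}(n,m),\CC)$, and $\phi=(\bar\phi)^{(1)}$. Granting this, one iterates: for $m=1$ the induction halts at $J^1(n,1)$, where a symmetry of $\CC$ is by definition a contact transformation, which gives $\phi=\widetilde\phi^{(k-1)}$; for $m>1$ it halts at $J^0(n,m)=\R^n\times\R^m$, where every diffeomorphism is trivially a symmetry, which gives $\phi=\widetilde\phi^{(k)}$. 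One can run the finite and infinitesimal versions in parallel (replacing $\phi_*\CC=\CC$ and pull-backs by $[X,\CC]\subset\CC$ and Lie derivatives), or deduce the infinitesimal statement from the finite one applied to the local flow of $X$.

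\textbf{Step 1: the derived flag of $\CC$ is intrinsic.} Writing $\CC=\CC^k$ in the coordinates \eqref{eq.Cartan.distribution.general} and computing brackets — the key identity being $[D_{x^i},\partial_{u^a_K}]=-\partial_{u^a_{K\smallsetminus i}}$ up to a nonzero scalar — one checks that the $j$-th derived distribution of $\CC^k$ is exactly the pull-back $(\pi_{k,k-j})^*\CC^{k-j}$, for $0\le j\le k$. Since derived distributions are invariantly attached to $\CC$, any symmetry $\phi$ preserves every member of this flag; in particular it preserves $\CC'=(\pi_{k,k-1})^*\CC^{k-1}$.

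\textbf{Step 2: recovering the vertical distribution $V^k:=\ker d\pi_{k,k-1}\subset\CC$.} This is the heart of the argument and splits into two regimes. (i) If $m\ge2$, or $m=1$ with $k\ge2$ and $n\ge2$, then $\dim V^k$ strictly exceeds the dimension $n$ of the maximal ``$R$-type'' integral elements of $\CC$ (those tangent to prolonged sections $j^k f$), and one verifies that $V^k$ is in fact the \emph{unique} integrable subdistribution of $\CC$ of (locally) maximal dimension — hence an intrinsically defined subbundle, preserved by $\phi$. (ii) If $n=m=1$, then $\CC=\CC^k(1,1)$ is a rank-$2$ Goursat distribution whose associated flag is everywhere \emph{regular}, and $V^k$ is the canonical Cauchy-characteristic line field attached to the flag: for $k=2$ it is the characteristic line field of the Engel distribution $\CC^2(1,1)$, and for $k\ge3$ it is the Cauchy-characteristic space of $\CC'$. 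In either regime $\phi$ maps $V^k$ to $V^k$, so it carries fibres of $\pi_{k,k-1}$ to fibres and covers a diffeomorphism $\bar\phi$ of $J^{k-1}(n,m)$.

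\textbf{Step 3: $\bar\phi$ is a symmetry, and $\phi$ is its prolongation; conclusion.} Because $\CC^{k-1}=(\pi_{k,k-1})_*\CC'$, invariance of $\CC'$ under $\phi$ forces $\bar\phi$ to be a symmetry of $\CC^{k-1}$. Its prolongation $(\bar\phi)^{(1)}$ (well defined by Remark \ref{remProlong}) is then a symmetry of $\CC^k$ also covering $\bar\phi$, so $\psi:=\phi\circ((\bar\phi)^{(1)})^{-1}$ is a symmetry of $\CC^k$ lying over the identity of $J^{k-1}$. A short computation in the top-order fibre coordinates settles that $\psi$ is the identity: such a $\psi$ fixes every coordinate of order $<k$, so $\psi_*D_{x^i}$ has $\partial_{x^j}$-component $\delta^j_i$ and therefore equals $D_{x^i}$ modulo $\langle\partial_{u^a_K}\rangle$; matching the $\partial_{u^a_J}$-components with $|J|=k-1$ then shows $\psi$ fixes every order-$k$ coordinate $u^a_{Ji}=u^a_K$, i.e. $\psi=\id$. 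Hence $\phi=(\bar\phi)^{(1)}$, which closes the induction. The genuine obstacle is Step 2: for $m\ge2$ it requires the (elementary but not short) classification of the integral elements and integrable subdistributions of the Cartan distribution, and for $n=m=1$ it rests on the non-trivial structure theory of regular Goursat flags — precisely the material the authors choose to omit; a secondary subtlety is the uniqueness claim in Step 3, namely that the identity is the only symmetry of $\CC^k$ covering $\id_{J^{k-1}}$.
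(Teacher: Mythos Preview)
Your outline is correct and is precisely the standard route to the Lie--B\"acklund theorem: show that the vertical bundle $V^k=\ker d\pi_{k,k-1}$ is intrinsically attached to the Cartan distribution (via the classification of maximal integral elements when $\dim V^k>n$, and via the Engel/Goursat flag when $n=m=1$), deduce that every symmetry is fibred over a symmetry of $J^{k-1}$, and then check that the only symmetry covering $\id_{J^{k-1}}$ is the identity. The paper, however, does not give a proof at all: its entire argument is the one-line citation ``See, e.g., \cite[Theorem 4.32]{book:27732}'', so there is nothing to compare your approach against. What you have written is essentially a compressed version of the proof one finds in that reference (or in Krasil'shchik--Lychagin--Vinogradov), and the gaps you flag --- the classification of integral elements of $\CC$ and the regularity of the Goursat flag on $J^k(1,1)$ --- are exactly the pieces those sources supply in detail.
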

\begin{proof}
 See, e.g., \cite[Theorem 4.32]{book:27732}.
\end{proof}
\begin{definition}\label{def.point.trans}
Diffeomorphisms of $J^0(n,m)$ are called \emph{point transformations}. Symmetries of $J^k(n,m)$ coming from point transformations are called \emph{point symmetries}.
\end{definition}
The meaning of Theorem \ref{thLieBakc} is   that, basically, the only symmetries of $J^{k}(n,m)$, no matter how large $k$ is, are either point symmetries, or contact symmetries. So, all of the interestingness of the theory is concentrated at low orders $k=0,1,2$, and this is the reason why we are focusing on these cases only. As a final remark, we observe that the Lie--B\"acklund theorem is valid also for the Lagrangian Grassmannian bundle $M^{(1)}\to M$: any symmetry of $M^{(1)}$ (where a distribution $\CC$ on $M^{(1)}$  can be  defined by using similar reasonings that led to the definition \eqref{seconddef}--\eqref{eq.C.general}, see also \cite{ArticoloAntipatetico}
for a more conceptual viewpoint) is the prolongation of a contact symmetry of the underlying contact manifold $M$.

\section{Characteristics}\label{sec.characteristics}
So far we have built a geometric environment for $1\St$ and $2\Nd$ order PDEs based on contact manifolds and their prolongations. In  Section \ref{secSymPDEBack} above  we have pointed out that an analogous framework for higher--order PDEs $\E\subset J^k$ can be obtained as well. We do not insist on it here, for it does not show any really new feature.\par
Rather, we focus on the  problem which anyone expects to be central in a reasonable theory of PDEs:  \emph{how to construct solutions?}
More precisely, if a PDE $\E$ is complemented by a Cauchy datum $\Sigma$, we wish to establish an existence and uniqueness result for the initial values problem $(\E,\Sigma)$. This is the subject of the well--known Cauchy--Kowalewskaya theorem (see Theorem \ref{thCK} later on), which admits a transparent formulation in the present geometric framework (see also Section \ref{secAntipatetica}). \par
The key idea is that a Cauchy datum $\Sigma$ is an integral submanifold of the contact distribution of dimension one less than the maximal, and one needs to ``enlarge'' it, by using an appropriate vector field (characteristic symmetry), to a full--dimensional submanifold that turns out to be a solution (in the sense of Definition \ref{def2ndOrdPDE} in the case of $2\Nd$ order PDES). The possibility of uniquely extending the given Cauchy datum $\Sigma$ to a solution of a given PDE can be tested infinitesimally, i.e., by inspecting the tangent space to $\Sigma$. It turns out that these tangent hyperplanes can be distinguished into two kinds: characteristics and non--characteristics. Only the latter correspond to well--defined initial values problems.\par
On the other hand, the characteristic (infinitesimal) Cauchy data (i.e., those giving rise to ill--defined initial values problems) can be conveniently collected into a variety, traditionally dubbed \emph{characteristic variety}, whose description is given in Section \ref{sezioneZarinista} and, in more contact-geometrical terms, in Section \ref{secAntipatetica}.
Its striking feature is that it is a geometric structure naturally associated to a jet space of order less by one. In fact, for a rich and interesting class of PDEs (see Section \ref{secMAE} below and Remark \ref{rem.reconstruct}), the characteristic variety \emph{allows to reconstruct the equation itself}.

\subsection{The method of the characteristic field for $1\St$ order PDE}\label{sec.method.characteristic}

Let $(M,\CC)$ be a $(2n+1)$--dimensional contact manifold.
\begin{proposition}\label{prop.char.Yf}
Let $\F=\{f=0\}\subset M$ be a $1\St$ order  PDE.
The $(n-1)$--dimensional restricted distribution $\CC_\F=\CC\cap T\F$ possesses the Hamiltonian vector field $Y_{df}=Y_f$ as a characteristic symmetry (see Definition \ref{def.involution} and subsequent properties), called also the \emph{characteristic vector field} of $\F$.
\end{proposition}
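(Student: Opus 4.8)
The idea is that the assertion is essentially a repackaging of the properties \eqref{eq.proprieta.Xf} of the Hamiltonian field $Y_f=Y_{df}$, together with the observation that $\CC_\F$ is nothing but the restriction to $\F$ of the distribution $\{\theta=0,\ df=0\}$ on $M$. Concretely, I would split the verification into two claims: (i) $Y_f$ is a section of $\CC_\F$, and (ii) $Y_f$ is an infinitesimal symmetry of $\CC_\F$; by Definition \ref{defPrelimSymm} the two together say precisely that $Y_f$ is an internal, i.e.\ characteristic, symmetry.

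For (i): the identity $\theta(Y_f)=0$ from \eqref{eq.proprieta.Xf} gives $Y_f\in\CC$, while $df(Y_f)=Y_f(f)=0$ (again \eqref{eq.proprieta.Xf}) shows that $Y_f$ is tangent to every level set of $f$, in particular to $\F=\{f=0\}$; hence $Y_f\in\CC\cap T\F=\CC_\F$. For (ii): since, along $\F$, the distribution $\CC_\F$ is locally the common kernel of the two $1$-forms $\theta$ and $df$ (independent there --- see the non-degeneracy remark below), it suffices, by Definition \ref{defPrelimSymm}, to show that $L_{Y_f}(\theta)$ and $L_{Y_f}(df)$ both lie in the $C^\infty$-module generated by $\theta$ and $df$. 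The first is read off directly from \eqref{eq.proprieta.Xf}: $L_{Y_f}(\theta)=df-\tfrac{\partial f}{\partial u}\,\theta$. The second is immediate from $L_{Y_f}(df)=d\bigl(Y_f(f)\bigr)=d(0)=0$, using once more $Y_f(f)=0$. This proves (ii).

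If one prefers a computation not referring to the ambient forms, I would instead check directly that $[Y_f,X]\in\CC_\F$ along $\F$ for every $X\in\CC_\F$ (extend $X$ arbitrarily off $\F$; the bracket restricted to $\F$ does not depend on the extension because $Y_f$ is tangent to $\F$). Applying the invariant formula $\alpha([Y_f,X])=Y_f(\alpha(X))-X(\alpha(Y_f))-d\alpha(Y_f,X)$ with $\alpha=\theta$ and with $\alpha=df$: the term $X(\alpha(Y_f))$ vanishes identically because $\theta(Y_f)=df(Y_f)=0$ on all of $M$; the term $Y_f(\alpha(X))$ vanishes along $\F$ because $\theta(X)=df(X)=0$ there and $Y_f$ is $\F$-tangent; finally $d\theta(Y_f,X)=df(X)=0$ on $\F$ since $Y_f=\omega^{-1}(df)$ means $d\theta(Y_f,\cdot)|_\CC=df|_\CC$, and $d(df)=0$ identically. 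Hence $\theta([Y_f,X])=df([Y_f,X])=0$ along $\F$, i.e.\ $[Y_f,X]\in\CC_\F$.

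The calculation itself presents no real difficulty; the only point deserving care is the tacit non-degeneracy hypothesis that $\theta$ and $df$ be independent along $\F$ --- equivalently that $df|_\CC\neq 0$ on $\F$, which is exactly what makes $\CC_\F$ a constant-rank sub-bundle and, simultaneously, guarantees $Y_f=\omega^{-1}(df|_\CC)\neq 0$. On the (closed, lower-dimensional) locus of $\F$ where it fails the characteristic field degenerates and the statement is to be read on the complement.
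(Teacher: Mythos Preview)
Your proof is correct and follows essentially the same approach as the paper: both reduce the claim to the properties \eqref{eq.proprieta.Xf} (namely $\theta(Y_f)=0$, $Y_f(f)=0$, and $L_{Y_f}(\theta)=df-\tfrac{\partial f}{\partial u}\theta$) together with the description $\CC_\F=\{\theta=0,\ df=0\}$. Your write-up is simply more explicit than the paper's one-line reference back to those properties, and the additional bracket computation and non-degeneracy remark are welcome but not strictly needed.
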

\begin{proof}
The proposition has been, essentially, already proved after Definition \ref{def.involution}. It is enough to observe that $Y_f\in\CC$ and also that $Y_f(f)=0$, which implies $Y_f\in\ker df$, and that $\CC_\F=\{\theta=0\,,\,df=0\}$.
\end{proof}
The characteristic vector field $Y_f$ is always tangent to every solution of $\F=\{f=0\}$ as it is tangent to $\F$  (indeed $Y_f(f)=0$) and it
preserves $\CC_\F$ in view of Proposition \ref{prop.char.Yf}.
For $n=1$, one gets precisely the vector field generating $\CC_\F$, which is $1$--dimensional (see Section \ref{secMethOfChar} below and also Section  \ref{sec.an.example.solve.ode} above for a working example). In other words, in this case, finding solutions to $\F$ amounts to integrating $Y_f$, i.e., finding its trajectories. For $n\geq 2$, solutions are no longer curves (i.e., points evolving in time), but rather $n$--folds (i.e., $(n-1)$--folds evolving in time). Interestingly enough, the techniques valid for $n=1$ can be adapted to the other cases.
\begin{definition}\label{def.cauchy.datum.first}
A \emph{Cauchy datum} for a $1\St$ order PDE $\F=\{f=0\}$, $f\in C^\infty(M)$, is an
$(n-1)$--dimensional integral submanifold of $\CC$ included in $\F$. It is called \emph{non--characteristic} if it is
transversal to the Hamiltonian vector field $Y_f$.
\end{definition}
Our discussions lead to the following theorem.
\begin{theorem}[Method of characteristics for $1\St$ order PDEs]\label{th.meth.char}
Let $\F$ be a $1\St$ order scalar PDE. Let $\Sigma\subset\F$ be a non--characteristic Cauchy datum. Then, there is some $\epsilon>0$, such that
\begin{equation*}
\widetilde{\Sigma}:=\bigcup_{-\epsilon<t<+\epsilon}\phi_t(\Sigma)\, ,
\end{equation*}
with $\{\phi_t\}$ the flow of $Y_f$, is a solution of $\F$.
\end{theorem}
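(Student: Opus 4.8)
The plan is to show that the set $\widetilde\Sigma$ is an $n$-dimensional integral submanifold of $\CC$ contained in $\F$, which is exactly the definition of a solution of $\F$. The construction is the higher-dimensional analogue of the $n=1$ case, where finding a solution amounts to integrating $Y_f$ starting from a point of $\F$; here we integrate $Y_f$ starting from the $(n-1)$-dimensional Cauchy datum $\Sigma$.

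First I would set up the picture locally. Since $\Sigma\subset\F=\{f=0\}$ is an $(n-1)$-dimensional integral submanifold of $\CC$ and it is transversal to $Y_f$, the flow map $(t,\sigma)\mapsto\phi_t(\sigma)$ is, for $\epsilon$ small enough, an embedding of $(-\epsilon,\epsilon)\times\Sigma$ onto $\widetilde\Sigma$; this gives at once that $\widetilde\Sigma$ is a smooth $n$-dimensional submanifold. That $\widetilde\Sigma\subset\F$ follows because $Y_f(f)=0$ (one of the properties recorded in \eqref{eq.proprieta.Xf}, see also Proposition \ref{prop.char.Yf}), so the flow $\phi_t$ preserves the hypersurface $\{f=0\}$, hence it maps $\Sigma\subset\F$ into $\F$ for every $t$.

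The heart of the matter, and the step I expect to be the main obstacle, is to check that $\widetilde\Sigma$ is an \emph{integral} submanifold of $\CC$, i.e.\ $\theta|_{\widetilde\Sigma}=0$. At a point $p=\phi_t(\sigma)$ the tangent space $T_p\widetilde\Sigma$ is spanned by $Y_f|_p$ together with the pushed-forward vectors ${\phi_t}_*(T_\sigma\Sigma)$. On the first generator we have $\theta(Y_f)=0$ by \eqref{eq.proprieta.Xf}. For the remaining directions I would argue that $\theta$ vanishes on ${\phi_t}_*(T_\sigma\Sigma)$ for all $t$: it holds at $t=0$ since $\Sigma$ is integral, and its $t$-derivative is controlled by $L_{Y_f}\theta$. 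Using $Y_f(\theta)=L_{Y_f}\theta=df-\frac{\partial f}{\partial u}\theta$ from \eqref{eq.proprieta.Xf}, and the facts that $df$ vanishes on vectors tangent to $\F\supset\Sigma$ (these are annihilated by $df$ since $f\equiv0$ along them) while $\theta$ already vanishes there by the inductive-in-$t$ hypothesis, one gets that $\frac{d}{dt}\big(\theta({\phi_t}_*\xi)\big)=0$ for $\xi\in T_\sigma\Sigma$, whence $\theta|_{T_p\widetilde\Sigma}=0$ for all $t$. (One must also note that ${\phi_t}_*\xi$ stays tangent to $\F$, which is immediate because $\phi_t(\F)=\F$.)

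Finally I would remark that $\widetilde\Sigma$ has dimension $n$, which by the discussion at the beginning of Section \ref{sec.non.int} is the maximal possible dimension for an integral submanifold of the contact distribution $\CC$ on the $(2n+1)$-dimensional manifold $M$; so $\widetilde\Sigma$ is a maximal integral submanifold contained in $\F$, i.e.\ a solution of $\F$ in the sense of Definition \ref{def.1stOrderPDEs}. The only genuinely delicate points are the transversality/embedding estimate producing $\epsilon$ (routine, via the implicit function theorem applied to the flow) and the Lie-derivative bookkeeping in the integrality check; everything else is a direct application of the properties \eqref{eq.proprieta.Xf} of the characteristic vector field.
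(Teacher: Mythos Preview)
Your argument is correct and is precisely the fleshing--out of the ``discussions'' the paper alludes to just before stating the theorem (the paper gives no separate proof). The three ingredients you invoke --- $Y_f(f)=0$, $\theta(Y_f)=0$, and $L_{Y_f}\theta=df-f_u\theta$ from \eqref{eq.proprieta.Xf}, together with Proposition \ref{prop.char.Yf} --- are exactly what the paper has in mind.

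One small point of precision: your claim that $\frac{d}{dt}\big(\theta({\phi_t}_*\xi)\big)=0$ is not literally true; what you actually get is
\[
\frac{d}{dt}\,\theta({\phi_t}_*\xi)\;=\;(L_{Y_f}\theta)({\phi_t}_*\xi)\;=\;-f_u\cdot\theta({\phi_t}_*\xi),
\]
since $df({\phi_t}_*\xi)=0$ along $\F$. So $g(t):=\theta({\phi_t}_*\xi)$ satisfies a linear homogeneous ODE with $g(0)=0$, hence $g\equiv 0$. Your ``inductive--in--$t$'' phrasing captures this, but writing it as an ODE with zero initial datum is cleaner and avoids any appearance of circularity.
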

In other words, the characteristic field $Y_f$ allows to ``enlarge'' the Cauchy datum $\Sigma$ to a honest, fully--dimensional solution $\widetilde{\Sigma}$, provided that the datum $\Sigma$ is non--characteristic.

\begin{example}
Let us consider the $1\St$ order scalar PDE in $2$ independent variables $(x^1,x^2)$
\begin{equation}\label{eq.pde.da.integrare}
f:=u-u_1u_2=0\,.
\end{equation}
Below we find the solution $u=u(x^1,x^2)$ of \eqref{eq.pde.da.integrare} such that $u(x^1,0)=(x^1)^2$. This condition, together with equation \eqref{eq.pde.da.integrare}, determines the Cauchy datum
\begin{equation}\label{eq.cauchy.datum.first}
x^1=s\,,\quad x^2=0\,,\quad u=s^2\,,\quad u_1=2s\,,\quad u_2\overset{\eqref{eq.pde.da.integrare}}{=}\frac{u}{u_1}=\frac{s}{2}\,.
\end{equation}
The Hamiltonian vector field $Y_f$ associated to the function $f=u-u_1u_2$ is (see \eqref{eq.hamiltonian.local})
$$
Y_f=-u_2D_{x^1}-u_1D_{x^2}-u_1\partial_{u_1}-u_2\partial_{u_2} =-u_2\partial_{x^1}-u_1\partial_{x^2}-2u_1u_2\partial_u-u_1\partial_{u_1}-u_2\partial_{u_2}\,.
$$
The Cauchy datum \eqref{eq.cauchy.datum.first} is transverse to the characteristic field $Y_f$, so that \eqref{eq.cauchy.datum.first} is not characteristic. By integrating $Y_f$ we obtain
\begin{equation*}
x^1 = C_1e^{-t}+C_4\,,\quad x^2 = C_2e^{-t}+C_3\,,\quad u = C_1C_2e^{-2t}+C_5\,,\quad u_1 = C_2e^{-t}\,,\quad u_2 = C_1e^{-t}\,,
\end{equation*}
and by imposing that $(x^1(0),\dots,u_2(0))$ belongs to the Cauchy datum \eqref{eq.cauchy.datum.first}, we obtain
$$
C_1 = \frac12\tau\,,\quad C_2 = 2\tau\,,\quad  C_3 = -2\tau\,, \quad C_4 = \frac12\tau\,,\quad    C_5 = 0\,,
$$
i.e.,
$$
x^1 = \frac12\tau e^{-t}+\frac12\tau \,,\quad x^2 = 2\tau e^{-t}-2\tau \,,\quad u = \tau^2 e^{-2t} \,,\quad u_1 = 2\tau e^{-t} \,,\quad u_2 = \frac12 \tau e^{-t}\,.
$$
Now, by solving the first $2$ equations with respect to $t$ and $\tau$ and by substituting them in the third one we obtain $u=\frac{1}{16}(4x^1+x^2)^2$, that is the solution we were looking for.
\end{example}

\subsection{The method of the characteristic field for higher--order ODEs}\label{secMethOfChar}
Let us now pass to $r^\textrm{th}$ order ODEs. In analogy with Definition \ref{def.1stOrderPDEs},
such an equation can be written as
\begin{equation*}
\{F(x,u,p_1,p_2,\ldots,p_r)=0\}=:\E\subseteq J^r(1,1)\, , \quad\text{where}\quad p_k:=u_{\underset{\textrm{k-times}}{\underbrace{1\cdots 1}}}\,,
\end{equation*}
i.e., a hypersurface (and hence $(r+1)$--dimensional) in the $(r+2)$--dimensional jet space $J^r(1,1)$.\par
The Cartan distribution $\CC$ (see Definition \eqref{def.Cartan.distribution}) is the common kernel of the $r$ differential 1--forms
\begin{equation*}
\omega_i=dp_i-p_{i+1}dx\, ,\quad i=0,1,\ldots, r-1\, ,\quad p_0:=u\, ,
\end{equation*}
and as such is 2--dimensional. Recall that (see \eqref{eq.Cartan.distribution.general})
$$
\CC=\Span{D:=D_x=\partial_x+p_1\partial_u+p_2\partial_{p_1}+\cdots+p_r\partial_{p_{r-1}}\,,\,V:=\partial_{p_r}}\,.
$$
This confirms the fact that, in the geometric theory of differential equations, the main tool is the Cartan distribution, not the jet space, which is a mere container---to be discarded once the equation one wants to study has been defined. Indeed, no matter how high is the order $r$ of the ODE, the Cartan distribution is always 2--dimensional, and, accordingly, the Cartan distribution restricted to $\E$, denoted by $\CC_\E$, is $1$--dimensional. This means that computations go along the same lines as before. Additional difficulties are apparent.\par
Namely, let $X\in\CC$, i.e.,
$$
X=\alpha D+ \beta V\,, \quad \alpha,\beta\in C^\infty(J^r(1,1))\,,
$$
and compute
$$
X(F)=dF(X)=\alpha(\partial_xF+p_1\partial_uF+p_2\partial_{p_1}F+\cdots+p_r\partial_{p_{r-1}}F)+\beta\partial_{p_r}F\,.
$$
Hence $X_p\in T_p\E$ $\forall\,p\in\E$ if and only if $(dF)_p(X_p)=0$, that is,  $X(F)(p)=0$ $\forall\,p\in\E$. This is equivalent to require
\begin{equation}\label{eqCampCaharConPallino}
\alpha \left(  F_x+\sum_{i=0}^{r-1}p_{i+1} F_{p_i} \right)+  \beta F_{p_r} \equiv F\, ,
\end{equation}
since any function which vanishes on $\E$ is proportional to $F$. An obvious solution of \eqref{eqCampCaharConPallino} is obtained with
$\alpha=-F_{p_r}=-V(F)$ and $\beta=D(F)$.
In other words,
\begin{equation*}
X=-V(F)D+D(F)V=:Y_F
\end{equation*}
is a vector field such that
$
\CC_\E=\Span{Y_F}
$,
for the points where $\CC_\E$ is $1$--dimensional.\par
In the other points,
$
(\CC_\E)_p=\CC_p
$,
i.e., both $D_p$ and $V_p$ are tangent to $\E$. This means that  $D(F)$ and $V(F)$ both vanish in $p$ and, thence,
\begin{equation*}
(Y_F)_p=-V(F)(p)D_p+D(F)(p)V_p=0\,.
\end{equation*}
In other words,
\begin{equation*}
(\CC_\E)_p=\left\{\begin{array}{lll}\Span{(Y_F)_p} & \textrm{when} & \dim(\CC_\E)_p=1\,, \\\CC_p & \textrm{when} & \dim(\CC_\E)_p=2\Leftrightarrow (Y_F)_p=0\, ,\end{array}\right.
\end{equation*}
i.e., the vector field $Y_F$, called \emph{characteristic vector field} in analogy to the characteristic vector field introduced in Section \ref{sec.method.characteristic}, generates $\CC_\E$ when it is not zero. See also Fig. \ref{Fig.Sfera}.
\begin{remark}
 Being $\dim\E=r+1$, we incidentally showed that an ODE of order $r$ is equivalent to a system of $r+1$ ODEs of order one (which is usually reduced to $r$ equations, since we look for ``horizontal trajectories", i.e., with $x(t)=t$), namely the \emph{equation of the trajectories of $Y_F$}.
\end{remark}
\begin{remark}
 When we face the simple multi--variable calculus problem of finding the directions along which a function $f=f(x,y)$ is constant, we may think as follows: \emph{the gradient $\nabla f=(f_x,f_y)$ is the direction along which the variation of $f$ attains it maximum}. It is a theorem of the theory of surfaces
 that the direction along which $f$ is constant is precisely the orthogonal one to $\nabla f$. For example, a counterclockwise rotation of $90^\circ$ produces the vector $(-f_y,f_x)$, the rotation being obtained from the standard symplectic structure of $\R^2$. Similarly, the vector $(D_p (F), V_p (F))$ can be thought of as the direction (in $\CC_p$) along which the variation of $F$ is maximal (and, hence, $\E$ intersect transversally $\CC_p$), so $(-V_p(F), D_p(F))$ is the direction along which $F$ is constant (i.e., $\E$ is tangent to $\CC_p$). This analogy should motivate the definition of $Y_F$.
\end{remark}
\begin{figure}
{\epsfig{file=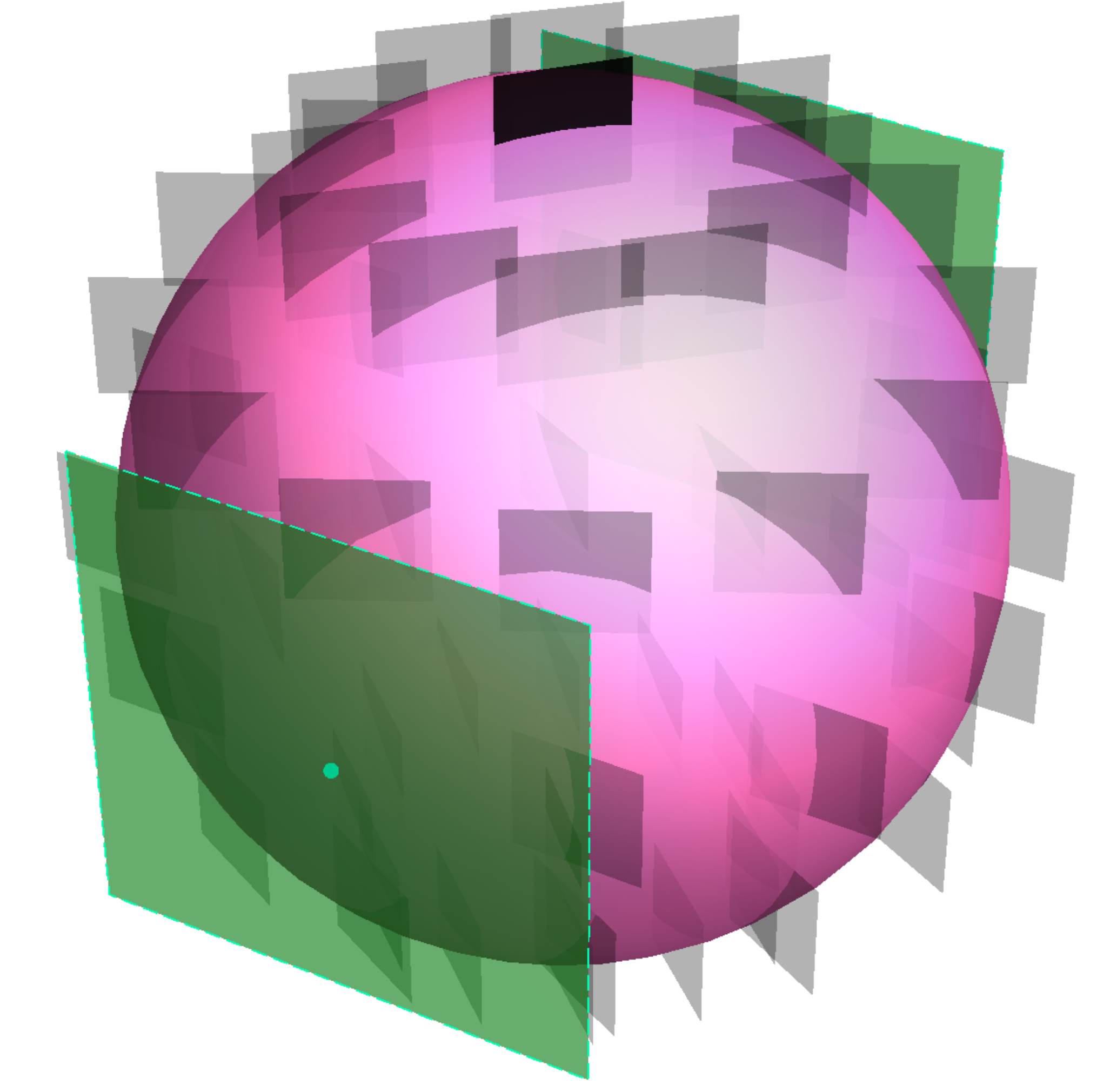,width=0.5\textwidth}}
 \caption{The tangent planes in green coincide with the contact hyperplanes. Hence, any characteristic vector field must vanish when passing through them.\label{Fig.Sfera}}
\end{figure}
\subsubsection{An example of how to solve a $1\St$ order ODE by integrating the characteristic vector field}
Consider the equation
\begin{equation}\label{eqEQEsempioChar}
u'(x)^2+u(x)^2=1\,
\end{equation}
(see \cite[Example 1.3]{MR1670044}) and the corresponding
submanifold
$
\{F:=u^2+u_1^2-1=0\}=:\F\subseteq J^1(1,1)
$.
The contact form is
$
\omega=du-u_1dx
$.
We wish to compute the restriction $\CC_\F$. To this end, observe that
$
dF=2udu+2u_1du_1
$.
Since
$\CC=\Span{D,V}$, $D=\partial_x+u_1\partial_u$, $V=\partial_{u_1}$,
let us take a linear combination
$
X=\alpha D+ \beta V\, ,\alpha,\beta\in C^\infty(J^1(1,1))
$,
and let us establish for which coefficients $\alpha$ and $\beta$,  $X$ is annihilated by $dF$, i.e., it is tangent to $\F$.\par
We compute
$
dF(X)=2u\alpha u_1 + 2u_1\beta =2u_1(u\alpha+\beta)
$.
Hence, for any ${\vec{\ell}}\in J^1(1,1)$ (cf. \eqref{eqCorrdSbaglSuJ121}), we see that
$
X_{{\vec{\ell}}}\in T_{\vec{\ell}}\F$ if and only if  $2u_1({\vec{\ell}})(u({\vec{\ell}})\alpha({\vec{\ell}})+\beta({\vec{\ell}}))=0$.
Away from the plane $u_1=0$, this means $\beta=-u\alpha$, i.e.,
$
(\CC_\F)_{{\vec{\ell}}}=\Span{(\partial_x+u_1\partial_u-u\partial_{u_1})_{\vec{\ell}}}$, for all ${\vec{\ell}}$  with $u_1({\vec{\ell}})\neq 0
$.
On the other hand, on the plane $u_1=0$, all values $\alpha, \beta$ are admissible, so that
$
(\CC_\F)_{{\vec{\ell}}}=\CC_{{\vec{\ell}}}\, ,\forall{\vec{\ell}}$ with $u_1({\vec{\ell}})= 0
$.
Summing up, as shown in Fig. \ref{fig.Cilindro},
\begin{equation*}
\dim (\CC_\F)_{{\vec{\ell}}}=\left\{\begin{array}{ccc}1 & \textrm{if} & u_1(\vec{\ell})\neq 0 \,, \\2 & \textrm{if} & u_1(\vec{\ell})= 0\end{array}\right.\,.
\end{equation*}
\begin{figure}
{\epsfig{file=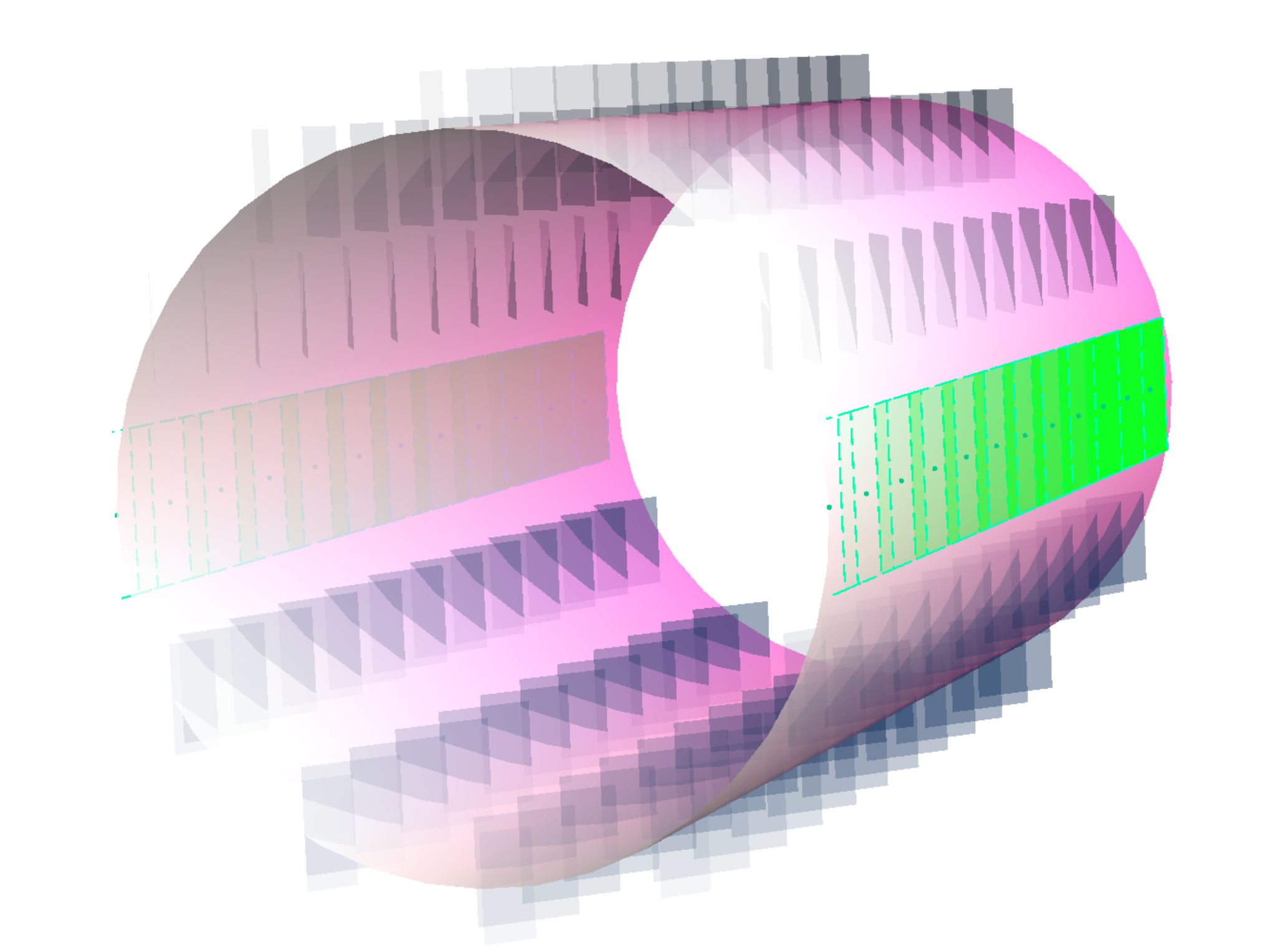,width=0.5\textwidth}}
\caption{Intersection of the contact distribution in $\R^3$ with the equation $(u')^2+u^2=1$.\label{fig.Cilindro}}
\end{figure}
In other words, the vector field
$
Z:=\partial_x+u_1\partial_u-u\partial_{u_1}
$
spans $\CC_\F$ away from the plane $u_1=0$, while on this plane, two vector fields, namely $D$ and $V$, are necessary to span $\CC_\F$.\par
We compute now the trajectories of $Z$. Let $\gamma(t)=(\overline{x}(t),\overline{u}(t),\overline{u_1}(t))$ be a curve in $J^1(1,1)$ and suppose that it is $Z$--integral, i.e., $\dot{\gamma}(t)=Z_{\gamma(t)}$ for all $t$. This means that
\begin{equation*}
 \left\{\begin{array}{l}\overline{x}'(t)=1 \,, \\\overline{u}'(t)=\overline{u_1}(t) \,, \\\overline{u_1}'(t)=-\overline{u}(t)\, ,\end{array}\right.
\end{equation*}
from which it follows that $\overline{x}(t)=t$ and then
\begin{equation*}
\left\{\begin{array}{l}\overline{u}'(x)=u_1(x) \,, \\\overline{u_1}'(x)=-u(x)\, .\end{array}\right.
\end{equation*}
This implies that $\overline{u}''(x)=-\overline{u}(x)$, i.e.,
\begin{equation*}
\left\{\begin{array}{l}u=c_1\cos x+c_2\sin x \,, \\u_1=-c_1\sin x+c_2\cos x\, .\end{array}\right.
\end{equation*}
Introduce the matrix
\begin{equation*}
A(x):=\left|\begin{array}{cc}\cos x & \sin x \\-\sin x & \cos x\end{array}\right|
\end{equation*}
and regard the two integration constants $(c_1,c_2)$ as a vector of $\R^2$. Then
\begin{equation}\label{eqEquazioneSegnataConDelta}
x\longmapsto \left|\begin{array}{cc}1 & 0 \\0 & A(x)\end{array}\right|\cdot (x,c_1,c_2)^t
\end{equation}
describes the trajectory $\gamma$ of $Z$ which emanates from the point $(x,c_1,c_2)$. Since $Z$ is tangent to $\F$, if $(x,c_1,c_2)\in\F$ then
$
\gamma(\R)\subseteq\F
$.
Hence, all the solutions of the form $\gamma$ can be obtained by choosing a point
$
(0,c_1,c_2)\in\F\cap\{ x=0\}
$,
which is a circle, i.e., an angle $\phi\in[0,2\pi]$ (and then setting $c_1=\cos\phi$, $c_2=\sin\phi$).\par
However, also the two curves
\begin{equation}\label{eqEquazioneSegnataConBOX}
\left\{\begin{array}{ccc}\gamma_+(x) & := & (x,+1,0) \\\gamma_-(x) & := & (x,-1,0)\end{array}\right.
\end{equation}
are integral for the distribution $\CC_\F$, since
\begin{equation*}
\dot{\gamma}_{\pm}(x)=\left.\frac{\partial}{\partial x}\right|_{(x,\pm 1,0)}\in (\CC_\F)_{(x,\pm 1,0)}\, ,\quad \forall x\in\R\, .
\end{equation*}
Hence, \eqref{eqEquazioneSegnataConDelta} and \eqref{eqEquazioneSegnataConBOX} are all the solutions of \eqref{eqEQEsempioChar}, rendered in Fig. \ref{Figura.sborona}.
\begin{figure}
{\epsfig{file=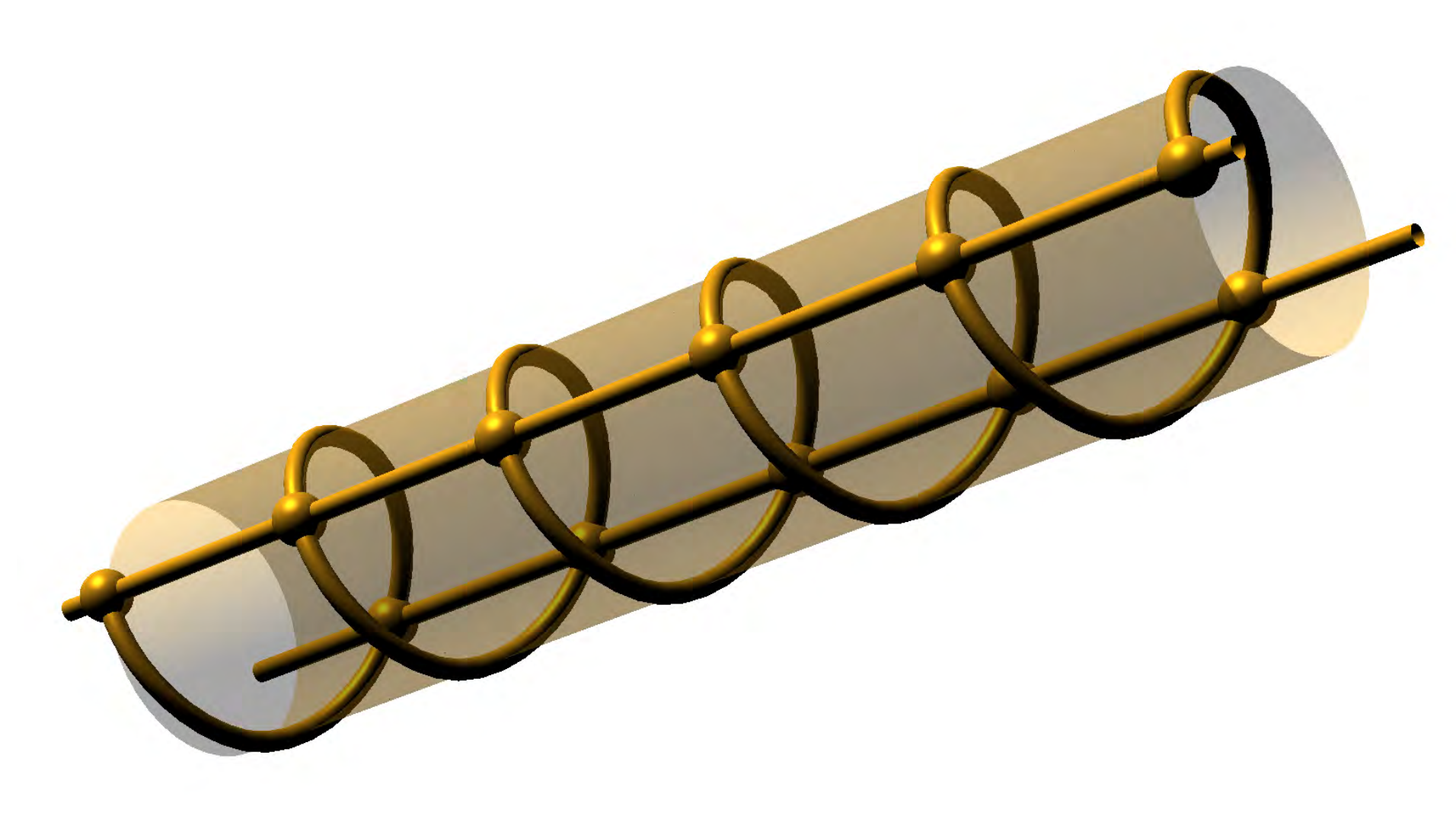,width=0.8\textwidth}}
 \caption{Two solutions of $(u')^2+u^2=1$ artistically rendered to show the difference between the regular and singular ones.\label{Figura.sborona}}
\end{figure}
Observe that the curves of the form \eqref{eqEquazioneSegnataConDelta}, being trajectories of a vector field ($Z$, in this case), form a family of non--intersecting curves. But this property fails if to such a family we add also the curves \eqref{eqEquazioneSegnataConBOX}, which in fact intersect all the curves  \eqref{eqEquazioneSegnataConDelta}. The points of intersection are precisely the points where $\CC_\F$ fails to be $1$--dimensional.\par
Let now solve the same equation  \eqref{eqEQEsempioChar} with a wise change of variables. Namely, pass from the implicit form $F=0$ to the parametric form
\begin{equation*}
\left\{\begin{array}{l}u=\sin\phi \,,  \\u_1=\cos\phi \,.  \end{array}\right.
\end{equation*}
In other words, $\Phi:(x,\phi)\in\R\times[0,2\pi]\longrightarrow (x,\sin\phi,\cos\phi)$ is (on an open and dense coordinate chart) a diffeomorphism. So, we identify
$
\omega\equiv\Phi^*(\omega)
$.
Let us compute
\begin{eqnarray}\label{eqCosFiDeFiMenoX}
\Phi^*(\omega)=\Phi^*(du-u_1dx) &=& d\Phi^*(u)-\Phi^*(u_1)\,d\Phi^*(x)=d(\sin\phi)-\cos\phi \,dx=\cos\phi \,d\phi -\cos\phi \,dx\nonumber\\
&=& \cos\phi\, d(\phi-x)\,.
\end{eqnarray}
Hence, in the points where $\cos\phi\neq 0$, i.e., $\phi\not\in\{\frac{\pi}{2},\frac{3\pi}{2}\}$, the kernel
of $\Phi^*(\omega)$ is the same as the kernel of $d(\phi-x)$, which is an exact form with potential $\phi-x$.
In other words, the function $\phi-x$ is constant along the solutions which do not pass through $u_1=0$, i.e.,
such solutions are given by the family of curves $\phi=x+c$, $c\in\R$,
which is precisely the family \eqref{eqEquazioneSegnataConDelta} found before.
The advantage of this second approach is that we did not have to integrate explicitly the vector field(s) belonging to $\CC_\F$. On the other hand,  \eqref{eqCosFiDeFiMenoX} vanishes on the points $(x,\frac{\pi}{2})$ and $(x,\frac{3\pi}{2})$, which correspond to the two curves \eqref{eqEquazioneSegnataConBOX}. Represented on the $(x,u)$--plane, the family \eqref{eqEquazioneSegnataConDelta} appears to be enveloping the two singular solutions \eqref{eqEquazioneSegnataConBOX}. See also Fig. \ref{Fig.Enveloping}.
\begin{figure}
\epsfig{file=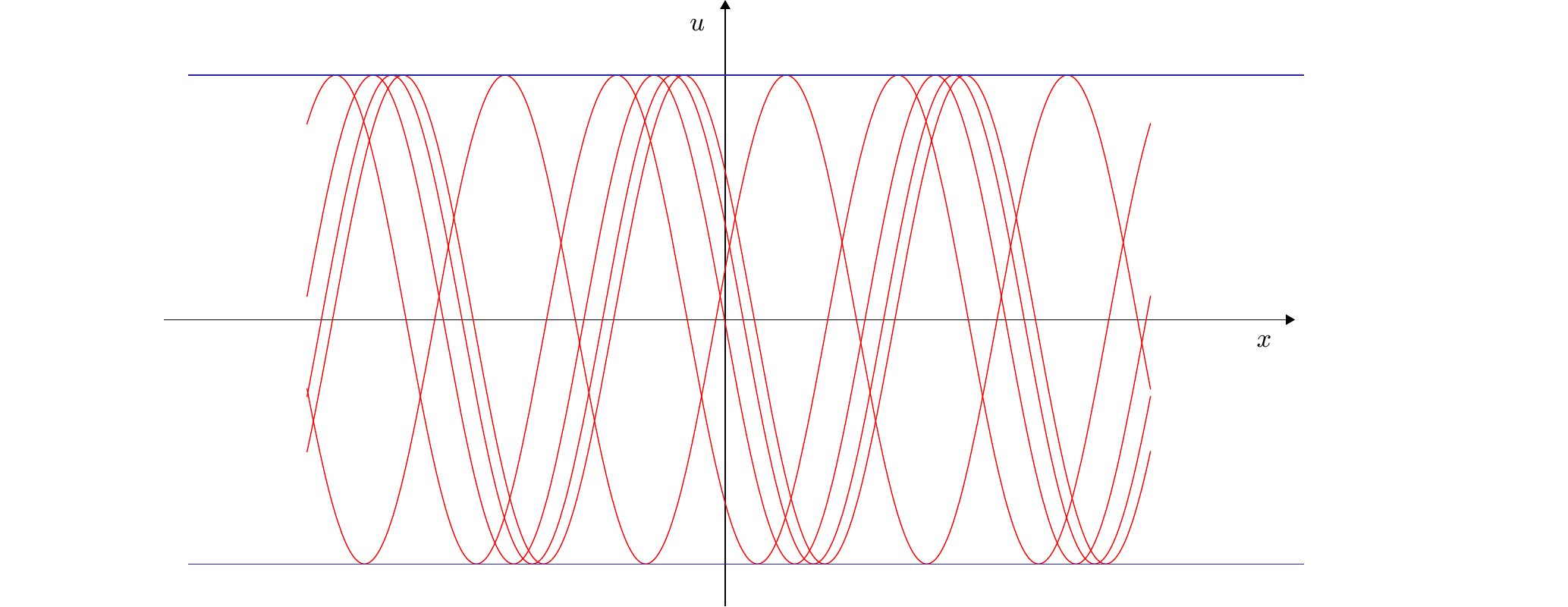,width=\textwidth}
 \caption{Singular and regular solutions to $(u')^2+u^2=1$.\label{Fig.Enveloping}}
\end{figure}

\subsection{Characteristic Cauchy data for $k\Th$ order PDEs in $n$ independent and $m$ dependent variables}\label{sezioneZarinista} In this section  we  consider the most general case of a system of (non--linear) PDEs imposed on $m$ functions of $n$ variables. Hopefully, this will help the reader to gain a geometric intuition on the notion of characteristic Cauchy data. More details can be found in \cite{Vitagliano2013}, where it is given also overview of characteristics in physically relevant contexts. The reader is also invited to have a look at \cite{MorenoCauchy} for a geometric study of Cauchy data.
\par
Denote by $\u:=(u^1,\ldots, u^m)$ a point of $\R^m$, and by
\begin{equation}\label{eqSplitCoord}
(\x=(x^1,\ldots, x^{n-1}),t)\in\R^{n-1}\times\R
\end{equation}
a point of the space--time.
Then, on $J^k(n,m)$ we have coordinates
$
(\x,t,\u,\u_{l,J})
$,
where $u_{j,J}$, similarly to \eqref{eq.uJ}, are defined by
$$
\u_{l,J}\circ j^k\phi=\frac{\partial^{l+|J|}\phi}{\partial t^l\partial x^J}:=\frac{\partial^{l+h}\phi}{\partial t^l\partial x^{j_1}\cdots\partial x^{j_h}}
$$
where $\phi:\R^n=\R^{n-1}\times\R\to\R^m$ is in $C^k(\R^n,\R^m)$, $J= j_1\cdots j_h$ is a multi--index and $|J|:=h\leq k-l$.

Similarly as we did in Section \ref{sec.Scalar.ODEs.order1}, one can see that horizontal integral $n$--dimensional submanifolds of the Cartan distribution \eqref{eq.Cartan.distribution.general} are the image of $j^k\phi$ for some $\phi$.
\begin{definition}
 A (system of $m$) $k^\textrm{th}$ order PDEs $\E\subset J^r(n,m)$ is in \emph{normal form} if and only if
 \begin{equation}\label{eqSistNormForm}
\E=\left\{   \u_{k,0}=f(\x,t,\u,\u_{l,J})    \right\}\, ,\quad l<k\, .
\end{equation}
\end{definition}
\begin{theorem}[Cauchy--Kowalewskaya \cite{Petrovsky}]\label{thCK}
If the equation $\E$ is in normal form \eqref{eqSistNormForm} with $f$ analytic, then there is a unique solution to the Cauchy problem
\begin{eqnarray}
\u_{k,0}&=&f(\x,t,\u,\u_{l,J})\, ,\label{eqNormF1}\\
 \left.\u_{l,0}\right|_{t=t_0}&=&\boldsymbol{h}_l(\x)\,,\,\,l<k\, ,\label{eqNormF2}
\end{eqnarray}
where the initial data $\boldsymbol{h}_l(\x)$ are analytic on the Cauchy surface $\Sigma:=\{t=t_0\}$.
\end{theorem}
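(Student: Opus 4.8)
The statement is classical, so the plan is to follow the standard route via \emph{majorant series}, which simultaneously yields existence and uniqueness in the analytic category. Throughout one may assume $t_0=0$, since $t\mapsto t-t_0$ is an analytic change of variable preserving the shape \eqref{eqSistNormForm}, and, after an affine substitution $\u_{l,0}\mapsto \u_{l,0}-\boldsymbol{h}_l(\x)$ (again analytic, since the $\boldsymbol{h}_l$ are), one may further assume the initial data \eqref{eqNormF2} are all zero and that the relevant Taylor expansions of $f$ are centred at the origin.

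First I would reduce the normal form \eqref{eqNormF1} to a \emph{first--order quasi--linear} system. Taking as new unknowns $\boldsymbol{v}$ the collection of all derivatives $\u_{l,J}$ with $l+|J|\leq k-1$ together with their first spatial derivatives, the equation can be rewritten as $\partial_t\boldsymbol{v}=\sum_{i=1}^{n-1}A_i(\x,t,\boldsymbol{v})\,\partial_{x^i}\boldsymbol{v}+\boldsymbol{b}(\x,t,\boldsymbol{v})$ with $A_i,\boldsymbol{b}$ analytic near the origin and $\boldsymbol{v}|_{t=0}=0$; a solution of this first--order system reassembles into a solution of \eqref{eqNormF1}--\eqref{eqNormF2}, and conversely. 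Next comes the formal step: writing $\boldsymbol{v}=\sum_{\alpha,l}\boldsymbol{c}_{\alpha,l}\,\x^\alpha t^l$ and substituting, the relation $\partial_t\boldsymbol{v}=\dots$ forces each $\boldsymbol{c}_{\alpha,l+1}$ to be a \emph{universal polynomial with non--negative coefficients} in the Taylor coefficients of $A_i$ and $\boldsymbol{b}$ and in the $\boldsymbol{c}_{\beta,j}$ with $j\leq l$. Since $\boldsymbol{c}_{\alpha,0}=0$, all coefficients are thereby uniquely determined; this already proves uniqueness among analytic solutions.

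Convergence of the so--constructed series is then established by the method of majorants. One chooses a single scalar Cauchy--type majorant $\Phi(y,z)=\dfrac{Cr}{\,r-(y+z)\,}$, with $C,r>0$ chosen so that $\Phi$ dominates, termwise, \emph{all} of $A_i$ and $\boldsymbol{b}$ at once, and considers the scalar majorant Cauchy problem $\partial_t w=\Phi\bigl(\textstyle\sum_{i}\partial_{x^i}w,\ w\bigr)$, $w|_{t=0}=0$. This problem admits an explicit analytic solution of the form $w=W\bigl(x^1+\cdots+x^{n-1}+\lambda t\bigr)$, for a suitable constant $\lambda$, obtained by reducing to an ordinary differential equation solvable in closed form; its Taylor coefficients are non--negative and dominate, term by term, those of $\boldsymbol{v}$. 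Hence the series for $\boldsymbol{v}$ converges in a polydisc around the origin to an analytic function solving the first--order system, whence \eqref{eqNormF1}, with the required data \eqref{eqNormF2}. The part demanding real care --- and the main obstacle --- is the majorant construction: one must fit $C$, $r$, $\lambda$ so that $\Phi$ simultaneously majorizes the (finitely many) analytic data and so that the scalar problem stays inside the analyticity region of $\Phi$, and then carry out the inductive termwise comparison of the two recursions, which is where the combinatorics of the non--negative universal polynomials is actually used.
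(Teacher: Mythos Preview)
The paper does not actually prove this theorem: it is stated with a citation to Petrovsky and no proof is given, the result being treated as a classical black box. Your outline is the standard majorant--series argument (reduction to a first--order quasi--linear system, unique formal determination of the Taylor coefficients, and convergence via comparison with an explicit scalar majorant problem), which is precisely the route taken in the cited reference, so there is nothing to compare and your sketch is correct as far as it goes.
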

Coordinates \eqref{eqSplitCoord} are known to physicists as ``split space--time coordinates'', i.e., reflecting an (unnatural) subdivision between space variables  and time variables. The commonly adopted viewpoint is the \emph{covariant} one, i.e., with space--time coordinates
$\x=(x^1,\ldots, x^{n-1},t)=(x^1,\ldots, x^n)$. Accordingly, on $J^k(n,m)$ we have coordinates
$(\x,\u,\u_{I})$,
where now $I$ is a multi--index of length $|I|\leq k$ defined similarly as above. Below we shall adopt such a notation. For pedagogical purposes, we shall explain the Cauchy problem, together with its characteristicity, in the case of quasi--linear systems. This will be useful for understanding the ``general'' Cauchy problem that will be treated from a contact--geometrical viewpoint in Section \ref{secAntipatetica} in the case of scalar $2\Nd$ order PDEs.
\begin{definition}
A system $\E\subset J^k(n,m)$ of $m$ equations of order $k$ of the form
$$
{A}^{i_1\cdots i_k,h}_a(x^1,\ldots,x^n,\ldots,\u_I,\ldots)\cdot u^a_{i_1\cdots i_k}=g^h(x^1,\ldots,x^n,\ldots,\u_I,\ldots)
$$
or, in short,
\begin{equation}\label{eqSisQLdetermined}
\boldsymbol{A}^{i_1\cdots i_k}(x^1,\ldots,x^n,\ldots,\u_I,\ldots)\cdot \u_{i_1\cdots i_k}=\boldsymbol{g}(x^1,\ldots,x^n,\ldots,\u_I,\ldots)\,,
\end{equation}
where $\boldsymbol{A}^{i_1\cdots i_k}$ is an $m\times m$ matrix, is called \emph{determined} and \emph{quasi--linear}.
\end{definition}
By comparing \eqref{eqSistNormForm} with \eqref{eqSisQLdetermined}, one notice that both are determined (i.e., the number of equations coincides with the number of unknown functions, which is $m$), but only the first one is solved with respect to the top space derivatives.
We wish to understand what are the obstruction to the possibility of bringing an equation of the form \eqref{eqSisQLdetermined} to the normal form \eqref{eqSistNormForm}.\par
\begin{proposition}
If the space--time coordinates change as
$\x\longmapsto\overline{\x}=\overline{\x}(\x)
$,
then
\begin{equation}\label{eqCoordchange}
\boldsymbol{A}^{i_1\cdots i_k}\longmapsto \frac{\partial\overline{x}^{i_1}}{\partial {x}^{j_1}}\cdots \frac{\partial\overline{x}^{i_k}}{\partial {x}^{j_k}}
\boldsymbol{A}^{j_1\cdots j_k}\,.
\end{equation}
\end{proposition}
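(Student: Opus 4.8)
The plan is to track how the top-order part of the system \eqref{eqSisQLdetermined} behaves under the substitution $\x\mapsto\overline{\x}=\overline{\x}(\x)$ and to read off the coefficient of the new top-order derivatives. Since a change of the independent variables does not alter the equation as a geometric object (it merely re-expresses the same relation in new coordinates), it suffices to understand the transformation of the derivatives $\u_{i_1\cdots i_k}$ and to observe that all lower-order contributions can be absorbed into the right-hand side $\boldsymbol{g}$.

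First I would apply the chain rule iteratively. Writing $\partial_{x^j}=\frac{\partial\overline{x}^i}{\partial x^j}\,\partial_{\overline{x}^i}$ and composing $k$ such operators, Leibniz's rule yields
\begin{equation*}
\u_{j_1\cdots j_k}=\frac{\partial^{k}\u}{\partial x^{j_1}\cdots\partial x^{j_k}}=\frac{\partial\overline{x}^{i_1}}{\partial x^{j_1}}\cdots\frac{\partial\overline{x}^{i_k}}{\partial x^{j_k}}\,\overline{\u}_{i_1\cdots i_k}+R_{j_1\cdots j_k}\,,
\end{equation*}
where $\overline{\u}_{i_1\cdots i_k}$ denotes the $k$-th derivative of $\u$ with respect to the new coordinates, and every term of the remainder $R_{j_1\cdots j_k}$ contains at least one differentiation falling on a Jacobian factor $\partial\overline{x}^i/\partial x^j$; consequently $R_{j_1\cdots j_k}$ is an expression (depending on the coordinate change and its derivatives up to order $k$) involving only the derivatives $\overline{\u}_I$ of $\u$ with $|I|<k$. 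Substituting this into the principal part $\boldsymbol{A}^{j_1\cdots j_k}\cdot\u_{j_1\cdots j_k}$ of \eqref{eqSisQLdetermined} and collecting the genuine top-order terms,
\begin{equation*}
\boldsymbol{A}^{j_1\cdots j_k}\cdot\u_{j_1\cdots j_k}=\left(\frac{\partial\overline{x}^{i_1}}{\partial x^{j_1}}\cdots\frac{\partial\overline{x}^{i_k}}{\partial x^{j_k}}\boldsymbol{A}^{j_1\cdots j_k}\right)\cdot\overline{\u}_{i_1\cdots i_k}+\boldsymbol{A}^{j_1\cdots j_k}\cdot R_{j_1\cdots j_k}\,,
\end{equation*}
where the last term has order $<k$ and is moved, together with $\boldsymbol{g}$ re-expressed in the new coordinates, into the new right-hand side $\overline{\boldsymbol{g}}$. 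Reading off the coefficient of $\overline{\u}_{i_1\cdots i_k}$ gives exactly $\overline{\boldsymbol{A}}^{i_1\cdots i_k}=\frac{\partial\overline{x}^{i_1}}{\partial x^{j_1}}\cdots\frac{\partial\overline{x}^{i_k}}{\partial x^{j_k}}\boldsymbol{A}^{j_1\cdots j_k}$, which is \eqref{eqCoordchange}; symmetrisation in the indices $i_1,\ldots,i_k$ causes no difficulty since the $\u_{i_1\cdots i_k}$ are already symmetric, and the dependence of $\boldsymbol{A}^{j_1\cdots j_k}$ on $(\x,\u,\u_I)$ is irrelevant here, as it only affects $\overline{\boldsymbol{g}}$.

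The only step that carries real content — and the one I would be most careful about — is the claim that $R_{j_1\cdots j_k}$ is strictly of lower order: in the Leibniz expansion of the $k$-fold composition $\bigl(\tfrac{\partial\overline{x}^{i}}{\partial x^{j}}\partial_{\overline{x}^i}\bigr)^{\circ k}$ applied to $\u$, the unique term in which all $k$ differentiations hit $\u$ is the fully transported one, so every other term spends at least one differentiation on a Jacobian coefficient and therefore differentiates $\u$ at most $k-1$ times. Equivalently, and more conceptually, one may observe that $\boldsymbol{A}^{i_1\cdots i_k}$ is precisely the (matrix-valued) \emph{principal symbol} of the left-hand operator in \eqref{eqSisQLdetermined}, i.e.\ the components of a section of $S^k TM\otimes\End(\R^m)$ along the relevant jet; the transformation law \eqref{eqCoordchange} is then simply the tensorial one, and the chain-rule computation above is its explicit verification.
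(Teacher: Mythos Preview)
Your argument is correct and is the standard chain-rule verification of the tensorial transformation law for the principal part of a quasi-linear system. The paper itself states this proposition without proof, immediately remarking afterwards that $\boldsymbol{A}^{i_1\cdots i_k}$ is a symmetric contravariant tensor of order $k$ with values in $m\times m$ matrices; your computation is precisely the explicit justification of that remark.
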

In other words, $\boldsymbol{A}^{i_1\cdots i_k}$ is a symmetric contravariant tensor of order $k$, taking values in the space of $m\times m$ matrices. In particular, for scalar PDEs (i.e., $m=1$ is both the number of equations and of dependent variables), it is just a honest scalar--valued symmetric tensor.
Let now assume that the system \eqref{eqSisQLdetermined} is complemented by the Cauchy datum
\begin{equation}\label{eqCauchyDatumKZ}
 \left.\frac{\partial^{l}\u}{\partial z^l}\right|_{z= 0}=\boldsymbol{h}_l\, ,\quad l<k\, ,
\end{equation}
where $\boldsymbol{h}_l$ are functions on $\Sigma:=\{z=0\}$, an arbitrary Cauchy surface. Then, assuming $z$ to be regular, we can think of $z$ as the ``time coordinate'' of a new split coordinate system $(\boldsymbol{y},z)$, and then perform the change
$(x^1,\ldots,x^n)\longmapsto(\boldsymbol{y},z)$.
According to \eqref{eqCoordchange}, the equations become
\begin{equation}\label{eqQuasiQuasiCiSiamo}
\frac{\partial {z}}{\partial {x}^{j_1}}\cdots \frac{\partial {z}}{\partial {x}^{j_k}}\boldsymbol{A}^{j_1\cdots j_k}\cdot \frac{\partial^k\u}{\partial z^k}=\overline{\boldsymbol{f}}(\boldsymbol{y},z,\ldots,\overline{\u}_{l,J},\ldots)\, ,\quad l<k\, ,
\end{equation}
i.e., we have collected on the left the terms with $(i_1,\ldots, i_k)=(1,\ldots,1)$, while all the others are absorbed in $\boldsymbol{f}$. Now, to bring \eqref{eqQuasiQuasiCiSiamo} to the normal form \eqref{eqSistNormForm}, it remain to insulate the vector  $ \frac{\partial^k\u}{\partial z^k}$ at the left--hand side, which, in turn, is possible if and only if the matrix
\begin{equation}\label{eqMatriceCheDovrebbeInvertirsi}
\frac{\partial {z}}{\partial {x}^{j_1}}\cdots \frac{\partial {z}}{\partial {x}^{j_k}}\boldsymbol{A}^{j_1\cdots j_k}
\end{equation}
in front of it is invertible. So, the invertibility of \eqref{eqMatriceCheDovrebbeInvertirsi} is the expected obstruction.
\begin{definition}\label{defDefinizioneCAuchyDATUM}
 A Cauchy datum \eqref{eqCauchyDatumKZ} on the Cauchy surface $\Sigma=\{z=0\}$ is \emph{characteristic} if   the $m\times m$ matrix \eqref{eqMatriceCheDovrebbeInvertirsi} is not invertible.
\end{definition}
By putting together all the reasonings and the results of this subsection, we arrive to the following theorem.
\begin{theorem}\label{thExsUniqness}
 If the Cauchy datum \eqref{eqCauchyDatumKZ}  is not characteristic, then the Cauchy problem  \eqref{eqSisQLdetermined}--\eqref{eqCauchyDatumKZ} can be brought to the normal form \eqref{eqNormF1}--\eqref{eqNormF2}.
\end{theorem}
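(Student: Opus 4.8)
The plan is to assemble the ingredients already developed in this subsection; the statement is essentially a repackaging of the preceding computations, so the argument will be short. First I would fix a point of the Cauchy surface $\Sigma=\{z=0\}$ at which the datum \eqref{eqCauchyDatumKZ} is prescribed and use the assumed regularity of $z$ (i.e. $dz\neq 0$ near that point) to complete $z$ to a local coordinate system $(\boldsymbol{y},z)=(y^1,\dots,y^{n-1},z)$, in which $\Sigma$ becomes the hyperplane $\{z=0\}$ of a ``split space--time'' chart of the type \eqref{eqSplitCoord}; restricting the $y^a$ to $\{z=0\}$ gives coordinates on $\Sigma$ itself.

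Next I would transport the quasi--linear system \eqref{eqSisQLdetermined} to the chart $(\boldsymbol{y},z)$. By the transformation rule \eqref{eqCoordchange}, the symbol $\boldsymbol{A}^{i_1\cdots i_k}$ is a symmetric contravariant $k$--tensor with values in $m\times m$ matrices, so in the new coordinates the equation takes the shape \eqref{eqQuasiQuasiCiSiamo}: the unique summand carrying $k$ derivatives in the $z$--direction is collected on the left, with the matrix displayed in \eqref{eqMatriceCheDovrebbeInvertirsi} as its coefficient, while every remaining contribution---each involving at least one $\boldsymbol{y}$--derivative, hence only the jet coordinates $\overline{\u}_{l,J}$ with $l<k$---is absorbed into the right--hand side $\overline{\boldsymbol{f}}$.

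Then I would invoke the hypothesis. Non--characteristicity of the datum means, by Definition \ref{defDefinizioneCAuchyDATUM}, that the matrix \eqref{eqMatriceCheDovrebbeInvertirsi} is invertible at the chosen point, hence, by continuity, on a neighbourhood of it. Left--multiplying \eqref{eqQuasiQuasiCiSiamo} by its inverse solves for the top derivative and yields $\u_{k,0}=f(\boldsymbol{y},z,\u,\overline{\u}_{l,J})$ with $l<k$, which is precisely a normal form \eqref{eqNormF1} (with $t$ renamed $z$). Finally, the prescribed initial data $\left.\partial_z^l\u\right|_{z=0}=\boldsymbol{h}_l$, $l<k$, read on $\Sigma=\{z=0\}$ as $\left.\u_{l,0}\right|_{z=0}=\boldsymbol{h}_l(\boldsymbol{y})$, which is exactly the form \eqref{eqNormF2}. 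Hence the pair \eqref{eqSisQLdetermined}--\eqref{eqCauchyDatumKZ} has been brought to \eqref{eqNormF1}--\eqref{eqNormF2}.

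I do not expect a serious obstacle, since no new idea enters. The only point deserving care is bookkeeping in the coordinate change: one must verify that, after \eqref{eqCoordchange}, every $k$--th order derivative other than $\partial_z^k\u$ really carries a factor $\partial y^a/\partial x^i$ with $y^a\neq z$, so that it contributes legitimately only to $\overline{\boldsymbol{f}}$ and no hidden copy of the top $z$--derivative is silently reabsorbed on the right. For completeness one should also record that $\overline{\boldsymbol{f}}$ inherits analyticity from $\boldsymbol{A}$, $\boldsymbol{g}$ and the (analytic) coordinate change, so that Theorem \ref{thCK} genuinely applies to the resulting normal form and produces the unique solution implicitly promised by the statement.
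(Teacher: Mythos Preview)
Your proposal is correct and follows exactly the approach the paper intends: the paper gives no separate proof but simply states that the theorem follows ``by putting together all the reasonings and the results of this subsection,'' and your write-up is a faithful, slightly more careful spelling-out of precisely those reasonings (coordinate change via \eqref{eqCoordchange}, isolation of the top $z$--derivative in \eqref{eqQuasiQuasiCiSiamo}, inversion of \eqref{eqMatriceCheDovrebbeInvertirsi} granted by Definition~\ref{defDefinizioneCAuchyDATUM}). Your closing remarks on bookkeeping and analyticity are appropriate caveats that the paper does not bother to make explicit.
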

Then one apply the Cauchy--Kowalewskaya Theorem to prove uniqueness of solution. The locus where the Cauchy--Kowalewskaya theorem fails in uniqueness corresponds to the so--called ``wave--fronts'' in the equations of hydrodynamics. A wave--front is precisely the place where an ``unperturbed state'' bifurcates and becomes a ``perturbed state'', i.e., a wave. See also Fig. \ref{Fig.Planes}.

By looking at \eqref{eqMatriceCheDovrebbeInvertirsi} and Definition \ref{defDefinizioneCAuchyDATUM}, it is  important to consider the equation
\begin{equation}\label{eqCharVAr}
\det \large( p_{i_1}\cdots p_{i_k}\boldsymbol{A}^{i_1\cdots i_k} \large)=0\, ,
\end{equation}
imposed on covectors $p_idx^i$ of the space--time. Observe that, for $m=1$ the determinant can be omitted, and the equation \eqref{eqCharVAr} is homogeneous of degree $k$. It then defines a degree--$k$ hypersurface in the projectivised cotangent space to the space--time.
\begin{definition}\label{def.char.luca}
The covector $p_idx^i$ is called \emph{characteristic} and the variety cut out by \eqref{eqCharVAr} is called the \emph{characteristic variety} (at a given point of $\E$). The form $p_{i_1}\cdots p_{i_k}\boldsymbol{A}^{i_1\cdots i_k}$ is called the \emph{symbol} of system \eqref{eqSisQLdetermined}.
\end{definition}
As we said, we shall explain from a contact--geometrical viewpoint the object described in Definition \ref{def.char.luca} in the context od scalar $2\Nd$ order PDE.

\begin{figure}
  {\epsfig{file=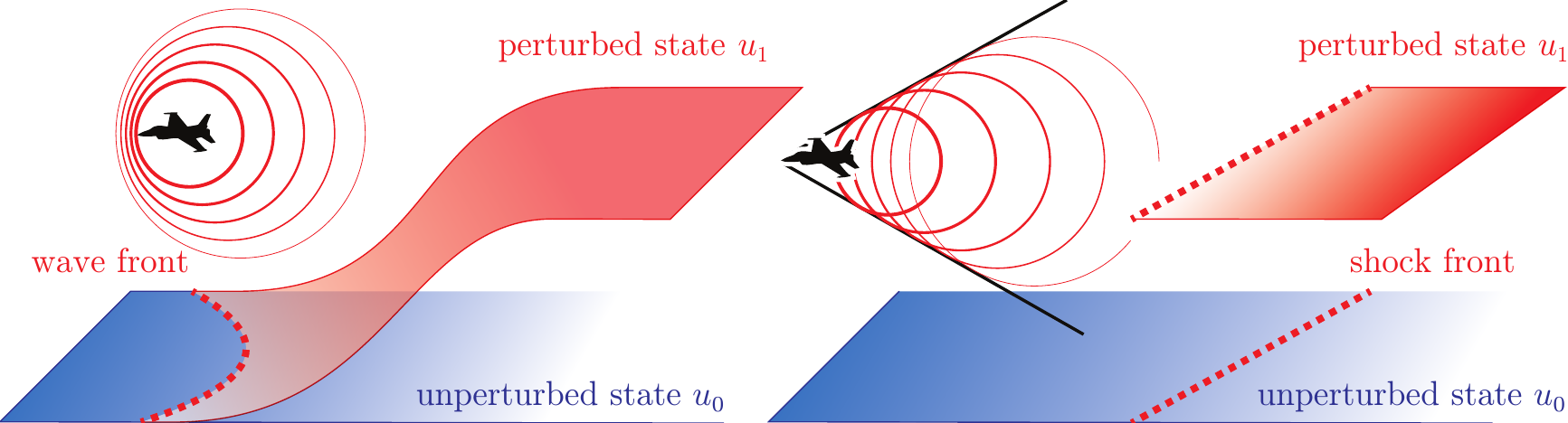,width=0.5\textwidth}}
  \caption{A characteristic initial surface admits a tangible physical interpretation, when describing a flying body going supersonic. It is exactly the place where people on the ground hear the ``boom''. \label{Fig.Planes}}
\end{figure}

\subsection{Characteristics  for $2\Nd$ order scalar PDEs in $n$ independent  variables}\label{secAntipatetica} Now we go back to the setting based on a $(2n+1)$--dimensional contact manifold $M$ (with Darboux coordinates $x^i,u,u_i$) and its prolongation $M^{(1)}$. This allows us to interpret geometrically the reasonings and the results of Section \ref{sezioneZarinista} in the case of scalar $2\Nd$ order PDEs. We shall see how the equation \eqref{eqCharVAr} becomes particularly simple and clarify why (in the case of $2\Nd$ order PDEs) vectors of rank $1$ in the sense of Definition \ref{def.rank.vector} are actually characteristic.
This will pave the way to the geometric description of Monge--Amp\`ere equations carried out in Sections \ref{secMAE}, \ref{sec.multidim.MAE} and \ref{secComplessa} below. Let us consider the scalar
$2\Nd$ order PDE with one unknown function $u$ (see Definition \ref{def2ndOrdPDE})
\begin{equation}\label{equation}
\mathcal{E}:\,F(x^{1},\dots,x^{n},u,u_{1},\dots,u_{n},u_{11},u_{12},\dots u_{nn}
)=0.
\end{equation}
The Cauchy problem
consists in finding a solution $u=f(x^{1},\dots,x^{n})$ of \eqref{equation}
such that
\begin{equation*}
f\big(X^{1}(\mathbf{t}),\dots,X^{n}(\mathbf{t})\big)=U(\mathbf{t})\,,\,\,\,
\frac{\partial f}{\partial z} {\big(X^{1}(\mathbf{t}),\dots,X^{n}(\mathbf{t})\big)}=Q(\mathbf{t}), \,\,\ell\leq 1,
\end{equation*}
where $X^i(\mathbf{t}), U(\mathbf{t}),Q(\mathbf{t})$ are given functions of $\mathbf{t}=(t_1,\dots,t_{n-1})$ and $\frac{\partial}{\partial z}$ is the derivative along a direction transversal to the $(n-1)$--dimensional submanifold parametrically described by $\{\big(X^{1}(\mathbf{t}),\dots,X^{n}(\mathbf{t})\big)\}$. In order to prescribe such a directional derivative, it is sufficient to know the gradient of $f$ along the aforementioned $(n-1)$--dimensional submanifold, i.e.,
\begin{equation}\label{Cauchy_problem}
f\big(X^{1}(\mathbf{t}),\dots,X^{n}(\mathbf{t})\big)=U(\mathbf{t})\,,\,\,\,\left.
\frac{\partial f}{\partial x^{i}}\right\vert _{(X^{1}(\mathbf{t}),\dots
,X^{n}(\mathbf{t}))}=U_{i}(\mathbf{t})\,.
\end{equation}
Equations \eqref{Cauchy_problem} describe parametrically the $(n-1)$--dimensional submanifold
\begin{equation}\label{Cauchy_datum}
\Sigma=\{(X^{1}(\mathbf{t}),\dots,X^{n}(\mathbf{t}),Z(\mathbf{t}),U_{1}(\mathbf{t}),\dots,U_{n}(\mathbf{t}))\}\,,
\quad
\mathbf{t}=(t_{1},\dots,t_{n-1})\,,
\end{equation}
of $M$. The submanifold \eqref{Cauchy_datum} is called a \emph{Cauchy datum} of $\mathcal{E}$
(obviously, the particular choice of the parametrization is irrelevant). Note that the tangent space to the above Cauchy datum is contained in the contact distribution $\CC$ of $M$. This allows us to specialise the definition of Cauhy datum already given in Section \ref{sezioneZarinista}.
\begin{definition}\label{def.Cauchy.datum.2.order}
A Cauchy datum for a $2\Nd$ order PDE is an $(n-1)$--dimensional integral submanifold of the contact distribution $\CC$.
\end{definition}
Now we see what are, in this context, characteristic hyperplanes (that can be identified with characteristic covectors) and characteristic vectors. Tangent vectors of rank $1$ (recall Definition \ref{def.rank.vector}) at a point $L\in\mathcal{E}$, are, up to sign,
\begin{equation}\label{eq.vector.rank.1.vero}
\sum_{i\leq j}\xi_i\xi_j\partial_{u_{ij}}\,,
\end{equation}
where $\xi_k$ are the components, with respect to the basis $\{dx^i\}$ of $L^*$, of the covector $\xi$ defined by \eqref{eq.rank.1.vector}. Vector \eqref{eq.vector.rank.1.vero} is tangent to the equation \eqref{equation} if and only if
\begin{equation}\label{eq.symb.Gianni}
\sum_{i\leq j}F_{u_{ij}}\xi_i\xi_j=0\,.
\end{equation}
Equation \eqref{eq.symb.Gianni} coincides with equation \eqref{eqCharVAr}, and the left--hand side term of \eqref{eq.symb.Gianni} coincides with the symbol of the PDE \eqref{equation}
if we assume $F$ to be quasi--linear (recall that here we are treating only the second order case). This motivates the following definition.
\begin{definition}\label{def.symbol.gianni}
Let $\E:=\{F=0\}\subset M^{(1)}$ be a $2\Nd$ order scalar PDE. Then its symbol at a point $L\in\E$ is the differential $d(F_{\pi(L)})$ (computed at $L$) where $F_{\pi(L)}$ means the function $F$ with $\pi(L)$ fixed (thus, $F_{\pi(L)}$ is a function depending only on $u_{ij}$).
\end{definition}
Locally, $d(F_{\pi(L)})$ reads
\begin{equation}\label{eq.symb.Gianni.2}
\sum_{i\leq j} F_{u_{ij}}du_{ij}\overset{\eqref{eqCanBundIso}}{\simeq}\sum_{i\leq j}F_{u_{ij}}D_i\odot D_j = \sum_{i\leq j} F_{u_{ij}}(\partial_{x^i}+u_i\partial_{u}+u_{ik}\partial_{u_k})\odot(\partial_{x^j}+u_j\partial_{u}+u_{kj}\partial_{u_k})
\end{equation}
where $u_{ij}$ of the above expression are computed in the point $L\in\E$. By denoting $D_i$ with $\xi_i$, we obtain expression \eqref{eq.symb.Gianni}.

\smallskip
Now  the correspondence \eqref{eq.correspondence.gianni} allows us to decide whether
or not a Cauchy datum is characteristic for a PDE \eqref{equation}. The definitions given in Section \ref{sezioneZarinista}, in the case of $2\Nd$ order PDEs, in the context outlined in the present section, become as follows.
\begin{definition}\label{def.char.tutto}
Let $\Sigma$ be a Cauchy datum. Then $T_p\Sigma$ (that we can interpret as an ``infinitesimal Cauchy datum'') is characteristic for the equation $\E$ at a point $L\in\E$ if the rank--$1$ direction corresponding to $T_p\Sigma$ through \eqref{eq.correspondence.gianni} is tangent to $\E$ at $L$. Such rank--$1$ direction is also called a \emph{characteristic line}. In this case, $T_p\Sigma$ is said also \emph{characteristic hyperplane} and its annihilator (unique in $\p L^*$, see again correspondence \eqref{eq.correspondence.gianni}) a \emph{characteristic covector}. A Cauchy datum $\Sigma$ is not characteristic for the PDE $\E$ if there are no points where it is characteristic for $\E$. The \emph{characteristic variety} at $L$ is the union of all characteristic hyperplanes at $L$.
\end{definition}

\subsubsection{An example of computation} Let $\E$ be a $2\Nd$ order PDE in $2$ independent variables.
A Cauchy datum is an integral curve $\gamma$ of $\CC=\Span{D_{x^1},D_{x^2},\partial_{u_1},\partial_{u_2}}$. Let us fix a point $p\in\gamma$ and denote by $\ell$ the tangent space to $\gamma$ at $p$, $\ell=T_p\gamma$, i.e., the ``infinitesimal Cauchy datum'' (according to Definition \ref{def.char.tutto}). A ``candidate infinitesimal solution'' to the Cauchy problem is any tangent space to a solution containing $\ell$, i.e., a Lagrangian plane $L$ such that $L\supset\ell$ and $L$ belongs to $\E$, i.e., in other words, the prolongation $\ell^{(1)}$ of $\ell$ (see Definition \ref{eq.def.prolog.lag.submanifold}) intersects $\E$:
$$
\ell^{(1)}\cap\E=\{   L\in\E \mid L\supset\ell \}\,.
$$
Let us fix a Lagrangian plane $L_0=L(U_0)$ containing $\ell$. Here $U_0$ is a $2\times 2$ matrix
$$
U_0=\left(
\begin{array}{cc}
u_{11}^0 & u_{12}^0
\\
u_{12}^0 & u_{22}^0
\end{array}
\right)
$$
that gives the Lagrangian plane (see also \eqref{eq.L.P.as.a.matrix})
\begin{equation*}
L(U_0)=\Span{D_{x^1}+u_{11}^0\partial_{u_1}+u_{12}^0\partial_{u_2}, D_{x^2}+u_{12}^0\partial_{u_1}+u_{22}^0\partial_{u_2}}\, .
\end{equation*}
If the line $\ell$ is given as the kernel of a covector $\xi=\xi_1dx^1+\xi_2dx^2$, i.e.,
\begin{equation*}
\ell=\ker\xi=-\xi_2( D_{x^1}+u_{11}^0\partial_{u_1}+u_{12}^0\partial_{u_2} )+\xi_1 ( D_{x^2}+u_{12}^0\partial_{u_1}+u_{22}^0\partial_{u_2}  )\, ,
\end{equation*}
then, a generic Lagrangian plane
$L(U)$ contains $\ell$ if and only if the matrix
 \begin{equation*}
\left(\begin{array}{cccc}
1 & 0 & u_{11} & u_{12} \\
0 & 1 & u_{12} & u_{22} \\
-\xi_2 & \xi_1 & -\xi_2 u_{11}^0+\xi_1 u_{12}^0 &  -\xi_2 u_{12}^0+\xi_1 u_{22}^0 \end{array}\right)
\end{equation*}
has rank $2$. This is true if and only if, by fixing $u_{12}$ as a parameter,
\begin{equation*}
u_{11} =  u_{11}^0+\frac{\xi_1}{\xi_2}(u_{12}-u_{12}^0)\,, \quad
u_{22} =  u_{22}^0 +\frac{\xi_2}{\xi_1}(u_{12}-u_{12}^0)\, .
\end{equation*}
In other words,
\begin{equation*}
\ell^{(1)}=\left\{
\left(
\begin{array}{cc}
u_{11}^0+\frac{\xi_1}{\xi_2}(u_{12}-u_{12}^0)& u_{12} \\
u_{12} & u_{22}^0 +\frac{\xi_2}{\xi_1}(u_{12}-u_{12}^0)
\end{array}
\right)\mid u_{12}\in \R
\right\}\,,
\end{equation*}
which is $1$--dimensional, as expected.
Its tangent line at $L_0$, i.e., at $u_{12}=u_{12}^0$, is
\begin{equation*}
T_{L_0}\ell^{(1)}=\Span{ \left(\begin{array}{cc}\frac{\xi_1}{\xi_2} & 1 \\1 & \frac{\xi_2}{\xi_1}\end{array}\right)   }
=\Span{ \frac{\xi_1}{\xi_2}\partial_{u_{11}}+\partial_{u_{12}}+ \frac{\xi_2}{\xi_1}\partial_{u_{22}} } = \Span{ \xi_1^2\partial_{u_{11}}+\xi_1\xi_2\partial_{u_{12}}+\xi_2^2\partial_{u_{22}}}\, .
\end{equation*}
Observe that
$$
\rank \left(\begin{array}{cc}\xi_1^2 & \xi_1\xi_2 \\\xi_1\xi_2 & \xi_2^2\end{array}\right)=1\,,
$$
i.e., $T_{L_0}\ell^{(1)}$ is a rank--$1$ direction (recall also the correspondence \eqref{eq.correspondence.gianni}). Now, accordingly to Definition \ref{def.char.tutto}, if $T_{L_0}\ell^{(1)}\subset T_{L_0}\E$, then $\ell$ is a hyperplane (of $L_0$) which is characteristic for the PDE $\E$ at the point $L_0$, and its annihilator is a characteristic covector. Moreover, the rank--$1$ line $T_{L_0}\ell^{(1)}$ is a characteristic line for $\E$ at $L_0$. We observe that (cf. equation \eqref{eq.symb.Gianni})
\begin{equation*}
T_{L_0}\ell^{(1)}\subset T_{L_0}\E\Leftrightarrow dF(\xi_1^2\partial_{u_{11}}+\xi_1\xi_2\partial_{u_{12}}+\xi_2^2\partial_{u_{22}})= F_{u_{11}}\xi_1^2 + F_{u_{12}}\xi_1\xi_2 + F_{u_{22}}\xi_2^2  =0\, .
\end{equation*}
There is another condition for $\ell$, ``stronger'' than $T_{L_0}\ell^{(1)}\subset T_{L_0}\E$, namely
\begin{equation}\label{eqCharStromg}
\ell^{(1)}\subset\E\, .
\end{equation}
\begin{definition}
If \eqref{eqCharStromg} holds, then $\ell\subset\CC_p$ is called a \emph{strong characteristic} (hyperplane) for $\E$.
\end{definition}
One can ask which kind of $2\Nd$ order PDEs are such that the characteristics  are also strong characteristics. We shall see that Monge--Amp\`ere equations satisfy this requirement. On the top of that, the characteristics show
a particularly simple geometrical meaning for these PDEs.

\section{Monge--Amp\`ere equations in two independent variables}\label{secMAE}

The importance of the Monge--Amp\`ere equation is underlined by the nearly one thousands citations,
gathered by the review book \cite{MR1964483} during the past dozen of years. This equation was always linked,
since its inception in 1781, to the problem of optimal mass transportation, whose most famous modern
reincarnation is the problem of optimal allocation of resources, tightly connected with the
Nobel prize in Economy awarded to Leonid Kantorovich in 1975.
This shows the enormous interest triggered by a \emph{particular}
Monge--Amp\`ere equation and its solutions, also in view of
its numerous  applications to natural sciences and economy.
On the purely theoretical side, there has been just a few
(though excellent) studies of the \emph{family} of all
Monge--Amp\`ere equations, which can be described as
\emph{hyperplane sections} of the Lagrangian Grassmannian
$\LL(n,2n)$ inside its Pl\"ucker embedding space
(see Definition \ref{defPluck}).\par
We explain below how to achieve this algebraic--geometric interpretation
of such second--order non--linear PDEs, by stressing the importance of their characteristics. In this section we treat only the case of $2$ independent variables. The multidimensional case, at least for those Monge--Amp\`ere equations which we call of \emph{Goursat} type (see Definition \ref{def.E.D.n.var} for the multidimensional case) can be dealt analogously. Some geometrical aspects of multidimensional Monge--Amp\`ere equations are discussed in Section \ref{sec.multidim.MAE}, where a concrete example is given to better illustrate some theoretical issues concerning the characteristics and the construction of solutions.

\subsection{Monge--Amp\`ere equations with $2$ independent variables}
 The main advantage of having $\LL(2,4)$ embedded
 into $\p^4$ (see Section \ref{sec.L24}), is that one can use hypersurfaces in $\p^4$ to ``cut out''
 hypersurfaces in $\LL(2,4)$. The latter are precisely $2\Nd$ order PDEs in two independent variables  (according to Section \ref{sec.SecondOrderPDEs}).\par
The simplest case of a hypersurface in $\p^4$ is that of a hyperplane
$
\Pi:=\{ Dz_0+Az_1+Bz_2+Cz_3+Nz_4=0\}\subset\p^4
$.
In this case,
\begin{equation}\label{eqHypSect}
\Pi\cap\LL(2,4)=\left\{       U=\left(\begin{array}{cc}u_{11} & u_{12} \\u_{12} & u_{22}\end{array}\right) \mid  D+Au_{11}+Bu_{12}+Cu_{22}+N(u_{11}u_{22}-u_{12}^2)=0 \right\}\,.
\end{equation}
\begin{definition}\label{defHypSec}
 The hypersurface \eqref{eqHypSect} is called a \emph{hyperplane section} of $\LL(2,4)$.
\end{definition}
Let now regard the 8--dimensional jet space
$
J^2(2,1)=\{  (x^1,x^2,u,u_1,u_2,u_{11},u_{12},u_{22})\}
$
as the Lagrangian Grassmannian bundle of the 5--dimensional contact manifold
$
J^1(2,1)= \{  (x^1,x^2,u,u_1,u_2)\}
$
 (see Section \ref{sec.LagGrassPrologM}).
\begin{definition}\label{defMAE}
 The sub--bundle
 \begin{equation}\label{eqDefMAE}
\E:=\{  D+Au_{11}+Bu_{12}+Cu_{22}+N(u_{11}u_{22}-u_{12}^2)=0\}\subset J^2(2,1)\, ,
\end{equation}
where $D,A,B,C,N\in C^\infty(J^1(2,1))$, is a \emph{Monge--Amp\`ere equation} (in two independent variables).
\end{definition}
Observe that, for each point $p\in J^1(2,1)$, the fibre $\E_p$ is a hyperplane section of $\LL(\CC_p)=\LL(2,4)$.\par
Let us study now the behavior of the Cauchy problem (see Sections \ref{sezioneZarinista} and \ref{secAntipatetica}) for the Monge--Amp\`ere equations \eqref{eqDefMAE}.
The symbol (see the left hand side of \eqref{eq.symb.Gianni} and \eqref{eq.symb.Gianni.2}) of \eqref{eqDefMAE} is
\begin{equation}\label{eqSmblMAE}
(A+Nu_{22})\xi_1^2+(B-2Nu_{12})\xi_1\xi_2+(C+Nu_{11}(u_{12},u_{22}))\xi_2^2\, ,
\end{equation}
where
\begin{equation}\label{eq.r}
u_{11}(u_{12},u_{22})=-\frac{D+Bu_{12}+Cu_{22}-Nu_{12}^2}{A+Nu_{22}}
\end{equation}
has been computed from \eqref{eqDefMAE}. The discriminant of \eqref{eqSmblMAE} is
\begin{equation*}
(B-2Nu_{12})^2-4(A+Nu_{22})(C+Nu_{11})
\end{equation*}
which, on the points of PDE \eqref{eqDefMAE}, reads
\begin{equation}\label{eqDEltaGiusta}
B^2-4AC+4ND=:\Delta\, .
\end{equation}
So, according to $\Delta$ being negative, zero, or positive, the Monge--Amp\`ere equation \eqref{eqDefMAE} will be elliptic, parabolic or hyperbolic, respectively. This means that the zero locus of the symbol \eqref{eqSmblMAE} will be empty, a line, or the union of two distinct lines, respectively. If the annihilator of the Cauchy datum lies on such a zero locus, then the Cauchy problem will be ill--defined (cf. Definition \ref{def.char.tutto}). In particular, for elliptic Monge--Amp\`ere equations, the Cauchy problem is \emph{always} well--defined.\par
Passing now to non--elliptic Monge--Amp\`ere equations, that is, equations of the form
\begin{equation}\label{eq.MAE.hyp.par}
\det( {U}- {F})=  \det(F) -f_{22}u_{11} + (f_{12}+f_{21})u_{12} -f_{11}u_{22} + \det(U)=0\,,
\end{equation}
where $U=(u_{ij})$ and $F=(f_{ij})$, with $f_{ij}=f_{ij}(x^1,x^2,u,u_1,u_2)$. We see that the discriminant of \eqref{eq.MAE.hyp.par} is equal to
\begin{equation}\label{eqNonEllitticita}
\Delta= {(f_{12}-f_{21})}^2\geq 0\, .
\end{equation}
Now $F$ can be identified with a $2$--dimensional sub--distribution of $\CC$, viz.
\begin{equation*}
F\longleftrightarrow \Span{D_{x^1}+f_{11}\partial_{u_1}+f_{12}\partial_{u_2}, D_{x^2}+f_{21}\partial_{u_1}+f_{22}\partial_{u_2}}\,.
\end{equation*}
In this perspective, $\det( {U}- {F})=0$ is the equation of the Lagrangian  planes, i.e., the elements of $J^2(2,1)$, which are non--transversal to $F$. Indeed,
\begin{equation}\label{eqDeterminanteGasante}
L(U)\cap F\neq 0\Leftrightarrow \det \left(\begin{array}{cccc}1 & 0 & f_{11} & f_{12} \\0 & 1 & f_{21} & f_{22} \\1 & 0 & u_{11} & u_{12} \\0 & 1 & u_{12} & u_{22}\end{array}\right)=\det(U-F)=0\, .
\end{equation}
We will show that these are the \emph{only} examples of non--elliptic Monge--Amp\`ere equations.

\smallskip
Let us begin with a $5$--dimensional contact manifold
$ (M,\CC)$. Let $\D$ be a $2$-dimensional sub--distribution of $\CC$. It is always possible to choose a contact chart
$(x^1,x^2,u,u_1,u_2)$ on $M$ such that
\begin{equation}\label{eq.D.2.dim}
\D=\Span{D_{x^1}+f_{11}\partial_{u_1}+f_{12}\partial_{u_2}, D_{x^2}+f_{21}\partial_{u_1}+f_{22}\partial_{u_2}}
\end{equation}
with $f_{ij}\in C^\infty(M)$, where we recall that $D_{x^i}$ are defined by \eqref{eq.contact.local}.
\begin{definition}\label{def.E.D.2.var}
Let $\D\subset\CC$ be a $2$--dimensional sub--distribution of $\CC$. Then the sub--bundle
 \begin{equation}\label{eq.ED.prima}
\E_\D:=\{  L\in M^{(1)}_p\mid L\cap\D_p\neq 0\}\subset M^{(1)}
\end{equation}
is called the \emph{Goursat--type Monge--Amp\`ere equation} associated with $\D$ (see Fig. \ref{Fig.E.con.D}). The distribution $\D$ is called the \emph{characteristic distribution} of the Monge--Amp\`ere equation $\E_\D$.
\end{definition}
The substitution $f_{12}\leftrightarrow f_{21}$ does not change the PDE \eqref{eq.MAE.hyp.par}. This shows that, if we consider the transpose of the matrix $F$, we obtain the same Monge--Amp\`ere obtained from $F$. Essentially, this means that $\mathcal{D}$ and $\mathcal{D}^\perp$ in definition \eqref{def.E.D.2.var} give the same Monge--Amp\`ere equation.
\begin{proposition}\label{prop.D.D.ort}
Let $\D\subset\CC$ be a $2$--dimensional sub--distribution of $\CC$. Then $\E_D=\E_{D^\perp}$.
\end{proposition}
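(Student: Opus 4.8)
The plan is to unwind the definition of $\E_\D$ and check that membership of a Lagrangian plane $L$ in $\E_\D$ depends on $\D$ only through the property of being non-transversal to it, which is symmetric in $\D$ and $\D^\perp$. First I would recall that, for a $2$--plane $L\in M^{(1)}_p=\LL(\CC_p)$, the condition $L\cap\D_p\neq 0$ is, by Definition \ref{def.E.D.2.var}, exactly what places $L$ in the fibre $(\E_\D)_p$. Here $\D^\perp$ denotes the symplectic orthogonal of $\D$ inside $(\CC_p,\omega)$, where $\omega=(d\theta)|_{\CC}$ is the (conformal) symplectic form from Remark \ref{rem.ConfSympSpace}. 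Since $\dim\CC_p=4$ and $\dim\D_p=2$, we have $\dim\D^\perp_p=2$ as well, so $\E_{\D^\perp}$ is again a Goursat--type Monge--Amp\`ere equation in the sense of Definition \ref{def.E.D.2.var}, and the statement makes sense.

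The key observation is the following linear-algebra fact in the $4$--dimensional symplectic space $(\CC_p,\omega)$: for a Lagrangian plane $L$ and a $2$--plane $\D_p$,
\begin{equation*}
L\cap\D_p\neq 0\ \Longleftrightarrow\ L\cap\D^\perp_p\neq 0\, .
\end{equation*}
To see this, suppose $L\cap\D_p=0$, so that $\CC_p=L\oplus\D_p$. Because $L$ is Lagrangian, $L=L^\perp$, hence $L^\perp\cap\D_p=L\cap\D_p=0$, and applying the dimension formula $\dim(A^\perp)+\dim A=4$ together with $(A\cap B)^\perp=A^\perp+B^\perp$ gives $L^\perp+\D^\perp_p=(L\cap\D_p)^\perp=\CC_p$; combined with $\dim L^\perp=\dim\D^\perp_p=2$ this forces $L^\perp\oplus\D^\perp_p=\CC_p$, i.e. $L\cap\D^\perp_p=L^\perp\cap\D^\perp_p=0$. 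The converse follows by symmetry of the argument (or by applying it to $\D^\perp$ and using $(\D^\perp)^\perp=\D_p$). Taking contrapositives yields the displayed equivalence. This is really the same phenomenon already noted after \eqref{eq.MAE.hyp.par}: transposing $F$ (equivalently, exchanging $f_{12}\leftrightarrow f_{21}$) leaves $\det(U-F)=0$ unchanged, and $F\mapsto F^{t}$ corresponds precisely to $\D\mapsto\D^\perp$ under the identification $F\longleftrightarrow\D$ of \eqref{eq.D.2.dim}.

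Granting the equivalence, the proof concludes at once: for every $p$ and every $L\in M^{(1)}_p$,
\begin{equation*}
L\in(\E_\D)_p\iff L\cap\D_p\neq 0\iff L\cap\D^\perp_p\neq 0\iff L\in(\E_{\D^\perp})_p\, ,
\end{equation*}
hence $\E_\D=\E_{\D^\perp}$ as sub--bundles of $M^{(1)}$. The only point needing care — the step I expect to be the main (minor) obstacle — is being scrupulous about the symplectic duality identities $\dim A^\perp = 4-\dim A$ and $(A\cap B)^\perp = A^\perp + B^\perp$ in a possibly conformal symplectic space; but since $\D^\perp$ is insensitive to rescaling $\omega$ (exactly as the Lagrangian condition is, cf. Remark \ref{rem.conf.symp}), working with any representative $\omega=(d\theta)|_{\CC}$ is harmless. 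Alternatively, one can bypass the abstract argument entirely and verify the claim in coordinates: writing $\D$ as in \eqref{eq.D.2.dim} with matrix $F=(f_{ij})$, a direct computation as in \eqref{eqDeterminanteGasante} shows $\E_\D=\{\det(U-F)=0\}$, whose explicit expansion \eqref{eq.MAE.hyp.par} is manifestly invariant under $F\mapsto F^{t}$, i.e. under $\D\mapsto\D^\perp$.
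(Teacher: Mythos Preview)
Your proof is correct. The paper's justification is precisely your coordinate alternative: it observes that $\det(U-F)=0$ is unchanged under $f_{12}\leftrightarrow f_{21}$ and that this swap corresponds to $\D\mapsto\D^\perp$. Your main argument, however, takes a genuinely different route: you prove the pointwise symplectic linear-algebra equivalence $L\cap\D_p\neq 0\Leftrightarrow L\cap\D_p^\perp\neq 0$ directly from $L=L^\perp$ and the duality identities $(A\cap B)^\perp=A^\perp+B^\perp$, $\dim A^\perp=4-\dim A$. This is cleaner and coordinate-free; in particular it makes transparent that the result depends only on the conformal symplectic structure of $\CC_p$ and not on any choice of contact chart, and it generalises immediately to the $n$-dimensional Goursat setting of Section~\ref{sec.multidim.MAE} without having to write out a determinant. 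The paper's coordinate computation, on the other hand, has the virtue of simultaneously exhibiting the explicit PDE \eqref{eq.MAE.hyp.par} and making the non-ellipticity \eqref{eqNonEllitticita} visible.
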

\begin{figure}
\centerline{\epsfig{file=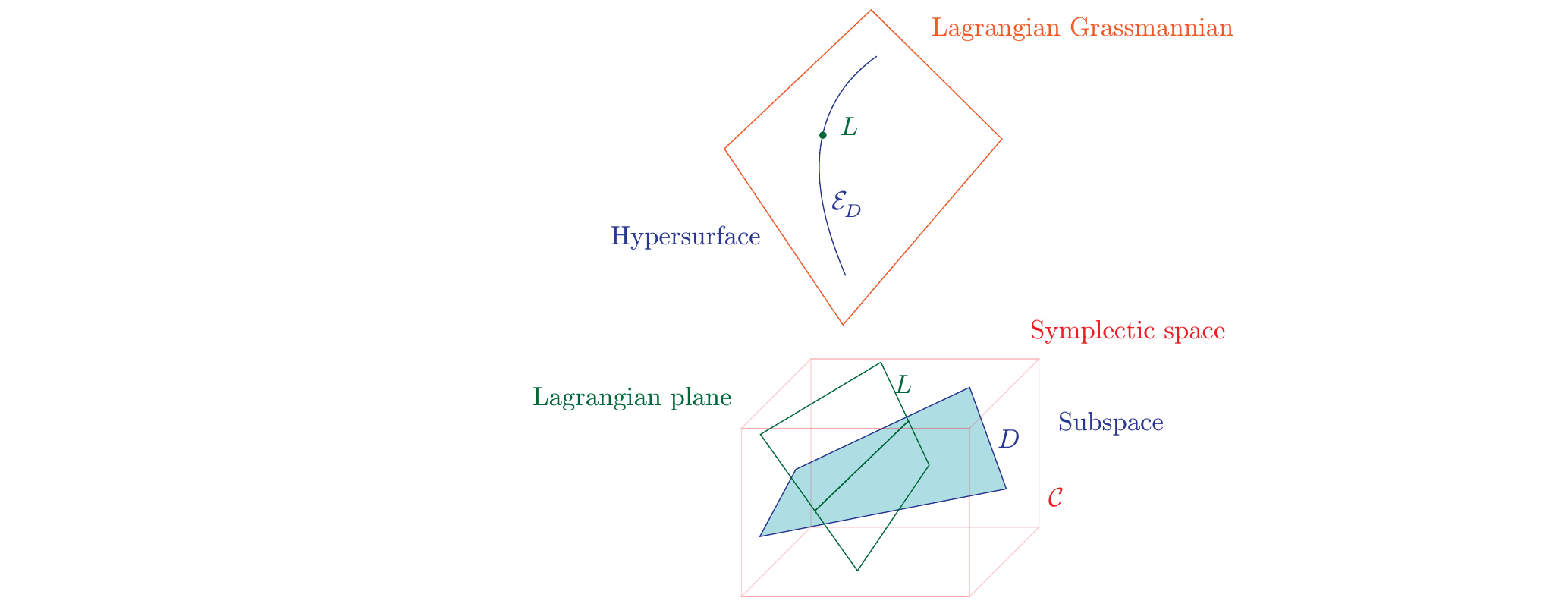,width=\textwidth}}
 \caption{Selecting only those Lagrangian planes that are non--transversal to $D$, one gets a hypersurface in $\E_D$ in the Lagrangian Grassmannian.\label{Fig.E.con.D}}
\end{figure}
We have already discovered (see formula \eqref{eqDeterminanteGasante}) that, locally, $\E_\D$ can be described as
\begin{equation*}
\det\left(\begin{array}{cc}u_{11}-f_{11} & u_{12}-f_{12} \\u_{12}-f_{21} & u_{22}-f_{22}\end{array}\right)=0\, ,
\end{equation*}
where $(u_{11},u_{12},u_{22})$ are the fibre coordinates of $M^{(1)}$. We also have proved (see \eqref{eqNonEllitticita}) that $\E_\D$ is non--elliptic, i.e., it possesses characteristics.  Now we show the converse by using geometrical properties of the characteristics of the Monge--Amp\`ere equation \eqref{eqDefMAE}.
\begin{theorem}\label{Th.non.elliptic.MAEs}
Any non--elliptic Monge--Amp\`ere equation is of Goursat type.
\end{theorem}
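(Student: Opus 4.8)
The plan is to show that a non--elliptic Monge--Amp\`ere equation, which by definition is a hyperplane section of the form \eqref{eqDefMAE} with discriminant $\Delta = B^2 - 4AC + 4ND \geq 0$, can always be rewritten in the determinantal form \eqref{eq.MAE.hyp.par}, from which a characteristic distribution $\D$ is then reconstructed. The key structural fact to exploit is the Pl\"ucker description of $\LL(2,4)$ as the quadric \eqref{eqLieQuadric} in $\p^4$: a Monge--Amp\`ere equation $\E$ is cut out fibrewise by a hyperplane $\Pi_p \subset \p^4$, and the geometry of the intersection $\Pi_p \cap \LL(\CC_p)$ is governed entirely by the position of $\Pi_p$ relative to the Lie quadric $Q^3$. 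The sign of $\Delta$ detects exactly whether $\Pi_p$ meets $Q^3$, which is where the case distinction enters.

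First I would work fibrewise and fix a point $p \in J^1(2,1)$ (or, in the abstract setting, $p \in M$), so that the fibre $\E_p$ is a genuine hyperplane section of $\LL(2,4)$. The claim to establish at this level is: \emph{if the symbol \eqref{eqSmblMAE} has real (possibly coincident) roots, i.e. $\Delta_p \geq 0$, then there exists a $2\times 2$ matrix $F_p = (f_{ij})$ such that $\E_p = \{ \det(U - F_p) = 0\}$}. To see this, expand the defining relation $D + A u_{11} + B u_{12} + C u_{22} + N(u_{11}u_{22} - u_{12}^2) = 0$. If $N_p \neq 0$, divide by $N_p$ and complete the "determinantal square": one seeks $f_{11}, f_{22}$ and a splitting $f_{12} + f_{21}$ of the $u_{12}$--coefficient so that $\det(U-F) = \det F - f_{22}u_{11} + (f_{12}+f_{21})u_{12} - f_{11}u_{22} + \det U$ matches $\frac{1}{N}(D + Au_{11} + Bu_{12} + Cu_{22}) + \det U$. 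Matching linear coefficients forces $f_{22} = -A/N$, $f_{11} = -C/N$, $f_{12}+f_{21} = B/N$; the only remaining constraint is $\det F = D/N$, i.e. $f_{11}f_{22} - f_{12}f_{21} = D/N$, which upon substitution becomes $AC/N^2 - f_{12}f_{21} = D/N$, so $f_{12}f_{21} = (AC - ND)/N^2$. Together with $f_{12} + f_{21} = B/N$ this is a quadratic system for $(f_{12}, f_{21})$ whose discriminant is precisely $(B/N)^2 - 4(AC-ND)/N^2 = \Delta/N^2 \geq 0$; hence real solutions $f_{12}, f_{21}$ exist exactly under the non--ellipticity hypothesis. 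The degenerate case $N_p = 0$ (when $\E$ is quasi--linear at $p$) must be handled separately: here $\det(U-F) = 0$ is required to reduce to the linear relation $A u_{11} + B u_{12} + C u_{22} + D = 0$, which forces $\det U$ and one of the $f_{ij}$ to drop out — one checks this forces a rank condition on $F$, and the solvability again hinges on $B^2 - 4AC \geq 0$.

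Next, having produced $F$ fibrewise, I would promote it to a global (local, in a contact chart) smooth field of matrices. Since the coefficients $A, B, C, D, N$ are smooth functions on $J^1(2,1)$ and the quadratic formula for $(f_{12}, f_{21})$ depends smoothly on them wherever $\Delta$ has constant sign and $N \neq 0$ (choosing one of the two roots consistently), the assignment $p \mapsto F_p$ is smooth; at points where $N = 0$ one patches using the quasi--linear normal form. Then one sets $\D := \Span{D_{x^1} + f_{11}\partial_{u_1} + f_{12}\partial_{u_2},\ D_{x^2} + f_{21}\partial_{u_1} + f_{22}\partial_{u_2}}$ as in \eqref{eq.D.2.dim}, and invokes the already--established equivalence \eqref{eqDeterminanteGasante}: $L(U) \cap \D_p \neq 0 \iff \det(U - F) = 0$. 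This shows $\E = \E_\D$, i.e. $\E$ is of Goursat type in the sense of Definition \ref{def.E.D.2.var}. Here I would use the freedom noted in Proposition \ref{prop.D.D.ort}, namely that $F$ and $F^t$ give the same equation, to absorb the ambiguity in the choice between the two roots $(f_{12}, f_{21})$ and $(f_{21}, f_{12})$.

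The main obstacle I anticipate is twofold. The genuinely geometric subtlety is ensuring that $\D$ has \emph{constant rank $2$}, i.e. that the two spanning vector fields remain independent — this is automatic from their form \eqref{eq.D.2.dim} since their $D_{x^i}$--parts are already independent, so this is in fact not an obstacle. The real work is the smoothness/globalization across the locus $N = 0$ and, more seriously, the behaviour when $\Delta$ vanishes (parabolic points): there the two roots $f_{12}, f_{21}$ coincide, $F$ becomes symmetric, and $\D = \D^\perp$, so one must check the construction degenerates gracefully rather than losing smoothness or rank. A clean way around this is to argue purely algebraically at each point as above — the existence of $F_p$ is a pointwise statement requiring only $\Delta_p \geq 0$ — and then note that the \emph{equation} $\E$, not the field $F$, is what must be smooth; since $\E$ was smooth to begin with and $\E = \{\det(U-F)=0\}$ as subsets, one recovers a smooth $\D$ from $\E$ directly via \eqref{eq.ED.prima} without ever needing $F$ itself to be globally single--valued. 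I expect the write--up to spend most of its length on this point, i.e. on decoupling "the matrix $F$ exists pointwise" from "the distribution $\D$ is globally smooth of constant rank."
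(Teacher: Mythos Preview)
Your approach is correct, and in fact the paper explicitly acknowledges it as a valid alternative: in the opening sentence of its own proof, the authors remark that ``the proof could be carried out by means of a straightforward computation: given the equation \eqref{eqDefMAE} one can find the $f_{ij}$'s from \eqref{eq.MAE.hyp.par} by solving a system.'' This is precisely your matching of coefficients and solving the quadratic for $(f_{12},f_{21})$ with discriminant $\Delta/N^2$. The paper then deliberately chooses a different route.

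The paper's method is to factor the symbol \eqref{eqSmblMAE} of the equation as a product of two linear forms in $(\xi_1,\xi_2)$ (possible exactly when $\Delta\geq 0$), interpret each factor as a characteristic covector, and then trace out the corresponding characteristic line $\ell(u_{12},u_{22})$ inside $\CC_p$ as the point varies over the fibre $\E_p$. The union of these lines is shown to sweep out a $2$--dimensional sub--distribution $\D\subset\CC$, and one checks $\E=\E_\D$ directly. The second factor produces $\D^\perp$, confirming Proposition~\ref{prop.D.D.ort}.

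What each buys: your argument is shorter and entirely elementary---it reduces the theorem to the solvability of a quadratic, and makes the role of $\Delta$ transparent as a discriminant. The paper's argument is more geometric: it exhibits $\D$ \emph{as} the characteristic variety of $\E$ (cf.\ Remark~\ref{rem.reconstruct}), so that the identity $\E=\E_\D$ becomes the statement that a non--elliptic Monge--Amp\`ere equation is recoverable from its characteristics. This is the conceptual payoff the authors want, since Section~\ref{secNormFormMAEs} onward classifies such equations via properties of $\D$.

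Your concern about the $N=0$ locus and parabolic smoothness is legitimate, but note that the paper's formula for $\D$ involves $\sqrt{\Delta}$ and division by $A+Nu_{22}$ as well, so neither proof is uniformly written; both tacitly work at generic points and rely on the local nature of the statement.
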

\begin{proof}
The proof could be carried out by means of a straightforward computation: given the equation \eqref{eqDefMAE} one can find the $f_{ij}$'s from \eqref{eq.MAE.hyp.par} by solving a system and then substitute them in \eqref{eq.D.2.dim} in order to obtain the distribution $\D$. Instead, we choose a different method, relying on the geometry of characteristics and symbol of PDE \eqref{eqDefMAE} in order to construct the distribution $\D$.

\smallskip
The symbol of the equation \eqref{eqDefMAE} is \eqref{eqSmblMAE} with $u_{11}(u_{12},u_{22})$ given by \eqref{eq.r}. Up to a non--zero factor, this symbol is equal to
\begin{equation}\label{eq.symb.MAE.2.dim}
\left((A+Nu_{22})\xi_1-\left(-\frac12 B+Nu_{12}+\frac12\sqrt{\Delta}\right)\xi_2\right)
\cdot\left((A+Nu_{22})\xi_1-\left(-\frac12 B+Nu_{12}-\frac12\sqrt{\Delta}\right)\xi_2\right)\,.
\end{equation}
Let us consider the first factor of \eqref{eq.symb.MAE.2.dim}. To this it is associated a line in $\CC$ depending on the point of the PDE, i.e., on $u_{12}$ and $u_{22}$:
\begin{equation}\label{eq.nonso.proprio}
\ell(u_{12},u_{22})=(A+Nu_{22})(\partial_{x^1}+u_1\partial_u+u_{11}(u_{12},u_{22})\partial_{u_1}+u_{12}\partial_{u_2})-\left(-\frac12 B+Nu_{12}+\frac12\sqrt{\Delta}\right)(\partial_{x^2}+u_2\partial_u+u_{12}\partial_{u_1}+u_{22}\partial_{u_2})\,,
\end{equation}
where $u_{11}(u_{12},u_{22})$ is given by \eqref{eq.r}. A computation shows that \eqref{eq.nonso.proprio} is equal to
\begin{equation}\label{eq.nonso.proprio.2}
\ell(u_{12},u_{22})=\left(-N D_2 -\frac12 (B+\sqrt{\Delta})\partial_{u_1}+A\partial_{u_2}\right)u_{12} + \left(N D_1 -C\partial_{u_1} +\frac12 (B-\sqrt{\Delta})\partial_{u_2}\right)u_{22}\,.
\end{equation}
If we let the point $(u_{11}(u_{12},u_{22}),u_{12},u_{22})$ vary on the equation \eqref{eqDefMAE} (i.e., we let $u_{12},u_{22}$ vary arbitrarily in $\R$), the line \eqref{eq.nonso.proprio.2} describes a $2$-dimensional distribution $\D$, i.e.,
$$
\D=\bigcup_{u_{12},u_{22}\in\R}\ell(u_{12},u_{22}) = \Span{-N D_2 -\frac12 (B+\sqrt{\Delta})\partial_{u_1}+A\partial_{u_2} \,,\, N D_1 -C\partial_{u_1} +\frac12 (B-\sqrt{\Delta})\partial_{u_2} }\,.
$$
Now it is easy to realize that the PDE \eqref{eqDefMAE} is equal to $\E_\D$ with $\D$ given above.

\smallskip\noindent
If we had chosen the second factor of \eqref{eq.symb.MAE.2.dim}, by doing the same reasoning as above, we would have come to the orthogonal distribution $\D^\perp$ of $\D$, as predicted by Proposition \ref{prop.D.D.ort}.
\end{proof}

\begin{remark}\label{rem.reconstruct}
We can say that the characteristic variety of a non--elliptic Monge--Amp\`ere equation is the union of a $2$--dimensional sub--distribution of $\CC$ and of its symplectic--orthogonal. The results we obtained so far show that we can re--construct a Monge--Amp\`ere equation starting from its characteristics.
\end{remark}

\subsection{Normal form of parabolic Monge--Amp\`ere equations }\label{secNormFormMAEs}

We have seen (see for instance Theorem \ref{Th.non.elliptic.MAEs}) that to a non--elliptic Monge--Amp\`ere equation it is associated a pair of distributions (possibly coinciding): a $2$--dimensional distribution $\D\subset\CC$ and its symplectic orthogonal $\D^\perp$. If $\D=\D^\perp$, then $\D$ is Legendrian and its associated Monge--Amp\`ere equation turns out to be parabolic as its $\Delta$ (defined by \eqref{eqDEltaGiusta}) is equal to zero (cf. also \eqref{eqNonEllitticita}): in this case we call $\D$ the \emph{characteristic} distribution of the considered parabolic Monge--Amp\`ere equation. It is easy to realize that two parabolic Monge--Amp\`ere equations are contactomorphic if and only if  so are their characteristic distributions.
Indeed, given a Legendrian distribution $\D$, one can canonically construct a parabolic Monge--Amp\`ere equation $\E_\D$ via formula \eqref{eq.ED.prima}. Conversely, given a parabolic Monge--Amp\`ere equation $\E$, one can construct a Legendrian distribution $\D$ such that $\E=\E_\D$ by using the reasonings contained in the proof of Theorem \eqref{Th.non.elliptic.MAEs}.
Thus, the problem of classifying parabolic Monge--Amp\`ere equations (with $2$ independent variables) is the same as classifying Legendrian distributions on $5$--dimensional contact manifolds. In this section we shall see how to find normal forms of Legendrian distributions on a  $5$--dimensional contact manifold and, as a byproduct, we obtain normal forms of (the corresponding) parabolic Monge--Amp\`ere equations. Our approach is based on the normal form of vector fields lying in the contact distribution $\CC$.

To this end, we need the following technical propositions (more details and ideas can be found in \cite{MR2503974}). Recall the notion of type and of Hamiltonian vector field given in Section \ref{sec.contact.manifolds.2}.
\begin{proposition}
\label{Ortogonale_Tipo_2} Let $Y\in\mathcal{C}$. Then $Y$ is of type $2$ if
and only if the derived distribution $(Y^{\bot})^{\prime}$ has dimension $4$.
Furthermore, $Y$ is a multiple of a hamiltonian field $Y_{f}$ if and only if
$(Y^{\perp})^{\prime}$ is $4$-dimensional and integrable.
\end{proposition}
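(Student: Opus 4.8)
The statement has two halves, both hinging on understanding the derived distribution of the corank-$2$ distribution $Y^\perp=\{\theta=0,\,\sigma=0\}$, where $\sigma=Y\lrcorner d\theta$ (the $1$-form whose kernel-with-$\theta$ cuts out $Y^\perp$, recalling $Y=\omega^{-1}(\sigma)$). The first move is to reformulate ``type of $Y$'' in terms of $\sigma$. Since $\theta$, $L_Y\theta$, $L_Y^2\theta,\dots$ span the space measured by the type, and since for $Y\in\CC$ one has $L_Y\theta=Y\lrcorner d\theta=\sigma$ (Cartan's formula plus $\theta(Y)=0$), the type of $Y$ is the rank of the system $\theta,\sigma,L_Y\sigma,L_Y^2\sigma,\dots$. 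So $Y$ has type $2$ precisely when $L_Y\sigma\in\Span{\theta,\sigma}$, i.e. when the flow of $Y$ preserves the plane distribution (Pfaffian system) $\{\theta,\sigma\}$ up to the system itself. This is the pivot for linking to $(Y^\perp)'$.

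\textbf{First equivalence.} I would apply Lemma \ref{Lemma_Calcolo_Derivato}: the Pfaffian system $I_{(Y^\perp)'}$ attached to the derived distribution consists of those $\rho\in I_{Y^\perp}=\Span{\theta,\sigma}$ with $L_X\rho\in I_{Y^\perp}$ for all $X\in Y^\perp$. Now $Y^\perp$ has rank $2n-2$ (here $2n=4$, so rank $2$), and $\dim M=2n+1=5$. The derived system $I_{(Y^\perp)'}$ is a subspace of the $2$-dimensional $\Span{\theta,\sigma}$, so $(Y^\perp)'$ has dimension $4$ iff $I_{(Y^\perp)'}$ is $1$-dimensional iff $I_{(Y^\perp)'}=\Span{\theta}$ (it always contains $\theta$, because $L_X\theta=X\lrcorner d\theta$ and for $X\in Y^\perp\subset\CC$ this lies in $\CC^*$... more carefully: $\theta\in I_{(Y^\perp)'}$ since $d\theta|_{Y^\perp}=0$, as $Y^\perp\subset\CC$ is isotropic once we note $Y^\perp=Y^{\perp_\omega}\cap\CC$, the symplectic orthogonal of $Y$ inside $\CC$, which is coisotropic of corank... let me re-examine: $Y^\perp$ as defined is $\{\theta=0, df=0\}$ in the Hamiltonian case, so generally $\{\theta=0,\sigma=0\}$, which is the $\omega$-orthogonal complement $\langle Y\rangle^{\perp_\omega}$ inside $\CC$; since $Y$ is isotropic, $\langle Y\rangle^{\perp_\omega}\supset\langle Y\rangle$ and $d\theta$ restricted to it has a $1$-dimensional kernel spanned by $Y$, hence $d\theta|_{Y^\perp}\neq 0$ unless $\dim Y^\perp\le ...$). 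The point to extract: $\theta\notin I_{(Y^\perp)'}$ in general; rather $I_{(Y^\perp)'}$ is spanned by combinations $a\theta+b\sigma$ that are ``$Y^\perp$-invariant''. The cleanest route is: compute $d\sigma$ and $d\theta$ modulo $\{\theta,\sigma\}$ and observe that $(Y^\perp)'$ is $4$-dimensional iff $\sigma\wedge d\sigma\wedge\theta = 0$ on $Y^\perp$ in the appropriate sense, which by a direct Cartan-calculus computation is equivalent to $L_Y\sigma\in\Span{\theta,\sigma}$, i.e. to $Y$ having type $2$. I would carry this out by choosing a local frame adapted to $\CC$ and to $Y$ and simply expanding; the bookkeeping is the routine part.

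\textbf{Second equivalence.} For the refinement, suppose $Y$ has type $2$, so $(Y^\perp)'$ is $4$-dimensional with $I_{(Y^\perp)'}=\Span{\rho}$ for a single $1$-form $\rho=a\theta+b\sigma$. By the Frobenius theorem (Theorem \ref{th.Frobenius}) in its dual form \eqref{eq.cond.dual.int}, the rank-$4$ distribution $(Y^\perp)'$ is integrable iff $d\rho\wedge\rho=0$. I would then show: $d\rho\wedge\rho=0$ holds iff $\rho$ is (a multiple of) an \emph{exact} form $df$ after rescaling — more precisely, iff $\rho$ can be chosen closed, hence locally $\rho=dg$ — and, tracking through $\rho=a\theta+b\sigma=a(du-u_idx^i)+b(Y\lrcorner d\theta)$, this forces $Y$ to be proportional to the Hamiltonian field $Y_{df}$ for a suitable $f$ (using the local formula \eqref{eq.hamiltonian.local} and the characterization $Y_{df}\in\CC$, $Y_{df}(f)=0$, together with the third identity in \eqref{eq.proprieta.Xf}, namely $Y_{df}(\theta)=df-f_u\theta$, which says exactly that $\{\theta, df\}$ is the invariant system). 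Conversely, if $Y=gY_{df}$, then \eqref{eq.proprieta.Xf} shows $Y_{df}^\perp=\{\theta=0,df=0\}$ has derived system containing the closed form $df$, so $(Y^\perp)'$ is $4$-dimensional (by the first part, since $Y_{df}$ has type $2$) and integrable, being cut out together with $\theta$ by... again one checks $d\theta$ lies in the ideal generated by $\theta$ and $df$ on $Y^\perp$.

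\textbf{Main obstacle.} The delicate step is the middle one: extracting from ``$(Y^\perp)'$ is $4$-dimensional and integrable'' the existence of a \emph{function} $f$ with $Y\parallel Y_{df}$, rather than merely a closed $1$-form. Locally a closed form is exact, so this is fine, but one must correctly identify which closed form arises — it must be the $df$ appearing in the Pfaffian pair $\{\theta,df\}$, and verifying that the Hamiltonian vector field of that $f$ is indeed a multiple of $Y$ requires unwinding the isomorphism $\omega:\CC\to\CC^*$ and checking that $\omega^{-1}(b^{-1}\rho - (a/b)\,\theta\text{-part})$ reproduces $Y$ up to scale. I expect this to be a short but careful computation in Darboux coordinates, using \eqref{eq.contact.local} and \eqref{eq.hamiltonian.local}; the rest is standard exterior-algebra manipulation with the Frobenius criterion.
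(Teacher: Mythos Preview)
Your approach to the first equivalence is essentially the paper's: both identify $Y^\perp=\{\theta=0,\,L_Y\theta=0\}$, invoke Lemma~\ref{Lemma_Calcolo_Derivato}, and reduce ``$(Y^\perp)'$ has rank~$4$'' to a condition on how $L_Y$ acts on the pair $\theta,\,L_Y\theta$. The paper packages the argument slightly differently: rather than your proportionality criterion for $d\theta$ and $d\sigma$ on $Y^\perp$ (beware: the expression $\sigma\wedge d\sigma\wedge\theta$ you write is a $4$-form on a $3$-space, hence vacuous as stated), it directly exhibits the annihilator of $(Y^\perp)'$ as $\rho=\alpha\theta+L_Y\theta$ and determines $\alpha$ by imposing $L_X\rho|_{Y^\perp}=0$ for a frame $\{X,Y,Z\}$ of $Y^\perp$. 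Your self-correction about $\theta\notin I_{(Y^\perp)'}$ is right: $d\theta|_{Y^\perp}\neq 0$ because $Y^\perp$ is $3$-dimensional inside the symplectic $4$-space $\CC_p$, hence not isotropic.

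For the second equivalence you are over-engineering. Once integrability gives $(Y^\perp)'=\{df=0\}$, the paper simply observes
\[
Y^\perp=(Y^\perp)'\cap\CC=\{\theta=0,\,df=0\}=\{\theta=0,\,L_{Y_f}\theta=0\}=Y_f^\perp,
\]
the first equality holding because $(Y^\perp)'$ and $\CC$ are distinct rank-$4$ distributions (the contact one is never integrable), so their intersection is $3$-dimensional and contains, hence equals, $Y^\perp$. From $Y^\perp=Y_f^\perp$ one reads off $\Span{Y}=\Span{Y_f}$ by symplectic duality in $\CC$. No Darboux coordinates, no tracking of coefficients in $a\theta+b\sigma$: the ``main obstacle'' you flag dissolves in one line once you notice $Y^\perp=(Y^\perp)'\cap\CC$.
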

\begin{proof}
Let $\dim(Y^{\bot})^{\prime}=4$. Since $Y^\bot$ is $3$-dimensional and it is contained in the contact distribution $\CC=\ker\,\theta$, in view of Lemma \ref{Lemma_Calcolo_Derivato}, the annihilator $\sigma$ of $(Y^{\bot})^{\prime}$, being such distribution $4$-dimensional by hypothesis,   is a linear combination of $\theta$ and $L_Y(\theta)$. Also, since there are no $3$-dimensional sub--distributions of $\CC$ whose derived distribution is still contained in $\CC$, we have that
\begin{equation}\label{forma_sigma}
\sigma=\alpha \theta+L_Y(\theta)\,,
\end{equation}
i.e., we can assume that the component of $\sigma$ with respect to $L_Y(\theta)$ is nowhere vanishing. Without loss of generality, we can set it equal to $1$. By Lemma \ref{Lemma_Calcolo_Derivato}, we can choose $\alpha$ from \eqref{forma_sigma} in such a way that $L_Y^2(\theta)$ depends on $\theta$ and $L_Y(\theta)$, i.e., $Y$ is of type $2$.

\smallskip\noindent
Conversely, let $Y$ be of type $2$. To prove our statement we have to find an
$\alpha$ in (\ref{forma_sigma}) such that $(Y^{\bot})'$ is described by a single equation
$\sigma=0$. To this end, let $\{X,Y,Z\}$ be a basis of $Y^{\bot}$. Then
$L_X(\sigma)$, $L_Y(\sigma)$ and $L_Z(\sigma)$ must vanish on $Y^{\bot}$. By
assumption it holds
\[
L_Y(\sigma)=L_Y^2(\theta)+L_Y(\alpha)\theta+\alpha L_Y(\theta)\equiv0\,\,\text{mod}\Span{\theta\,,\,L_Y(\theta)}
\]
and, therefore, $L_Y(\sigma)$ vanishes on $Y^{\bot}$ for any choice of $\alpha$.
As to $L_X(\sigma)$, relations
\[
L_X(\sigma)(Y)=-L_Y(\sigma)(X)=0\,,\quad L_X(\sigma)(X)=d\sigma(X,X)=0
\]
hold true for any $\alpha$, whereas the equation
\[
0=L_X(\sigma)(Z)=dY(\theta)(X,Z)+\alpha\, d\theta(X,Z)
\]
determines $\alpha$. Therefore, by choosing $\alpha$ in this way, one has that
$L_X(\sigma)$ vanishes on $Y^{\bot}$, and the same holds for $L_Z(\sigma)$.

Now we prove the second part of the statement. Assume $Y=Y_{f}$ (or
a multiple of it). Then
\[
Y^{\perp}=Y_{f}^{\perp}=\Span{Y_{f}\,,\,Y_{g}\,,\,Y_{h}}\,,
\]
with $g$ e $h$ being independent first integrals of $Y$, obviously in
involution with $f$. On the other hand,
\[
Y_{f}^{\perp}=\{\theta=L_{Y_{f}}(\theta)=0\}=\{\theta=df=0\}.
\]
Furthermore,
\[
L_{Y_{f}}(df)=L_{Y_{g}}(df)=L_{Y_{h}}(df)=0\,,
\]
i.e., by Lemma \ref{Lemma_Calcolo_Derivato},
\[
(Y_{f}^{\perp})^{\prime}=\{df=0\}\,,
\]
which is $4$-dimensional and integrable.

\smallskip Viceversa, let ($Y^{\perp})^{\prime}$ be $4$-dimensional and
integrable. Then there exists a function $f$ such that ($Y^{\perp})^{\prime
}=\{df=0\}$. Therefore
\[
Y^{\perp}=(Y^{\perp})^{\prime}\cap\mathcal{C}=\{\theta=df=0\}=\{\theta=L_{Y_{f}}
(\theta)=0\}=Y_{f}^{\perp}\,,
\]
which entails the parallelism between $Y$ and $Y_{f}$.
\end{proof}

\begin{proposition}\label{Campi_Tipo2}
Let $Y\in\mathcal{C}$. Then
\begin{enumerate}
\item[1)] $Y$ is a Hamiltonian vector field if and only if, in a suitable contact chart $(x^1,x^2,u,u_1,u_2)$,
$$
Y=\partial_{u_1}\,;
$$
\item[2)] $Y$ is of type $2$ but not Hamiltonian if and only if, in a suitable contact chart $(x^1,x^2,u,u_1,u_2)$,
$$
Y=\partial_{u_1}+u \partial_{u_2}\,;
$$
\item[3)] $Y$ is of type $3$ if and only if, in a suitable contact chart $(x^1,x^2,u,u_1,u_2)$.
\begin{equation*}
Y=a\partial_{u_1}+b\partial_{u_2}\,,\,\, \text{with}\,\, Y(b/a)\neq0;
\end{equation*}
\end{enumerate}
\end{proposition}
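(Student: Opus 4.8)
The statement classifies vector fields $Y\in\CC$ on a $5$-dimensional contact manifold according to their type, producing normal forms in suitable contact charts. The strategy is to reduce everything to the Darboux theorem for contact manifolds (Theorem~\ref{th.Darboux.contact}) and to the preceding Proposition~\ref{Ortogonale_Tipo_2}, which already characterises type-$2$ fields (and Hamiltonian ones among them) in terms of the derived distribution $(Y^\perp)'$. The key observation is that a nowhere-vanishing $Y\in\CC$ generates a line field $\Span{Y}\subset\CC$ whose symplectic orthogonal $Y^\perp=\{\theta=0,\ Y\lrcorner d\theta=0\}$ is a $3$-dimensional sub-distribution of $\CC$, and the type of $Y$ is read off from the derived flag $Y^\perp\subset(Y^\perp)'\subset\cdots$; so finding a normal form for $Y$ amounts to finding a contact chart adapted to this flag.

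\textbf{Case 1 (Hamiltonian).} One direction is trivial: in contact coordinates $(x^1,x^2,u,u_1,u_2)$ the field $\partial_{u_1}$ equals $Y_{df}$ with $f=x^1$ (compare \eqref{eq.hamiltonian.local}), hence is Hamiltonian, and it is evidently of type~$2$ by \eqref{eq.proprieta.Xf}. Conversely, if $Y$ is a multiple of some $Y_{df}$, then $f$ is (locally) a submersion and $(Y^\perp)'=\{df=0\}$ is integrable of dimension $4$ by Proposition~\ref{Ortogonale_Tipo_2}. The plan is to invoke Proposition~\ref{prop.int.one.way} applied to a suitable integrable sub-distribution: more concretely, use that $f$ can be completed, using the Darboux theorem with one coordinate prescribed (Remark~\ref{rem.choose.arb.coord} applied in the symplectification, as in the proof of Theorem~\ref{th.Darboux.contact}), to a contact chart in which $f=x^1$; then $Y_{df}=\partial_{u_1}$ by \eqref{eq.hamiltonian.local}, and $Y$ is a nonzero multiple of it. Since the statement is about the line field $\Span{Y}$ (cf.\ the remark after the definition of type), rescaling is harmless, so $Y=\partial_{u_1}$ in that chart.

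\textbf{Case 2 (type $2$, non-Hamiltonian) and Case 3 (type $3$).} For Case~2, Proposition~\ref{Ortogonale_Tipo_2} tells us $(Y^\perp)'$ is $4$-dimensional but \emph{not} integrable. The idea is to write $(Y^\perp)'=\ker\sigma$ for a $1$-form $\sigma$ with $\sigma\wedge d\sigma\neq0$ on the $4$-dimensional distribution — a ``contact-like'' structure in dimension $5$ restricted to a rank-$4$ distribution — and to choose coordinates in which $\sigma=du_2-u\,du_1$; then $Y$, being the characteristic direction of the flag, becomes $\partial_{u_1}+u\partial_{u_2}$ after rescaling. One checks directly that this $Y$ is type~$2$ (the list $\theta,L_Y\theta,L_Y^2\theta$ has rank~$2$: indeed $L_Y\theta=-u_1\,\cdots$, a short computation) and not Hamiltonian (its $(Y^\perp)'$ is non-integrable, by Proposition~\ref{Ortogonale_Tipo_2}). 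For Case~3, $Y$ has type~$3$, meaning $\theta,L_Y\theta,L_Y^2\theta$ are independent; equivalently $(Y^\perp)'$ fills all of $\CC$ (dimension~$4$ inside the $5$-manifold, but its further derived distribution is the full tangent bundle). Here one cannot normalise $Y$ to a constant field; the plan is instead to write $Y=a\partial_{u_1}+b\partial_{u_2}$ in an arbitrary contact chart (using that $Y\in\CC$ and $Y$ is, up to the horizontal part which a partial Legendre transformation removes, vertical) and to show that type~$3$ is equivalent to the non-vanishing of the invariant $Y(b/a)$, which measures precisely the failure of $(Y^\perp)'$ to be integrable ``to second order''.

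\textbf{Main obstacle.} The genuinely delicate point is Case~3: producing the adapted chart and identifying $Y(b/a)\neq0$ as the exact obstruction. Unlike Cases~1 and~2, where the derived flag stabilises quickly and Darboux-type arguments apply cleanly, in the type-$3$ case the full flag $Y^\perp\subset(Y^\perp)'\subset\CC\subset TM$ grows maximally fast, and the normal form is not rigid — only the \emph{condition} $Y(b/a)\neq0$ is invariant, not the particular functions $a,b$. I expect the argument to require: first, a partial Legendre transformation \eqref{eq.partial.legendre} to arrange $Y$ to be ``vertical'' (spanned by $\partial_{u_1},\partial_{u_2}$ components), which is legitimate since $Y\in\CC$; second, a careful bookkeeping of how $L_Y\theta$ and $L_Y^2\theta$ depend on $a,b$ to extract the scalar $Y(b/a)$; and third, verifying that this scalar is unchanged under the residual freedom in the chart. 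The computations, though routine in spirit, are where the real work lies, and they are carried out in detail following \cite{MR2503974}.
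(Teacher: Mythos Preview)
Your Case~1 is essentially the paper's argument. The genuine gap is in Case~3, and Case~2 is too vague to count as a plan.

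For Case~3, the step ``a partial Legendre transformation removes the horizontal part of $Y$'' is not valid. A partial Legendre transformation \eqref{eq.partial.legendre} merely swaps some $D_{x^\alpha}$ with the corresponding $\partial_{u_\alpha}$; it cannot turn an arbitrary $Y=\alpha^iD_{x^i}+\beta_i\partial_{u_i}\in\CC$ into a purely vertical field. The paper's argument rests on a structural fact you have not identified: when $Y$ has type~$3$, the $2$-dimensional distribution $\mathcal{D}_Y:=\{\theta=L_Y\theta=L_Y^2\theta=0\}=\Span{Y,X}$ (with $X\in Y^\perp$) has $Y$ as a \emph{characteristic symmetry}, so $\mathcal{D}_Y$ is integrable. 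Being integrable and Lagrangian, $\mathcal{D}_Y$ is spanned by two Hamiltonian fields $Y_f,Y_g$ with $f,g$ in involution; taking $f,g$ among the Darboux coordinates gives $Y=a\,\partial_{u_1}+b\,\partial_{u_2}$. A direct computation of $L_Y^2\theta$ then shows that type~$2$ (i.e.\ dependence of $L_Y^2\theta$ on $\theta,L_Y\theta$) is equivalent to $Y(b/a)=0$, whence type~$3$ is exactly $Y(b/a)\neq0$.

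For Case~2, starting from $(Y^\perp)'=\ker\sigma$ is right, but ``choose coordinates in which $\sigma=du_2-u\,du_1$'' does not by itself produce a \emph{contact} chart: you must arrange the original $\theta$ into Darboux form simultaneously, and you have not said how. The paper first normalises $\sigma$ (via the Darboux theorem for $1$-forms) as $\sigma=df-g\,dh$, the contact case $df-g\,dh-k\,dl$ being excluded since $\ker\sigma$ contains the $3$-plane $Y^\perp$ with $(Y^\perp)'=\ker\sigma$. Then $Y=Y_\sigma=Y_f-gY_h$; computing $L_Y^2\theta$ and imposing the type-$2$ condition yields an explicit expression of $\theta$ in terms of $df,dg,dh$, from which one reads off the contact chart $x^1=f$, $x^2=h$, $u=-g$, and in that chart $Y=\partial_{u_1}+u\,\partial_{u_2}$.
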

\begin{proof}
Let us prove statements $1)$ and $2)$. Let $(Y^{\bot})^{\prime}$ be locally described by the equation
$\sigma=0$ (see also Proposition \ref{Ortogonale_Tipo_2}). Then (see Section \ref{sec.non.int} and \cite[Section 3.2]{MR1300019}),
one can choose independent functions $f,g,h,k,l$ in such a way that, up to a non--zero
factor, one of the following three expressions holds:
\begin{equation}
\sigma=df \label{(4,5)_zera}
\end{equation}
or
\begin{equation}
\sigma=df-gdh \label{(4,5)_prima}
\end{equation}
or
\begin{equation}
\sigma=df-gdh-kdl. \label{(4,5)_seconda}
\end{equation}
Expression \eqref{(4,5)_seconda} can be ruled out because, otherwise,
$\{\sigma=0\}$ would be a contact structure containing a $3$-dimensional
distribution, $Y^{\bot}$, such that $(Y^{\bot})^{\prime}=\{\sigma=0\}$, which
is impossible. If (\ref{(4,5)_zera})
holds, $Y$ is a multiple of $Y_{f}$ (Proposition \ref{Ortogonale_Tipo_2}); on
the other hand, there exists a contact
transformation sending $f$ into the coordinate function $x^1$, so that, modulo a non--zero factor,
\[
Y=\partial_{u_1}.
\]
Finally, in the case \eqref{(4,5)_prima}, one has
\begin{equation}
\label{X_di_appoggio}Y=Y_{\sigma}=Y_{f}-gY_{h}.
\end{equation}
Hence,
\[
Y(\theta)=df-gdh-(f_{u}-gh_{u})\theta\,, \quad Y^{2}(\theta)=-Y_{h}(f)dg+Y_{g}(f-gh)dh.
\]
But, being $Y$ of type $2$, one gets
\begin{equation}
-Y_{h}(f)dg+Y_{g}(f-gh)dh=\lambda \theta+\mu(df-gdh) \label{bingo3}
\end{equation}
for some $\lambda,\mu\in C^{\infty}(M)$. As the contact form $\theta$ is determined
up to a non--zero factor, one may assume that $\lambda$ does not vanish. Hence, it
follows from (\ref{bingo3}) that
\[
\theta=-\frac{Y_{h}(f)}{\lambda} \left(  dg +\frac{\mu}{Y_{h}(f)}df + \frac
{Y_{g}(gh-f)-\mu g}{Y_{h}(f)}dh \right)  .
\]
So, the functions
\[
x^1=f,\quad x^2=h,\quad u=-g,\quad u_1 = \frac{\mu}{Y_{h}(f)},\quad u_2=\frac
{Y_{g}(gh-f)-\mu g}{Y_{h}(f)}\,,
\]
form a contact chart. Consequently, $Y$ from (\ref{X_di_appoggio}) takes the
form
\[
Y=Y_{x^1}+uY_{x^2}=\partial_{u_1}+u\partial_{u_2}.
\]

\medskip\noindent
Let us prove statement $3)$. If $Y$ is of type $3$, then it is characteristic for the distribution $\mathcal{D}_{Y}:=\{\theta=L_Y(\theta)=L_Y^{2}(\theta)=0\}=\Span{Y\,,\,X}$, for some $X\in Y^{\perp}$. Hence,
$\mathcal{D}_{Y}$ is integrable (because it contains $[X,Y]$) and,
consequently, it is spanned by two Hamiltonian vector fields $Y_f$ and $Y_g$ in involution, i.e., $Y=aY_f+bY_g$ for some functions $a,b\in C^\infty(M)$. In view of the involutivity, we can choose $f$ and $g$ as the coordinates $u_1$ and $u_2$ in a contact system. A straightforward computation shows that $L_Y^2(\theta)$ depends on $L_Y(\theta)$ and $Y$ (i.e., $Y$ is of type $2$) if and only if $b/a$ is a first integral of $Y$.
\end{proof}

We can split parabolic Monge--Amp\`ere equations into four mutually non--contactomorphic classes, according to the growth of the derived flag of their characteristic distribution.
More precisely, the derived flag of a Lagrangian distribution $\D$ can have only the following behavior (the subscripts below indicate the dimension of the distribution):
\begin{itemize}
\item (2,2): $\D_2=\mathcal{D}_2'$, i.e. $\D$ is completely integrable;
\item (2,3,4,4): $\D_2\subset\D'_3\subset\D''_4=\D'''_4$;
\item (2,3,4,5): $\D_2\subset\D'_3\subset\D''_4\subset\D'''_5= TM$;
\item (2,3,5): $\D_2\subset\D'_3\subset\D''_5=TM$.
\end{itemize}
The last case of the above list is called \emph{generic}, whereas the first three ones are called \emph{non--generic}.

\subsubsection{The case when the characteristic distribution $\D$ is of type $(2,3,5)$, i.e. $\D$ is generic}

A natural question is if any parabolic Monge--Amp\`ere equation is contactomorphic to a linear one. This is true if the equation is non--generic, i.e., if its characteristic distribution is not of type $(2,3,5)$ (see Sections \ref{sec.2.2}, \ref{sec.2.3.4.4}, \ref{sec.2.3.4.5} below). In the generic case, the fact that a parabolic Monge--Amp\`ere equation with $C^\infty$ coefficients is contactomorphic to a linear one remains conjectural  \cite{MR1334205,MR2503974}, whereas in the analytic case the conjecture has been proved in \cite[Section 3, Theorem 1]{MR1334205} and in \cite[Theorem 47]{MR2503974} by showing the existence of a vector field of type less than $4$ (see Theorem \ref{normal_form_generic_D} below).

\begin{theorem}\label{normal_form_generic_D}
If a Lagrangian distribution $\mathcal{D}$
contains a vector field of type less than $4$, then there exist contact local coordinates on $M$ in which $\mathcal{D}$ takes the form
\begin{equation}\label{eq_normal_form_generic_D}
\D=\langle\partial_{u_1}+a\partial_{u_2}\,,\,
D_{x^2}-aD_{x^1}+b\partial_{u_2}\rangle\,, \quad a,b\in C^\infty(M)\,.
\end{equation}
\end{theorem}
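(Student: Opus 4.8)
The plan is to exploit the normal forms for vector fields in $\CC$ obtained in Proposition \ref{Campi_Tipo2}. Let $\D$ be a Lagrangian (i.e. two--dimensional Legendrian) sub--distribution of $\CC$ containing a vector field $Y$ of type less than $4$, that is, of type $2$ or $3$. In either case Proposition \ref{Campi_Tipo2} provides a contact chart $(x^1,x^2,u,u_1,u_2)$ in which $Y$ has a very explicit form: in the type--$2$ Hamiltonian case $Y=\partial_{u_1}$, in the type--$2$ non--Hamiltonian case $Y=\partial_{u_1}+u\,\partial_{u_2}$, and in the type--$3$ case $Y=a\partial_{u_1}+b\partial_{u_2}$ with $Y(b/a)\neq 0$. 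The first step is therefore to fix such a chart and write $\D=\Span{Y,Z}$ for some second vector field $Z\in\CC$, which a priori is a combination $Z=\alpha D_{x^1}+\beta D_{x^2}+\gamma\partial_{u_1}+\delta\partial_{u_2}$ of the frame \eqref{eq.contact.local} of $\CC$.

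\textbf{Key steps.} First I would observe that, modulo adding a multiple of $Y$ and rescaling, $Z$ can be normalised so that its ``horizontal part'' $\alpha D_{x^1}+\beta D_{x^2}$ is non--zero (if it were zero, $\D$ would be contained in the vertical distribution $\Span{\partial_{u_1},\partial_{u_2}}$, which is integrable, contradicting genericity/the hypothesis that $\D$ contains a vector field whose derived flag grows). After a further contact transformation preserving the shape of $Y$ — essentially a rotation in the $(x^1,x^2)$-plane together with the induced action on $(u_1,u_2)$ — one can arrange $Z=D_{x^2}+(\textrm{vertical})$, i.e. eliminate the $D_{x^1}$ component. Write the remaining vertical part as $cD_{x^1}+$ wait — more carefully, after normalisation $Z=D_{x^2}-aD_{x^1}+c\,\partial_{u_1}+b\,\partial_{u_2}$ for functions $a,b,c$; the coefficient $-a$ in front of $D_{x^1}$ must match the coefficient $a$ appearing in $Y=\partial_{u_1}+a\partial_{u_2}$ because the \emph{Lagrangian} (Legendrian) condition $d\theta(Y,Z)=0$ forces exactly this relation — this is the crucial algebraic constraint. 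Indeed, computing $d\theta(Y,Z)$ with $\theta=du-u_1dx^1-u_2dx^2$ and $d\theta=dx^1\wedge du_1+dx^2\wedge du_2$ yields a linear relation among the coefficients which, after the normalisation, collapses to $c=0$ and identifies the $D_{x^1}$-coefficient of $Z$ with $-a$. The second step is thus to carry out this $d\theta$-computation and read off that $\D$ takes the form \eqref{eq_normal_form_generic_D}. Finally, one checks that the three sub--cases of Proposition \ref{Campi_Tipo2} are all captured: in the Hamiltonian case $a\equiv 0$, in the non--Hamiltonian type--$2$ case $a$ is (up to the contact transformation) the coordinate $u$ shifted appropriately, and in the type--$3$ case $a$ is the function with $Y(b/a)\neq 0$; in each instance one verifies that no further obstruction prevents the stated normal form.

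\textbf{Main obstacle.} The delicate point is the bookkeeping of contact transformations: one must check that the contact changes of coordinates used to simplify $Z$ (the rotation in $(x^1,x^2)$ and the subsequent shears) can be chosen to \emph{preserve} the normal form of $Y$ already achieved via Proposition \ref{Campi_Tipo2}, rather than destroying it. Equivalently, one needs the stabiliser inside the contact pseudogroup of the model vector field $\partial_{u_1}+a\partial_{u_2}$ to act transitively enough on the possible completions $Z$ to a Legendrian pair. I expect this to require a careful but essentially routine analysis of the prolongation formula (Conclusion \ref{conConlcusioneSollevamento} and its $n=2$ analogue) showing which point/contact transformations fix $x^1$ (or the relevant function $f$) and how they move the remaining coordinates; once that is in hand, the Legendrian condition $d\theta(Y,Z)=0$ does the rest. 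A secondary subtlety is ensuring the horizontal part of $Z$ is genuinely non--vanishing at the point under consideration, which uses that $\D$ is not the (integrable) vertical distribution — this follows because $Y$ has type $\geq 2$ and $\D$ is assumed to contain a low--type field with non--trivial derived flag, consistently with the classification of Lagrangian distributions recalled just before the theorem.
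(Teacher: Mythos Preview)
Your proposal reaches the right destination but by a needlessly circuitous route, and the ``main obstacle'' you worry about is in fact a non--issue. The paper's proof is two lines: once Proposition~\ref{Campi_Tipo2} puts the low--type field (up to a non--vanishing factor) in the form $Y=\partial_{u_1}+a\partial_{u_2}$, one simply completes $\D$ by \emph{any} vector field $Z\in Y^{\perp}$ linearly independent of $Y$. No further contact transformation is required.

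Concretely: since $\D$ is Lagrangian one has $\D=\D^{\perp}\subset Y^{\perp}$, and with $d\theta=dx^1\wedge du_1+dx^2\wedge du_2$ one computes $Y\lrcorner d\theta=-(dx^1+a\,dx^2)$, so
\[
Y^{\perp}=\Span{D_{x^2}-aD_{x^1},\ \partial_{u_1},\ \partial_{u_2}}\, .
\]
Thus any $Z\in\D$ is of the form $\alpha(D_{x^2}-aD_{x^1})+\beta\partial_{u_1}+\gamma\partial_{u_2}$; assuming the horizontal part is non--zero (i.e.\ $\alpha\neq 0$), rescale by $\alpha^{-1}$ and subtract $\beta Y$ to obtain $Z=D_{x^2}-aD_{x^1}+b\partial_{u_2}$. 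Done.

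Two specific corrections to your write--up. First, the ``rotation in the $(x^1,x^2)$--plane'' you propose is superfluous: orthogonality \emph{already} forces the $D_{x^1}$--coefficient of $Z$ to be $-a$ times its $D_{x^2}$--coefficient, so there is nothing to rotate away and no need to worry about preserving the normal form of $Y$. Second, you claim that $d\theta(Y,Z)=0$ ``collapses to $c=0$'': it does not. The orthogonality condition is the single equation $\alpha+a\beta=0$ on the horizontal part; the vanishing of the $\partial_{u_1}$--coefficient of $Z$ comes instead from the freedom to subtract a multiple of $Y$, which is pure linear algebra in the fibre.

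Your remark about the horizontal part of $Z$ being non--zero is legitimate but your justification is not quite right: if $\D=\Span{\partial_{u_1},\partial_{u_2}}$ is the vertical (integrable) distribution, then $Y=\partial_{u_1}$ is Hamiltonian of type~$2$, so the hypothesis of the theorem is satisfied yet $\alpha=0$. The paper handles this integrable case separately via a Legendre transformation (Section~\ref{sec.2.2}); the present theorem is really aimed at the non--integrable cases, where the horizontal component is automatic.
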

\begin{proof}
In view of Proposition \ref{Campi_Tipo2}, a vector field $X\in\mathcal{C}$ of type less than $4$ can be put, up to a non--vanishing factor, in the form $\partial_{u_1}+a\partial_{u_2}$. Then it is enough to choose a vector field that is orthogonal to $X$ to obtain a basis for $\D$.
\end{proof}
Taking into account Definition \ref{def.E.D.2.var}, we realize that the parabolic Monge--Amp\`ere equation admitting \eqref{eq_normal_form_generic_D} as its characteristic distribution is
\begin{equation}\label{eq.PMAE.generic}
u_{22}-2au_{12}+a^{2}u_{11}=b\,,\quad a,b\in C^{\infty}(M)\,.
\end{equation}

\subsubsection{The case when the characteristic distribution $\D$ is of type $(2,2)$, i.e., $\D$ is completely integrable}\label{sec.2.2}

Since $\D$ is completely integrable, in view of Proposition \ref{prop.int.one.way} there exists a system of coordinates $(x^1,x^2,u,u_1,u_2)$ such that $\D=\Span{\partial_{u_1}\,,\partial_{u_2}}$. This choice of coordinates leads to no Monge--Amp\`ere equation as $\D$, in this specific coordinate system, turns out to be completely vertical. We overcome this problem by considering partial (and total) Legendre transformations (see \eqref{eq.partial.legendre}). Indeed, if we apply one of these transformations, we realize that, in the new coordinates that for simplicity we keep denoting by $(x^1,x^2,u,u_1,u_2)$, $\D$ can assume one of the following forms:
\begin{equation}\label{eq.3.distr}
\Span{\partial_{u_1}\,,D_{x^2}}\,,\quad \Span{D_{x^1}\,,\partial_{u_2}}\,,\quad \Span{D_{x^1}\,,D_{x^2}}\,.
\end{equation}
In other words, the distribution $\D=\Span{\partial_{u_1}\,,\partial_{u_2}}$ is contactomorphic to one of the distributions \eqref{eq.3.distr}.  In view of Definition \ref{def.E.D.2.var}, the differential equation associated to the first (resp., second and third) distribution from \eqref{eq.3.distr} is $u_{22}=0$ (resp., $u_{11}=0$ and $u_{11}u_{22}-u_{12}^2=0$). Since $\D$ is Lagrangian the corresponding Monge--Amp\`ere equation is parabolic. Recall that two parabolic Monge--Amp\`ere equations are contactomorphic if and only if so are their characteristic distributions (see the beginning of Section \ref{secNormFormMAEs}).
\begin{corollary}
The parabolic Monge--Amp\`ere equations $u_{11}u_{22}-u_{12}^2=0$ and $u_{11}=0$ are contactomorphic.
\end{corollary}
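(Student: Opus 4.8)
The plan is to reduce the statement to the equivalence, recalled at the beginning of Section~\ref{secNormFormMAEs}, that two parabolic Monge--Amp\`ere equations are contactomorphic if and only if their characteristic distributions are. By Definition~\ref{def.E.D.2.var} and the determinantal description \eqref{eqDeterminanteGasante}, the equation $u_{11}u_{22}-u_{12}^2=0$ is $\E_{\D}$ for $\D=\langle D_{x^1},D_{x^2}\rangle$, whereas $u_{11}=0$ is $\E_{\D'}$ for $\D'=\langle D_{x^1},\partial_{u_2}\rangle$; the verification here is the elementary rank computation showing that a Lagrangian plane $L(U)$ meets $\langle D_{x^1},\partial_{u_2}\rangle$ nontrivially exactly when $u_{11}=0$, and meets $\langle D_{x^1},D_{x^2}\rangle$ nontrivially exactly when $\det U=0$. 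Hence it suffices to produce one contactomorphism of $M$ carrying $\D$ onto $\D'$.

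First I would note that both $\D=\langle D_{x^1},D_{x^2}\rangle$ and $\D'=\langle D_{x^1},\partial_{u_2}\rangle$ are contactomorphic to the purely vertical distribution $\langle\partial_{u_1},\partial_{u_2}\rangle$, which is completely integrable and Lagrangian in $\CC$ (so all three are Legendrian and the associated Monge--Amp\`ere equations are parabolic, as the cited equivalence requires). This is exactly the content of the list \eqref{eq.3.distr}: the total Legendre transformation \eqref{eq.Legendre}, whose push-forward sends $\partial_{u_i}\mapsto D_{\tilde x^i}$ (Section~\ref{secLegTrans}), turns $\langle\partial_{u_1},\partial_{u_2}\rangle$ into $\langle D_{x^1},D_{x^2}\rangle$, while the partial Legendre transformation \eqref{eq.partial.legendre} in the single index $1$ turns $\langle\partial_{u_1},\partial_{u_2}\rangle$ into $\langle D_{x^1},\partial_{u_2}\rangle$. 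Composing the inverse of the former with the latter gives a contact transformation $\Phi$ of $M$ with $\Phi_*\D=\D'$.

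Finally, I would invoke the naturality of the assignment $\D\mapsto\E_{\D}$ of \eqref{eq.ED.prima}: a contactomorphism of $M$ mapping $\D$ to $\D'$ prolongs to a contact transformation of $M^{(1)}$ mapping $\E_{\D}$ onto $\E_{\D'}$. Applied to the $\Phi$ just constructed, this shows that $u_{11}u_{22}-u_{12}^2=0$ and $u_{11}=0$ are contactomorphic, which is the assertion. I do not expect a real obstacle: the argument only assembles facts already established, and the sole computational content is the short rank check identifying which of the three distributions in \eqref{eq.3.distr} produces which Monge--Amp\`ere equation.
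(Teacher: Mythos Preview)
Your proposal is correct and follows essentially the same approach as the paper: both identify the characteristic Lagrangian distributions $\langle D_{x^1},D_{x^2}\rangle$ and $\langle D_{x^1},\partial_{u_2}\rangle$ of the two equations (as in \eqref{eq.3.distr}), observe that they are related to $\langle\partial_{u_1},\partial_{u_2}\rangle$ (and hence to each other) by Legendre transformations, and then invoke the equivalence at the start of Section~\ref{secNormFormMAEs} that parabolic Monge--Amp\`ere equations are contactomorphic iff their characteristic distributions are. Your explicit rank check and the composition of Legendre maps make the argument slightly more self-contained, but the content is the same.
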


\subsubsection{The case when the characteristic distribution $\D$ is of type $(2,3,4,4)$}\label{sec.2.3.4.4}

Now, in view of Proposition \ref{Ortogonale_Tipo_2}, $\D'^\perp$ is $1$-dimensional and Hamiltonian. Let $X\in\mathcal{C}$ such that $\D'^\perp=\langle X\rangle$. Then
$$
\D=\langle X\,,\,Y\rangle \,, \quad \D'=\langle X\,,\,Y\,,\,[X,Y]\rangle\,, \quad Y\in X^\perp\,.
$$
Indeed,
$$
d\theta(X,[X,Y])=X(\theta)([X,Y])=X(X(\theta)(Y))-X^{2}(\theta)(Y)=0\,,
$$
in view of the fact $Y\in X^\perp$ implies $X(\theta)(Y)=0$ and that, if $X$ is a Hamiltonian vector field (in particular of type $2$), then $X^{2}(\theta)(Y)=0$.

Now, there exists a contact system of coordinates $(x^1,x^2,u,u_1,u_2)$ where the Hamiltonian vector field $X$ takes the form $\partial_{u_1}$, see Proposition \ref{Campi_Tipo2}. Thus, $\D$ is spanned by $\partial_{u_1}$ and a vector field orthogonal to it. Since
$
\langle \partial_{u_1}\rangle^\perp = \langle D_{x^2}\,,\, \partial_{u_1}\,,\, \partial_{u_2} \rangle\,,
$
without loss of generality,
\begin{equation}\label{eq.D.2.3.4.4}
\D=\langle \partial_{u_1}\,,\,  D_{x^2} + b\partial_{u_2} \rangle\,, \quad b\in C^\infty(M)\,,\,\, b_{u_1}\neq 0\,.
\end{equation}
The last condition in \eqref{eq.D.2.3.4.4} assures that $\D$ is not integrable.
The parabolic Monge--Amp\`ere equation having \eqref{eq.D.2.3.4.4} as its characteristic distribution is
\eqref{eq.PMAE.generic} with $a=0$.

\subsubsection{The case when the characteristic distribution $\D$ is of type $(2,3,4,5)$}\label{sec.2.3.4.5}

We can follow the same reasonings as in Section \ref{sec.2.3.4.4}, taking into account that now $\D'=\langle X \rangle$, where $X\in\mathcal{C}$ is of type $2$ but not Hamiltonian. So (see again Proposition \ref{Campi_Tipo2}), $X$ can be put in the form $\partial_{u_1}+u\partial_{u_2}$. By following the lines of Section \ref{sec.2.3.4.4}, we arrive to the normal form
\begin{equation}\label{eq.D.2.3.4.5}
\D=\langle\partial_{u_1}+u\partial_{u_2},\,D_{x^2}
-uD_{x^1}+b\partial_{u_2}\rangle\,, \,\, b\in C^{\infty}(M)\,, \,\,
b_{u_1}+ub_{u_2}+uu_1-u_2\neq0\,.
\end{equation}
The last condition in \eqref{eq.D.2.3.4.5} assures that the distribution $\D$ is of type $(2,3,4,5)$. Note that, if $b_{u_1}+ub_{u_2}+uu_1-u_2=0$, then the distribution $\D$ would be integrable.
The parabolic Monge--Amp\`ere equation having \eqref{eq.D.2.3.4.5} as its characteristic distribution is
\eqref{eq.PMAE.generic} with $a=u$.

\section{Multidimensional Monge--Amp\`ere equations of Goursat type and construction of a complete integral}\label{sec.multidim.MAE}

The general form of a Monge--Amp\`ere equation in $n$ independent variables is
\begin{equation}\label{eq.mult.MAE.minors}
M_n+M_{n-1}+\dots + M_0=0\,,
\end{equation}
where $M_k$ is a linear combination (with functions of $x^i,u,u_i$ as coefficients) of all $k\times k$ minors of the $n\times n$ Hessian matrix of $u=u(x^1,\dots,x^n)$. Observe that for $n=2$ we recover the definition \eqref{eqDefMAE}.
According to our general definition of a $2\Nd$ order PDE (see Definition \ref{def2ndOrdPDE}), equation \eqref{eq.mult.MAE.minors} must be interpreted as a hypersurface in $M^{(1)}$, where $M$ is a $(2n+1)$--dimensional contact manifold. It turns out that the simplest way to make \eqref{eq.mult.MAE.minors} intrinsic is by means of an $n$--form $\Omega$ on $M$. More precisely, given an $n$--form $\Omega$ on M, the subset
\begin{equation}\label{eq.MAE.intr.Omega}
\mathcal{E}_{\Omega}:=\{L\in M^{(1)}
\,\,\big|\,\,\Omega|_{L}=0\}\,,\quad\Omega\in\Lambda^n(M)
\end{equation}
of $M^{(1)}$ is a hypersurface whose local coordinate expression is given by \eqref{eq.mult.MAE.minors}.

The correspondence between Monge--Amp\`ere equations and $n$--forms on $M$ is not one--to--one. Indeed, two $n$--forms $\Omega, \Omega'$ define the same equation
$\mathcal{E}_{\Omega}= \mathcal{E}_{\Omega'}$ if and only if, up to a non--vanishing factor,
\begin{equation*}
\Omega'=\Omega+\alpha\wedge d\theta+\beta\wedge \theta
 \text{\quad for some $\alpha\in\Lambda^{n-2}(M),\,\,\beta\in\Lambda^{n-1}(M)$}.
\end{equation*}
Thus, the aforementioned correspondence is one--to--one (up to a non--vanishing factor) if we factor our the differential forms contained in the differential ideal generated by an arbitrarily chosen contact form. The so--obtained forms are also called \emph{effective} \cite{MR2352610}. It turns out that there is a special class of Monge--Amp\`ere equations, namely, those associated to decomposable $n$--forms (up to the contact differential ideal). These Monge--Amp\`ere equations represent precisely the multidimensional analogues of Definition \ref{def.E.D.2.var}.
\begin{definition}\label{def.E.D.n.var}
Let $\D\subset\CC$ be a $n$--dimensional sub--distribution of $\CC$. Then the sub--bundle
\begin{equation*}
\E_\D:=\{  L\in M^{(1)}_p\mid L\cap\D_p\neq 0\}\subset M^{(1)}
\end{equation*}
is called the \emph{Goursat--type Monge--Amp\`ere equation} (in $n$ independent variables) associated to $\D$.
\end{definition}
Locally, in a system of contact coordinates $(x^i,u,u_i,u_{ij})$, if the distribution $\D$ is given by
$$
\D=\Span{\partial_{x^i}+u_i\partial_u+f_{ij}\partial_{u_j}}_{i=1,\dots, n}\,,\quad f_{ij}\in C^\infty(M^{(1)})\,,
$$
then the corresponding a Goursat--type Monge--Amp\`ere equation $\E_\D$ is given by
$$
\det\big(u_{ij}-f_{ij}\big)=0\,.
$$
The PDE $\E_D$ will be called a \emph{parabolic Goursat--type Monge--Amp\`ere equation} if $\D$ is a Lagrangian distribution. This, locally, implies that $f_{ij}=f_{ji}$.

Now we link the intrinsic Definition \ref{def.E.D.n.var} with the traditional coordinate--wise definition \eqref{eq.mult.MAE.minors}.
\begin{proposition}\label{cor.decomposable}
The equation $\E_\D$ (see Definition \ref{def.E.D.n.var}) is the Monge--Amp\`ere equation associated, according to \eqref{eq.MAE.intr.Omega}, with  the $n$--form
\[
\Omega=L_{Y_{1}}(\theta)\wedge\dots\wedge
L_{Y_{n}}(\theta),
\]
where $Y_{i}$ are vector fields generating the orthogonal
distribution $\D^{\perp}$. The converse is also true.
\end{proposition}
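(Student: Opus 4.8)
\textbf{Proof proposal for Proposition \ref{cor.decomposable}.}

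The plan is to work in a system of Darboux coordinates $(x^i,u,u_i)$ on $M$, exploit the explicit form \eqref{eq.hamiltonian.local} of Hamiltonian vector fields together with the Lie-derivative formula \eqref{eq.proprieta.Xf}, and reduce the claimed identity of hypersurfaces in $M^{(1)}$ to a linear-algebra fact about determinants. First I would choose, by Proposition \ref{prop.int.one.way} applied to $\D^\perp$ (or simply by a contact chart adapted to $\D$ as in \eqref{eq.D.2.dim} extended to $n$ variables), coordinates in which $\D=\Span{D_{x^i}+f_{ij}\partial_{u_j}\mid i=1,\dots,n}$. The orthogonal $\D^\perp$ is then spanned by $n$ vector fields $Y_1,\dots,Y_n\in\CC$; since $\D$ is the common kernel of $\theta$ and of the $1$-forms $Y_j\lrcorner d\theta$ restricted to $\CC$, and since $\CC\ni Y$ satisfies $L_Y(\theta)=Y\lrcorner d\theta$ (mod $\theta$) by the Cartan formula as in Remark \ref{re.no.internal}, we get $\D=\ker\theta\cap\bigcap_{j}\ker L_{Y_j}(\theta)$. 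Dually, a Lagrangian plane $L=L(U)\in M^{(1)}_p$, written as the graph $\Span{D_{x^i}+u_{ij}\partial_{u_j}}$ in the affine neighborhood \eqref{eqAffNeighCiPi}, meets $\D_p$ nontrivially if and only if the system expressing a common vector has a solution, which (as in \eqref{eqDeterminanteGasante}) happens exactly when $\det(u_{ij}-f_{ij})=0$. So the left-hand description of $\E_\D$ in the statement is the hypersurface $\{\det(u_{ij}-f_{ij})=0\}$.

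Next I would compute $\Omega|_L$ where $\Omega=L_{Y_1}(\theta)\wedge\cdots\wedge L_{Y_n}(\theta)$ and $L=L(U)$. Restricting a $1$-form to $L$ means pulling it back along the parametrization $x^i\mapsto(x^i,\dots,u_{ij})$, i.e. evaluating it on the frame $D_{x^k}+u_{kl}\partial_{u_l}$, $k=1,\dots,n$. Using \eqref{eq.proprieta.Xf}, each $L_{Y_j}(\theta)$ equals $dg^j$ mod $\theta$ for the Hamiltonian generator $g^j$ of $Y_j$, and a direct computation with \eqref{eq.hamiltonian.local} shows that $L_{Y_j}(\theta)$ evaluated on $D_{x^k}+u_{kl}\partial_{u_l}$ produces, up to the overall nonvanishing conformal factors and up to terms in the contact ideal (which vanish on $L$ since $\theta|_L=0$), the entry $u_{kj}-f_{kj}$ of the matrix $U-F$. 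Hence $\Omega|_L$ is, up to a nowhere-zero scalar, $\det(u_{kj}-f_{kj})\,dx^1\wedge\cdots\wedge dx^n$, so $\Omega|_L=0$ is equivalent to $\det(U-F)=0$. This is exactly the local equation \eqref{eq.mult.MAE.minors} of $\E_\D$, proving $\E_\D=\E_\Omega$ with the stated $\Omega$.

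For the converse, given a decomposable effective $n$-form $\Omega=\eta^1\wedge\cdots\wedge\eta^n$ with each $\eta^j\in\Lambda^1(M)$, I would first note that modulo the contact ideal one may assume each $\eta^j$ annihilates the Reeb direction and is supported on $\CC$; then $\omega=d\theta|_\CC$ gives an isomorphism $\CC\simeq\CC^*$ (Remark \ref{rem.ConfSympSpace}), and I set $Y_j:=\omega^{-1}(\eta^j|_\CC)$. The $n$-plane $\D:=\bigcap_j\ker\eta^j\cap\CC$ is then $n$-dimensional and equals $\Span{Y_1,\dots,Y_n}^\perp$, and by the first half of the argument $\E_\Omega=\E_\D$. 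The main obstacle I anticipate is the careful bookkeeping in the second paragraph: one must verify that the ambiguity of $\Omega$ modulo $\alpha\wedge d\theta+\beta\wedge\theta$ and the choice of conformal factor $\theta\mapsto h\theta$ do not affect the zero locus $\{\Omega|_L=0\}$, and that the $f_{ij}$ read off from $\D$ match those read off from $\Omega$ up to a symmetric gauge (which, since only the determinant $\det(U-F)$ enters, changes $F$ only by a matrix whose effect is absorbed, exactly as the substitution $f_{12}\leftrightarrow f_{21}$ in Proposition \ref{prop.D.D.ort}). Once this invariance is checked, the rest is the determinant identity already used in \eqref{eqDeterminanteGasante}.
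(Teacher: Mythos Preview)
Your argument is correct and shares the paper's key insight, namely that $\D=\ker\theta\cap\bigcap_j\ker L_{Y_j}(\theta)$ because $L_{Y_j}(\theta)=Y_j\lrcorner d\theta$ for $Y_j\in\CC$. The paper's proof, however, stops here and invokes ``simple considerations of linear algebra'': since any Lagrangian $L$ already lies in $\ker\theta$, we have $L\cap\D\neq 0$ iff some nonzero $v\in L$ is killed by all the $1$--forms $L_{Y_j}(\theta)$, iff the $n$ restrictions $L_{Y_j}(\theta)|_L$ (which are $1$--forms on the $n$--dimensional space $L$) are linearly dependent, iff their wedge $\Omega|_L$ vanishes. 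This is coordinate--free and immediate.

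You instead expand both sides in Darboux coordinates, identify $\E_\D$ with $\{\det(U-F)=0\}$, compute $L_{Y_i}(\theta)|_L=(u_{ki}-f_{ki})dx^k$, and match determinants. That works, and it has the virtue of making the local form of the equation completely explicit; but the detour through coordinates and the final ``bookkeeping obstacle'' you flag (invariance modulo the contact ideal, the $f_{12}\leftrightarrow f_{21}$ gauge) is unnecessary once you notice the dependence argument above. In particular, no choice of $\theta$ or of basis $Y_j$ matters: replacing $\theta$ by $h\theta$ multiplies each $L_{Y_j}(\theta)|_L$ by $h$ (since $\theta|_L=0$), and a change of basis $Y_j\mapsto A^k_jY_k$ multiplies $\Omega$ by $\det A$; both leave the zero locus unchanged. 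Your converse is fine and is again the coordinate unpacking of the same linear--algebra fact read backwards.
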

\begin{proof}
Note that
$\D\subset \mathcal{C}$ is defined  by the equations
$\{\theta=0\,,\,\,L_{Y_i}(\theta)=0\}$
where the vector fields $Y_{i}$ generate $\mathcal{D}^{\perp}$. The remaining part of the claim follows by simple considerations of linear algebra.
\end{proof}

Recall Definition \ref{def.char.tutto}.

\begin{proposition}
Let $\E$ be a $2\Nd$ order PDE on a $(2n+1)$--dimensional contact manifold. Then $\E$ admits an integrable $n$--dimensional distribution as a (field of) characteristic varieties if and only if it is contactomorphic to $u_{x^1x^1}=0$.
\end{proposition}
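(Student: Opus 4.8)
The plan is to prove both implications, the forward one being the substantive part. Assume first that $\E$ admits an integrable $n$--dimensional distribution $\D$ as its field of characteristic varieties. By Definition \ref{def.char.tutto} and the correspondence \eqref{eq.correspondence.gianni}, at each point $L\in\E$ the characteristic variety is the union of the characteristic hyperplanes of $L$; saying that this variety \emph{is} an $n$--dimensional distribution $\D$ (i.e.\ is as large as possible, equal to all of $\CC$ modulo nothing — more precisely that the characteristic cone degenerates to a single repeated hyperplane, or rather that the symbol has a single $n$--fold zero) is a strong parabolicity-type condition. The key observation is that such an $\E$ must in fact be a parabolic Goursat--type Monge--Amp\`ere equation $\E_\D$ in the sense of Definition \ref{def.E.D.n.var}, where $\D$ is Lagrangian: indeed, the symbol \eqref{eq.symb.Gianni} of $\E$ at $L$, being a quadratic form in $\xi$ whose zero locus is a single hyperplane $\ell$ counted with multiplicity, is (up to scale) a perfect square $(\sum_i c_i\xi_i)^2$, and letting $L$ vary over the fibre of $\E$ the annihilators of these hyperplanes sweep out exactly $\D$; reconstructing $\E$ from its characteristics as in the proof of Theorem \ref{Th.non.elliptic.MAEs} and Remark \ref{rem.reconstruct} then identifies $\E$ with $\E_\D$ for this $\D$, and the single-repeated-root condition forces $\D=\D^\perp$, i.e.\ $\D$ Lagrangian.

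Once $\E=\E_\D$ with $\D$ Lagrangian \emph{and integrable}, the conclusion follows from Proposition \ref{prop.int.one.way}: since $\D$ is an $n$--dimensional integrable sub--distribution of $\CC$, there is a contact chart $(x^i,u,u_i)$ in which $\D=\Span{\partial_{u_1},\ldots,\partial_{u_n}}$. Now I must translate "$L\cap\D_p\neq 0$" into the coordinate equation in the extended chart $(x^i,u,u_i,u_{ij})$ of $M^{(1)}$. Using the description of a generic Lagrangian plane as $L(U)=\Span{D_{x^i}+u_{ij}\partial_{u_j}}$ (Remark \ref{rem.coordinates.M1}), the condition that $L(U)$ meets $\Span{\partial_{u_1},\ldots,\partial_{u_n}}$ nontrivially is exactly that the $n\times 2n$ matrix formed by the generators of $L(U)$ together with $\partial_{u_1},\ldots,\partial_{u_n}$ drops rank, which is $\det(u_{ij})=0$, i.e.\ $\det\mathrm{Hess}\,u=0$. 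This is \emph{not} of the form $u_{x^1x^1}=0$ directly, so — exactly as in Section \ref{sec.2.2} for $n=2$ — one applies a (partial or total) Legendre transformation \eqref{eq.partial.legendre} to move some of the vertical directions to horizontal ones; after an appropriate such transformation $\D$ becomes $\Span{D_{x^1},\partial_{u_2},\ldots,\partial_{u_n}}$ (say), and the associated Goursat equation becomes $u_{x^1x^1}=0$. This gives the contactomorphism claimed.

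For the converse, suppose $\E$ is contactomorphic to $\{u_{x^1x^1}=0\}\subset M^{(1)}$. It suffices, since the characteristic variety is a contact invariant (characteristics are defined intrinsically via \eqref{eq.correspondence.gianni} and Definition \ref{def.char.tutto}), to exhibit an integrable $n$--dimensional distribution $\D$ serving as the field of characteristic varieties for $u_{x^1x^1}=0$ itself. Here $F=u_{11}$, so $F_{u_{ij}}=\delta_{i1}\delta_{j1}$ and the symbol \eqref{eq.symb.Gianni} is $\xi_1^2$, whose zero locus is the single hyperplane $\{\xi_1=0\}$ of $L$ with multiplicity $2$; via the correspondence \eqref{eq.correspondence.gianni} its annihilator direction is a fixed rank--$1$ line, and as $L$ ranges over the fibre of $\E$ these assemble into the distribution $\D=\Span{\partial_{u_2},\partial_{u_3},\ldots,\partial_{u_n},D_{x^1}}$ (the one producing $u_{x^1x^1}=0$ as its Goursat equation), which is manifestly integrable: $[\partial_{u_a},\partial_{u_b}]=0$, $[D_{x^1},\partial_{u_a}]=0$ for $a,b\geq 2$. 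Hence $\E$ admits an integrable $n$--dimensional field of characteristic varieties, and transporting $\D$ by the contactomorphism gives the same for the original $\E$.

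The main obstacle I anticipate is the first step of the forward direction: rigorously arguing that "the characteristic variety is an $n$--dimensional distribution" forces $\E$ to be parabolic Goursat--type with \emph{Lagrangian} $\D$. One has to be careful about what "characteristic variety equals $\D$" means — the characteristic variety at $L$ is a subvariety of $\p L^*$ (a hypersurface of degree $n$ for general second-order PDEs, here expected to be a single hyperplane with multiplicity $n$, not $2$), and its "union over directions" only fills an $n$--dimensional distribution in $\CC$ when the symbol degenerates maximally. Making precise the passage from this degeneracy to the decomposability of the associated effective $n$--form $\Omega$ (cf.\ Proposition \ref{cor.decomposable}), and thence to the Goursat structure, is the delicate technical heart; the rest is the Legendre-transformation normalization already illustrated in Section \ref{sec.2.2} and an essentially formal application of Proposition \ref{prop.int.one.way}.
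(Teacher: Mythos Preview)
Your proof is largely correct in its second half and in the converse, but you are working much harder than the paper does in the forward direction, and you introduce one real error along the way.

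The paper's proof is essentially a one--liner. In the context of Section~\ref{sec.multidim.MAE}, the hypothesis ``$\E$ admits an integrable $n$--dimensional distribution $\D$ as a field of characteristic varieties'' is already taken to mean that $\E=\E_\D$ is Goursat--type with $\D$ Lagrangian and integrable; there is no separate argument that an arbitrary PDE with degenerate symbol must be Goursat. Given this reading, Proposition~\ref{prop.int.one.way} says all integrable $n$--dimensional sub--distributions of $\CC$ are mutually contactomorphic, so $\D$ may be put in the form $\langle D_{x^1},\partial_{u_2},\ldots,\partial_{u_n}\rangle$, for which $\E_\D$ is visibly $u_{11}=0$. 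That is the whole proof. Your route through $\det(u_{ij})=0$ followed by a Legendre transformation reaches the same destination via an extra step; the paper simply picks the convenient representative of the contactomorphism class of $\D$ at the outset.

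Your attempt to \emph{deduce} the Goursat structure from the symbol is more ambitious than what is being claimed, and it contains a slip: you write that the characteristic variety at $L$ is ``a hypersurface of degree $n$'' and should be ``a single hyperplane with multiplicity $n$''. This is wrong --- the symbol \eqref{eq.symb.Gianni} of \emph{any} scalar $2\Nd$ order PDE is quadratic in $\xi$, so the characteristic variety in $\p L^*$ is always a quadric, regardless of $n$. Maximal degeneration means the symbol is a perfect square, i.e.\ rank~$1$, and the repeated hyperplane has multiplicity $2$, not $n$. Your own earlier sentence (``a perfect square $(\sum_i c_i\xi_i)^2$'') had this right; the later correction to degree $n$ is the error. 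Beyond this, the reconstruction argument you sketch (varying $L$ over the fibre and assembling the characteristic hyperplanes into $\D$) is the $n$--variable analogue of Theorem~\ref{Th.non.elliptic.MAEs}, which the paper carries out only for $n=2$ and does \emph{not} claim in general; you correctly flag this as the delicate point, but it is simply not part of what the proposition (as the paper intends it) requires.

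Your converse is fine and matches the paper's implicit reasoning.
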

\begin{proof}
This follows from the fact that integrable distributions are all contactomorphic each other (see Proposition \ref{prop.int.one.way}). In particular, the distribution \eqref{eq.int.dist.vert} is contactomorphic to
\begin{equation}\label{eq.int.dist.1.hor}
\mathcal{D}=\langle D_{x^1}\,,\partial_{u_2} \dots\,,\partial_{u_n}\rangle\,,
\end{equation}
and the equation $\E_\D$, with $\D$ as in \eqref{eq.int.dist.1.hor}, is precisely $u_{11}=0$.
\end{proof}
\begin{corollary}
Let $U=(u_{x^ix^j})$ be the Hessian matrix of $u=u(x^1,\dots,x^n)$. Then all the equations $\det(U_k)=0$, $1\leq k\leq n$, where $U_k$ is the $k\times k$ matrix formed by the first $k$ rows and the first $k$ columns of $U$, are contactomorphic each other.
\end{corollary}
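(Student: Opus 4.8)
The plan is to realize each equation $\det(U_k) = 0$ as a Goursat-type Monge--Amp\`ere equation $\E_{\D_k}$ for a suitable $n$-dimensional sub-distribution $\D_k \subset \CC$, and then to exhibit a contactomorphism sending $\D_k$ to $\D_{k'}$, whence $\E_{\D_k} = \E_{\D_{k'}}$ up to that contactomorphism. The key remark is that $\det(U_k) = 0$ is the condition that the $k\times k$ minor vanishes, which by the same computation as in \eqref{eqDeterminanteGasante} is equivalent to the non-transversality of the Lagrangian plane $L(U)$ with the $n$-dimensional subspace of $\CC$ spanned by the vertical directions $\partial_{u_{k+1}}, \dots, \partial_{u_n}$ together with the horizontal directions $D_{x^1}, \dots, D_{x^k}$. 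Indeed, first I would check that, with
$$
\D_k := \Span{ D_{x^1}, \dots, D_{x^k}, \partial_{u_{k+1}}, \dots, \partial_{u_n} }\, ,
$$
one has $\E_{\D_k} = \{ L(U) \mid L(U) \cap (\D_k)_p \neq 0 \} = \{ \det(U_k) = 0 \}$: this is an elementary rank computation on the $(n + k)\times 2n$ matrix obtained by stacking the Lagrangian plane $L(U)$ against generators of $\D_k$, completely analogous to \eqref{eqDeterminanteGasante}, and it shows that the $u_{ij}$ with $i$ or $j$ exceeding $k$ drop out, leaving exactly the principal $k\times k$ minor.

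Second, I would observe that each $\D_k$ is a completely integrable $n$-dimensional distribution contained in $\CC$. This is immediate: $[D_{x^i}, D_{x^j}] = 0$, $[\partial_{u_i}, \partial_{u_j}] = 0$, and $[D_{x^i}, \partial_{u_j}] = -\delta_{ij}\partial_u$, which for $i \leq k < j$ vanishes; so $\D_k$ is involutive, hence completely integrable by the Frobenius Theorem \ref{th.Frobenius}. Now invoke Proposition \ref{prop.int.one.way} (or rather the discussion following it, together with the preceding Proposition on integrable characteristic distributions): any two integrable $n$-dimensional sub-distributions of a $(2n+1)$-dimensional contact manifold are contactomorphic to each other --- indeed Proposition \ref{prop.int.one.way} puts every such distribution into the single normal form $\Span{\partial_{u_1}, \dots, \partial_{u_n}}$, and composing the two resulting contact charts gives a contactomorphism taking $\D_k$ to $\D_{k'}$. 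Since $\E_{\D_k}$ is intrinsically determined by $\D_k$ (Definition \ref{def.E.D.n.var}), this contactomorphism carries $\E_{\D_k}$ onto $\E_{\D_{k'}}$, that is, $\det(U_k) = 0$ is contactomorphic to $\det(U_{k'}) = 0$.

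The only subtlety --- and this is what I expect to be the main point requiring care rather than a genuine obstacle --- is the bookkeeping in the first step: one must verify that the non-transversality condition $L(U)\cap(\D_k)_p\neq 0$ really selects the principal minor $\det(U_k)$ and not some larger subdeterminant, i.e. that adding the horizontal vectors $D_{x^1},\dots,D_{x^k}$ to the vertical ones $\partial_{u_{k+1}},\dots,\partial_{u_n}$ produces exactly the right rank drop. A clean way to see this: in the block form, $L(U)$ projects isomorphically onto the horizontal part, so $L(U)$ meets $\D_k$ nontrivially precisely when the system expressing a horizontal combination $\sum_{i=1}^k c_i D_{x^i}$ as an element of $L(U)$ forces the "vertical overflow" into the directions $\partial_{u_1},\dots,\partial_{u_k}$ to vanish, which reads $\sum_i c_i u_{ij} = 0$ for $j = 1,\dots,k$ --- a nontrivial solution exists iff $\det(u_{ij})_{1\le i,j\le k} = 0$. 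Once this identification is nailed down, the corollary follows formally from integrability plus Proposition \ref{prop.int.one.way}, with no further computation needed; alternatively, one could bypass the geometry entirely and just write down the explicit contact (indeed, linear point) transformation permuting the roles of a pair of $x^i$'s with a pair of $u_i$'s via the partial Legendre transformation \eqref{eq.partial.legendre}, checking directly that it swaps $\det(U_k) = 0$ with $\det(U_{k-1}) = 0$, but the distribution-theoretic argument is cleaner and reuses the machinery already developed.
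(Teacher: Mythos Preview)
Your proposal is correct and follows essentially the same approach as the paper: you define the same distributions $\D_k=\Span{D_{x^1},\dots,D_{x^k},\partial_{u_{k+1}},\dots,\partial_{u_n}}$, identify $\E_{\D_k}$ with $\{\det(U_k)=0\}$, observe that each $\D_k$ is integrable, and invoke Proposition~\ref{prop.int.one.way} (with partial Legendre transformations as the explicit contactomorphisms). The paper's own proof is just a terser version of exactly this argument.
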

\begin{proof}
It is enough to observe that the distributions
$$
\D_1=\langle D_{x^1}\,,\partial_{u_2}\,, \dots\,,\partial_{u_n}\rangle\,,\quad \D_2=\langle D_{x^1}\,,D_{x^2}\,,\partial_{u_3} \dots\,,\partial_{u_n}\rangle\,, \,\,\cdots\,,\,\, \D_n=\langle D_{x^1}\,,D_{x^2}\,, \dots\,,D_{x^n}\rangle
$$
are contactomorphic each other being integrable. Note that the local expression of $\D_k$ can be obtained by considering partial Legendre transformations \eqref{eq.partial.legendre}. Equations $\E_{\D_k}$ are exactly $\det(U_k)=0$.
\end{proof}

\begin{definition}
Let $\E\subset M^{(1)}$ be a $2\Nd$ order PDE. A function $f\in C^{\infty}(M)$ is called an \emph{intermediate integral} of $\E$ if all the solutions of the family of $1\St$ order PDEs $f=c$, $c\in \mathbb{R}$, are also solutions of $\E$.
\end{definition}

\begin{theorem}[Theorem 6.7 of \cite{MR2985508}]\label{th.int.int.goursat}
A function $f\in C^{\infty}(M)$ is an intermediate integral of $\E_\D$  if and only if it is a first integral either of $\D$ or $\D^\perp$.
\end{theorem}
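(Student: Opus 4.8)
The statement to prove is Theorem~\ref{th.int.int.goursat}: a function $f\in C^\infty(M)$ is an intermediate integral of the Goursat-type Monge--Amp\`ere equation $\E_\D$ if and only if $f$ is a first integral of $\D$ or of $\D^\perp$. My plan is to translate both sides of the claimed equivalence into statements about the Hamiltonian vector field $Y_f$ and the $1\St$ order PDE $\{f=c\}$, and then compare them against the geometric characterisation $\E_\D=\{L\in M^{(1)}\mid L\cap\D\neq 0\}$ from Definition~\ref{def.E.D.n.var}.

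First I would unwind the definition of an intermediate integral. The family $\{f=c\}$, $c\in\R$, foliates $M$ by hypersurfaces, and by Theorem~\ref{th.meth.char} and Proposition~\ref{prop.char.Yf} a solution of the $1\St$ order PDE $\{f=c\}$ is obtained by sweeping an $(n-1)$-dimensional integral submanifold of $\CC_{\{f=c\}}=\{\theta=0,\,df=0\}$ along the characteristic field $Y_f$; such solutions are the Legendrian submanifolds $N\subset M$ on which $df$ vanishes, equivalently the Legendrian submanifolds tangent to the Hamiltonian field $Y_f$. So $f$ being an intermediate integral of $\E_\D$ means: \emph{every} Legendrian $N$ with $Y_f$ tangent to $N$ prolongs into $\E_\D$, i.e.\ $T_pN\cap\D_p\neq 0$ for all $p\in N$. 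The key reduction is therefore the pointwise-linear-algebra statement: $Y_f\in\CC$ generates (together with $n-1$ further directions of $\ker df$) Lagrangian planes $L$, and we must understand for which $f$ \emph{every} Lagrangian plane $L\subset\ker(df_p)\cap\CC_p$ containing $(Y_f)_p$ meets $\D_p$.

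The crux is then the following linear algebra fact, which I would state and prove on a fixed contact hyperplane $\CC_p\simeq\R^n\oplus\R^{n\ast}$ with its conformal symplectic form $\omega$: given a $2$-dimensional (actually $n$-dimensional) distribution $\D_p$ and a nonzero vector $Y=(Y_f)_p\in\CC_p$ lying in $\ker df_p$, \emph{every} Lagrangian $L$ with $Y\in L\subset\ker df_p$ satisfies $L\cap\D_p\neq 0$ if and only if either $Y\in\D_p$ or $Y\in\D_p^\perp$ (where $\perp$ is the $\omega$-orthogonal). The "if" direction is the easy half: if $Y\in\D_p$ then $L\supset\Span{Y}$ meets $\D_p$ automatically; if $Y\in\D_p^\perp=\Span{Y_{g_1},\dots}$, then since $L$ is Lagrangian and contains $Y$, the space $L^{\perp_\omega}=L$ already forces $L$ to intersect $\D_p$ by a dimension count (I would use $\dim L+\dim\D_p=2n=\dim\CC_p$ together with the fact that $L$ being Lagrangian equals its own orthogonal, so $L\cap\D_p\neq 0$ iff $L^\perp+\D_p^\perp\neq\CC_p$ iff $L+\D_p^\perp\neq\CC_p$, and $Y\in L\cap\D_p^\perp$ obstructs the sum being everything). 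For the "only if" direction I would argue contrapositively: if $Y\notin\D_p$ and $Y\notin\D_p^\perp$, exhibit an explicit Lagrangian $L$ through $Y$ contained in $\ker df_p$ that is transversal to $\D_p$ — this is where a coordinate computation using the bi-Lagrangian decomposition (Proposition~\ref{propCRnRnStar}) and the affine chart $S^2\R^{n\ast}$ of $\LL(n,2n)$ (Proposition~\ref{prop.ApAffLGR}) is needed, perturbing a generic symmetric matrix slightly while keeping $Y$ in the plane.

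Finally I would globalise: the condition "$Y_f\in\D$ at every point" is, by Proposition~\ref{cor.decomposable} and the identification $\D=\{\theta=0,\,L_{Y_i}(\theta)=0\}$, precisely the statement that $L_{Y_i}(\theta)(Y_f)=0$ for all generators $Y_i$ of $\D^\perp$, i.e.\ $d\theta(Y_i,Y_f)=0$, i.e.\ $Y_i(f)=0$ for all $i$; but the $Y_i$ span $\D^\perp$, so this says exactly that $f$ is a first integral of $\D^\perp$. Symmetrically, $Y_f\in\D^\perp$ at every point says $f$ is a first integral of $\D$ (using $\E_\D=\E_{\D^\perp}$ from Proposition~\ref{prop.D.D.ort}, the two cases are genuinely symmetric and I should be careful to track which orthogonal matches which). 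I expect the main obstacle to be the "only if" half of the pointwise lemma: one must show that generically a Lagrangian plane through a fixed non-special vector $Y$ inside the hyperplane $\ker df_p$ can be chosen \emph{transversal} to the given $n$-plane $\D_p$, and ruling out the degenerate configurations cleanly — verifying that the only obstructions are exactly $Y\in\D_p$ and $Y\in\D_p^\perp$ — requires a careful case analysis of how $\D_p$ sits relative to $\ker df_p$ and to the line $\Span{Y}$. The rest is bookkeeping with Lie derivatives of the contact form, already made routine by Lemma~\ref{Lemma_Calcolo_Derivato} and the material of Section~\ref{sec.contact.manifolds.2}.
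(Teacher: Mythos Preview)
The paper does not prove this theorem; it is merely quoted from \cite{MR2985508}, so there is no in-paper argument to compare against. Your strategy is the right one and the pointwise linear-algebra lemma you isolate is correct and is the heart of the matter. For its ``only if'' half, a cleaner route than a coordinate computation is to pass to the reduced symplectic space $\bar{W}:=Y^\perp/\Span{Y}$: if $Y\notin\D_p$ and $Y\notin\D_p^\perp$ then $\D_p\cap Y^\perp$ is $(n-1)$-dimensional and injects into $\bar{W}$; a generic Lagrangian $(n-1)$-plane in the $(2n-2)$-dimensional symplectic space $\bar{W}$ is transversal to this image, and its preimage in $Y^\perp$ is the desired Lagrangian $L\ni Y$ with $L\cap\D_p=0$. (Also: the condition $L\subset\ker df_p$ you impose is redundant --- it follows automatically from $(Y_f)_p\in L$ and $L$ Lagrangian.)

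Two steps you pass over need a word. First, reducing from ``every Legendrian $N$ tangent to $Y_f$'' to ``every Lagrangian plane containing $(Y_f)_p$'' requires, in the only-if direction, realising a prescribed $L_0\ni(Y_f)_{p_0}$ as $T_{p_0}N$ for an actual solution $N$ of $\{f=f(p_0)\}$: pick an $(n-1)$-dimensional integral submanifold $\Sigma\subset\{f=f(p_0)\}$ of $\CC$ with $T_{p_0}\Sigma$ a hyperplane of $L_0$ transverse to $(Y_f)_{p_0}$, then apply Theorem~\ref{th.meth.char}. Second, and more genuinely missing, your pointwise conclusion ``$(Y_f)_p\in\D_p\cup\D_p^\perp$ for every $p$'' does not automatically yield the \emph{global} disjunction ``$f$ is a first integral of $\D^\perp$, or of $\D$''. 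A connectedness argument closes this when $\D\cap\D^\perp=0$ and $Y_f$ is nowhere vanishing (the two closed sets $\{Y_f\in\D\}$ and $\{Y_f\in\D^\perp\}$ are then disjoint and cover $M$, so one is empty); in general the conclusion should be read locally, which is presumably how the cited reference intends it.
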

We have seen in Theorem \ref{th.meth.char} how to find the solution of a $1\St$ order PDE starting from a non--characteristic Cauchy datum. Basically, one can ``enlarge'' the given Cauchy datum by means of the (local) flow of the characteristic vector field $Y_f$ (Hamiltonian vector field associated to $df$, recall Definition \ref{def.involution}). A similar method, called the \emph{generalized Monge method}, can apply also to Monge--Amp\`ere equations of Goursat type. This method employs vector fields of type $2$ rather than Hamiltonian ones.
\begin{theorem}[Theorem 6.12 of \cite{MR2985508}]\label{th.Monge.method}
Let $\E_\D\subset M^{(1)}$ be a Goursat--type Monge--Amp\`ere equation, where $(M,\CC)$ is a  $(2n+1)$--dimensional contact manifold. Let $\theta$ be a contact form. Let $X\in\D$ be a vector field of type $2$. Let $\Sigma\subset M$ be a Cauchy datum for $\E_\D$ (see Definition \ref{def.Cauchy.datum.2.order})
transversal to $X$. Then
$N=\bigcup_{t}\,\varphi_{t}(\Sigma)\subset M$, where $\{\varphi_{t}\}$
is the local flow of $X$, is a solution of the equation
$\mathcal{E}_{\mathcal{D}}$ if and only if
$d\theta(T_{p}\Sigma,X_{p})=0\,\,\forall\,\,p\in \Sigma$.
\end{theorem}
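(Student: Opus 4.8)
The goal is to characterize when the "generalized Monge method" produces a solution of the Goursat-type Monge--Amp\`ere equation $\E_\D$: given a vector field $X\in\D$ of type $2$ and a Cauchy datum $\Sigma$ transversal to $X$, the swept-out hypersurface $N=\bigcup_t\varphi_t(\Sigma)$ is a solution if and only if $d\theta(T_p\Sigma,X_p)=0$ for all $p\in\Sigma$. The strategy I would follow is: (1) reduce "being a solution of $\E_\D$" to the pointwise condition that each tangent space $T_pN$ is a Lagrangian subspace of $\CC_p$ meeting $\D_p$ nontrivially; (2) use that $X\in\D$ lies in $T_pN$ by construction, so the intersection-with-$\D$ condition is automatic once $T_pN\subset\CC_p$ and $T_pN$ is Lagrangian; (3) show that the vector-field-of-type-$2$ hypothesis is exactly what makes the flow of $X$ preserve the contact distribution along $N$; and (4) show that $d\theta(T_p\Sigma,X_p)=0$ is precisely the remaining condition needed to make $T_pN$ isotropic (hence Lagrangian, by dimension count) for all $t$.

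\textbf{Step-by-step.} First I would set up notation: $\Sigma$ is $(n-1)$-dimensional, integral of $\CC$ (Definition \ref{def.Cauchy.datum.2.order}), and transversal to $X$, so $N=\bigcup_t\varphi_t(\Sigma)$ is an $n$-dimensional submanifold with $T_pN=T_p\Sigma\oplus\Span{X_p}$ at points of $\Sigma$, and more generally $T_{\varphi_t(p)}N={\varphi_t}_*(T_p\Sigma)\oplus\Span{X_{\varphi_t(p)}}$. To see $N$ is a solution I must check two things along $N$: that $\theta|_N=0$ (i.e. $N$ is integral of $\CC$), and that $(d\theta)|_N=0$ (which, together with $\dim N=n$, makes $T_pN$ Lagrangian in $\CC_p$); then $X\in\D\cap T_pN$ gives $T_pN\in\E_\D$ by Definition \ref{def.E.D.n.var}. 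For $\theta|_N=0$: on $\Sigma$ it holds since $\Sigma$ is integral and $\theta(X)=0$ (as $X\in\D\subset\CC$); propagating along the flow, $\tfrac{d}{dt}(\varphi_t^*\theta)=\varphi_t^*(L_X\theta)$, and since $X$ has type $2$, $L_X\theta=a\theta+d\theta(X,\cdot)$ stays in the ideal generated by $\theta$ and its "first derivative" — more precisely $X\in\CC$ forces $L_X\theta=X\lrcorner d\theta + d(X\lrcorner\theta)=X\lrcorner d\theta$, which vanishes on any vector that is $d\theta$-orthogonal to $X$. I would show the relevant tangent vectors stay $d\theta$-orthogonal to $X$ along the flow, using type $2$ to control $L_X^2\theta$. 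For the isotropy condition: I want $d\theta(\xi,\eta)=0$ for $\xi,\eta\in T_pN$. Splitting $T_pN=T_p\Sigma\oplus\Span{X_p}$, this breaks into (a) $d\theta|_{T_p\Sigma}=0$, which holds because $\Sigma$ is integral of $\CC$ so $T_p\Sigma$ is isotropic; (b) $d\theta(\xi,X_p)=0$ for $\xi\in T_p\Sigma$ — this is exactly the stated hypothesis $d\theta(T_p\Sigma,X_p)=0$; and (c) $d\theta(X_p,X_p)=0$ trivially. The main point is then to propagate (a) and (b) along the flow: compute $\tfrac{d}{dt}\,\varphi_t^*(d\theta)(\xi,\eta)$ and $\tfrac{d}{dt}\,\varphi_t^*(d\theta)(\xi,X)$, using $L_X d\theta = d(L_X\theta)=d(X\lrcorner d\theta)$ together with $d(d\theta)=0$, and show these derivatives vanish when the conditions hold at $t=0$, invoking the type-$2$ identity that $L_X^2\theta$ is a combination of $\theta$ and $L_X\theta$.

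\textbf{The converse.} For necessity, suppose $N$ is a solution. Then along $\Sigma$, $T_p\Sigma\oplus\Span{X_p}=T_pN$ must be Lagrangian in $\CC_p$; isotropy forces $d\theta(\xi,X_p)=0$ for every $\xi\in T_p\Sigma$, which is the claimed identity. This direction is short once the "solution $\Leftrightarrow$ Lagrangian integral manifold contained in $\E_\D$" dictionary is in place.

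\textbf{Expected main obstacle.} The delicate part is the propagation-along-the-flow argument — showing that the two pointwise conditions ($\theta|_N=0$ and the isotropy of $T_pN$) which we arrange on $\Sigma$ actually persist for all $t$. This is where the hypothesis "$X$ of type $2$" is indispensable: it is the precise statement that the iterated Lie derivatives $\theta, L_X\theta, L_X^2\theta$ have rank $2$, i.e. $L_X^2\theta\in\Span{\theta,L_X\theta}$, and this closure is what lets the differential equations for $\varphi_t^*\theta$ and $\varphi_t^*(d\theta)$ restricted to $TN$ close up and vanish identically. I would phrase this as a small lemma: if $\alpha$ is a $1$-form with $\alpha(X)=0$ and $X$ has type $2$ with respect to the contact form $\theta$, and $\alpha|_{\Sigma}=0$ while $\alpha$ annihilates the flow directions initially, then $\varphi_t^*\alpha$ annihilates $TN$ for all $t$; applying it to $\alpha=\theta$ and to the components of $d\theta(\cdot,X)$ finishes the proof. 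A secondary technical point worth care is checking that transversality of $\Sigma$ to $X$ is preserved under the flow (so $N$ is genuinely a manifold of dimension $n$ near $\Sigma$), which follows because $\varphi_t$ is a diffeomorphism and $X$ is $\varphi_t$-invariant.
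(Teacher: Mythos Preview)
The paper does not actually give a proof of this theorem: it is stated with the citation ``Theorem 6.12 of \cite{MR2985508}'' and then immediately used. So there is no proof in the paper to compare against.

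That said, your outline is essentially correct and is the standard argument. A few remarks to tighten it:

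\begin{itemize}
\item Your separation into two conditions, $\theta|_N=0$ and $(d\theta)|_N=0$, is redundant: once $\theta|_N=0$ you get $(d\theta)|_N=d(\theta|_N)=0$ for free (this is exactly \eqref{eq.dtheta.bla} in the paper). So the whole problem reduces to showing $\theta|_N=0$, and the decomposition of the isotropy condition that you perform at $t=0$ is really serving a different purpose: it identifies $(L_X\theta)|_{T_p\Sigma}=d\theta(X_p,T_p\Sigma)=0$ as the second initial condition needed for the propagation ODE.
\item The propagation argument is exactly as you say, but can be phrased more cleanly. Since $X\in\CC$, one has $L_X\theta=X\lrcorner\, d\theta$ (no extra $a\theta$ term). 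Type $2$ means $L_X^2\theta=a\theta+bL_X\theta$ for some functions $a,b$. Hence for each $\xi\in T_q\Sigma$ the pair $f(t)=(\varphi_t^*\theta)(\xi)$, $g(t)=(\varphi_t^*L_X\theta)(\xi)$ satisfies the linear system $f'=g$, $g'=(a\circ\varphi_t)f+(b\circ\varphi_t)g$. The hypothesis gives $f(0)=g(0)=0$, so $f\equiv g\equiv 0$; together with $\theta(X)=0$ this yields $\theta|_N=0$.
\item Once $N$ is an $n$--dimensional integral submanifold of $\CC$ it is automatically Lagrangian, and since $X_p\in\D_p\cap T_pN$ for every $p\in N$, each $T_pN$ lies in $\E_\D$ by Definition \ref{def.E.D.n.var}. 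This is your Step (2), and it is the right way to dispatch the ``meets $\D$'' condition.
\item Your converse is fine: if $N$ is Lagrangian then $(d\theta)|_{T_pN}=0$, and $T_p\Sigma,X_p\subset T_pN$ gives the identity.
\end{itemize}

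So the argument is sound; the only adjustment is to recognise that the isotropy check is not an independent step but a consequence of $\theta|_N=0$, and that the hypothesis enters as the initial value of $L_X\theta$ on $T\Sigma$ in the ODE for propagation.
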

It is not difficult to realize that any first integral $f$ of the distribution $\D^\perp$ defines a Hamiltonian vector
field $Y_f$ belonging to $\D$. So, in view of Theorem \ref{th.Monge.method},
the problem of constructing solutions of $\E_\D$ reduces to the problem of constructing $(n-1)$--dimensional submanifolds
$\Sigma$ of $M$ such that
\begin{equation}\label{eq.Y.N}
d\theta(T_p\Sigma\,,\,({Y_f})_p) = 0\,,\quad\forall p\in \Sigma.
\end{equation}
\begin{proposition}\label{prop.Monge.method.2}
Let $f$ be an intermediate integral of $\E_\D$
and $\Sigma$ a Cauchy datum for the $1\St$ order PDE $\{f=0\}$. Then the submanifold
$N=\bigcup_t\varphi_t(\Sigma)$,
where $\{\varphi_t\}$ is the local flow of the Hamiltonian vector field
$Y_f$, is a solution of $\E_\D$. If $\Sigma$ is
non--characteristic, then such a solution is unique.
\end{proposition}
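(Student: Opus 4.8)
The plan is to reduce the statement to an application of Theorem~\ref{th.Monge.method}, after first checking the preliminary claim that an intermediate integral $f$ of $\E_\D$ gives a Hamiltonian vector field $Y_f$ belonging to $\D$. By Theorem~\ref{th.int.int.goursat}, an intermediate integral $f$ is a first integral either of $\D$ or of $\D^\perp$. Suppose first that $f$ is a first integral of $\D^\perp$, i.e.\ $Y(f)=0$ for all $Y\in\D^\perp$. Recall (see the properties \eqref{eq.proprieta.Xf} and Definition~\ref{def.involution}) that $Y_f\in\CC$ and that $Y_f^\perp=\{\theta=0,\,df=0\}$. Since every $Y\in\D^\perp$ lies in $\CC=\{\theta=0\}$ and annihilates $f$ (so $df(Y)=0$), we get $\D^\perp\subseteq Y_f^\perp$; taking symplectic orthogonals inside the conformal symplectic space $(\CC_p,[\omega])$ and using that $Y_f$ spans $(Y_f^\perp)^{\perp_\omega}$ modulo the dimension count, we conclude $Y_f\in(\D^\perp)^{\perp_\omega}=\D$. (If instead $f$ is a first integral of $\D$, the same argument with $\D$ and $\D^\perp$ interchanged, together with Proposition~\ref{prop.D.D.ort} which says $\E_\D=\E_{\D^\perp}$, reduces us to the previous case.) Thus $Y_f\in\D$.

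Next I would verify that $Y_f$ is a vector field of type $2$: this is exactly the first item in the list of properties \eqref{eq.proprieta.Xf}, where it is stated that every Hamiltonian vector field $Y_{df}$ has type $2$. So $Y_f$ is an admissible choice for the vector field ``$X$'' appearing in Theorem~\ref{th.Monge.method}.

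Now let $\Sigma$ be a Cauchy datum for the $1\St$ order PDE $\{f=0\}$ in the sense of Definition~\ref{def.cauchy.datum.first}: an $(n-1)$--dimensional integral submanifold of $\CC$ contained in $\{f=0\}$. In particular $\Sigma$ is an $(n-1)$--dimensional integral submanifold of $\CC$, hence a Cauchy datum for $\E_\D$ in the sense of Definition~\ref{def.Cauchy.datum.2.order}. To apply Theorem~\ref{th.Monge.method} I must check (i) that $\Sigma$ is transversal to $Y_f$, and (ii) that $d\theta(T_p\Sigma,(Y_f)_p)=0$ for all $p\in\Sigma$. For (ii): since $\Sigma$ is an integral submanifold of $\CC$, we have $T_p\Sigma\subset\CC_p$, and since $Y_f$ is tangent to $\Sigma\subset\{f=0\}$ would be too strong — instead observe that $T_p\Sigma\subset\ker df_p$ because $\Sigma\subset\{f=0\}$, so for $\xi\in T_p\Sigma$, using $Y_f\lrcorner d\theta = df-\tfrac{\partial f}{\partial u}\theta$ (from \eqref{eq.proprieta.Xf}) and $\theta|_{\Sigma}=0$, we get $d\theta((Y_f)_p,\xi)=df(\xi)-\tfrac{\partial f}{\partial u}\theta(\xi)=0$. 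This is precisely the hypothesis of Theorem~\ref{th.Monge.method}, so $N=\bigcup_t\varphi_t(\Sigma)$ is a solution of $\E_\D$. For the uniqueness clause: if $\Sigma$ is non--characteristic for $\{f=0\}$ (Definition~\ref{def.cauchy.datum.first}), i.e.\ transversal to $Y_f$, then the flow construction produces an $n$--dimensional submanifold, and the uniqueness of the solution follows from Theorem~\ref{th.meth.char} applied to the $1\St$ order PDE $\{f=0\}$ together with the fact that any solution of $\E_\D$ containing $\Sigma$ and annihilating $f$ must be swept out by the characteristic flow of $Y_f$, which is the standard argument behind the method of characteristics.

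The main obstacle I anticipate is the careful handling of step (i)/(ii) at points where the distributions degenerate or where $Y_f$ may vanish, and more importantly making precise the uniqueness part: strictly speaking one must argue that a solution $N$ of $\E_\D$ through $\Sigma$ has $T_pN\supset(Y_f)_p$ — i.e.\ $Y_f$ is tangent to every solution of $\{f=0\}$, which is the content of the remark following Proposition~\ref{prop.char.Yf} — and then invoke the uniqueness in the method of characteristics (Theorem~\ref{th.meth.char}) for the associated $1\St$ order equation. The linear-algebra reduction $\D^\perp\subseteq Y_f^\perp \Rightarrow Y_f\in\D$ also deserves a clean one-line justification via dimension count in the conformal symplectic space $\CC_p$, since $\dim Y_f^\perp = \dim\D^\perp = n$ forces equality $\D^\perp = Y_f^\perp$ and hence $\D=(\D^\perp)^{\perp_\omega}=(Y_f^\perp)^{\perp_\omega}=\Span{Y_f}^{\perp_\omega{}^\perp}\ni Y_f$; the only subtlety is verifying $\dim\D^\perp=n$, which holds because $\D$ is $n$--dimensional Lagrangian so $\D^\perp$ (the symplectic orthogonal inside the $2n$--dimensional $\CC_p$) is also $n$--dimensional.
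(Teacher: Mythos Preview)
Your approach is correct and essentially identical to the paper's: verify $d\theta((Y_f)_p,\xi)=df(\xi)-f_u\,\theta(\xi)=0$ for $\xi\in T_p\Sigma$, invoke Theorem~\ref{th.Monge.method}, and deduce uniqueness from the fact that $N$ is also a solution of the first--order PDE $\{f=0\}$ (so the method of characteristics, Theorem~\ref{th.meth.char}, applies).

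One correction to your closing paragraph: the claimed equality $\dim Y_f^\perp=\dim\D^\perp=n$ is wrong, since $Y_f^\perp$ is the symplectic orthogonal of a line in the $2n$--dimensional space $\CC_p$ and hence has dimension $2n-1$; thus $\D^\perp\subset Y_f^\perp$ is a \emph{strict} inclusion and you cannot conclude $\D^\perp=Y_f^\perp$. Fortunately your first--paragraph argument is already the correct one: from $\D^\perp\subseteq Y_f^\perp$ take symplectic orthogonals (which reverse inclusions and are involutive) to get $\Span{Y_f}=(Y_f^\perp)^\perp\subseteq(\D^\perp)^\perp=\D$, hence $Y_f\in\D$. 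No further dimension count is needed, and there is no need to assume $\D$ Lagrangian --- any $n$--dimensional subspace of a $2n$--dimensional symplectic space has an $n$--dimensional symplectic orthogonal.
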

\begin{proof}
Let $X\in T\Sigma$. Then
$
d\theta(Y_f,X)=df(X)=X(f)=0.
$
Therefore, $N$ is a solution of $\E_\D$,
since $\Sigma$ satisfies the condition \eqref{eq.Y.N}. The uniqueness of $N$ follows from the fact that $N$ is also a solution of the $1\St$ order
PDE $f=0$.
\end{proof}
Now we have all the necessary tools to provide a sufficient condition for a Monge--Amp\`ere equation of Goursat type $\E_\D$ to have any Cauchy datum ``enlargeable'' to a solution.
\begin{theorem}\label{th.Monge.Morimoto}
Let us suppose that $\mathcal{D}$ (or $\mathcal{D}^\perp$) possesses $n$ independent first integrals.
Then any Cauchy datum $\Sigma$ can be extended to a solution of $\mathcal{E}_{\mathcal D}$.
\end{theorem}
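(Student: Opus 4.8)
The plan is to build an intermediate integral of $\E_\D$ that is constant on the given datum $\Sigma$ and then to invoke the generalized Monge method, i.e. Theorem \ref{th.Monge.method} (equivalently Proposition \ref{prop.Monge.method.2}). First I would reduce to a convenient side. For an $n$-plane $L$ and an $n$-dimensional $\D\subset\CC_p$, the condition $L\cap\D\neq 0$ is equivalent, by symplectic duality ($L^{\perp_\omega}=L$, so $(L+\D)^{\perp_\omega}=L\cap\D^\perp$), to $L\cap\D^\perp\neq 0$; thus $\E_\D=\E_{\D^\perp}$ in every dimension, which is the general form of Proposition \ref{prop.D.D.ort}. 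Hence we may assume without loss of generality that it is $\D^\perp$ that possesses $n$ independent first integrals $f^1,\dots,f^n$. By Theorem \ref{th.int.int.goursat} each $f^i$ is an intermediate integral of $\E_\D$. Moreover, from the last identity in \eqref{eq.proprieta.Xf} one gets $d\theta(Y_{f^i},Z)=df^i(Z)$ for every $Z\in\CC$; since $f^i$ is a first integral of $\D^\perp$, the right-hand side vanishes on $\D^\perp$, so $Y_{f^i}\in(\D^\perp)^\perp=\D$. As the $df^i$ are independent, the Hamiltonian fields $Y_{f^1},\dots,Y_{f^n}$ span $\D$, and each of them is of type $2$.

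Next I would produce the intermediate integral adapted to $\Sigma$. The map $F=(f^1,\dots,f^n)\colon M\to\R^n$ has rank at most $n-1$ along the $(n-1)$-dimensional $\Sigma$, so locally near a point of $\Sigma$ its image lies in a hypersurface $\{\Psi=0\}\subset\R^n$ with $d\Psi\neq 0$; set $g:=\Psi\circ F$. Then $dg=\sum_i(\partial_i\Psi\circ F)\,df^i$ still annihilates $\D^\perp$, so $g$ is a first integral of $\D^\perp$, hence an intermediate integral of $\E_\D$, and $g\equiv 0$ on $\Sigma$, so $\Sigma$ is a Cauchy datum for the first-order PDE $\{g=0\}$ in the sense of Definition \ref{def.cauchy.datum.first}. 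Since $g$ is constant on $\Sigma$ one gets $d\theta(T_p\Sigma,(Y_g)_p)=dg(T_p\Sigma)=0$ for all $p\in\Sigma$, and $Y_g=\sum_i(\partial_i\Psi\circ F)\,Y_{f^i}\in\D$ is of type $2$. Therefore Theorem \ref{th.Monge.method} (or Proposition \ref{prop.Monge.method.2}) yields that $N:=\bigcup_t\varphi_t(\Sigma)$, with $\{\varphi_t\}$ the flow of $Y_g$, is a solution of $\E_\D$ extending $\Sigma$, unique if $\Sigma$ is non-characteristic.

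The main obstacle is transversality: for $N$ to be genuinely $n$-dimensional one needs $Y_g$ not to be tangent to $\Sigma$ along $\Sigma$. Because $\dim\D=n$ while $\dim T_p\Sigma=n-1$, the distribution $\D$ cannot be tangent to $\Sigma$ everywhere, so some combination of the $Y_{f^i}$ points off $\Sigma$; the freedom in the choice of $\Psi$ — together with the fact that where the rank of $F|_\Sigma$ drops the admissible family of such $g$ enlarges — should be exploited to make the particular field $Y_g$ transversal at least on a dense open subset of $\Sigma$, after which the solution extends across the remaining thin set by continuity and uniqueness. A careful treatment of these degenerate points, and of the case where $\Sigma$ is itself characteristic, is where the hypothesis of $n$ independent first integrals is used in full strength; it is the only non-routine part of the argument, everything else being linear algebra combined with the already-established Theorems \ref{th.int.int.goursat} and \ref{th.Monge.method} and Proposition \ref{prop.Monge.method.2}.
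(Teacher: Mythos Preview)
Your proof is correct and follows essentially the same route as the paper: restrict the first integrals $f^1,\dots,f^n$ to the $(n-1)$--dimensional datum $\Sigma$, use the resulting functional dependence to produce an intermediate integral $g=\Psi(f^1,\dots,f^n)$ vanishing on $\Sigma$, and then invoke Proposition~\ref{prop.Monge.method.2}. The paper's proof is exactly this, only more terse: it does not spell out the reduction $\E_\D=\E_{\D^\perp}$ (it just takes first integrals of $\D$ and implicitly uses the symmetry), and it does not discuss transversality at all --- your third paragraph is more careful than the paper on that point.
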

\begin{proof}
Let $f^1,\dots,f^n$ be independent first integrals of $\mathcal{D}$ (so that any function of them is an
intermediate integral of $\mathcal{E}_{\mathcal D}$). Let denote by $g^i$ the restriction of $f^i$ to $\Sigma$.
 Of course, the functions $g^i$ are dependent. So, there exists a non--trivial functional relation
$
\psi(g^1,\dots,g^n)=0.
$
The function $f=\psi(f^1,\dots,f^n)$ turns out to be an intermediate integral which vanishes on $\Sigma$ and it also satisfies the hypothesis of Proposition \ref{prop.Monge.method.2}. Then the flow of $Y_f$ extends $\Sigma$ to a solution of $\mathcal{E}_{\mathcal D}$.
\end{proof}
Above Theorems actually allow to solve a Cauchy problem for a given $2\Nd$ order PDE.

\subsection{An illustrative example}
Let us consider the $2\Nd$ order PDE
\begin{equation}\label{eq.example.bello}
\mathcal{E}:\,\,u_{11}+(x^1+u_2)u_{12}+x^1u_2u_{22}-u_3u_{13}-x^1u_3u_{23}=0
\end{equation}
in $3$ independent variables $x^1,x^2,x^3$ and a dependent variable $u$.
A point $L\in\mathcal{E}$ ha coordinates
$$
\big(x^1,x^2,x^3,u,u_1,u_2,u_3,-(x^1+u_2)u_{12}-x^1u_2u_{22} +u_3u_{13}+x^1u_3u_{23},u_{12},\dots,u_{33}\big)\,.
$$

\subsubsection{Computation of characteristics} A straightforward computation shows that the symbol (see \eqref{eq.symb.Gianni.2}) of the equation \eqref{eq.example.bello} is decomposable as follows
\begin{equation}\label{eq.ex.mult}
(x^1w_2+w_1)\odot (w_1+u_2w_2-u_3w_3)\,,
\end{equation}
where the $w_i$'s are the total derivatives restricted to  \eqref{eq.example.bello}, i.e.,
$$
w_1=D_{x^1}
-\big((x^1+u_2)u_{12}+x^1u_2u_{22}-u_3u_{13}-x^1u_3u_{23}\big)\,\partial_{u_1}
+u_{12}\partial_{u_2}+u_{13}\partial_{u_3}
$$
$$
w_2=D_{x^2} + u_{12}\partial_{u_1}
+u_{22}\partial_{u_2} + u_{23}\partial_{u_3}$$
$$
w_3=D_{x^3} + u_{13}\partial_{u_1} +
u_{23}\partial_{u_2} + u_{33}\partial_{u_3}.
$$

\subsubsection{Proof that $\mathcal{E}$ is of Goursat
type} By
substituting the above values of $w_1$, $w_2$ and $w_3$ in
the first factor of \eqref{eq.ex.mult} we obtain
$$
D_{x^1}+x^1D_{x^2} + (u_{13}+x^1u_{23})(\partial_{u_3}+u_3\partial_{u_1}) + (u_{12}+x^1u_{22})(\partial_{u_2}-u_2\partial_{u_1})\,,
$$
where we recall that the $D_{x^i}$'s are defined by \eqref{eq.contact.local}.
If we let  vary the point $m^1$ along the fibre $\mathcal{E}_m=\pi^{-1}(m)\cap\mathcal{E}$ of the equation $\mathcal{E}$, we
obtain the distribution
\begin{equation*}
\mathcal{D}=\langle X_1, X_2, X_3\rangle ,\,\,
X_1=D_{x^1}+x^1D_{x^2},\,\,
X_2=\partial_{u_3}+u_3\partial_{u_1},\,\,
X_3=\partial_{u_2}-u_2\partial_{u_1}.
\end{equation*}
Thus,
$\mathcal{E}=\mathcal{E_D}$ (recall Definition \ref{def.E.D.n.var}). On the other hand, by taking the
orthogonal complement of $\mathcal{D}$ or by performing the
analogous calculation for the other factor of \eqref{eq.ex.mult}, $w_1+u_2w_2-u_3w_3$, we get the distribution
$$
\mathcal{D}^\perp=\langle Y_1, Y_2, Y_3\rangle\,,\,\,\, Y_1=D_{x^1}+u_2D_{x^2}-u_3D_{x^3}\,,\,\,\,
Y_2=\partial_{u_2}-x^1\partial_{u_1}\,,\,\,\, Y_3=\partial_{u_3}.
$$

\subsubsection{Intermediate integrals} It is easy to check that
$\mathcal{D}'$ is of rank $4$ and integrable, and it is generated by $\partial_{x^1}+x^1\partial_{x^2}$, $\partial_{u_3}+u_3\partial_{u_1}$, $\partial_{u_2}-u_2\partial_{u_1}$ and $\partial_u$. In view of Theorem \ref{th.int.int.goursat}, a
standard computation gives the following $3$  independent intermediate
integrals of $\mathcal{E}_{\mathcal D}$ (they are first integrals of $\D$), namely
$$
\lambda_1:={(x^1)}^2-2x^2\,,\,\,\,
\lambda_2:=(u_3)^2-(u_2)^2-2u_1\,,\,\,\,  \lambda_3:=x^3.
$$

\subsubsection{A Cauchy problem} Any Cauchy datum can be extended to a solution as we found $3$ first integrals of $\D$ (see Theorem \ref{th.Monge.Morimoto}). In this case, a Cauchy datum
$\Sigma$, according to the Definition \ref{def.Cauchy.datum.2.order}, consists of a $2$-dimensional integral submanifold of $\mathcal C$. If we suppose that this datum can be parameterized by $x^1$ and
$x^2$, then we can arbitrarily fix $u$, $x^3$ and
$u_3$ as functions of $x^1$ and $x^2$ and then determine $u_1$
and $u_2$ by the contact condition. Let us choose, for instance,
$$
\Sigma:\,\,\,u={(x^1)}^2+x^2, \,\, x^3=x^1, \,\, u_3=0.
$$
Then, from the contact condition, one easily obtains that $u_1=2x^1$ and $u_2=1$, which completes
the parametrization of $\Sigma$.

The restrictions of $\lambda_1$, $\lambda_2$ and $\lambda_3$ to
$\Sigma$ are $ \overline\lambda_1:={(x^1)}^2-2x^2$,
$\overline\lambda_2:=-1-4x^1$, $\overline\lambda_3=x^1$. A first
integral vanishing on $\Sigma$ is $f:=\lambda_2+4\lambda_3+1$, whose
associated Hamiltonian field is
$$
Y_f=2u_3Y_{u_3}-2u_2Y_{u_2}-2Y_{u_1}+4Y_{x^3}=
2\left(u_3\widehat\partial_{x^3}-u_2\widehat\partial_{x^2}-\widehat\partial_{x^1}-
2\partial_{u_3}\right)\,.
$$
$Y_f$ has $6$ independent first integrals, namely
\begin{equation*}
\mu_1:=u_1,\,\,\, \mu_2:=u_2,\,\,\, \mu_3:=\frac 12 (u_3)^2+2x^3,\,\,\, \mu_4:=u_2x^1-x^2,
\mu_5:=x^1-\frac 12u_3,\,\,\,
\mu_6:=\frac 12((u_2)^2+u_1)u_3-\frac 16(u_3)^3-u.
\end{equation*}
\noindent In order to prolong the Cauchy datum $\Sigma$ along the orbits
of $Y_f$, we restrict the above $6$ first integrals on $\Sigma$ (the
bar denotes such a restriction): $ \overline\mu_1=2x^1$, $
\overline\mu_2=1$, $\overline\mu_3=2x^1$, $
\overline\mu_4=x^1-x^2$, $\overline\mu_5=x^1$, $
\overline\mu_6=-({(x^1)}^2+x^2)$. By eliminating the parameters $x^1$
and $x^2$, we obtain  $4$ independent relations, viz.
\begin{equation}\label{e:relaciones}\mu_2=1,\quad \mu_3-\mu_1=0,\quad
\mu_5-\frac 12\mu_1=0,\quad \mu_6+\frac 14(\mu_1)^2+\frac
12\mu_1-\mu_4=0\,.
\end{equation}
If we substitute the $\mu$'s in
\eqref{e:relaciones},
we get
\begin{equation}\label{e:otrasrelaciones}
\begin{cases}
&u_2=1\,,\\
&\frac 12 {(u_3)}^2+2x^3-u_1=0\,,\\
&x^1-\frac 12 u_3-\frac 12 u_1=0\,,\\
&\frac 12 ({(u_2)}^2+u_1)u_3-\frac 16{(u_3)}^3-u+\frac 14 {(u_1)}^2+\frac 12
u_1-u_2x^1+x^2=0\,.
\end{cases}
\end{equation}
Finally, from the first three equations of the system \eqref{e:otrasrelaciones} we can obtain the $u_i$'s in terms of the $x^i$'s and then the fourth equation allows to express $u$ as a function of $x^1,x^2,x^3$ which is the required solution:
$$
u=\frac{1}{6}+x^1+{(x^1)}^2+x^2-x^3\mp\sqrt{1-4x^3+4x^1}\left(\frac{1}{6} + \frac{2}{3}x^1-\frac{2}{3}x^3\right).
$$

\section{Monge--Amp\`ere equations on K\"ahler and para--K\"ahler manifolds and Special Lagrangian submanifolds}\label{secComplessa}

So far we interpreted scalar $2\Nd$ order PDEs, essentially, as hypersurfaces of the Lagrangian Grassmannian bundle $M^{(1)}\to M$, where $M$ is a contact manifold. One can also introduce PDEs on symplectic manifolds in complete analogy with the constructions we considered so far. More precisely, $1\St$ order scalar PDEs which do not depend on the unknown function are hypersurfaces of a $2n$--dimensional symplectic manifold $(W,\omega)$, where $n$ is the number of the independent variables. For this reason, following \cite{MR2352610}, we call such equations \emph{symplectic PDEs} of $1\St$ order. We recall that, by Darboux's theorem \ref{th.Darboux.sympl},
there always exists a system of coordinates $(x^i,u_i)$ on $W$ such that $\omega=dx^i \wedge du_i$, so that a $1\St$ order scalar symplectic PDE is locally described by $\{f(x^i,u_i)=0\}$. Solution of symplectic PDEs of the $1\St$ order are Lagrangian submanifolds of $W$ contained in the aforementioned hypersurface. We can introduce the prolongation $W^{(1)}$ of a symplectic manifold $W$ as the set of Lagrangian planes of $W$. More precisely, we have the bundle $W^{(1)}\to W$ whose fiber at $w\in W$ is the Lagrangian Grassmannian $\LL(T_wW)$ of $T_wW$.
Smooth coordinates on $W^{(1)}$ can be constructed similarly as we did in Remark \ref{rem.coordinates.M1}.

A $2\Nd$ order scalar PDE $\E$ with $n$ independent variables, which is independent of the unknown function, is a hypersurface of $W^{(1)}$, where $W$ is a $2n$-dimensional symplectic manifold. Thus, locally, in a system of Darboux coordinates, $\mathcal{E}=\{F(x^i,u_i,u_{ij})=0\}$. Such PDEs are called \emph{symplectic PDEs} of $2\Nd$ order. Solutions of such PDEs are Lagrangian submanifolds of $W$ whose prolongation is contained in the aforementioned hypersurface.

Similarly to the contact case (see \eqref{eq.MAE.intr.Omega}), an $n$-form $\Omega$ on a symplectic manifold $W$ defines  the set $\{L\in W^{(1)}\,\,|\,\, \Omega|_{L}=0\}$. Such a set is a hypersurface of $W^{(1)}$ which turns out to be a (symplectic) Monge--Amp\`ere equation. This will be the key observation for introducing and studying Monge--Amp\`ere equations on K\"ahler and para--K\"ahler manifolds.
In fact, an important natural class of  $n$--forms on symplectic manifolds  arises when we  assume that a symplectic manifold $(W, \omega)$ is a K\"ahler manifold (for more details and discussions see also \cite{MR2945626}). We recall that this means that $(W, \omega)$ is equipped with a complex structure $J$, i.e., a field of endomorphisms $J:TW\to TW$ such that $J^2=-Id$ (almost--complex structure), whose Nijenhuis tensor
\begin{equation}\label{eq.Nijenhuis}
N_J(X,Y)=[X,Y]+[JX,JY]-J[JX,Y]-J[X,JY]=0
\end{equation}
vanishes, such that  $g = \omega \circ J$ is a Riemannian metric.

Similar constructions can be performed also by starting from a field of endomorphisms $K:TW\to TW$, such that $K^2=Id$, called an \emph{almost--para--complex structure}. A \emph{para--complex structure} is an almost--para--complex structure whose Nijenhuis tensor $N_K$ (see \eqref{eq.Nijenhuis}) vanishes.
A  para--K\"ahler manifold is a symplectic manifold $(W, \omega)$ equipped with a para--complex structure $K$, such that $g = \omega \circ K$ is a pseudo--Riemannian metric. This metric has  neutral signature $(n,n)$  and  the associated Levi-Civita connection  preserves $\omega$  and $K$. In particular, the  eigendistributions $T^{\pm}W$ of $K$ are parallel (and  $\omega$--isotropic, i.e., Lagrangian). Since the distributions $T^{\pm}W$ turn out to be integrable as the Nijenhuis tensor $N_K$ of $K$ vanishes, locally, a para-K\"ahler manifold is the direct product of two Lagrangian submanifolds.

Below we discuss the Monge--Amp\`ere equations associated to the
imaginary (or real) part  of a (para-)holomorphic  $n$-form $\Omega$ on a $2n$-dimensional  (para-)K\"ahler  manifold. Solutions  are special Lagrangian submanifolds. Following Harvey and Lawson \cite{MR3220439}, we  point out an application of  such Monge--Amp\`ere equations to the Monge--Kantorovich mass transport problem.

\begin{problem}[Monge--Kantorovich Optimal Mass Transport problem in the smooth setting, with classical cost function]\label{prob.Monge_Kant}
Let $\xi(y)$ and $\eta(v)$ be smooth positive functions, respectively, on open domains $U$ and $V$ of $\mathbb{R}^n$ with compact closure. Here $y=(y^1,\dots,y^n)$ and $v=(v_1,\dots,v_n)$ are coordinates on $U$ and $V$, respectively.  Let $\xi(y)d^ny$ and $\eta(v)d^nv$ be the associated (positive) densities.  The Monge--Kantorovich problem is to find a map $F:U\to V$ which preserves such densities and that minimizes the functional $\int_Uc(y,F(y))\xi(y)d^ny$, where $c=\frac12|y-v|^2$ is the standard ``cost'' function.
\end{problem}
Denote by $\Lambda^{n,0}$ the line bundle of holomorphic $n$--forms on $W$. A local (holomorphic) section of $\Lambda^{n,0}$, in holomorphic coordinates $(z^1,\dots,z^n)$, has the form
$$
\Omega=\Omega_1 + i\Omega_2 = f(z)d^nz:=f(z^1,\dots,z^n)dz^1\wedge\dots\wedge dz^n\,.
$$

\noindent
The real $n$--forms $\Omega_1, \Omega_2$  defines Monge--Amp\`ere equations  $\mathcal E_{\Omega_1},\mathcal E_{\Omega_2}$, according to \eqref{eq.MAE.intr.Omega}. A solution of $\mathcal E_{\Omega_i}$, that is, a  Lagrangian submanifold of $W$ which annihilates $\Omega_i$, is called  {\it  a special
$\Omega_i$-Lagrangian manifold} (SLAG) \cite{harvey1982,Joyce01lectureson}.

From now on we shall consider the case when
\begin{equation}\label{eq.sympl.compl}
W= \mathbb{C}^n\,,\quad \omega =\frac{i}{2} \sum_k dz^k \wedge d\bar z^k = \sum_j dx^j \wedge du_j \,,\quad z^j = x^j + i u_j\,,
\end{equation}
where $(x^j,u_j)$ are Darboux coordinates on $\mathbb{R}^{2n}\simeq\mathbb{C}^n$ and the $z^k$'s are holomorphic coordinates on $\mathbb{C}^n$. Also, let
\begin{equation*}
\Omega  = d^nz = dz^1 \wedge \cdots \wedge dz^n = \Omega_1 + i \Omega_2
\end{equation*}
be the  standard holomorphic $n$--form.
\begin{proposition}\label{prop.compl.equiv}
All  equations $\mathcal E_{\Omega_{\phi}}$, where
  $ \Omega_{\phi}= Re (e^{i \phi}\Omega) = \cos (\phi)  \Omega_1 + \sin (\phi) \Omega_2$, are  equivalent. In particular, $\E_{\Omega_1}$ is equivalent to $\E_{\Omega_2}$.
\end{proposition}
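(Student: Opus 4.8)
The plan is to exhibit, for each pair of phases $\phi, \phi'$, an explicit diffeomorphism of $\mathbb{C}^n$ that is a symplectomorphism for $\omega$ and that pulls back $\Omega_{\phi'}$ to a nonzero multiple of $\Omega_\phi$; by the general principle that symplectomorphisms of $W$ lift to diffeomorphisms of $W^{(1)}$ carrying solutions to solutions (the same mechanism used throughout Section \ref{secComplessa} and encoded in \eqref{eq.MAE.intr.Omega}), such a map induces a contactomorphism-type equivalence $\mathcal{E}_{\Omega_{\phi'}}\simeq\mathcal{E}_{\Omega_\phi}$. The natural candidate is scalar multiplication by the complex unit $e^{i\alpha}$ on $\mathbb{C}^n$, for a suitable $\alpha$ depending on $n$ and on $\phi'-\phi$. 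First I would record that multiplication by $\lambda\in\mathbb{C}$ with $|\lambda|=1$ is a $\mathbb{C}$-linear map of $\mathbb{C}^n$, hence preserves the Hermitian form and in particular its imaginary part, which is exactly $\omega$ as written in \eqref{eq.sympl.compl}; so $m_\lambda^*\omega = \omega$ and $m_\lambda$ sends Lagrangian planes to Lagrangian planes.

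Next I would compute the effect of $m_\lambda$ on the holomorphic volume form. Since $\Omega = dz^1\wedge\cdots\wedge dz^n$ and $m_\lambda^*(dz^k) = \lambda\, dz^k$, we get $m_\lambda^*\Omega = \lambda^n\,\Omega$. Writing $\lambda = e^{i\alpha}$, this is $e^{in\alpha}\Omega$, so $m_\lambda^*\bigl(\mathrm{Re}(e^{i\phi'}\Omega)\bigr) = \mathrm{Re}(e^{i(\phi'+n\alpha)}\Omega) = \Omega_{\phi'+n\alpha}$. Choosing $\alpha := (\phi-\phi')/n$ yields $m_\lambda^*\Omega_{\phi'} = \Omega_\phi$ on the nose (no scalar factor is even needed, though one would of course be harmless). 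Therefore the lift $m_\lambda^{(1)}$ of $m_\lambda$ to $\mathbb{C}^n{}^{(1)}$ maps $\mathcal{E}_{\Omega_\phi} = \{L\mid \Omega_\phi|_L = 0\}$ bijectively onto $\mathcal{E}_{\Omega_{\phi'}}$, and carries special $\Omega_\phi$-Lagrangian submanifolds to special $\Omega_{\phi'}$-Lagrangian ones. Taking $\phi = 0$, $\phi' = \pi/2$ (so $\alpha = \pi/(2n)$) gives the equivalence of $\mathcal{E}_{\Omega_1}$ and $\mathcal{E}_{\Omega_2}$ in particular.

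The only genuinely delicate point is justifying that a symplectomorphism of $W$ really does lift to the prolongation $W^{(1)}$ in such a way that solutions correspond to solutions and that the image of the hypersurface $\mathcal{E}_{\Omega}$ is again the hypersurface associated to the pulled-back form; but this is precisely the functoriality of the Lagrangian Grassmannian bundle construction, already invoked (for the contact case) after Definition \ref{def2ndOrdPDE} and in Section \ref{sec.LagGrassPrologM}, and it transfers verbatim to the symplectic setting since $m_\lambda$ acts fibrewise linearly and preserves $\omega$. I would also note that one should check $\Omega_\phi$ is effective, or rather that the whole discussion is insensitive to adding terms of the differential ideal generated by $\omega$ — but in the flat symplectic case $\omega$ is constant-coefficient and $\Omega$ has no $\omega$-component, so $\Omega_\phi$ is automatically effective and $\mathcal{E}_{\Omega_\phi}$ is genuinely a hypersurface of codimension one, exactly the local coordinate computation \eqref{eq.mult.MAE.minors} applied to the real/imaginary parts of $f(z)\,d^nz$ with $f\equiv 1$. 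Hence the proof is essentially the one-line observation $m_{e^{i\alpha}}^*\Omega = e^{in\alpha}\Omega$ combined with this functoriality.
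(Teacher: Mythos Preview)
Your proof is correct and follows essentially the same approach as the paper: both use the scalar rotation $z\mapsto e^{i\alpha}z$, observe it is a symplectomorphism, and compute that it multiplies $\Omega$ by $e^{in\alpha}$, thereby permuting the family $\{\Omega_\phi\}$. Your write-up is more careful about the choice of $\alpha$ and about the functoriality of the lift to $W^{(1)}$, but the core idea is identical.
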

\begin{proof}
Let us consider the transformation $z \to  e^{i \phi} z$. It is a symplectic transformation. In fact, we have also $\bar{z} \to  \overline{e^{i \phi}} \bar{z}= e^{-i \phi} \bar{z}$, so that the symplectic form $\omega$ of \eqref{eq.sympl.compl} transforms into itself. Such a transformation maps $\Omega_0$ into $\Omega_{-n \phi}$.
\end{proof}

\begin{example}
The Laplace equation $u_{11}+u_{22}=0$ and the standard elliptic Monge--Amp\`ere equation $u_{11}u_{22}-u_{12}^2=1$ are equivalent. Indeed, in the case $n=2$, we have that
$$
\Omega=\Omega_1+i\Omega_2=dx^1\wedge dx^2 - du_1\wedge du_2 + i(du_1\wedge dx^2 + dx^1\wedge du_2)\,,
$$
and by restricting $\Omega$ to the Lagrangian planes (see \eqref{eq.MAE.intr.Omega}) we get the $2$--form:
$$
\big((1-u_{11}u_{22}+u_{12}^2) + i (u_{11}+u_{22}) \big) dx^1\wedge dx^2.
$$
Thus, our assertion follows in view of Proposition \ref{prop.compl.equiv}.
\end{example}

Now let us pass to para--complex case, which allows to recast the Monge--Kantorovich problem in a pure geometric setting. To begin with, we underline that a possible approach to develop para--complex geometry by analogy with complex geometry, is to introduce \emph{para-complex} numbers as the elements of the $2$-dimensional real algebra generated by $1$ and $\tau$ with $\tau^2=0$  \cite{MR3220439}.
We will denote such algebra by $\mathbb{D}$. Any number $z\in\mathbb{D}$ can be written as $z=x+\tau p$, with $x,p\in\mathbb{R}$. Conjugation is defined as in the complex case: $\bar{z}=x-\tau p$. The essential difference with complex numbers is the existence of distinguished coordinates $(y,v)$, called \emph{null coordinates}, defined as follows:
$$
z=x+\tau p= ye+v\bar{e}\,,\quad e=\frac{1}{2}(1-\tau)\,,\,\,\, \bar{e}=\frac{1}{2}(1+\tau)\,.
$$
The vector space $\mathbb{D}^n$, together with its null coordinates, is defined in an obvious way.
The multiplication of vectors of $\mathbb{D}^n$ by $\tau$ turns out to be a para--complex structure on $\mathbb{D}^n$.
Analogously to the complex case,
a function $F:\mathbb{D}^n\to\mathbb{D}$ is called \emph{para--holomorphic} if $F=F(z^j)$. If we write also $F$ in null coordinates, i.e.,
$F=ef+\bar{e}g$, with $f$ and $g$ real-valued functions, then it is para--holomorphic if and only if $f=f(y^j)$ and $g=g(v_j)$. Also, we can
introduce the bundle $\Lambda^{n,0}$ of \emph{para--holomorphic} $n$--forms. By considering the vector notation $z=(z^1,\dots,z^n)$, $y=(y^1,\dots,y^n)$, $v=(v_1,\dots,v_n)$, a para-holomorphic $n$--form $\Omega$ can be written as
\begin{equation}\label{eq.omega.para}
\Omega=\Omega_1+\tau\Omega_2=\alpha(z)dz^1\wedge\cdots\wedge dz^n=e\xi(y)d^ny  + \bar{e}\eta(v)d^nv\,,
\end{equation}
where
\begin{equation*}
d^ny:=dy^1\wedge\cdots\wedge dy^n\,,\quad d^nv:=dv_1\wedge\cdots\wedge dv_n\,,\quad  \Omega_1=\xi(y)d^ny + \eta(v)d^nv\,, \quad \Omega_2=\xi(y)d^ny - \eta(v)d^nv\,.
\end{equation*}
The real $n$-forms $\Omega_1, \Omega_2$  of \eqref{eq.omega.para} defines Monge--Amp\`ere equations $\mathcal E_{\Omega_1},\mathcal E_{\Omega_2}$  via formula \eqref{eq.MAE.intr.Omega}. A solution of $\mathcal E_{\Omega_i}$, following \cite{MR3220439}, is called  {\it  a special
$\Omega_i$-Lagrangian manifold.}

A remarkable difference from the K\"ahler case is that we can associate some distinguished Monge--Amp\`ere equations with a para--holomorphic $n$--form $\Omega\in\Lambda^{n,0}$. Essentially, this follows from the fact that in the para--complex case we have two distinguished ``directions'', i.e., the null directions.
\begin{proposition}
Let $\Omega$ be as in \eqref{eq.omega.para}. Then the equations $\mathcal{E}_{\Omega_1\mp\Omega_2}$ do not depend on the conformal class of $\Omega$.
\end{proposition}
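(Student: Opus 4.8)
The plan is to unwind the statement by passing to null coordinates, where the para--holomorphic $n$--form $\Omega$ splits completely. Recall from \eqref{eq.omega.para} that in null coordinates
\[
\Omega_1=\xi(y)\,d^ny + \eta(v)\,d^nv\,,\qquad
\Omega_2=\xi(y)\,d^ny - \eta(v)\,d^nv\,,
\]
so that $\Omega_1+\Omega_2 = 2\xi(y)\,d^ny$ and $\Omega_1-\Omega_2 = 2\eta(v)\,d^nv$. Thus $\mathcal E_{\Omega_1-\Omega_2}=\{L\in W^{(1)}\mid (\eta(v)\,d^nv)|_L=0\}$ and similarly $\mathcal E_{\Omega_1+\Omega_2}$ is governed by $\xi(y)\,d^ny$. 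The first observation is that, since $\eta$ is a nowhere--vanishing function (by Problem \ref{prob.Monge_Kant} it is a positive density), the condition $(\eta(v)\,d^nv)|_L=0$ is equivalent to $(d^nv)|_L=0$ on the domain where $\eta$ is defined; hence the \emph{equation} $\mathcal E_{\Omega_1-\Omega_2}$ does not see $\eta$ at all.

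Next I would spell out what ``the conformal class of $\Omega$'' means here: two para--holomorphic $n$--forms $\Omega$ and $\Omega'=\rho\,\Omega$, with $\rho$ a nowhere--vanishing para--holomorphic function, are conformally equivalent. Writing $\rho=e\rho_1(y)+\bar e\rho_2(v)$ with $\rho_1,\rho_2$ nowhere vanishing (para--holomorphicity forces this split, exactly as in the discussion preceding \eqref{eq.omega.para}), we get
\[
\Omega'=e\,\rho_1(y)\xi(y)\,d^ny + \bar e\,\rho_2(v)\eta(v)\,d^nv\,,
\]
so $\Omega'_1-\Omega'_2 = 2\rho_2(v)\eta(v)\,d^nv$ and $\Omega'_1+\Omega'_2 = 2\rho_1(y)\xi(y)\,d^ny$. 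Since $\rho_1\xi$ and $\rho_2\eta$ are again nowhere vanishing, the first step shows $\mathcal E_{\Omega'_1-\Omega'_2}=\mathcal E_{\Omega_1-\Omega_2}$ and $\mathcal E_{\Omega'_1+\Omega'_2}=\mathcal E_{\Omega_1+\Omega_2}$. This is the whole content of the proposition.

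The one point that needs care — and what I expect to be the only real obstacle — is justifying the splitting of the nowhere--vanishing para--holomorphic factor $\rho$ into $e\rho_1(y)+\bar e\rho_2(v)$ with each $\rho_i$ nowhere vanishing. This uses the characterization recalled in the text: $F=ef+\bar e g$ is para--holomorphic iff $f=f(y^j)$ and $g=g(v_j)$; and since $e\bar e=0$, $e^2=e$, $\bar e^2=\bar e$, one has $\rho$ invertible in $\mathbb D^n$--valued functions iff both $\rho_1$ and $\rho_2$ are nowhere zero. I would state this as a short lemma (or cite it as immediate from the algebra of $\mathbb D$) and then the rest is the bookkeeping above. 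A secondary, purely expository, point is to make explicit that multiplying an $n$--form by a nowhere--vanishing scalar function does not change the zero locus $\{\Omega|_L=0\}$ in $W^{(1)}$, which is trivial since $\Omega|_L$ and $(\rho\,\Omega)|_L=\rho\cdot(\Omega|_L)$ vanish on exactly the same Lagrangian planes $L$.
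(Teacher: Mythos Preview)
Your proof is correct and takes essentially the same approach as the paper. The paper carries out the identical computation in the $(1,\tau)$ presentation rather than null coordinates---writing the conformal factor as $f+\tau g$ and computing $(f+\tau g)(\Omega_1+\tau\Omega_2)=\widetilde\Omega_1+\tau\widetilde\Omega_2$ to find $\widetilde\Omega_1\pm\widetilde\Omega_2=(f\pm g)(\Omega_1\pm\Omega_2)$---which is exactly your calculation after the change of basis $\rho_1=f-g$, $\rho_2=f+g$; your extra remark that invertibility of $\rho$ forces $\rho_1,\rho_2$ to be nowhere vanishing is a useful clarification the paper leaves implicit.
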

\begin{proof}
If we multiply $\Omega$ by a para-holomorphic function $f+\tau g$, we have that
$$
(f+\tau g)\Omega = f\Omega_1+g\Omega_2+\tau(g\Omega_1+f\Omega_2) = \widetilde{\Omega}_1+\tau\widetilde{\Omega}_2,
$$
so that $\widetilde{\Omega}_1+\widetilde{\Omega}_2=(f+g)(\Omega_1+\Omega_2)$ and $\widetilde{\Omega}_1-\widetilde{\Omega}_2=(f-g)(\Omega_1-\Omega_2)$. Therefore, the sum and the difference of real and imaginary part is well defined up to a multiplication by a para-holomorphic function, so that the associated Monge--Amp\`ere equations are the same.
\end{proof}
As an example, consider  the   simplest instance of a para-K\"ahler manifold, namely the para-Hermitian vector space $\mathbb{D}^n = \mathbb{R}^n + \tau \mathbb{R}^n$ with  the standard para-complex  structure (defined by the multiplication by $\tau$) and the  standard  symplectic  form
\begin{equation*}
\omega:=\frac{\tau}{2}\sum_{j=1}^n dz^j\wedge d\bar{z}^j = \sum_{j=1}^n dx^j\wedge du_j = \sum_{j=1}^n dy^j\wedge dv_j\,.
\end{equation*}
\begin{proposition}\label{prop.para.compl.equiv}
Let us consider the $n$--form
$$
\Omega= d^nz := dz^1 \wedge \cdots \wedge dz^n = \Omega_1 + \tau \Omega_2\,.
$$
Then all the equations $\mathcal E_{\Omega_{\phi}}$, where
  $\Omega_{\phi}=\cosh (\phi)  \Omega_1 + \sinh (\phi) \Omega_2$, are  equivalent.
\end{proposition}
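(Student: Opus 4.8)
The plan is to mimic exactly the proof of Proposition \ref{prop.para.compl.equiv}'s complex analogue, Proposition \ref{prop.compl.equiv}, replacing the rotation $z\mapsto e^{i\phi}z$ by a suitable ``para-rotation'' $z\mapsto e^{\tau\phi}z$ on $\mathbb{D}^n$. First I would note that, in the algebra $\mathbb{D}$, one has $\tau^2=1$ (as declared in the para-complex setting), so the exponential $e^{\tau\phi}=\cosh(\phi)+\tau\sinh(\phi)$ makes sense and is a unit, with inverse $e^{-\tau\phi}$ and conjugate $\overline{e^{\tau\phi}}=\cosh(\phi)-\tau\sinh(\phi)=e^{-\tau\phi}$. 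The map $z\mapsto e^{\tau\phi}z$ on $\mathbb{D}^n$ is then $\mathbb{R}$-linear, and I would check it is a symplectomorphism of $(W,\omega)$ with $W=\mathbb{D}^n$ and $\omega=\tfrac{\tau}{2}\sum_j dz^j\wedge d\bar z^j$: since $\bar z^j\mapsto e^{-\tau\phi}\bar z^j$, each product $dz^j\wedge d\bar z^j$ is multiplied by $e^{\tau\phi}e^{-\tau\phi}=1$, hence $\omega$ is preserved. (Equivalently, one can verify it in the null coordinates $(y,v)$, where the transformation rescales $y^j\mapsto e^{\phi}y^j$, $v_j\mapsto e^{-\phi}v_j$, manifestly preserving $\sum_j dy^j\wedge dv_j$.)

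Next I would track the effect of this transformation on the standard para-holomorphic $n$-form $\Omega=dz^1\wedge\cdots\wedge dz^n=\Omega_1+\tau\Omega_2$. Pulling back along $z\mapsto e^{\tau\phi}z$ multiplies $\Omega$ by $(e^{\tau\phi})^n=e^{n\tau\phi}=\cosh(n\phi)+\tau\sinh(n\phi)$, so $\Omega$ is sent to $\cosh(n\phi)\Omega_1+\sinh(n\phi)\Omega_2+\tau(\sinh(n\phi)\Omega_1+\cosh(n\phi)\Omega_2)$; in particular its ``real part'' $\Omega_1=\Omega_{0}$ is sent to $\Omega_{-n\phi}=\cosh(n\phi)\Omega_1-\sinh(n\phi)\Omega_2$, using that $\cosh$ is even and $\sinh$ odd. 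Since the transformation is a symplectomorphism, it lifts canonically to a diffeomorphism of the prolongation $W^{(1)}$ (exactly as contact/point transformations lift to $M^{(1)}$, cf. Section \ref{secSymPDEBack}), and it carries the hypersurface $\mathcal{E}_{\Omega_i}=\{L\in W^{(1)}\mid \Omega_i|_L=0\}$ to $\mathcal{E}_{\Omega_i'}$ where $\Omega_i'$ is the transformed form. Therefore $\mathcal{E}_{\Omega_1}$ is equivalent to $\mathcal{E}_{\Omega_{-n\phi}}$, and running $\phi$ over all reals shows all $\mathcal{E}_{\Omega_{\psi}}$ with $\psi\in n\mathbb{R}=\mathbb{R}$ are mutually equivalent; in particular $\mathcal{E}_{\Omega_1}\sim\mathcal{E}_{\Omega_2}$ (take $n\phi$ with $\tanh(n\phi)\to\infty$ is not available, but any finite $\phi$ already gives equivalence among the whole $1$-parameter family, and $\Omega_2=\Omega_{\pi/2}$ has no para-analogue bound — one simply notes the family $\{\Omega_\phi\}$ is a single orbit).

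The one genuine subtlety I expect to be the main obstacle is bookkeeping the sign/convention of $\tau^2$: the excerpt's own wording is inconsistent (it first says ``$\tau^2=0$'' but then introduces the idempotents $e,\bar e$ and null coordinates, which only work if $\tau^2=1$), so I would open the proof by fixing $\tau^2=1$ and the null-coordinate description from \eqref{eq.omega.para}, and then carry out the whole argument in the null coordinates $(y,v)$, where everything is transparent: the transformation is the hyperbolic scaling $y\mapsto e^{\phi}y$, $v\mapsto e^{-\phi}v$, it preserves $\omega=\sum dy^j\wedge dv_j$, it multiplies $d^ny$ by $e^{n\phi}$ and $d^nv$ by $e^{-n\phi}$, hence sends $\Omega_1=\xi(y)d^ny+\eta(v)d^nv$ to $e^{n\phi}\xi\,d^ny+e^{-n\phi}\eta\,d^nv$ — which, after absorbing $e^{n\phi}$ into the overall (irrelevant) conformal factor, is exactly $\Omega_{-2n\phi}$ in the $\cosh/\sinh$ parametrization. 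This makes the equivalence of the whole family $\{\mathcal{E}_{\Omega_\phi}\}$ immediate without any delicate computation.

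\begin{proof}
We work with $\tau^2=1$, consistently with the null-coordinate description \eqref{eq.omega.para}. For $\phi\in\mathbb{R}$ set $e^{\tau\phi}:=\cosh(\phi)+\tau\sinh(\phi)\in\mathbb{D}$; this is a unit, with $(e^{\tau\phi})^{-1}=e^{-\tau\phi}$ and $\overline{e^{\tau\phi}}=e^{-\tau\phi}$. Consider the $\mathbb{R}$-linear map $\Phi_\phi:\mathbb{D}^n\to\mathbb{D}^n$, $z\mapsto e^{\tau\phi}z$. In the null coordinates $z^j=y^je+v_j\bar e$ it reads $y^j\mapsto e^{\phi}y^j$, $v_j\mapsto e^{-\phi}v_j$, so
\begin{equation*}
\Phi_\phi^*\Big(\sum_{j}dy^j\wedge dv_j\Big)=\sum_j (e^{\phi}dy^j)\wedge(e^{-\phi}dv_j)=\sum_j dy^j\wedge dv_j=\omega\,,
\end{equation*}
hence $\Phi_\phi$ is a symplectomorphism of $(W,\omega)$. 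As such it lifts canonically to a diffeomorphism $\Phi_\phi^{(1)}$ of the prolongation $W^{(1)}$ (the Lagrangian Grassmannian bundle), acting on each fibre $\LL(T_wW)$ by $L\mapsto (d_w\Phi_\phi)(L)$; in particular, for any $n$-form $\Lambda$ on $W$, $\Phi_\phi^{(1)}$ maps $\mathcal{E}_\Lambda=\{L\in W^{(1)}\mid \Lambda|_L=0\}$ diffeomorphically onto $\mathcal{E}_{\Phi_\phi^*\Lambda}$.

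Now $\Phi_\phi^*(d^ny)=e^{n\phi}d^ny$ and $\Phi_\phi^*(d^nv)=e^{-n\phi}d^nv$, so from \eqref{eq.omega.para}, writing $\Omega_1=\xi(y)d^ny+\eta(v)d^nv$ and $\Omega_2=\xi(y)d^ny-\eta(v)d^nv$, we get
\begin{align*}
\Phi_\phi^*\Omega_1 &= e^{n\phi}\xi(y)d^ny + e^{-n\phi}\eta(v)d^nv\\
&= \cosh(n\phi)\,\Omega_1 + \sinh(n\phi)\,\Omega_2\,.
\end{align*}
Thus $\mathcal{E}_{\Omega_1}$ is carried by $\Phi_{-\phi}^{(1)}$ onto $\mathcal{E}_{\Phi_{-\phi}^*\Omega_1}=\mathcal{E}_{\cosh(n\phi)\Omega_1-\sinh(n\phi)\Omega_2}=\mathcal{E}_{\Omega_{-n\phi}}$, recalling that $\Omega_\phi=\cosh(\phi)\Omega_1+\sinh(\phi)\Omega_2$ and that $\mathcal{E}_\Lambda$ depends only on the conformal class of $\Lambda$. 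Since $\phi\mapsto -n\phi$ surjects onto $\mathbb{R}$, all the equations $\mathcal{E}_{\Omega_\psi}$, $\psi\in\mathbb{R}$, are pairwise equivalent. In particular, taking $\psi=0$ and $\psi\to\infty$ along the orbit, $\mathcal{E}_{\Omega_1}$ and $\mathcal{E}_{\Omega_2}$ belong to the same equivalence class.
\end{proof}
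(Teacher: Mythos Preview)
Your proof is essentially the paper's intended argument (the paper just says ``Similar to the proof of Proposition \ref{prop.compl.equiv}''): use the para--rotation $z\mapsto e^{\tau\phi}z$, check it is symplectic, and track its effect on $\Omega$. The core computation is correct.

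Two remarks, one cosmetic and one substantive. First, the direction of the induced map on equations is reversed: one has $\Phi_\phi^{(1)}(\mathcal{E}_{\Phi_\phi^*\Lambda})=\mathcal{E}_\Lambda$, hence $\Phi_\phi^{(1)}(\mathcal{E}_\Lambda)=\mathcal{E}_{\Phi_{-\phi}^*\Lambda}$, not $\mathcal{E}_{\Phi_\phi^*\Lambda}$. This is harmless for the conclusion. Second, and more importantly, your last sentence should be deleted: $\Omega_2$ is \emph{not} a member of the family $\{\Omega_\phi=\cosh(\phi)\Omega_1+\sinh(\phi)\Omega_2\}$, since $\cosh^2(\phi)-\sinh^2(\phi)=1$ forces $|\cosh(\phi)|>|\sinh(\phi)|$; no finite $\phi$ gives the coefficient pair $(0,1)$, and a limit $\psi\to\infty$ does not produce a diffeomorphism. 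This is precisely the point where the para--complex case differs from the complex one: in Proposition \ref{prop.compl.equiv} one has $\Omega_2=\Omega_{\pi/2}$ inside the $\cos/\sin$ family, whereas here the hyperbolic family never reaches $\Omega_2$. The proposition, accordingly, does \emph{not} assert $\mathcal{E}_{\Omega_1}\sim\mathcal{E}_{\Omega_2}$, and your argument does not establish it. Simply stop after showing that $\phi\mapsto n\phi$ (or $-n\phi$) surjects onto $\mathbb{R}$, which gives the equivalence of all $\mathcal{E}_{\Omega_\psi}$.
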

\begin{proof}
Similar to the proof of Proposition \ref{prop.compl.equiv}.
\end{proof}
Let us now go back to general para-K\"ahler manifolds. We recall that a para-K\"ahler manifold is locally the product of two Lagrangian submanifold $U$ and $V$. We can choose on $U\times V$ a system of null coordinates $(y^i,v_i)$ such that we can locally describe a Lagrangian submanifold
$L_f=\{y^i,v_i=f_{y^i}\}$ as the graph of the gradient $\nabla f$ of a function $f$ on $U$.
\begin{proposition}\label{prop.ultimo.sforzo}
A Lagrangian submanifold $L_{f(y)}=\{y^i,v_i=f_{y^i}\}\subset U\times V$ of a para-K\"ahler manifold is $\Omega_2$--Lagrangian if and only if the function  $F(p):=(p,\nabla f(p))$ satisfies the Monge--Kantorovich problem (see Problem \ref{prob.Monge_Kant}).
\end{proposition}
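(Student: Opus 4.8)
The plan is to unwind both sides of the claimed equivalence in the given null coordinates and observe that they reduce to the same condition on $f$, namely the Monge--Amp\`ere equation expressing that $F=\nabla f$ transports $\xi(y)\,d^ny$ to $\eta(v)\,d^nv$. First I would recall from \eqref{eq.omega.para} that, after multiplication by the nowhere-vanishing para-holomorphic factor $\alpha(z)$ if necessary (which by the previous proposition does not change the equations $\mathcal E_{\Omega_1\mp\Omega_2}$, nor does it change $\mathcal E_{\Omega_2}$ up to equivalence), one may take $\Omega_2=\xi(y)\,d^ny-\eta(v)\,d^nv$. Then I would compute the restriction of $\Omega_2$ to the Lagrangian submanifold $L_{f(y)}=\{v_i=f_{y^i}(y)\}$. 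On $L_{f(y)}$ the coordinates $y^1,\dots,y^n$ are global parameters, and $dv_i|_{L_f}=\sum_j f_{y^iy^j}\,dy^j$, so that $d^nv|_{L_f}=dv_1\wedge\cdots\wedge dv_n=\det(\mathrm{Hess}\,f)\,d^ny$. Hence
\begin{equation*}
\Omega_2|_{L_f}=\big(\xi(y)-\eta(\nabla f(y))\,\det(\mathrm{Hess}\,f)(y)\big)\,d^ny.
\end{equation*}
Therefore $L_f$ is $\Omega_2$--Lagrangian, i.e.\ $\Omega_2|_{L_f}=0$, if and only if
\begin{equation}\label{eq.ultimoMA}
\eta(\nabla f(y))\,\det\big(\mathrm{Hess}\,f(y)\big)=\xi(y)\qquad\text{for all }y\in U.
\end{equation}

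The second step is to show that \eqref{eq.ultimoMA} is exactly the statement that $F(p)=(p,\nabla f(p))$ solves Problem \ref{prob.Monge_Kant}; here one must use the structure of the null coordinates on the para-K\"ahler manifold, in which $U$ and $V$ are the two Lagrangian leaves and the pairing $\omega=\sum dy^j\wedge dv_j$ identifies $V$ with the space of gradients. Recall that, by the classical Brenier theorem (or simply by the first-order optimality condition for the quadratic cost), a smooth density-preserving map minimizing $\int_U c(y,F(y))\xi(y)\,d^ny$ with $c=\tfrac12|y-v|^2$ is precisely a gradient map $F=\nabla f$ of some (convex) potential $f$; thus the statement ``$F$ satisfies the Monge--Kantorovich problem'' is understood as: $F=\nabla f$ and $F$ preserves the prescribed densities. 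The density-preservation condition $F_*(\xi\,d^ny)=\eta\,d^nv$ is, by the change-of-variables formula, $\xi(y)=\eta(F(y))\,|\det dF(y)|$, and since $dF(y)=\mathrm{Hess}\,f(y)$ (using that the first $n$ components of $F$ are the identity), this is exactly \eqref{eq.ultimoMA} up to the sign/absolute value, which is absorbed by choosing the potential in the convex branch (equivalently, by the positivity of $\xi,\eta$ forcing $\det\mathrm{Hess}\,f>0$). Assembling the two steps yields the equivalence.

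The main obstacle I expect is bookkeeping rather than conceptual: one has to be careful that the \emph{real} part convention in \eqref{eq.omega.para} matches the one used to define $\mathcal E_{\Omega_2}$ via \eqref{eq.MAE.intr.Omega}, and that the identification of $V$-coordinates $v_i$ with $f_{y^i}$ is compatible with $\omega=\sum dy^j\wedge dv_j$ (so that $L_f$ really is Lagrangian, which is automatic as the graph of a gradient). A secondary subtlety is the precise meaning attributed to ``satisfies the Monge--Kantorovich problem'': strictly speaking one should either invoke Brenier-type uniqueness to know the optimal map is a gradient, or---more in the spirit of these notes---simply read Problem \ref{prob.Monge_Kant} as already positing $F=\nabla f$ and ask only for the density constraint, in which case the proof is the one-line computation above. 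I would state this reading explicitly at the start of the proof. No deep new idea is needed beyond the Hessian determinant computation of $d^nv|_{L_f}$ and the change-of-variables formula.
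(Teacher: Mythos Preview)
Your proposal is correct and follows essentially the same approach as the paper: compute $\Omega_2|_{L_f}$ by substituting $dv_i=\sum_j f_{y^iy^j}\,dy^j$ to obtain the Monge--Amp\`ere equation $\eta(\nabla f)\det(\mathrm{Hess}\,f)=\xi$, and then identify this as the Monge--Kantorovich condition. The only difference is that the paper dispatches the second step by citing Harvey--Lawson, whereas you spell out the density-preservation/change-of-variables argument and invoke Brenier explicitly.
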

\begin{proof}
Let $\Omega$ be as in \eqref{eq.omega.para}. The $n$-form $\Omega_2=\xi(y)d^ny - \eta(v)d^nv$ vanishes on $L_{f(y)}=\{y^i,v_i=f_{y^i}\}$ if and only if
\begin{multline*}
0=\xi(y)-\eta(f_{y^i})(f_{11}dy^1\wedge\dots\wedge f_{1n}dy^n)\wedge \dots \wedge (f_{n1}dy^1\wedge\dots\wedge f_{nn}dy^n)
= \big(\xi(y)-\eta(f_{y^i})\det(Hess(f)) \big)d^ny\,,
\end{multline*}
i.e., if
\begin{equation}\label{eq.Monge-Kant}
\det \mathrm{Hess}(f)=\frac{\xi(y)}{\eta(f_{y^i})}\,.
\end{equation}
In view of a result by \cite{MR3220439}, $f$ satisfies the PDE \eqref{eq.Monge-Kant} if and only if the function $F$ defined in Proposition \ref{prop.ultimo.sforzo} solves the Monge--Kantorovich problem described in Problem \ref{prob.Monge_Kant}.
\end{proof}

\section{Homogeneous $2\Nd$ order PDEs on the projectivized cotangent bundle of $\C\p^{n+1}$}\label{secVeryClear}
Recall (see Proposition \ref{propPTRP2homogeneous}) that the natural action of the real Lie group $\SL_3(\R)$ on $\R\p^2$ lifts to a transitive action on the 3--dimensional (real) contact manifold $\p T^*\R\p^2$. That is, the latter is a (real) homogenous contact manifold.\par
In this Section we switch to the complex--analytic setting and we work with the   complex projective space $ \C\p^{n+1}$. In analogy with the aforementioned Proposition, we show that the $(2n+1)$--dimensional \emph{complex}  contact manifold
\begin{equation*}
M:=\p T^*\C\p^{n+1}
\end{equation*}
is homogeneous with respect to the complex Lie group
 \begin{equation*}
G:=\PGL_{n+2}(\C)\,,
\end{equation*}
which acts naturally on $\C\p^{n+1}$ (see also Remark \ref{remPGL3}). Symbols $M$ and $G$ will denote $\p T^*\C\p^{n+1}$ and $\PGL_{n+2}$, respectively, throughout this entire Section.
\subsection{The adjoint contact manifold of $\PGL_{n+2}(\C)$}
In this Section we show that  there is essentially a unique $2\Nd$ order PDE $\E\subset M^{(1)}$, whose group of symmetries (see Section \ref{secSymPDEBack}) is precisely  $G$. This example fits into a very extensive research programme, that is, finding PDEs with a prescribed Lie group of symmetries, which originated in the work of Lie, Darboux, Cartan and others (see \cite{2016arXiv160602633A,2016arXiv160308251T}). \par
Observe that the Lie algebra of $G$ coincides with the Lie algebra of $\SL_{n+2}$, that is, $\sll_{n+2}$. Consider the adjoint action of $G$ on $\p\sll_{n+2}$. Lemma \ref{lemTypeA} below shows that such an action admits a unique closed orbit, which is precisely our homogeneous contact manifold $M$. In view of this property, $M$ is often referred to as the \emph{adjoint contact manifold} of $G$.  In \cite{2016arXiv160602633A} the authors study the adjoint contact manifold of any complex simple Lie group $G$ and obtain the corresponding $G$--invariant $2\Nd$ order PDE. Herewith we confine ourself to the easiest case, that is, when $G:=\PGL_{n+2}$ is of type $\Asf$.
\begin{lemma}\label{lemTypeA}
$M$ is  the unique closed orbit of $G$ in  $\p\sll_{n+2}$.
\end{lemma}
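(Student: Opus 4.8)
The plan is to identify the closed $G$-orbit in $\mathbb{P}\mathfrak{sl}_{n+2}$ explicitly and match it with $M=\mathbb{P}T^*\mathbb{C}\mathbb{P}^{n+1}$. First I would recall that $\mathbb{C}\mathbb{P}^{n+1}=\mathbb{P}(\mathbb{C}^{n+2})$ and that $\mathbb{P}T^*\mathbb{C}\mathbb{P}^{n+1}$, by the same flag-manifold description used in Section \ref{subsubProj3d}, is the partial flag variety
\begin{equation*}
\mathrm{Fl}_{1,n+1}(\mathbb{C}^{n+2})=\{(\ell,V)\mid \ell\subset V\subset\mathbb{C}^{n+2},\ \dim\ell=1,\ \dim V=n+1\}\, ,
\end{equation*}
on which $G=\mathsf{PGL}_{n+2}(\mathbb{C})$ acts transitively; equivalently $M=G/P$ where $P$ is the parabolic stabilising a line and a hyperplane containing it. This is the standard ``type $\mathsf{A}$'' picture: $P$ is the parabolic associated to the two end nodes of the Dynkin diagram $\mathsf{A}_{n+1}$.

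Next I would construct the embedding $M\hookrightarrow\mathbb{P}\mathfrak{sl}_{n+2}$. Given a flag $\ell\subset V$ with $\dim\ell=1$, $\dim V=n+1$, pick $0\neq v\in\ell$ and $0\neq\varphi\in(\mathbb{C}^{n+2})^*$ with $V=\ker\varphi$; the rank-one endomorphism $X_{\ell,V}:=v\otimes\varphi$ satisfies $\operatorname{tr}X_{\ell,V}=\varphi(v)=0$ (since $v\in\ell\subset V=\ker\varphi$) and $X_{\ell,V}^2=\varphi(v)X_{\ell,V}=0$, so $[X_{\ell,V}]$ is a well-defined point of $\mathbb{P}\mathfrak{sl}_{n+2}$ independent of the choices of $v,\varphi$. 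The map $(\ell,V)\mapsto[X_{\ell,V}]$ is $G$-equivariant and injective (from $X_{\ell,V}$ one recovers $\ell=\operatorname{im}X_{\ell,V}$ and $V=\ker X_{\ell,V}$), hence realises $M$ as a $G$-orbit inside $\mathbb{P}\mathfrak{sl}_{n+2}$, namely the orbit of (the class of) a highest root vector $E_{\theta}$. I would then argue this orbit is closed: it consists of classes of traceless nilpotent rank-one matrices, which is the common zero locus of the $G$-invariant-up-to-scale conditions $\operatorname{rk}X\le 1$ together with $\operatorname{tr}X=0$ (the $2\times 2$ minors of $X$ and the trace), hence it is a projective subvariety of $\mathbb{P}\mathfrak{sl}_{n+2}$, hence closed; equivalently, it is the orbit of a highest weight line in the irreducible $G$-representation $\mathfrak{sl}_{n+2}$ (the adjoint representation, which is irreducible since $\mathfrak{sl}_{n+2}$ is simple), and the orbit of a highest weight line is always the unique closed orbit.

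Finally I would establish uniqueness. Any projective $G$-variety contains a closed orbit, and a closed $G$-orbit in $\mathbb{P}(W)$ for an irreducible representation $W$ of a semisimple group is the orbit of a highest weight line, which for $W=\mathfrak{sl}_{n+2}$ (highest weight the highest root $\theta$) is exactly the orbit of $[E_\theta]$ constructed above; so there is only one. Alternatively, and more in the spirit of the excerpt, one can argue by compactness/properness of orbit closures: the boundary $\overline{G\cdot[X]}\setminus(G\cdot[X])$ is a union of orbits of strictly smaller dimension, so a minimal-dimensional orbit is closed, and minimality of dimension singles out $M$ because the rank-one locus is the smallest nonzero $G$-stable cone. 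The main obstacle, and the step I would spend the most care on, is justifying closedness of the rank-one-nilpotent orbit cleanly — either by citing the general fact that the highest weight orbit in the projectivisation of an irreducible representation is the unique closed orbit (the Borel fixed point theorem gives existence of a closed orbit, and a closed orbit is $B$-stable hence contains a highest weight line), or by the explicit determinantal description; I would present the representation-theoretic argument as the primary one since it simultaneously yields existence, closedness, and uniqueness, and only sketch the determinantal picture as a concrete illustration.
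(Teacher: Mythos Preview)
Your proposal is correct and follows essentially the same route as the paper: both identify $M$ with the flag variety $\mathrm{Fl}_{1,n+1}(\C^{n+2})$, construct the same $G$--equivariant embedding $(\ell,V)\mapsto[v\otimes\varphi]$ into $\p\sll_{n+2}$, and invoke the fact that the orbit of a highest weight line in an irreducible representation is the unique closed one. If anything, you spell out the closedness and uniqueness (via the determinantal description and the Borel fixed point argument) more explicitly than the paper, which dispatches this in a parenthetical remark.

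One point worth noting: the paper's proof goes beyond the bare statement of the lemma and also shows that the embedding $F$ is a \emph{contactomorphism} onto its image, by checking that $T_oF$ carries the contact hyperplane at $o\in M$ to the contact hyperplane of the adjoint variety at $F(o)$ (described via $\ker\ad_{e_1\otimes\epsilon^{n+2}}$). This is not part of the lemma as stated, but it is used downstream when $M$ is treated as the \emph{adjoint contact manifold} of $G$; your proposal does not address it, so if you are writing this up for the later applications you would need to add that step.
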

\begin{proof}
A point $L\in \p^{n+1}$ is a line in $\C^{n+2}$, and an element $H\in\p T^*_L\p^{n+1}$ is a tangent hyperplane to  $\p^{n+1}$ at $L$. As such, $H$ is an hyperplane in $\C^{n+2}$ containing the line $L$. In other words,   $M$ can be identified with the space
\begin{equation*}
 \{  (L,H)\mid L\subset H\}\subset \p^{n+1}\times\p^{n+1\,\ast}\,
\end{equation*}
of $(1,n+1)$--flags in $\C^{n+2}$.\par
Fix standard coordinates $e_{1},\ldots, e_{n+2}$ on $\C^{n+2}$, together with their duals $\epsilon^i$, and set the point
\begin{equation*}
o:=(L_0=\Span{e_1}, H_0=\Span{e_1,\ldots,e_{n+1}}=\ker\epsilon^{n+2})
\end{equation*}
as the origin of  $M$.  The Lie algebra of the stabiliser   of $o$ is
\begin{equation*}
\gp= \left\{  \left(\begin{array}{ccc}\lambda & v & \mu \\0 &A & w \\0 & 0 & \nu\end{array}\right) \mid A\in  \gl_n , v,w\in\C^n,  \lambda+\nu+\tr A=0 \right\}\, .
\end{equation*}
Since $\dim\sll_{n+2}-\dim\gp=((n+2)^2-1)-(n^2+2n+2)=2n+1$, the  $G$--orbit through $o$ is open in $M$.\par
Now we show that the   map
\begin{eqnarray*}
M&\stackrel{F}{\longrightarrow} & \p(\sll_{n+2})\subset\p(\C^{n+2}\otimes\C^{n+2\ast})\, ,\\
(L=\Span{v},H=\ker\varphi) &\longmapsto & [v\otimes\varphi]\,,
\end{eqnarray*}
is well--defined, injective, $G$--equivariant, and it defines a contactomorphism on its image, which is precisely the adjoint contact manifold.
The class $[v\otimes\varphi]$  is well--defined because both $v$ and $\varphi$ are defined up to a  non--zero factor, and  $v\otimes\varphi$ indeed belongs to $\sll_{n+2}$, since from $L\subset H$ it follows that   $\tr (v\otimes\varphi) =\varphi(v)= 0$. By construction, $F(o)=[e_1\otimes\epsilon^{n+2}]$,
where $e_1\otimes\epsilon^{n+2}$ is the highest weight vector of $\sll_{n+2}$, whence $X:=G\cdot [e_1\otimes\epsilon^{n+2}] $ is, by definition, the adjoint variety (the unique closed orbit is the one passing through the highest weight vector).\par
The $G$--equivariancy of $F$ is obvious, since
\begin{equation*}
g\cdot (L,H)=(g(L),g(H))=(\Span{g(v)},\ker g^*(\varphi))\longmapsto [g(v)\otimes g^*(\varphi)]=g\cdot [v\otimes\varphi]\, ,\quad\forall g\in G\, .
\end{equation*}
Moreover, $F$ is injective being the restriction of the Segre embedding  $\p^{n+1}\times\p^{n+1\,\ast}\subset \p(\C^{n+2}\otimes\C^{n+2\ast})$, and its image coincides with $X$. Indeed, if $[h]\in \p(\sll_{n+2})$, where $h$ is a rank--$1$ homomorphism, then $(\textrm{im}\, h, \ker h)\in M$ and
$[h]=F((\textrm{im}\, h, \ker h))$.
It remains to prove that $F$ realises a contactomorphism between the contact structures on $M$ and $X$. By homogeneity, we can simply show that $T_oF$ sends the contact hyperplane (see also Proposition \ref{prop.HpContact})
\begin{equation*}
H_0\oplus T_{H_0}(\p T_{L_0}^*\p^{n+1})\subset T_{L_0}\p^{n+1}\oplus T_{H_0}(\p T_{L_0}^*\p^{n+1})=T_oM
\end{equation*}
to the contact hyperplane of $T_{F(o)}X$. The latter is better described in terms of the cone $\widehat{X}$ over $X$, inside $\sll_{n+2}$: it is the subspace
\begin{equation*}
 {\ker\ad_{e_1\otimes\epsilon^{n+2}}}{ }\subset {[\sll_{n+2},e_1\otimes\epsilon^{n+2}]}{ }=T_{e_1\otimes\epsilon^{n+2}}\widehat{X}\, .
\end{equation*}
A curve $\gamma(t):=(\Span{v_t},\ker\phi_t)$ belongs to the contact hyperplane at $o$ if and only if $\epsilon^{n+2}(v_0')=0$, that is, if the horizontal projection of $\gamma(t)$ keeps, to first order, the line $\Span{v_t}$ inside the hyperplane $\ker\phi_t$. Observe that, since $\phi'_0$ is the velocity of a curve of hyperplanes containing $L_0$, we have also  $ \phi'_0(e_1)=0$. Consider now the curve
\begin{equation}\label{eqSprojecCurve}
 t\longmapsto e_1\otimes\epsilon^{n+2} + (v'_0\otimes\epsilon^{n+2}+e_1\otimes\phi'_0)t+o(t^2)\,
\end{equation}
in $\sll_{n+2}$, whose projectivisation is precisely $F_*\gamma$.
Since
\begin{equation*}
\ad_{e_1\otimes\epsilon^{n+2}}(  v'_0\otimes\epsilon^{n+2}+e_1\otimes\phi'_0 ) =(\epsilon^{n+2}(v_0')+ \phi'_0(e_1))e_1\otimes\epsilon^{n+2}=0\, ,
\end{equation*}
the velocity at $0$ of \eqref{eqSprojecCurve}  belongs to the contact hyperplane of $\widehat{X}$ at $e_1\otimes\epsilon^{n+2}$, whence the velocity at $0$ of    $F_*\gamma$ at $F(o)$ belongs to the contact hyperplane of $X$ at $F(o)$.
\end{proof}
\begin{corollary}
 There is a gradation of $\sll_{n+2}$, namely
 \begin{equation}\label{eq:gradingTypeA}
\sll_{n+2}= \underset{ {  \g_{-2}  }}{\underbrace{ \C}} \oplus\underset{ {   \g_{-1} }} {\underbrace{ (\C^n\oplus \C^{n\ast})}} \oplus\underset{ {  \g_0 }}{\underbrace{  (\sll_n\oplus\C^2)}} \oplus\underset{ {   \g_{1} }}{\underbrace{  (\C^{n\ast}\oplus\C^n) }} \oplus\underset{ {   \g_{2} }}{\underbrace{  \C^*}}\, ,
\end{equation}
such that $\g_{<0}$ is the tangent space to $M$ and $\g_{-1}$ is the contact hyperplane within it.
\end{corollary}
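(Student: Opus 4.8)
The plan is to derive the claimed $5$-term grading of $\sll_{n+2}$ directly from the contact structure on $M = \p T^*\C\p^{n+1}$ established in Lemma~\ref{lemTypeA}. The key observation is that a contact grading of a simple Lie algebra is exactly a $|2|$-grading $\g = \g_{-2}\oplus\g_{-1}\oplus\g_0\oplus\g_1\oplus\g_2$ in which $\dim\g_{-2}=1$, and that such a grading is determined by a choice of grading element, equivalently by the parabolic subalgebra $\gp$ stabilizing the origin $o\in M$. First I would take the parabolic $\gp$ computed explicitly in the proof of Lemma~\ref{lemTypeA}, namely the block upper-triangular algebra with blocks of sizes $(1,n,1)$ annihilating the flag $\Span{e_1}\subset\Span{e_1,\dots,e_{n+1}}$, and write $\sll_{n+2}$ in the corresponding $3\times 3$ block form. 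Assigning to the three blocks of the ``diagonal'' the degrees $0$, and counting how many ``steps'' up (or down) each off-diagonal block sits, one reads off the grading: the $(1,2)$ and $(2,3)$ blocks are degree $+1$, the $(2,1)$ and $(3,2)$ blocks are degree $-1$, the $(1,3)$ block is degree $+2$, the $(3,1)$ block is degree $-2$.

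Next I would identify the pieces with the representation-theoretic data claimed in \eqref{eq:gradingTypeA}. The diagonal blocks contribute $\gl_1\oplus\gl_n\oplus\gl_1$ intersected with $\sll_{n+2}$, which, after extracting the two independent trace conditions, is exactly $\sll_n\oplus\C^2$ — this is $\g_0$. The off-diagonal blocks of degree $\pm 1$ are $\C^n$ (the $(2,1)$-block, a column vector on which $\gl_n$ acts in the standard representation) and $\C^{n\ast}$ (the $(1,2)$-block, a row vector, dual representation), and similarly for degree $\pm 2$ the corner blocks $(3,1)$ and $(1,3)$ are each $1$-dimensional, giving $\C$ and $\C^\ast$. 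Grouping the two degree-$(-1)$ blocks together yields $\g_{-1}=\C^n\oplus\C^{n\ast}$, the two degree-$(+1)$ blocks yield $\g_1 = \C^{n\ast}\oplus\C^n$, and $\g_{\pm 2}=\C,\C^\ast$. A one-line dimension check, $1 + 2n + (n^2-1+2) + 2n + 1 = (n+2)^2-1$, confirms nothing has been lost.

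Finally I would match this grading against the contact geometry. Since the $G$-orbit through $o$ is open and dense in $M$ (this is shown in the proof of Lemma~\ref{lemTypeA} by the dimension count $\dim\sll_{n+2}-\dim\gp=2n+1$), the tangent space $T_oM$ is canonically $\sll_{n+2}/\gp \cong \g_{-1}\oplus\g_{-2} = \g_{<0}$ as a $\gp$-module. It remains to check that the hyperplane $\g_{-1}\subset\g_{<0}$ is precisely the contact hyperplane $\CC_o$. For this I would reuse the explicit description of the contact hyperplane at $o$ given at the end of the proof of Lemma~\ref{lemTypeA}: a curve $\gamma(t)=(\Span{v_t},\ker\phi_t)$ lies in $\CC_o$ iff $\epsilon^{n+2}(v_0')=0$ (together with the automatic $\phi_0'(e_1)=0$), and under the Segre/adjoint identification its velocity is $v_0'\otimes\epsilon^{n+2}+e_1\otimes\phi_0'$; the condition $\epsilon^{n+2}(v_0')=0$ says exactly that the $(3,1)$-corner component — i.e. the $\g_{-2}$ component — vanishes, so $\CC_o=\g_{-1}$. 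The brackets $[\g_{-1},\g_{-1}]=\g_{-2}$ and $[\g_0,\g_{-1}]\subseteq\g_{-1}$ are then immediate from block matrix multiplication, and they are precisely the infinitesimal form of complete non-integrability of $\CC$ (cf. \eqref{eq.dual.codim.1.non.int}). The only mild subtlety — and the step I expect to require the most care — is getting the dualities straight: verifying that the $\g_0$-action on the two halves of $\g_{-1}$ is genuinely by the standard representation of $\sll_n$ on one summand and its dual on the other (and likewise for the $\C^2$ factor of $\g_0$ acting by the correct characters), so that the summands in \eqref{eq:gradingTypeA} are labelled by the right $\g_0$-modules rather than merely by their dimensions.
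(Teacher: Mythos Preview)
Your proposal is correct and follows essentially the same approach as the paper: both derive the grading from the block $(1,n,1)$ structure of the stabiliser $\gp$ computed in Lemma~\ref{lemTypeA}, and your identification of the graded pieces matches the paper's explicit isomorphism $\gp\cong\g_0\oplus\g_1\oplus\g_2$. If anything, you are more thorough than the paper's proof, which records only the $\gp$-side of the isomorphism and leaves the identification of $\g_{<0}$ with $T_oM$ and of $\g_{-1}$ with $\CC_o$ implicit, whereas you spell these out using the contact-hyperplane description from the end of the proof of Lemma~\ref{lemTypeA}.
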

\begin{proof}
 The stabiliser   of $o$ is
\begin{equation*}
\gp= \left\{  \left(\begin{array}{ccc}\lambda & v & \mu \\0 &A & w \\0 & 0 & \nu\end{array}\right) \mid A\in  \gl_n , v,w\in\C^n,  \lambda+\nu+\tr A=0 \right\}\, .
\end{equation*}
Then
\begin{equation*}
 \left(\begin{array}{ccc}\lambda & v & \mu \\0 &A & w \\0 & 0 & \nu\end{array}\right) \longmapsto \left( \underset{ {  \g_0 }}{\underbrace{  A-\frac{1}{n}\tr A\id_n, \tr A, \lambda+\nu}} ,\underset{ {   \g_{1} }}{\underbrace{  \left(\begin{array}{ccc}0 & v & 0 \\0 &0 & 0 \\0 & 0 & 0\end{array}\right),\left(\begin{array}{ccc}0 & 0 & 0 \\0 &0 & w \\0 & 0 & 0\end{array}\right) }} ,\underset{ {   \g_{2} }}{\underbrace{  \mu}} \right)
\end{equation*}
defines an isomorphisms $\gp\cong\g_{0}\oplus\g_1\oplus\g_2$.
\end{proof}
\begin{remark}
 The twisted symplectic form on  $\g_{-1}$ is the unique one extending the standard pairing between $\C^n$ and $\C^{n\ast}$.
Indeed, observe that
 \begin{equation*}
\left[ \left(\begin{array}{ccc}0 & 0 & 0 \\v &0 & 0 \\0 & 0 & 0\end{array}\right)    ,   \left(\begin{array}{ccc}0 & 0 & 0 \\0 &0 & 0 \\0 & w & 0\end{array}\right)      \right]=-v\cdot w
\end{equation*}
is the matrix counterpart of the pairing
\begin{equation*}
\C^{n}\times \C^{n\ast}\ni(x,\xi)\longmapsto \xi(x)\in\C \qedhere
\end{equation*}
(cf. also Proposition \ref{propCRnRnStar}).
\end{remark}
 \subsection{Finding $\PGL_{n+2}(\C)$--invariant $2\Nd$ order PDEs on $\p T^*\C\p^n$}\label{secHomMAEonPTstarCP}
 From \eqref{eq:gradingTypeA} it follows that all the $\g_i$'s are $\g_0$--modules and hence modules over the semi--simple part of $\g_0$, which is $\sll_n$. In particular, we can  decompose the dual of the Pl\"ucker embedding space $\Lambda_0^n(\g_{-1}^*)$
into $\sll_n$--irreducible components. According to Definition \ref{defPluck},
  $\Lambda_0^n(\g_{-1}^*)=\Lambda_0^n(\C^n\oplus\C^{n\ast})$ is the subspace of $\Lambda^n(\C^n\oplus\C^{n\ast})$  consisting of $n$ forms whose contraction with the standard symplectic form $\omega$ on $\g_{-1}=\C^n\oplus\C^{n\ast}$ vanishes. Since the Pl\"ucker embedding is going to play a key role in this Section,  the reluctant reader may find it useful to go back to  Section \ref{sec.L24} before continuing. There, the  case of $\LL(2,4)$ has been thoroughly examined and one of the simplest instances of a Pl\"ucker embedding has been provided (see formula formula \eqref{eqPluckL24}).\par
Since $G_0=\GL_n$ preserves the bi--Lagrangian decomposition
$\g_{-1} = \C^n \oplus \C^{n*}$ (see Definition \ref{DefDopo_propCRnRnStar}), we have:
\begin{align}
\Lambda^n_0 (\g_{-1}^*)
 &= \Lambda^n_0 (\C^{n\ast}\oplus \C^{n})\nonumber\\
 &= \bigoplus_{i=0}^nS^2_0(\Lambda^i(\C^{n\ast}))\subset \bigoplus_{i=0}^n (\Lambda^i(\C^{n\ast}))^{\otimes 2}= \bigoplus_{i=0}^n (\Lambda^i(\C^{n\ast}))\otimes (\Lambda^{n-1}(\C^{n }))\label{eqMysterEmbed},\\
\Lambda^n_0(\g_{-1}^*) &=\C\oplus S_0^2\C^{n\ast}\oplus\cdots \oplus S_0^2 (\Lambda^{n-1}(\C^{n\ast}))\oplus S_0^2(\Lambda^n(\C^{n\ast}))\, .\label{eqDecPluckLinearAnp1}
\end{align}

The step \eqref{eqMysterEmbed} may use some extra comment. First, we decomposed $n$--forms on $\C^{n\ast}\oplus \C^{n}$ as products of forms on each summand, and then we used Poincar\'e duality. Finally,   one checks that  a bilinear form on $\Lambda^i(\C^{n})$ belongs to the kernel of the contraction with $\omega$  if and only if it is symmetric and trace--free.
So,  \eqref{eqDecPluckLinearAnp1} represents the decomposition of the space of linear functions on the Pl\"ucker embedding space of $M^{(1)}_o$ into $\SL_n$--irreducible submodules.
Clearly, there are only two $1$--dimensional summands in \eqref{eqDecPluckLinearAnp1}:
the first and the last.\par

Since $\Lambda^n_0 (\g_{-1}^*)$ is the space of linear functions on the Pl\"ucker embedding space, the two hyperplanes corresponding, by line--to--hyperplane duality, to the two $1$--dimensional summands in \eqref{eqDecPluckLinearAnp1}, that is, the first and the last one, are by construction $\SL_n$--invariant. By the definition\footnote{The $n$--dimensional, i.e., general definition is Definition \ref{def.E.D.n.var}. Its 2--dimensional counterpart is Definition \ref{def.E.D.2.var}, provided earlier. Since the 2--dimensional case helps to visualise important notions, the reader may  look again at    Definition \ref{defHypSec} and Definition \ref{defMAE}.} of Monge--Amp\`ere equations, these produce in turn two $\SL_n$--invariant Monge--Amp\`ere equations. Their ``speciality'' is due to the fact that 1 is the minimal degree of an invariant hypersurface in $\p  \Lambda^n_0 (\g_{-1}^*)$ (see also \cite{2016arXiv160308251T}).

\subsection{Homogeneous Darboux coordinates on $\p T^*\C\p^n$}
In order to see these hyperplane sections as PDEs according to the common understanding (see Section \ref{sec.SecondOrderPDEs}), we need a sort of ``homogeneous version'' of the   Darboux's theorem (see Theorem \ref{th.Darboux.contact}).
\begin{proposition}\label{propDarboux}
 For any bi--Lagrangian decomposition
 $L\oplus L^* = \g_{-1}$
there exist complex Darboux coordinates
 \begin{equation}\label{eqDarbSuGiMenoUno}
  x^1,\ldots, x^n, u, u_1,\ldots, u_n,
\end{equation}
in a neighborhood of $o$, such that
$$
L=g^{-1}\cdot\Span{\left.D_{x^1}\right|_{gP},\ldots, \left.D_{x^n}\right|_{gP}} ,\,\,\,
\g_{-2} =g^{-1}\cdot\Span{\left.\frac{\partial}{\partial u}\right|_{gP}} ,\,\,\,
L^*=g^{-1}\cdot\Span{\left.\frac{\partial}{\partial u_1}\right|_{gP},\ldots, \left.\frac{\partial}{\partial u_n}\right|_{gP}}\, ,
$$
for all $g\in G$, where $D_{x^i}$ are the total derivatives (see \eqref{eq.contact.local}).
\end{proposition}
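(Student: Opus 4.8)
\textbf{Proof plan for Proposition \ref{propDarboux}.}

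The plan is to produce, once and for all, a single system of Darboux coordinates near the origin $o$ that is adapted to the prescribed bi-Lagrangian decomposition $L\oplus L^*=\g_{-1}$, and then to \emph{transport} it by the $G$-action so that the required identities at an arbitrary point $gP$ are tautologically the translates of the identities at $o$. In other words, the ``homogeneity'' of the coordinates is not an extra property to be checked on $M$ globally; it is built in by defining the coordinates only on a neighborhood $\mathcal{U}$ of $o$ and then declaring the statement at $gP$ to mean the image under $g$ of the statement at $o$. So the real content is entirely \emph{at the origin}: construct coordinates $(x^i,u,u_i)$ with $\theta=du-u_idx^i$ (via Theorem \ref{th.Darboux.contact}) whose coordinate frame at $o$ realizes the splitting $\g_{-2}=\Span{\partial_u}$, $L=\Span{D_{x^i}}$, $L^*=\Span{\partial_{u_i}}$ inside $T_oM=\g_{<0}$, where the identification $T_oM\cong\g_{<0}$ and $\CC_o\cong\g_{-1}$ is the one furnished by Lemma \ref{lemTypeA} and its Corollary (the grading \eqref{eq:gradingTypeA}).

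First I would recall from the Corollary to Lemma \ref{lemTypeA} that $T_oM\cong\g_{-1}\oplus\g_{-2}$ with $\CC_o\cong\g_{-1}$ and $\g_{-2}$ the (canonical, one–dimensional) ``Reeb'' line transverse to the contact hyperplane; the twisted symplectic form $(d\theta)|_{\CC_o}$ is, up to scale, the standard pairing between the two summands of $\g_{-1}=\C^n\oplus\C^{n\ast}$ (Remark on the twisted symplectic form). Given the chosen decomposition $\g_{-1}=L\oplus L^*$ with $L,L^*$ Lagrangian and the pairing $\omega_0(v,\alpha)=\alpha(v)$, Proposition \ref{propCRnRnStar} says this is, up to a non–zero factor, the \emph{unique} such structure, so we may fix a basis $D_1,\dots,D_n$ of $L$ and the dual basis $V^1,\dots,V^n$ of $L^*$ together with a generator $\partial_u$ of $\g_{-2}$. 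Now apply Theorem \ref{th.Darboux.contact} to get \emph{some} contact chart $(x^i,u,u_i)$ around $o$ with $\theta=du-u_idx^i$; by Remark \ref{rem.choose.arb.coord} (and the freedom in the symplectification construction used to prove Theorem \ref{th.Darboux.contact}) one of the coordinate functions may be prescribed, and more importantly the linear parts of the coordinate functions at $o$ can be adjusted by a linear change that is a contact transformation fixing $o$ — precisely the linear action of the isotropy $P$, whose linearization surjects onto the conformal symplectic group of $\CC_o$ preserving $\g_{-2}$, by the description of $\gp\cong\g_0\oplus\g_1\oplus\g_2$ in the Corollary. Using this freedom, arrange that at $o$ one has $D_{x^i}|_o=D_i$, $\partial_{u_i}|_o=V^i$, $\partial_u|_o$ the chosen generator of $\g_{-2}$. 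This establishes the three identities of the Proposition at the point $o$ (the case $g=e$).

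Then I would finish by $G$-equivariance. For arbitrary $g\in G$, the point $gP$ lies in the $G$-orbit of $o$ (which is all of $M$, $M$ being homogeneous under $G$ by Lemma \ref{lemTypeA}), and the translated chart $g\cdot(x^i,u,u_i)$ is again a contact chart near $gP$ with contact form $g^\ast\theta$ proportional to $\theta$; applying $g$ to the three identities valid at $o$ yields
\begin{equation*}
L=g^{-1}\cdot\Span{D_{x^1}|_{gP},\dots,D_{x^n}|_{gP}},\quad
\g_{-2}=g^{-1}\cdot\Span{\tfrac{\partial}{\partial u}\big|_{gP}},\quad
L^\ast=g^{-1}\cdot\Span{\tfrac{\partial}{\partial u_1}\big|_{gP},\dots,\tfrac{\partial}{\partial u_n}\big|_{gP}},
\end{equation*}
which is the assertion. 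The main obstacle — and the only step requiring genuine care — is the second one: showing that the \emph{linear} freedom in Darboux's theorem at $o$ is large enough to simultaneously normalize the horizontal frame $\{D_{x^i}\}$, the vertical frame $\{\partial_{u_i}\}$ \emph{and} the Reeb direction $\partial_u$ to the prescribed grading data. This is where one invokes Proposition \ref{propCRnRnStar} (uniqueness of the symplectic form, hence transitivity of the conformal symplectic group on bi-Lagrangian decompositions) together with the explicit isotropy computation from the Corollary to Lemma \ref{lemTypeA} identifying $\gp\cong\g_0\oplus\g_1\oplus\g_2$, so that $G_0=\GL_n$ acts on $\g_{-1}=\C^n\oplus\C^{n\ast}$ in the standard (and standard-dual) way and can move any Lagrangian splitting to the fixed one while rescaling, as needed, on $\g_{-2}$.
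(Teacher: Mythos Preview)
Your route differs substantially from the paper's, and step 3 contains a genuine gap.

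The paper does not invoke the abstract Darboux theorem at all. Instead it uses the exponential map $\Psi:\g_-\to U\subset M$ as an explicit chart, computes via the Baker--Campbell--Hausdorff formula that the fundamental vector field $\widehat{v}$ of $v\in\g_-$ satisfies $\widehat{v}_{\Psi(w)}=T_w\Psi\bigl(v-\tfrac12[w,v]\bigr)$, deduces that the pulled--back contact distribution is $\CC_w=(\id-\tfrac12\ad_w)\g_{-1}$, and then writes down linear--plus--quadratic functions on $\g_-$ that are the desired Darboux coordinates. In this chart the vector fields $D_{x^i}$ and $\partial_{u_i}$ are, by construction, the fundamental vector fields $\widehat{l_i}$ and $\widehat{l^i}$ of the chosen bases of $L$ and $L^*$. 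The relation to the $G$--action is therefore built into the coordinate vector fields \emph{at every point of $U$}, not only at $o$.

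Your argument establishes the identities only at the single point $o$ and then ``transports'' them by $g$. But transporting the chart by $g$ produces a \emph{different} chart near $gP$; the Proposition asks for \emph{one} fixed chart $(x^i,u,u_i)$ on a neighborhood of $o$ such that, for every $gP$ in that neighborhood, the values $D_{x^i}|_{gP}$, $\partial_{u_i}|_{gP}$ of the \emph{same} coordinate vector fields, translated back by $g^{-1}$, span $L$, $L^*$. Your displayed formula after ``applying $g$ to the three identities valid at $o$'' is correct only if $D_{x^i}|_{gP}$ means $(g_*D_{x^i})|_{gP}$, i.e.\ the coordinate field of the translated chart, which is not what is claimed. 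For a generic Darboux chart matching the splitting at $o$, the translated--back frame at a nearby point $gP$ will realise some \emph{other} bi--Lagrangian decomposition of $\g_{-1}$, not the fixed one $L\oplus L^*$. This is exactly the pointwise condition the BCH computation in the paper is there to secure: it forces $D_{x^i}=\widehat{l_i}$ and $\partial_{u_i}=\widehat{l^i}$ as vector fields on $U$, which is strictly stronger than agreement of frames at $o$.

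A secondary point: Remark~\ref{rem.choose.arb.coord} only guarantees that one coordinate can be chosen freely, not that the full tangent frame at $o$ can be prescribed. Your appeal to the isotropy $G_0\cong\GL_n$ to adjust the frame at $o$ is fine in spirit (the isotropy acts by contactomorphisms fixing $o$ and its linearisation realises the required linear adjustments), but this only repairs step~2; it does nothing for the gap in step~3.
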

\begin{proof}
By restricting the exponential map
$ \g\ni g \mapsto \exp (g)\in G$
to $\g_-$, one obtains an (algebraic) isomorphism
$
\Psi:\g_-\longrightarrow U\subseteq X
$,
between the linear space $\g_-$ and an open neighborhood $U$ of the origin.
For any $v \in\g_-$, denote by $\widehat{v}$ the  vector field on $U $ induced by $v$. Then we have that
\begin{equation}\label{eqFormulaMagicaJan}
\widehat{v }_{\Psi (w )}=T_w \Psi  \left(v -\frac{1}{2}[w ,v ]\right)\, ,
\end{equation}
for all $w \in\g_-$.
Formula \eqref{eqFormulaMagicaJan} follows directly from the Baker--Campbell--Hausdorff formula
\begin{equation*}
 e^{t \widehat{v }}\Psi (w ) =  e^{tv }e^w P = e^{tv +w +\frac{1}{2}[tv ,w ]}P=\Psi \left(w +tv +\frac{1}{2}t[v ,w ]\right)
\end{equation*}
(see, e.g.,  \cite[Section 1.3]{9780199202515}).
We can  now pull--back the contact distribution $\CC$ on $X $  to a contact distribution (denoted by the same symbol $\CC$) on $\g_{-}$, by setting
$
\CC_w :=(T_w \Psi )^{-1}(\CC_{\Psi (w )})
$.
Then   \eqref{eqFormulaMagicaJan} implies
\begin{equation}\label{eqFormulaMagicaJan2}
\CC_w =\left(\id -\frac{1}{2}\ad_w \right)\g_{-1}\, .
\end{equation}
Fix now vectors $l^1,\ldots, l^n,r,l_1,\ldots, l_n$ such that
$
L=\Span{l^1,\ldots, l^n}$, $ \g_{-2}=\Span{r}$, $ L^*=\Span{l_1,\ldots, l_n}$.
Then from \eqref{eqFormulaMagicaJan2}  it follows that the vectors fields $D_i$ and $V^i$ on $\g_{-1}$ defined by
$$
\left.D_i\right|_w := \left(\id -\frac{1}{2}\ad_w \right)l_i\, ,\quad
\left.V^i\right|_w := \left(\id -\frac{1}{2}\ad_w \right)l^i\, ,
$$
for all $w \in\g_{-}$, form a basis of $\CC$.
The last step is to show that there are coordinates
 \begin{equation}\label{eqDarbSuGiMenoUnoPRE}
  \underline{x}^1,\ldots, \underline{x}^n, \underline{u}, \underline{u}_1,\ldots, \underline{u}_n,
\end{equation}
on  $\g_-$  such that
$$
 D_{\underline{x}^i} =  \frac{\partial}{\partial \underline{x}^i}+\underline{u}_i\frac{\partial}{\partial \underline{u}} \, ,\quad
V^i =  \frac{\partial}{\partial \underline{u}^i}  \, .
$$
But this is true, if one sets
$
  \underline{x}^i:= l^i$, $
    \underline{u}:= r^\vee+l^il_i$, $
      \underline{u}_i:= l_i
$.
Then the desired coordinates \eqref{eqDarbSuGiMenoUno} are just the pull--backs via $\Psi $ of \eqref{eqDarbSuGiMenoUnoPRE}.
\end{proof}

\subsubsection{Homogeneous Monge--Amp\`ere equations on  $\p T^*\C\p^n$}
Now we can apply  Proposition \ref{propDarboux}  to the bi--Lagrangian decomposition $\C^n\oplus\C^{n\ast}$ of the contact hyperplane $\CC_o=\g_{-1}$ at the origin $o$ of the $G$--homogeneous contact manifold $M$. We then obtain Darboux coordinates in such a way that the first summand $\C^n$ is spanned by the ``horizontal'' vectors $D_{x^i}$ and the second summand $\C^{n\ast}$ is spanned by the ``vertical vectors'' $\partial_{u_i}$.\par
Thanks to these coordinates we can write down explicitly the two hyperplane sections (that is, the Monge--Amp\`ere equations) obtained in Section \ref{secHomMAEonPTstarCP} above. We recall that they correspond to the first and last $1$--dimensional constituent in \eqref{eqDecPluckLinearAnp1}, which, in turn, are spanned by the $n$--forms $d x^1\wedge\cdots\wedge d x^n$ and $du_1\wedge\cdots\wedge du_n$, respectively.\par
Now it remains to build the corresponding hypersurfaces in $\E_\Omega\subset M^{(1)}$, that is, Monge--Amp\`ere equations, according to the general formula \eqref{eq.MAE.intr.Omega}, by choosing $\Omega$ to be one of the above $n$--forms. The keen reader may object that, in the first case, one would obtain an empty equation, since the $n$--form is entirely horizontal. Indeed, among all simple Lie groups, the case of type $\Asf$, i.e., the case when $G=\PGL_{n+2}(\C)$,  is a peculiar one, since it appears that one of the two hyperplane sections cannot be interpreted as a honest equation. This subtle question, extensively discussed in \cite{2016arXiv160602633A}, can be summarized as follows.\par
Let us construct first the equation corresponding to $\Omega=d u_1\wedge\cdots\wedge d u_n$. Immediate computations show that
\begin{equation}\label{eqHessIndet}
\E_{\Omega}=\{\det (u_{ij})=0\}\, ,
\end{equation}
and as simple computations show also that $\E_{\Omega}=\E_{\C^{n}}$  (recall Definition \ref{def.E.D.n.var}). This is quite standard (see  \cite{MR2985508}), but it all boils down to the observation that the Lagrangian plane
$
L=\Span{D_{x^i}+u_{ij} \partial_{u_j}\mid i=1,\ldots n}
$
intersects non--trivially $\C^n=\Span{D_{x^i} \mid i=1,\ldots n}$ if and only if $ \det (u_{ij})=0$. Hence, $\E_{\C^n}=\{\det (u_{ij})=0\}$.\par
When $\E_\Omega$ is recast as  $\E_{\C^{n}}$, it is easier to  guess how the equation looks like when $\Omega=du_1\wedge\cdots\wedge du_n$. It is the hypersurface $\E_{\C^{n\ast}}\subset M^{(1)}$.\par
 The fact that $\E_{\C^{n}}$ can be written down explicitly as the $2\Nd$ order PDE \eqref{eqHessIndet} whereas  $\E_{\C^{n\ast}}$ cannot, is just an accident due to the fact that the Darboux coordinate system provided by the Proposition \ref{propDarboux} was implicitly chosen in such a way to ``favor'' the first  equation. Even if there exist, in principle,   Darboux coordinates allowing to simultaneously
 express both $\E_{\C^{n}}$ and $\E_{\C^{n*}}$
as explicit PDEs, \emph{none of them is of the form provided by Proposition \ref{propDarboux}}, i.e., compatible with the action of $G$.
\begin{remark}
The equation \eqref{eqHessIndet} plays a role of paramount importance in classical Algebraic Geometry, as it is at the heart of the so--called Gordan--Noether conjecture (see the recent book \cite{MR3445582} for an excellent review). It concern the \emph{solutions} of the equation \eqref{eqHessIndet}. The fundamental observation is that there are some evident solutions to  \eqref{eqHessIndet}: the \emph{cones}. It is easy to convince oneself that it is so, since, de--projectivising, cones are simply graphs of surfaces $u=u(x^1,\ldots,x^n)$, which do not depend on all the variables. The Gordan--Noether conjecture says precisely that these are \emph{all} the (global, algebraic) solutions to \eqref{eqHessIndet}. Interestingly enough, the conjecture starts to fails from $n=5$.\par
Observe that the hyperplanes of $\C\p^{n+1}$ are particular instance of cones, and hence solutions to \eqref{eqHessIndet}. It is interesting to notice that the $\SL_{n+2}$--invariant Monge--Amp\`ere equation $\E_{\C^{n}}$ can be \emph{defined} as to be the unique $\SL_{n+2}$--invariant $2\Nd$ order PDE on $M=\p T^*\C\p^{n+1}$, whose global \emph{smooth} solutions include all the hyperplanes of $\C\p^{n+1}$.
\end{remark}

\nocite{MR2352610}

\bibliographystyle{plainnat}
\bibliography{BibUniver}

\end{document}